\definecolor{MyDarkBlue}{rgb}{0,0.08,0.50}
\definecolor{BrickRed}{rgb}{0.65,0.08,0}
\newtheorem{Theorem}{Theorem}
\newtheorem{Assumption}{Assumption}
\newtheorem{Lemma}{Lemma}[section]
\newtheorem{lemma}{Lemma}[section]
\newtheorem{Proposition}[Lemma]{Proposition}
\newtheorem{Corollary}[Lemma]{Corollary}
\newtheorem{remark}[Lemma]{Remark}
\DeclareMathOperator*{\esssup}{ess\,sup}
\newcommand{\s}{\quad}
\newcommand{\eps}{\epsilon}
\newcommand{\R}{\mathbb{R}}
\newcommand{\normal}{\operatorname{N}}
\newcommand{\non}{\nonumber}
\newcommand{\beq}{\begin{eqnarray*}}
\newcommand{\eeq}{\end{eqnarray*}}
\newcommand{\beqn}{\begin{eqnarray}}
\newcommand{\eeqn}{\end{eqnarray}}
\newcommand{\bt}{\begin{Theorem}}
\newcommand{\et}{\end{Theorem}}
\newcommand{\bas}{\begin{Assumption}}
\newcommand{\eas}{\end{Assumption}}
\newcommand{\be}{\begin{equation}}
\newcommand{\ee}{\end{equation}}
\newcommand{\sign}{\operatorname{sign}}
\newcommand{\mb}[1]{{\mathbf #1}}
\newcommand\ci{\perp\!\!\!\!\perp}
\newcommand{\Var}{{\rm Var}}
\numberwithin{equation}{section}
\begin{document}

\tikzset{every node/.style={auto}}
 \tikzset{every state/.style={rectangle, minimum size=0pt, draw=none, font=\normalsize}}
 \tikzset{bend angle=20}

	\author{Abhishek Pal Majumder
\thanks{University of Reading}
	}

\title{Long time behavior of semi-Markov modulated perpetuity and some related processes}
\maketitle
\begin{abstract}
Examples of stochastic processes whose state space representations involve functions of an integral type structure $$I_{t}^{(a,b)}:=\int_{0}^{t}b(Y_{s})e^{-\int_{s}^{t}a(Y_{r})dr}ds, \s t\ge 0$$ are studied under an ergodic semi-Markovian environment described by an $S$ valued jump type process $Y:=(Y_{s}:s\in\R^{+})$ that is ergodic with a limiting distribution $\pi\in\mathcal{P}(S)$. Under different assumptions on signs of $E_{\pi}a(\cdot):=\sum_{j\in S}\pi_{j}a(j)$ and tail properties of the sojourn times of $Y$ we obtain different long time limit results for $I^{(a,b)}_{}:=(I^{(a,b)}_{t}:t\ge 0).$ In all cases mixture type of laws emerge which are naturally represented through an affine stochastic recurrence equation (SRE) $X\stackrel{d}{=}AX+B,\,\,$ $X\ci (A, B)$. Examples include explicit long-time representations of pitchfork bifurcation, and regime-switching diffusions under semi-Markov modulated environments, etc.
\end{abstract}



\section{Introduction and the integral process} We study some dynamical systems (both stochastic and deterministic) whose parameters characterizing their overall stability, are not fixed constants, instead, they switch their values driven by some hidden jump-type process mechanisms. These models are abundantly seen in directions of econometrics, finance, biological systems, statistical physics etc. We focus on regime-switching diffusion processes, pitchfork bifurcation examples where the key parameter is stochastically varying through different regimes. Common theme of all these processes is that they have an explicit state-space representation in terms of a following integral type of  bivariate process $(\Phi^{(a)},I^{(a,b)}):=\big((\Phi^{(a)}_{t},I^{(a,b)}_{t}):t\ge 0\big)$ where

\beqn
(\Phi^{(a)}_{t},I^{(a,b)}_{t}):= \Big(e^{-\int_{0}^{t}a(Y_{r})dr},\int_{0}^{t}b(Y_{s})e^{-\int_{s}^{t}a(Y_{r})dr}ds\Big)\,\,\,\,\,\forall t>0,\label{modelint}
\eeqn

for some given functions $a,b:S\to\R$ and some $S$-valued process $Y:=(Y_{t}: t\ge 0)$ where $S$ is a countable set. Let the common filtration of the regime-switching dynamics be defined by $\mathcal{F}^{Y}_{t}:=\sigma\{Y_{s}:s\le t\}.$ Main content of this article is to understand how various parametrizations of the dynamics $Y$ influence the time-asymptotic behaviors of $(\Phi^{(a)},I^{(a,b)})$ and their applications in different examples.

The integral process $I^{(a,b)}$ is known as perpetuity in the actuarial science. Several previous works, 
e.g.~Gjessing and Paulsen \cite{gjessing1997present}, 
Bertoin and Yor \cite{bertoin2005exponential}, 
Maulik and Zwart \cite{maulik2006tail}, 
Behme and Lindner \cite{behme2015exponential}, 
Chapter $2$ of Iksanov \cite{iksanov2016renewal}, 
Zhang et al. \cite{zhang2016long} and 
Feng et al. \cite{feng2019exponential} studied this perpetuity in line with exponential functionals of Lévy processes defined by  $J_{t} := \int_{0}^{t} e^{-P_{s}} dQ_{s},$
where \( (P_{t}, Q_{t}: t \ge 0) \) jointly constitutes a Lévy process in an appropriate state space. These works addressed distributional properties, including moments and asymptotic behaviors of tail events, etc. In our context, this implies if we let $\int_{0}^{t}a(Y_{s})ds$ be a Levy process instead of an integral of a semi-Markov process, subsequent analysis gets relatively simpler due to the stationary, independent increment properties. In a recent work (Behme \& Sideris \cite{behme2019exponential}), authors considered $(P_{t}, Q_{t})$ as a Markov additive process and give sharp existence uniqueness conditions motivated by Alsmeyer \cite{alsmeyer2017stability}, that entails the foundation of stability for affine stochastic recurrence equation in a Markovian environment. Contrary to Markov-switching, semi-Markovian switching framework driven by $Y$ will exhibit a natural auto-regulatory control on the law of sojourn time that depends on the future regime state as well as the current one, which is extremely useful for implementing control-related formulations into the latent state dynamics. The theoretical foundation of the Semi-Markov process and its time asymptotic behaviors were explored long back in Cinlar \cite{cinlar1969markov} along with Cinlar \cite{ccinlar1967queues},\cite{ccinlar1969semi} etc. \cite{korolyuk1975semi} reviewed subsequent developments along with other references therein. Still, for ``perpetuity" type of path-based functionals $I^{(a,b)},$ there were no exact long-term results available in the literature for a semi-Markovian $Y,$ to the best of our knowledge.

Regime-switching processes have a rich history of applications in econometrics, actuarial mathematics, mathematical biology, and quantitative finance. Examples of such uses include Ang and Timmermann \cite{ang2012regime}, BenSaida \cite{BenSaida15}, Fink et al. \cite{fink2017regime}, and Genon-Catalot et al. \cite{GCJL00}, who have used it in the context of stochastic volatility modeling in financial markets. Additionally, Zhang et al. \cite{zhang2016long} have considered stochastic interest rate models with Markov switching, while Hardy \cite{Hardy01}, Lin et al. \cite{LinKenHailiang09}, and Shen and Kuen Siu \cite{ShenSiu13} have studied the long-term behavior of stock returns and bond pricing. Similar to quantitative finance, such processes are commonly used in actuarial science for solvency investigations (e.g., Abourashchi et al. \cite{abourashchi2016pension}), mortality modeling (e.g., Gao et al. \cite{GMLT15}), and in the context of disability insurance (e.g., Djehiche and L\"ofdahl \cite{DjehicheLofdahl18}). In all of the above references, the sojourn times for the regime-switching process have been considered to be Exponential (to make $Y$ Markovian), although it may not be ideal. 
\begin{figure}[h]
\centering
\includegraphics[width=14cm, height=5.5cm]{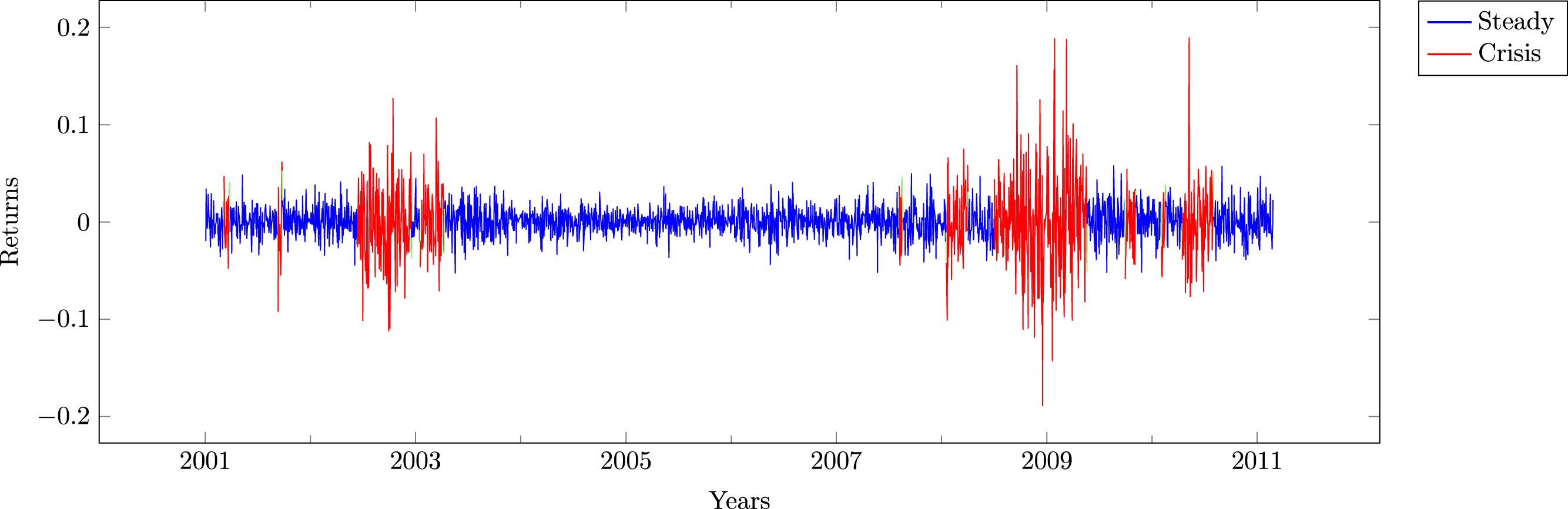}
\caption{BNP Paribas stock price daily returns and Markov-switching environments predicted in \cite{salhi2016regime} assuming  $|S|=2$.  
}\label{fig:regime}
\end{figure}

For example, in Salhi et al. \cite{salhi2016regime}, assuming that there is a Markovian regime-switching with only two latent states, the underlying environments were predicted in the time evolution of BNP stock returns (as shown in the Figure \ref{fig:regime}). It is observed that the sojourn time of the ``steady" period from 2003 Q1 to 2007 Q2 is more than four years, which is around seven times higher than the average length of other ``steady" state sojourn times. This event is highly unlikely when the underlying regime switching dynamics is Markovian, as the sojourn times of a particular state are iid exponentials (hence has an exponentially decaying tail making that event significantly rare). However, the aforementioned event may not be rare if the distribution of sojourn times is modeled as Pareto or any other heavy-tailed distributions, as it fits the notion of having a few large jumps in a handful sample size. This motivates the use of a general semi-Markovian regime-switching process, as the sojourn time distributions can be flexible with any laws. Several recent works such as Apergis et al. \cite{Apergis2019Decoding}, Qin et al. \cite{qin2024robust}, Wang et al. \cite{Wang2021Bear} have demonstrated the superior performance of semi-Markovian regime-switching models compared to their Markov-switching counterparts in various applications including energy consumption forecasting, stock price modeling, and historical S\&P 500 analysis. Availability of computational tools, such as the R packages hsmm \cite{bulla2010hsmm} and mhsmm \cite{o2011hidden}, has facilitated the practical implementation of semi-Markov switching models. Despite empirical success, a rigorous theoretical understanding of the long-term behavior of semi-Markov modulated processes remains an open problem. While foundational properties, such as existence \& uniqueness, recurrence \& transience, ergodicity are well-established in the works of Yin \& Zhu \cite{yin2010switching}, Mao \cite{mao2006stochastic}, Shao \cite{shao2015ergodicity}, and Zhang \cite{zhang2016long} in contexts of general hybrid regime switching models including long-term asymptotics of switching diffusions, jump processes etc., explicit asymptotic characterizations are lacking, making it difficult to assess the sensitivity of limiting distributions to latent switching dynamics $Y$.  Beyond finance, semi-Markovian models have been applied in disability insurance \cite{stenberg2007algorithmic} to relax Markovian assumptions and in interest rate modeling \cite{d2013semi} to allow sojourn times to depend on both current and future states. These applications highlight the growing importance of processes with semi-Markovian latent structures, and a need for a deeper theoretical exploration of their time-asymptotic properties.

Beyond providing long-term characterizations of $(\Phi^{(a)}_{}, I^{(a,b)}_{})$, we consider two examples where the processes of interest after some transformations, admit an exact state-space representation either in terms of a linear combination of $\{\Phi^{(a)}_{t}, I^{(a,b)}_{t}\}$ or $\{\Phi^{(a)}_{t}, |I^{(ma,b)}_{t}|^{1/m}\}$. Corollary \ref{conn1} aids in establishing the limit results of such functionals, within the divergent framework. First application where we use the long-term behavior of $(\Phi, I),$ is the deterministic pitchfork bifurcation model in polar coordinates $(\rho,\phi):=((\rho_{t},\phi_{t}):t\ge 0)$ satisfying 
\beqn
\frac{d\rho_{t}}{dt} &=& \rho_{t} \left( a(Y_{t}) - b(Y_{t}) \rho_{t}^{2} \right), \quad \frac{d\phi_{t}}{dt} = 1, \label{model2}
\eeqn
where $a(\cdot)$ and $b(\cdot)$ are functions of the underlying semi-Markov process $Y$, rather than fixed parameters. Note that if $a$ and $b$ are constants with $b>0$, then for $a \leq 0$, the state $0$ would be a stable steady state, while for $a > 0$, it becomes an unstable one. If $a(\cdot)$ switches its values changing its signs over time, it is interesting to see under varying stochastic environment $Y$, how the limiting measure of $\rho$ behaves around $0,$ if that exists. For $|S| > 2$ and semi-Markovian $Y$, the long-term behavior of $\rho$ is also not known in the literature, which we will address explicitly in Theorem \ref{Ex4}, in different cases. 

Another major application, discussed in the Section \ref{secApplications}, involves the long-term behavior of an $\R$-valued regime-switching Ornstein-Uhlenbeck process, denoted by $X = (X_{t})_{t \geq 0}$, under a semi-Markovian $Y$. It is defined as the weak solution to the SDE
\beqn
dX_{t} = \big(c(Y_{t}) - a(Y_{t}) X_{t}\big) dt + b(Y_{t}) dL_{t}, \quad Y_{0} = y_{0} \in \R, \label{model01}
\eeqn
where $a(\cdot), b(\cdot), c(\cdot): S \to \R$ are arbitrary functions, and $(L_{t})_{t \geq 0}$ is either a standard stable-$\alpha$ Lévy process or a Brownian motion, independent of the background $S$-valued process $Y$.

In context of Markov modulated Ornstein-Uhlenbeck process, Zhang \& Wang \cite{ZW2017stationayOU} studies for two-states (i.e when $|S|=2$) regime process. It is possible to compute the stationary distribution under the stable regime since the possible transitions are only of $1\to 2$ and $2\to 1$ type which makes studying $(\Phi, I)$ tractable. However if $|S|> 2,$ there will be many different transitions between other states, and generalizing that approach becomes infeasible. The regime-switching process is also deeply connected with Piecewise Deterministic Markov processes (PDMP) and related applications. In similar contexts, ~Cloez and Hairer \cite{hairer2010convergence} studied a class of general regime switching Markov processes where sufficient conditions ensure exponential ergodicity of the observed process under  Wasserstein distance. Benaim et al. \cite{benaim2016lotka} showed that survival or extinction of competing
species in a Lotka-Volterra model is heavily influenced by switching rate parameters using Lyapunov drift type criteria (Section 4 or Theorem 4.1 of \cite{benaim2016lotka}). That generalized a set of Lyapunov-type approaches for cases where no explicit temporal representation of the overall process is available.  In more recent works, Behme and Sideris \cite{behme2020markov} characterized the stationarity of a Markov-modulated generalized Ornstein-Uhlenbeck process with applications in Risk theory. However, the results on time-limiting behaviour are not available in explicit form.

In contrast to all works mentioned above, \cite{lindskog2018exact} addressed a similar problem to \eqref{model01} in a diffusion setting with Markov switching, providing several long-time limit results. However, when the sojourn times of $Y$ are non-exponential and depend on the future state (determined by an independent state-determining Markov chain), the analysis for non-Markovian $Y$ becomes more complex. This is because the residual time $(t - \tau_{g_{t}^{j}})$ in \eqref{z_to_r} does not factor out as it does in the Markovian case (as noted after display (6.9) in \cite{lindskog2018exact}), creating difficulties. To address this, a modification via a limiting argument is required, exploiting the regenerative structure of $Y$, as performed in step $2$ of Lemma \ref{lem01}. Lemma \ref{lem01} is crucial for our semi-Markovian framework, as it can be used to study time asymptotic behaviors of any functionals of $Y$ satisfying the conditions of $H_{t}$ in \eqref{key}. Lemma \ref{lem01} is then applied to establish all subsequent time asymptotic results in Theorems \ref{T1}- \ref{Ex1}. Unlike the Markovian switching context \cite{lindskog2018exact}, long-term limit results here in the divergent cases (i.e, $E_{\pi}a(\cdot)\le 0$) are found to be further subdivided based on whether \[\sigma_{j}^{2}:=\Var\Big(\int_{\tau_{0}^{j}}^{\tau_{1}^{j}}\big(a(Y_{s})-E_{\pi}a(\cdot)\big)ds\Big),\] is finite, infinite or equal to $0$ (where $\tau_{0}^{j},\tau_{1}^{j}$ are time instants of two successive hitting times of $Y$ to state $j$ (validated under Assumption \ref{As0})), which is mostly due to the general sojourn time distributions of $Y.$ The flexibility in controlling the sojourn time dynamics gives diverse modeling opportunities and will have an additional impact on the scaling limits in all cases, which along with different applications, are the main contributions of this article.

{\color{blue}As semi-Markov modulated processes, all results in these examples are novel, but two surprising takeaways are  following:
\begin{enumerate}[(a)]
\item In the Pitchfork Bifurcation example \eqref{model2}, under stability domain (i.e., when $E_{\pi}a(\cdot)>0$), the limiting distribution $\mathcal{L}(\rho^{2}_{\infty})$ is not known for $|S|>2$. Specifically, if \( \inf_{j \in S} a(j) < 0 \), how \( \mathcal{L}(\rho^{2}_{\infty}) \) behaves under a stable domain and concentrates around \( 0 \) has remained an open question. Along with an explicit characterization of $\mathcal{L}(\rho^{2}_{\infty})$ (see Theorem \ref{Ex4}(A)), we found that there exist constants $\nu^{*}$ and $c > 0$, explicitly determined by the dynamics of $Y$, such that the following small-ball asymptotic limit holds:

\beqn
\lim_{\epsilon \to 0^{+}} \epsilon^{-\nu^{*}} P\big[\rho^{2}_{\infty} < \epsilon\big] = c,\label{GK1}
\eeqn
(as shown by setting $x = 1/\epsilon$ in Remark \ref{rmk}) which is quite surprising, as it shows  how $ P\big[\rho^{2}_{\infty} < \epsilon\big]$ scales when $\epsilon$ is close to $0.$ The constant $c$ is related to the Goldie-Kesten constant \cite{bura2016kestenG}.
\item In diffusion example \eqref{model01} (with $c=0$), or more generally in \eqref{exe3} or in \eqref{SaS}, we observe that the noise process influences weak limit of the long-term behavior of the original process only in following two cases:
\begin{itemize}
    \item in the stable case, i.e., when $E_{\pi}a(\cdot)>0$, corresponding to Theorem \ref{Ex1} (A), and
    \item in a specific divergent case where $\sigma_{j}^{2}=0, \forall j\in S$ (as described in Theorem \ref{Ex1} (D)).
\end{itemize}
In other divergent cases (i.e., where $\sigma_{j}^{2} < \infty$ or $\sigma_{j}^{2} = \infty$ for all $j\in S$, corresponding to Theorem \ref{Ex1} (B) and (C), respectively), given certain integrability conditions, the weak limits of original process after scaling and transformation are found to be universal and they do not depend on the noise processes (see Remark \ref{RR1*}), which is quite fascinating. This entails how impactful features of the latent dynamics are, in governing different time-asymptotic behaviours of the overall process $X.$
\end{enumerate}
 }

This paper is organized as follows. Section \ref{sec2} sets the structure of the regime process $Y$ along with preliminary assumptions and necessary definitions. In Section \ref{secStable}, exact long-time characterizations for the process $(\Phi^{(a)},I^{(a,b)})$, and different generalizations, are presented under assumptions corresponding to the stable regime. Section \ref{sec_unstable} presents long time results for a scaled version of $(\Phi^{(a)},I^{(a,b)})$ in the divergent and critical  regimes i.e, when $E_{\pi}a(\cdot)\le 0.$ Section \ref{secApplications} contains applications of the findings in Sections \ref{secStable} and \ref{sec_unstable} to \eqref{model2} \& \eqref{model01}. In Section \ref{Lem2} we present the Lemma \ref{key} regarding asymptotic independence of the residual time and a specific functional $(H_{t}:t\ge 0)$ satisfying \eqref{key} which is the heart of this work as it is used in proving all the key results. We end the article through Section \ref{secConclusion} by giving some conclusions, future directions, and some open questions. The proofs will be found in Section \ref{proof} and in the subsequent sections.

\section{Preliminaries and Assumptions}\label{sec2}
 $Y$ is an $S$-valued, where $S$ is a countable set, jump-type semi-Markov process that will be introduced here along with some underlying structures and relevant assumptions. Consider an $S\times \R_{+}$ valued time-homogeneous Markov chain $(J,T):=(J_{n},T_{n})_{n\in\mathbb{N}}$ following the transition kernel
\beqn
Q_{ij}(t):= P\big[J_{n+1}=j, T_{n+1}\le t| J_{n}=i, T_{n}=s\big]\s\forall\,\,\, n\in\mathbb{N}, \s\forall\,\,\, s,t\ge 0,\label{Q}
\eeqn
with initial distribution $P[J_{0}=i,T_{0}=0]=p_{i}\,\,\,\forall\,\, i\in S$, with $\sum_{i\in S}p_{i}=1,$ and further structures of $\big(Q_{ij}(\cdot):i,j\in S\big)$ are following. As in definition of $Q$ in \eqref{Q} suggests that  $ Q_{ij}(t)$ does not depend on $T_{n}=s$ in the conditioning part so one can write $$Q_{ij}(t)=P\big[J_{n+1}=j, T_{n+1}\le t| J_{n}=i\big]$$ without any problem. Let $\sum_{i=0}^{n}T_{i}$ be denoted by $S_{n}$ for $n\in\mathbb{N}.$

We define the the $S$-valued regime switching dynamics $Y:=(Y_{t}:t\ge 0)$ as \beqn
Y_{t}:= J_{N_{t}}\s\text{where}\s N_{t}:=\esssup\Big\{n:\,\,S_{n}\le t\Big\}.\label{Ydef}
\eeqn
Clearly \eqref{Ydef} suggests that $T_{n}$ denotes the sojourn time of $Y$ at state $J_{n-1}$ for $n\ge 1,$ and $T_{0}:=0.$ Let $J:=(J_{n}:n\in\mathbb{N})$ marginally be a discrete-time $S$-valued Markov chain (independent of everything else) with transition kernel $P:=\big(P_{ij}: i,j\in S\big)$ such that 
$$P_{ij}:=P\big[J_{n+1}=j| J_{n}=i\big]\s \text{and}\s P_{ii}:=0,\,\,\forall i\in S,$$
representing the marginal dynamics of the embedded chain $J.$ Note that $Y$ can be seen as the marginal of the Markov renewal process $(S_{n},J_{n})$ which is alternatively known as the semi-Markov process in the literature \cite{cinlar1969markov} (which is not necessarily Markovian).

 Let 
\beqn
\mathcal{G}:=\big\{F_{ij}\in\mathcal{P}(\R_{+}): i,j\in S,\,\, i\neq j,\,\,\text{and}\,\, P_{ij}>0,\,\,F_{ij}(0)=0, F_{ij}(\infty)=1 \big\}\label{G}
\eeqn
 be the set of probability distributions (having no atoms at $0$) of sojourn times of $Y$ where particularly $F_{ij}$ denotes the law of the sojourn time of $Y$ at state $i$ when the future state will be $j\in S,$ or more precisely 
$$F_{ij}(t):=P\big[T_{n+1}\le t| J_{n}=i,J_{n+1}=j\big].$$
Clearly with above definitions one can decompose $Q_{ij}(t)$ in following fashion
\beqn
Q_{ij}(t)&=&P\big[J_{n+1}=j| J_{n}=i\big] P\big[T_{n+1}\le t| J_{n+1}=j, J_{n}=i\big]\non\\
&=& P_{ij}F_{ij}(t).\label{multiplic}
\eeqn
Given the transition kernel $P,$ the set $\mathcal{G}$ and the initial distribution $\{p_{i}:i\in S\}$ the dynamics of $(J,T)$ is completely specified and well defined as well as the process $Y$ defined in \eqref{Ydef}. Observe that semi-Markov process $Y$ exhibits much more dependence structure than continuous time Markov chain which is obtained as a special case $F_{ij}(t):= 1-e^{-\lambda_{i}t}$ for some $\lambda_{i}>0$ for all $i,j\in S.$ The semi-Markov formulation gives much more flexibilities and control on the sojourn time distributions that may depend on the next state with more general than just Exponential distribution. This opens up a huge number of modelling opportunities for underlying regime switching mechanisms.  

The semi-Markov process $Y$ will be called conservative at state $j\in S,$ if one has $$P[N_{t}<\infty\mid Y_{0}=j]=1,\s\forall t\in[0,\infty),$$
which is a regularity condition (Page 150 of \cite{cinlar1969markov}) implying that $Y$ is non-explosive or that the number of transitions in a finite time is finite with probability $1$ starting from a specific state $j\in S.$ 

\bas\label{As0} Following assumptions are about the dynamics of the process \((J, T)\):

\begin{enumerate}[(a)]
    \item The embedded chain \(J\) is a time-homogeneous, irreducible, aperiodic, and positive recurrent Markov chain, taking values in \(S\) and evolving in discrete time. It is independent of all other variables. The transition matrix \(P\) satisfies the properties \(P_{ii} = 0\), \(P_{ij} \in [0,1]\), and \(\sum_{j \in S} P_{ij} = 1\) for all \(i, j \in S\). Additionally, the chain admits a unique stationary distribution \(\mu \in \mathcal{P}(S)\), which satisfies the measure-valued equation \(\mu = \mu P\).

    \item The set \(\mathcal{G}\) is defined such that, for all \(i,j \in S\), the distribution functions \(F_{ij}\) satisfy \(F_{ij}(0) = 0\) and contain no atoms in \(\mathbb{R}_{\geq 0}\). In other words, each \(F_{ij}\) is absolutely continuous with respect to the Lebesgue measure on \(\mathbb{R}_{\geq 0}\). For each pair \(i,j \in S\), define the mean of the random variables distributed according to \(F_{ij}(\cdot)\) as
    \[
    m_{ij} := \int_{0}^{\infty} \big(1 - F_{ij}(t)\big) dt.
    \]
    We assume that \(\sup_{i,j \in S} m_{ij} < \infty\), ensuring that the expected sojourn times are uniformly bounded.

    \item The conditional probability 
    \[
    P\big[J_{n+1} = j, T_{n+1} \leq t \mid J_n = i, T_n = s\big]
    \]
    does not depend on \(s\). As a result, \(s\) is omitted in the notation for the distribution function \(Q_{ij}(t)\) used in the definition \eqref{Q}.

    \item The semi-Markov process \(Y\) is conservative for all \(j \in S\), meaning that it does not explode in finite time.
\end{enumerate}

\eas

For any $i,j\in S,$ let $W_{ij}$ be defined as the $F_{ij}$ distributed random variable denoting the sojourn time of $Y$ at state $i$ conditioned on the future state $j$. Then we define the distribution of sojourn time at state $i$ unconditional on the future value as
$$P[T_{n+1}<t| J_{n}=i]= \sum_{j\in S}P_{ij}F_{ij}(t)=:F_{i}(t)$$ denoted by $F_{i}(\cdot)$ and denote the corresponding random variable by $W_{i}.$ For each $i\in S,$ by $m_{i}$ we denote the mean of $W_{i}$ as
$$m_{i}:=EW_{i}=\int_{0}^{\infty}(1-F_{i}(t))dt=\int_{0}^{\infty}(1-\sum_{j\in S}P_{ij}F_{ij}(t))dt=\sum_{j\in S}P_{ij}m_{ij}.$$ Assumption \ref{As0}(a) suggests that for each $j\in S,$ $\mu_{j}>0$ and positive recurrence for $J$ implies that there exist hitting times at state $j$ defined as
$$N_{0}^{j}:=\inf\{n\ge 0: J_{n}=j\},\s N_{k}^{j}:= \inf\{n>N_{k-1}^{j}: J_{n}=j\}\s $$ for $\,\, k\ge 1.$ Denote the set $\{N_{k-1}^{j},N_{k-1}^{j}+1, \ldots, N_{k}^{j}-2,N_{k}^{j}-1\}$ (which is a successive array of integers)  by $A_{k}^{j}$  (the $k$-th recursion from state $j$ to itself by the process $J$) for each $k\ge 1.$ Positive recurrence of $J$ suggests that $E|A_{k}^{j}|=E[N_{k}^{j}-N_{k-1}^{j}]<\infty.$  By $(i,j)$ we denote a successive transition from state $i$ to $j$ (or $i\to j$) for the $J$ chain if $P_{ij}>0.$ Set of all possible transitions can be described as $\{(i,j)\in S\times S: P_{ij}>0\}$. 

The recursion path $A_{k}^{j}$ can be alternatively represented as the set $A_{k}^{j}=\{j,i_{1},i_{2},\ldots,i_{l_{1}}\}$ where $l_{1}=[N_{k}^{j}-N_{k-1}^{j}]-1,$ representing the sequence of transitions $j\to i_{1}\to i_{2}\to\ldots\to i_{l_{1}}\to j.$ By $\widetilde{A}_{k}^{j}$ we denote the set of tuples denoting the successive state transitions of  $A_{k}^{j}$ expressed as
\beqn
\s\s\widetilde{A}_{k}^{j}=\{(j,i_{1}),(i_{1},i_{2}),(i_{2},i_{3}),\ldots,(i_{|A_{k}^{j}|-1},j): i_{l}\in S,\&\,\, i_{l}\neq j\,\,\,\forall l=1,\ldots,|A_{k}^{j}|-1\}.\label{Atilde}
\eeqn
Observe that $\widetilde{A}_{k}^{j}$ has a one to one correspondence with $A_{k}^{j}$ as both represent the 
 $k-$th recursion cycle from state $j$ to itself.

Note that $P[T_{0}=0]=1$ and for any arbitrary $u_{1},\ldots,u_{n}>0$ and any arbitrary $i_{1},\ldots,i_{n+1}\in S$ one has the following identity for any $n\in \mathbb{N}$
\begin{align}
&P\big[T_{1}\le u_{1},\ldots,T_{n}\le u_{n} |J_{0}=i_{1},J_{1}=i_{2},\ldots, J_{n}=i_{n+1}\big]\non\\ &\s\s\stackrel{(a)}{=}\frac{\prod_{k=1}^{n}Q_{_{i_{k}i_{k+1}}}(u_{k})}{\prod_{k=1}^{n} P_{i_{k}i_{k+1}}}\s\non\\
&\s\s\stackrel{(b)}{=}\prod_{k=1}^{n}F_{_{i_{k}i_{k+1}}}(u_{k})\label{Prod}
\end{align}
where $(a)$ and $(b)$ in above equalities hold due to \eqref{Q} and \eqref{multiplic} (as a result of Assumption \ref{As0} (c)) respectively. Display \eqref{Prod} suggests that conditioned on path $\{J_{0}=i_{1},J_{1}=i_{2},\ldots,J_{n}=i_{n+1}\}$ the sojourn times $\{T_{1},\ldots T_{n}\}$ in corresponding states are independent, which also prompts us to consider the regenerating renewal intervals for $Y$ as well. We define the hitting times of $Y$ at state $j\in S,$ denoted by $\{\tau_{k}^{j}:k\ge 0\}$ as
$$\tau_{0}^{j}:=\sum_{i=0}^{N_{0}^{j}}T_{i},\s \tau_{k}^{j}:=\sum_{i=0}^{N_{k}^{j}}T_{i}\s\forall k\ge 1,$$
and denote the last hitting time to state $j$ before $t$ by 
\beqn
g_{t}^{j}:=\max(\sup\{n: \tau_{n}^{j}\le t\},0),\s \sup\emptyset=-\infty.\label{g_{t}}
\eeqn
  It is clear that any functionals of Y in $\{[\tau_{i}^{j},\tau_{i+1}^{j}):i\ge 0\}$ behave identically and independently as $\Big\{\sum_{i=N_{k-1}^{j}+1}^{N_{k}^{j}}T_{i}:k\ge 1\Big\}$ are iid due to the fact that $\mathcal{G}$ is a fixed set and $\{A_{k}^{j}\}_{k\ge 1}$ are regenerating sets for $J$ at state $j\in S.$

For $Y$ denote the $k$-th recursion interval at state $j\in S$ by $\mathfrak{I}_{k}^{j}:=[\tau_{k-1}^{j},\tau_{k}^{j}).$  From Assumption \ref{As0}(b) it follows that
\beqn
E|\mathfrak{I}_{k}^{j}|&=&E\sum_{n=1}^{\infty}T_{n} 1_{\{(J_{n-1},J_{n})\in \widetilde{A}_{k}^{j}\}}\non\\
&=&E \Big[\sum_{n=1}^{\infty}E\big[T_{n}| J_{n}=i_{n},J_{n-1}=i_{n-1}\big]1_{\{(J_{n-1},J_{n})\in \widetilde{A}_{k}^{j}\}}\Big]\non\\
&\le& \bigg(\sup_{i,j\in S}m_{ij}\bigg) E|\widetilde{A}_{k}^{j}|<\infty,\s\forall j\in S,\s \forall k\in \mathbb{N},\non
\eeqn
as a consequence of $E\big[T_{n}| J_{n}=i_{n},J_{n-1}=i_{n-1}\big]=m_{i_{n-1},i_{n}}\le \sup_{i,j\in S}m_{ij}$, uniformly for all $(i_{n-1},i_{n})\in S^{2}.$

For each tuple $(i_{k}, i_{k+1})$, the sojourn time of the semi-Markov process $Y$ during the transition from state $i_{k}$ to state $i_{k+1}$ is denoted by $\widetilde{F}_{i_{k}i_{k+1}}$ and follows the distribution $F_{i_{k}i_{k+1}}$. It is possible for two tuples, say $(i_{k}, i_{k+1})$ and $(i_{l}, i_{l+1})$, representing different transitions in $\widetilde{A}_{k}^{j}$, to be identical; that is, for $k \neq l$, we may have $i_{k} = i_{l}$ and $i_{k+1} = i_{l+1}$. Even in this case, we treat the two tuples $(i_{k}, i_{k+1})$ and $(i_{l}, i_{l+1})$ as distinct elements of $\widetilde{A}_{k}^{j}$. Correspondingly, the sojourn times $\widetilde{F}_{i_{k}i_{k+1}}$ and $\widetilde{F}_{i_{l}i_{l+1}}$ are considered independent random variables, each distributed according to $F_{i_{k}i_{k+1}}$.

 For any event $A$ by $P_{_{i}}[A]$ we denote $P[A\mid Y_{0}=i].$

\begin{remark}\label{R0}
Assumption \ref{As0}(c) is essential for applying the regenerating-renewal argument to the process $Y$. As a result, any functionals of $Y$ within the intervals $\{[\tau_{i}^{j},\tau_{i+1}^{j}):i\ge 0\}$ behave identically and independently for each $j\in S.$ If Assumption \ref{As0}(c) does not hold, the process $Y$ would fail to regenerate independently after reaching state 
$j$, as the sojourn time at state $j$ would be influenced by the time spent in the previous state before transitioning to $j$. This would break the independence required for the regeneration of $Y$ in the intervals $\{[\tau_{i}^{j},\tau_{i+1}^{j}):i\ge 0\}$.
\end{remark}

Theorem 7.14 of \cite{cinlar1969markov} (Page 160) suggests that if $Y$ is an irreducible, recurrent (The term `Persistent' is used in \cite{cinlar1969markov} for recurrent), aperiodic semi-Markov process along with $m_{i}:=\sum_{j\in S}P_{ij}m_{ij}<\infty,$ then
\beqn
\lim_{t\to\infty}P_{i}[Y_{t}=j]=\frac{\mu_{j}m_{j}}{\sum_{k\in S}\mu_{k}m_{k}},\s\forall i\in S\s\label{lim1}
\eeqn
which follows from conditions of Assumption \ref{As0}. By $\pi$ denote the probability vector $(\pi_{i}:i\in S)$ such that 
\beqn
\pi_{j}:= \frac{\mu_{j}m_{j}}{\sum_{k\in S}\mu_{k}m_{k}}\,\,\forall j\in S.\label{pi}
\eeqn
 From Assumption \ref{As0} it follows that $\pi_{j}>0, \forall j\in S.$ We loosely describe a process to be ergodic when it converges to a limiting distribution that does not depend on the initial distribution and hence a semi-Markov process $Y$ having all properties above (irreducible, recurrent, aperiodic, $m_{k}<\infty,\,\,\forall k\in S$) is ergodic.

Regenerative property of $Y$ suggests that any specific functional of $Y$ in $\{[\tau_{i-1}^{j},\tau_{i}^{j}): i\ge 1\}$ behave identically independent random variables. Denote the sigma algebra $\sigma\{Y_{t}: t\in [\tau_{i-1}^{j},\tau_{i}^{j})\}$ by $\mathcal{H}_{i}$ for each $i\ge 1.$ In this setting we have the following result. For any time $t>0,$ denote $t-\tau^{j}_{g_{t}^{j}}, \tau_{g_{t}^{j}+1}^{j}-t$ by $A_{j}(t),B_{j}(t)$ as respectively backward residual time and forward residual times at state $j\in S$. Clearly $A_{j}(t)+B_{j}(t)= \tau_{g_{t}^{j}+1}^{j}-\tau^{j}_{g_{t}^{j}},$  length of the regenerating interval containing $t.$ Results from \cite{cinlar1969markov},\cite{asmussen2008applied}  suggest if $E[\tau^{j}_{2}-\tau_{1}^{j}]<\infty,$ (which follows from $E|\mathfrak{I}_{1}^{j}|<\infty,$ ensured under Assumption \ref{As0}) then both $A_{j}(t),B_{j}(t)$ are $O_{_{P}}(1)$ as both quantities
\beqn
P[A_{j}(t)>x]\to \frac{\int_{x}^{\infty}P\big[|\mathfrak{I}_{k}^{j}|>y\big]dy}{E|\mathfrak{I}_{k}^{j}|},\s P[B_{j}(t)>x]\to\frac{\int_{x}^{\infty}P\big[|\mathfrak{I}_{k}^{j}|>y\big]dy}{E|\mathfrak{I}_{k}^{j}|},\label{eRes}
\eeqn
as $t\to\infty$. 

The following proposition is a consequence of $Y$ exhibiting renewal regenerative property.

\begin{Proposition}\label{Pr1}
Under Assumption \ref{As0}, for any $t>\tau_{0}^{j},$
\begin{enumerate}[(i)]
\item the distribution of $\tau_{g^{j}_{t}+2}^{j}-\tau_{g^{j}_{t}+1}^{j}$ is identical as $\tau^{j}_{1}-\tau_{0}^{j}.$
\item any functional of $Y$ in $\{s: s\ge \tau^{j}_{g_{t}^{j}+1}\}$ is independent of $g_{t}^{j}$ and identically distrubuted as the same functional of $Y$ on $\{s:s\ge \tau_{0}^{j}\}.$ 
\item Let $Y^{(1)}:=\big(Y^{(1)}_{t}:t\ge 0\big), Y^{(2)}:=\big(Y^{(2)}_t:t\ge 0\big)$ be two arbitrary stochastic processes. Conditioned on any event $K_{t},$ suppose marginals of $Y^{(1)}_{t},Y^{(2)}_{t}$ are respectively $\sigma\{\mathcal{H}_{i}:i\le g^{j}_{t}\}$ and $\sigma\{\mathcal{H}_{i}: i\ge g^{j}_{t}+2\}$ measurable for any $j\in S$. Then following holds conditioned on $\{Y_{t}=j'\}$ 
$$P[Y^{(1)}_{t}\in A ,Y^{(2)}_{t} \in B\mid K_{t},Y_{t}=j']=P[Y^{(1)}_{t}\in A \mid K_{t},Y_{t}=j'] P[Y^{(2)}_{t} \in B\mid  K_{t}],$$
for arbitrary events $A,B$ for any $j'\in S.$
\end{enumerate}
\end{Proposition}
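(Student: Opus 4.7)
The plan is to leverage the renewal regenerative structure of $Y$ at its hitting times $\{\tau_k^j\}_{k \ge 0}$ to state $j$. Under Assumption \ref{As0} combined with \eqref{Prod} and the positive recurrence of the embedded chain $J$, the excursion intervals $\{\mathfrak{I}_k^j\}_{k \ge 1}$ are i.i.d., and the post-$\tau_{k+1}^j$ trajectory of $Y$ is independent of $\sigma\{\mathcal{H}_i : 1 \le i \le k+1\}$ with the same law as $(Y_{\tau_0^j + s})_{s \ge 0}$ started fresh from $j$. All three parts reduce to careful applications of this single regeneration identity.

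For part (i), I would condition on $\{g_t^j = k\}$. On this event, $\tau_{k+1}^j$ is the first hit of $Y$ to $j$ strictly after $t$, and by the regeneration identity applied at index $k+1$, the next excursion length $\tau_{k+2}^j - \tau_{k+1}^j$ has the same law as $\tau_1^j - \tau_0^j$, independently of $k$. Removing the conditioning via the tower property preserves the distributional identity. Part (ii) is parallel: the same identity gives that, conditioned on $g_t^j = k$, any functional of $(Y_s : s \ge \tau_{g_t^j+1}^j)$ is distributed as that functional applied to $(Y_s : s \ge \tau_0^j)$ independently of $k$, so the conditional law does not depend on $g_t^j$, yielding both the independence from $g_t^j$ and the distributional equality.

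Part (iii) is the main technical point. Conditioning on $\{g_t^j = k\}$, the measurability hypothesis places $Y^{(1)}_t$ in $\sigma\{\mathcal{H}_1, \ldots, \mathcal{H}_k\}$, $Y^{(2)}_t$ in $\sigma\{\mathcal{H}_{k+2}, \mathcal{H}_{k+3}, \ldots\}$, and $\{Y_t = j'\}$ in $\mathcal{H}_{k+1}$, so these three pieces live on disjoint blocks separated by the regeneration epoch $\tau_{k+1}^j$. Under the natural reading that $K_t$ is measurable with respect to $\sigma\{\mathcal{H}_i : i \le g_t^j + 1\}$ (i.e.\ it encodes only pre-regeneration information), the regeneration identity forces $Y^{(2)}_t$ to be independent of $(Y^{(1)}_t, Y_t, K_t)$ on $\{g_t^j = k\}$, and the conditional law of $Y^{(2)}_t$ given $g_t^j = k$ does not depend on $k$. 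Averaging over $g_t^j$ in the tower expansion then collapses $P[Y^{(2)}_t \in B \mid K_t, Y_t = j']$ to $P[Y^{(2)}_t \in B \mid K_t]$ and factors the joint probability as stated.

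The main obstacle is fixing the correct compatibility condition for $K_t$ in part (iii); once $K_t$ is required to be $\sigma\{\mathcal{H}_i : i \le g_t^j + 1\}$-measurable, all three claims follow from a single application of the renewal regeneration identity together with the tower property, and the remaining work is bookkeeping of indices around the regeneration epoch $\tau_{g_t^j+1}^j$.
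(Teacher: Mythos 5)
Your proposal is correct and follows essentially the same route as the paper: decompose over $\{g_t^j=n\}=\{\tau_n^j\le t<\tau_{n+1}^j\}$, use the independence of the regeneration blocks $\mathcal{H}_i$ to factor the pre- and post-$\tau_{g_t^j+1}^j$ quantities, and reassemble with the tower property. Your explicit requirement that $K_t$ carry only pre-regeneration information (measurability with respect to $\sigma\{\mathcal{H}_i:i\le g_t^j+1\}$) is a sensible sharpening of the statement's ``any event $K_t$''; the paper's step removing $\{g_t^j=n,Y_t=j'\}$ from the conditioning uses exactly this compatibility implicitly, so you have identified rather than altered the argument.
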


Consider a probability measure $\pi$ on a countable set $S$ such that $\pi(A)=\sum_{i\in A}\pi_{i}$ for any $A\in \mathcal{B}(S)$ and a set of probability measures $\{\mu_j:j\in S\}$. Then  
$$
\sum_{j\in S} \delta_{U}(\{j\})Z_{j}, \quad\text{where}\quad U \perp (Z_j)_{j\in S}, \quad U\sim \pi, \quad Z_j\sim \mu_j,
$$
is a random variable whose distribution is the mixture distribution $\sum_{j\in S} \pi_{j}\mu_{j}$. 

Let $\{R_{n}\}_{n\ge 1}$ be an arbitrary sequence of random variables and $N^{*}_{t}$ is a $\mathbb{N}$ valued stopped random time. Suppose there exists a random variable $R_{\infty},$ such that $R_{n}\stackrel{d}{\to}R_{\infty}$  as $n\to\infty$. Let $N^{*}_{t}$ be any ``stopped random time" (defined in the common probability space where $\{R_{n}\}_{n\ge 1}$ is defined) such that there exists an increasing function $c(\cdot)$ such that $c(t)\to\infty$ and $\frac{N^{*}_{t}}{c(t)}\stackrel{P}{\to}1$ as $t\to\infty.$ In this context \emph{Anscombe’s contiguity condition} (Gut \cite{gut2009stopped}, p.~16) is useful for establishing weak convergence of the process $(R_{N^{*}_{t}}:t\ge 0)$ as $t\to\infty$  which is following:

Given $\epsilon>0$ and $\eta>0$ there exists $\delta>0$ and $n_{0},$ such that 
\beqn
P\bigg(\max_{\{m: |m-n|<n\delta\}}|R_{m}-R_{n}|>\epsilon\bigg)<\eta,\s\forall\s n> n_{0}.\label{Anscombe}
\eeqn 
If \eqref{Anscombe} is satisfied by $\{R_{n}\}_{n\ge 1}$, then it is sufficient to conclude $$R_{N^{*}_{t}}\stackrel{d}{\to}R_{\infty}\s\text{as}\s t\to\infty.$$
 The notion of ``stopped random time" is not same as the ``stopping time" in the literature.

For a given bivariate random variable $(A,B)$, the following time series is referred to as a stochastic recurrence equation (in short SRE, also referred to as random coefficient AR$(1)$) 
\begin{align*}
Z_{n+1}=A_{n+1}Z_{n}+B_{n+1}\quad\text{with}\quad (A_{i},B_{i})\stackrel{\text{i.i.d}}{\sim} \mathcal{L}(A,B),\quad Z_{n}\ci (A_{n+1},B_{n+1}) 
\end{align*} 
for an arbitrary initial $Z_{0}=z_{0}\in \R$. Let $\log^{+}|a|:=\log(\max(|a|,1))$.  
If 
\beqn
P[A=0]=0, \quad E\log|A| <0,\s\text{and}\s E\log^{+}|B|<\infty, \label{sre_conv_conds}
\eeqn
then $(Z_{n})$ has a unique causal ergodic strictly stationary solution solving the following fixed-point equation in law: 
\beqn
Z\mathop{=}^{d}AZ+B\s\text{ with} \s Z\ci (A,B).\label{x=ax+b}
\eeqn
The condition $P[Ax+B=x]<1,$ for all $x\in \mathbb{R}$, rules out degenerate solutions $Z=x$ a.s. We refer to Corollary 2.1.2 and Theorem 2.1.3 in Buraczewski et al. \cite{buraczewski2016stochastic} for further details. For multivariate $d$-dimensional case the existence and uniqueness \cite{erhardsson2014conditions} of the distributional solution \eqref{x=ax+b} holds if
$$\prod_{i=1}^{n}A_{i}\,\,\stackrel{a.s}{\to}\,\, 0,\s\&\s \bigg(\prod_{i=1}^{n}A_{i}\bigg) B_{n+1}\,\,\stackrel{a.s}{\to}\,\, 0.$$

For any two functions $f,g$ the notation $f\sim g,$  implies that $\lim_{x\to\infty}\frac{f(x)}{g(x)}\to 1.$ Furthermore the notation $f\sim o(g),$ will imply $\lim_{x\to\infty}\frac{f(x)}{g(x)}\to 0.$ A random variable $X$ will be called regularly varying at $\infty$ with index $\alpha>0,$ if $P[X>x]\sim x^{-\alpha}L(x)$ holds where $L(\cdot)$ is any slowly varying function at $\infty$, i.e $\lim_{t\to\infty}\frac{L(xt)}{L(t)}=1,\forall x\in\R_{+}.$ If $X$ is a random variable regularly varying at $\infty$ with index $\alpha>0,$ then we have $E|X|^{\beta}<\infty$ for all $\beta<\alpha,$ and $E|X|^{\beta}=\infty$ for all $\beta\ge \alpha,$ for which we alternatively describe $X$ as heavy tailed with index $\alpha\ge 0$" (Proposition 1.4.6 of \cite{kulik2020heavy}).

 Let $\widetilde{X}$ be a non-negative regularly varying random variable with index $\alpha>0.$ For each $\epsilon>0$ there exists a finite constant $c_{\epsilon}$ such that for any $y\ge 0$ one has the following upper bound 
\beqn
\frac{P[y \widetilde{X}>x]}{P[\widetilde{X}>x]}\le c_{\epsilon}(1\vee y)^{\alpha}\label{bound}
\eeqn
holds that follows directly from Potter's bound (see Proposition 1.4.2 of \cite{kulik2020heavy}).

Define $\mathcal{S}_{\alpha}(\sigma,\beta,\mu)$ as the Stable random variable with $(\alpha,\sigma,\beta,\mu)\in (0,2]\times \R^{+}\times[-1,1]\times \R$ respectively denoting the index, scale, skewness and the shift parameter whose characteristic function is 
\beqn
\log Ee^{i\theta \mathcal{S}_{\alpha}(\sigma,\beta,\mu)}&=& -\sigma^{\alpha}|\theta|^{\alpha}\Big(1-i\beta(\sign(\theta))\tan\frac{\pi \alpha}{2}\Big)+i\mu\theta,\s\alpha\neq 1,\non\\
&=&-\sigma^{}|\theta|^{}\Big(1+i\beta(\sign(\theta))\frac{2}{\pi}\log|\theta|\Big)+i\mu \theta,\s\,\,\,\,\alpha=1\non
\eeqn
where $\sign(\theta)=1_{\{\theta>0\}}-1_{\{\theta<0\}}.$ In our work we are only concerned for $\alpha\in (1,2).$ Under above definition given $C_{\alpha}:=\bigg(\int_{0}^{\infty}x^{-\alpha}\sin xdx\bigg)^{-1},$ significance of $\beta\in(-1,1)$ can be represented through the following limits
\beqn
x^{\alpha}P\big[\mathcal{S}_{\alpha}(\sigma,\beta,\mu)>x\big]\to C_{\alpha}\frac{1+\beta}{2}\sigma^{\alpha},\s x^{\alpha}P\big[\mathcal{S}_{\alpha}(\sigma,\beta,\mu)<-x\big]\to C_{\alpha}\frac{1-\beta}{2}\sigma^{\alpha}\non
\eeqn
as $x\to\infty.$ If $\alpha\in (1,2),$ it follows that $\frac{1}{\sigma}\mathcal{S}_{\alpha}(\sigma,\beta,\mu)\stackrel{d}{=}\mathcal{S}_{\alpha}(1,\beta,\frac{\mu}{\sigma}).$ We recall Theorem 4.5.1 of \cite{whitt2002stochastic} for a version of stable central limit theorem for a sequence of iid random variables $\{X_{i}:i\ge 1\}$ with each of them is regularly varying RV$^{-\alpha},$ i.e there exists a slowly varying function $L(\cdot)$ (at $\infty$) such that $$P[|X|>x]\sim x^{-\alpha}L(x)\s \text{with} \s\lim_{x\to\infty}\frac{P[X>x]}{P[|X|>x]}\to \frac{1+\beta}{2}$$ for some $-1<\beta<1.$ Given this set up, define a scaling function $c_{n}$ such that 
\beqn
\lim_{n\to\infty}\frac{nL(c_{n})}{c_{n}^{\alpha}}=C_{\alpha},\s \text{which implies that}\s c_{n}= n^{\frac{1}{\alpha}}L_{0}(n)\,\,\,\text{(Thm 4.5.1 \cite{whitt2002stochastic})},\label{cn}
\eeqn  for some slowly varying function $L_{0}(\cdot)$ different from $L(\cdot)$ but it  depends on $L$ through \eqref{cn}. 
By $\mathcal{\bf{S}}_{\alpha,\beta}$ denote the $\alpha$-stable Levy process $\mathcal{\bf{S}}_{\alpha,\beta}:=\big(\mathcal{\bf{S}}_{\alpha,\beta}(t):t\ge 0\big)$  such that the increments follow Stable law (see display (5.30) of \cite{whitt2002stochastic}) as
$$\mathcal{\bf{S}}_{\alpha,\beta}(t+s)-\mathcal{\bf{S}}_{\alpha,\beta}(s)\stackrel{d}{=}\mathcal{S}_{\alpha}(t^{\frac{1}{\alpha}},\beta,0)\stackrel{d}{=}t^{\frac{1}{\alpha}}\mathcal{S}_{\alpha}(1,\beta,0)\s\forall s,t\ge 0.$$

\section{Main results on stable domain}\label{secStable}

In this section we study long-time behavior of the joint process $(\Phi^{(a)},I^{(a,b)},Y):=(\Phi^{(a)}_{t},I^{(a,b)}_{t},Y_{t})_{t\ge 0}$ ensuring convergence in distribution as $t\to\infty$. Following assumption will ensure the existence of a limiting distribution for $(\Phi^{(a)},I^{(a,b)},Y)$:  

\bas\label{as2} The $S$-valued process $Y$ and the functions $a,b:S\to\R$ satisfy 
\begin{itemize}
\item[(a)] $a(\cdot)$ is integrable with respect to $\pi,$ and $E_{\pi} a(\cdot)>0.$
\item[(b)] For every $j\in S$, $E\Big[\log^{+}\big|\int_{\tau^{j}_{0}}^{\tau^{j}_{1}}b^{}(Y_{s})e^{-\int_{s}^{\tau^{j}_{1}}a(Y_{r})dr}ds\big|\Big]<\infty.$
\end{itemize}
\eas

For each $j\in S,$ let $\pi^{*}_{j}$ be the distribution on $\R^{+}$ such that for any $x>0,$ 
\beqn
\pi^{*}_{j}[x,\infty):=\frac{\sum_{k\in S}P_{jk}\int_{x}^{\infty}(1-F_{jk}(y))dy}{m_{j}}.\label{pi_j^*}
\eeqn 

\begin{remark}\label{rem1} 
\begin{enumerate}[(i)]
\item Assumption \ref{as2} corresponds to the general condition \eqref{sre_conv_conds} for existence of a stationary solution to the stochastic recurrence equation
$$
Z_{j,n+1}=e^{-\int_{\tau^{j}_{n}}^{\tau^{j}_{n+1}}a(Y_{s})ds}Z_{j,n}+\int_{\tau^{j}_{n}}^{\tau^{j}_{n+1}}b^{}(Y_{s})e^{-\int_{s}^{\tau^{j}_{n+1}}a(Y_{r})dr}ds
$$ 
with affine solution of the form 
$$
Z_{j}\stackrel{d}{=}e^{-\int_{\tau^{j}_{0}}^{\tau^{j}_{1}}a(Y_{s})ds}Z_{j}+\int_{\tau^{j}_{0}}^{\tau^{j}_{1}}b^{}(Y_{s})e^{-\int_{s}^{\tau^{j}_{1}}a(Y_{r})dr}ds.
$$ 
Assumption \ref{as2}(b) defines integrability criteria for $b(\cdot).$ Observe that if $\sup_{j\in S} |a(j)|<\infty$ and $E_{\pi}|b^{}(\cdot)|<\infty$, then Assumption \ref{as2}(b) follows immediately from  Assumption \ref{As0} as a consequence of the inequalities $\log^{+}|ab|\le \log^{+}|a|+\log^{+}|b|$ and $\log^{+}|a|\le |a|$ for any $a,b$.
\item $\pi_{j}^{*}$ in \eqref{pi_j^*} is a size-biased distribution of averaged sojourn time at state $j\in S$. Note that if $F_{jk}(x)=1-e^{-\lambda_{j}x}\,\,\forall k,$ (corresponds to the case of CTMC) then $\pi^{*}_{j}[x,\infty)=e^{-\lambda_{j}x}$ for $x>0,$ as a consequence of the fact that Exponential distribution is the unique distributional fixed point of the size-biased transformation. If we consider a variation, such that $F_{jk}(x):=1-e^{-\lambda_{jk}x},$ for some $\lambda_{jk}\in(0,\infty)\,\,\forall j,k\in S,$ then $$\pi^{*}_{j}[x,\infty):=\frac{\sum_{k\in S}\frac{P_{jk}}{\lambda_{jk}}e^{-\lambda_{jk}x}}{\sum_{k\in S}\frac{P_{jk}}{\lambda_{jk}}}\s\forall x>0,$$
i.e, $\pi_{j}^{*}$ will be a mixture of Exponentials of aforementioned form.
\end{enumerate}
\end{remark}
An explicit expression for the limiting distribution of the joint process $(\Phi,I, Y)$ is given in following theorem.

\begin{Theorem}\label{T1}
Under Assumptions \ref{As0} and \ref{as2} the limiting distribution of the joint process $(\Phi^{(a)},I^{(a,b)},Y)$ can be expressed as :
\beqn
(\Phi^{(a)}_{t},I^{(a,b)}_{t},Y_{t}) \,\,\stackrel{d}{\to}\,\, \Big(0,\sum_{j\in S}\delta_U(\{j\})Z_j,U\Big)\quad \text{as } t\to\infty, \label{P1e01}
\eeqn
where $U\ci (Z_j)_{j\in S}$, $U\sim \pi$,   and 
\beqn
Z_j\stackrel{d}{=}
b(j)\int_0^{T^j}e^{-a(j)(T^{j}-s)}ds+e^{-a(j)T^{j}}V^{*}_j,\label{Z_{j}}\label{mixture}
\eeqn
where $T^{j}\sim \pi_{j}^{*}$ is independent of $V^{*}_j$, and 
$\mathcal{L}(V^{*}_j)$ is the unique solution to \eqref{x=ax+b} with $(A,B)$ having the distribution of 
\begin{eqnarray}
\Big(e^{-\int_{\tau^{j}_{0}}^{\tau_{1}^{j}}a(Y_{s})ds},
\int_{\tau^{j}_{0}}^{\tau_{1}^{j}}b^{}(Y_{s})e^{-\int_{s}^{\tau^{j}_{1}}a(Y_{r})dr}ds\Big).\label{AB}
\end{eqnarray}
\end{Theorem}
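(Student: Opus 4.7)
The plan is to leverage the regenerative structure of $Y$ at each state $j\in S$ by splitting the integral $I^{(a,b)}_t$ at the last hitting time $\tau^j_{g^j_t}$ of $j$ before $t$, and then identify the mixture by conditioning on the event $\{Y_t=j\}$, whose probability tends to $\pi_j$ by \eqref{lim1}. For the first coordinate, Assumption \ref{as2}(a) with the renewal reward / ergodic theorem for $Y$ gives $t^{-1}\int_0^t a(Y_r)dr\to E_\pi a(\cdot)>0$ almost surely, so $\Phi^{(a)}_t\to 0$ a.s.; since the limit is deterministic, Slutsky's theorem will couple this with the other two marginals. For the second coordinate, on $\{Y_t=j\}$ the path satisfies $Y_r\equiv j$ on $[\tau^j_{g^j_t},t]$, which yields the splitting
\begin{equation*}
I^{(a,b)}_t \;=\; e^{-a(j)A_j(t)}\, I^{(a,b)}_{\tau^j_{g^j_t}} \;+\; b(j)\int_0^{A_j(t)} e^{-a(j)u}\,du,
\end{equation*}
with $A_j(t)=t-\tau^j_{g^j_t}$ the backward residual time. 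The substitution $u=T^j-s$ shows that the summand $b(j)\int_0^{T^j} e^{-a(j)(T^j-s)}ds$ in \eqref{Z_{j}} equals $b(j)\int_0^{T^j}e^{-a(j)u}du$, so \eqref{Z_{j}} will follow once we establish $\bigl(I^{(a,b)}_{\tau^j_{g^j_t}},A_j(t)\bigr)\,\big|\,\{Y_t=j\}\stackrel{d}{\to}(V^*_j,T^j)$ with $V^*_j\perp T^j$.

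For the past term, set $Z_{j,n}:=I^{(a,b)}_{\tau^j_n}$. Decomposing $\int_0^{\tau^j_{n+1}}=\int_0^{\tau^j_n}+\int_{\tau^j_n}^{\tau^j_{n+1}}$ and factoring the exponential produces the affine recursion $Z_{j,n+1}=A_{n+1}Z_{j,n}+B_{n+1}$ with $(A_{n+1},B_{n+1})$ distributed as in \eqref{AB}. The regenerative property (Remark \ref{R0} and Proposition \ref{Pr1}(i)--(ii)) makes $\{(A_n,B_n)\}_{n\ge 1}$ i.i.d. A renewal reward computation over one $j$-cycle gives $E\log|A|=-E\int_{\tau^j_0}^{\tau^j_1}a(Y_s)ds=-E_\pi a(\cdot)\,E|\mathfrak{I}_1^j|<0$ under Assumption \ref{as2}(a), while Assumption \ref{as2}(b) is exactly $E\log^+|B|<\infty$. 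Hence Corollary 2.1.2 / Theorem 2.1.3 of \cite{buraczewski2016stochastic} apply, delivering the unique stationary law $\mathcal{L}(V^*_j)$ satisfying \eqref{x=ax+b} and yielding $Z_{j,n}\stackrel{d}{\to}V^*_j$. Since $g^j_t\to\infty$ a.s.\ with $g^j_t/t\to \mu_j/\sum_k \mu_k m_k>0$, and oscillations $|Z_{j,m}-Z_{j,n}|$ for $|m-n|<n\delta$ can be controlled through geometric contraction of products $\prod A_i$ once the SRE is near stationarity, the Anscombe contiguity condition \eqref{Anscombe} is verified and we conclude $I^{(a,b)}_{\tau^j_{g^j_t}}\stackrel{d}{\to}V^*_j$.

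The main obstacle is the conditional asymptotic independence of $I^{(a,b)}_{\tau^j_{g^j_t}}$ and $A_j(t)$ given $\{Y_t=j\}$. In the Markov-switching analogue of \cite{lindskog2018exact} the memoryless property of $W_j$ allows the residual time to factor cleanly, but here $F_{jk}$ depends on the next state, so a direct decoupling does not go through, and this is precisely why Lemma \ref{lem01} (the main tool announced in Section \ref{Lem2}) is introduced. My plan is to apply Lemma \ref{lem01} with $H_t := I^{(a,b)}_{\tau^j_{g^j_t}}$, which is $\sigma\{\mathcal{H}_i:i\le g^j_t\}$-measurable, while $A_j(t)$ is determined by the regeneration interval straddling $t$; Proposition \ref{Pr1}(iii) then yields the required asymptotic independence. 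The marginal limit $A_j(t)\,|\,\{Y_t=j\}\stackrel{d}{\to}T^j\sim\pi_j^*$ is the standard size-biasing of the current sojourn at $j$, obtainable from the renewal limit \eqref{eRes} after restricting to the event of being in a $j$-sojourn, since $\pi_j^*[x,\infty)=\int_x^\infty(1-F_j(y))dy/m_j$ is precisely the stationary age distribution for the sojourn renewal sequence $(W_j^{(n)})\sim F_j$. Combining these two limits with the continuous mapping theorem applied to the decomposition above gives $(\Phi^{(a)}_t,I^{(a,b)}_t)\,|\,\{Y_t=j\}\stackrel{d}{\to}(0,Z_j)$; since $P_i[Y_t=j]\to\pi_j$ and the limits are joint with the $Y_t$ marginal, the law of total probability yields the announced mixture $(0,\sum_j\delta_U(\{j\})Z_j,U)$ of \eqref{P1e01}.
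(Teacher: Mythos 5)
Your skeleton matches the paper's: split $I^{(a,b)}_t$ at $\tau^j_{g^j_t}$, identify $(A,B)$ as in \eqref{AB}, get $V^*_j$ from the SRE theory, obtain the residual-time law $\pi^*_j$ from the key renewal theorem, and decouple the two via Lemma \ref{lem01} before mixing over $\pi_j$. The identification of the recursion $Z_{j,n+1}=A_{n+1}Z_{j,n}+B_{n+1}$, the verification of \eqref{sre_conv_conds}, and the treatment of $\Phi^{(a)}_t$ are all correct.

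The gap is in how you pass from $Z_{j,n}\stackrel{d}{\to}V^*_j$ to the behaviour at the random time $g^j_t$. The quantity $I^{(a,b)}_{\tau^j_n}$ is the \emph{forward} iterate $\widetilde{P}^j_n=\sum_{k=1}^n\big(\prod_{i=k+1}^nL^j_i\big)Q^j_k$ (plus the $Q^j_0$ term), which converges only in distribution, not in probability: for $m=n+\lfloor n\delta\rfloor$ the factor $\prod_{i=n+1}^{m}A_i$ contracts to $0$ (since $E\log A<0$ and the window width $n\delta\to\infty$), so $Z_{j,m}$ \emph{decorrelates} from $Z_{j,n}$ rather than staying close to it, and $\max_{|m-n|<n\delta}|Z_{j,m}-Z_{j,n}|$ is of the order of the difference of two nearly independent copies of $V^*_j$. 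Hence the Anscombe condition \eqref{Anscombe} fails, and — more fatally — the hypothesis $H_t-H_{t-\epsilon(t)}\stackrel{P}{\to}0$ in \eqref{key} fails for your choice $H_t=I^{(a,b)}_{\tau^j_{g^j_t}}$, so Lemma \ref{lem01} cannot be applied to it. The missing idea is the exchangeability/time-reversal step \eqref{invar}--\eqref{ePP}: conditionally on $\{Y_0=i,Y_t=j\}$, $g^j_t$ depends on the cycles only through the sum of their lengths, so reversing the order of the cycles preserves the conditional law and lets you replace $\widetilde{P}^j_{g^j_t}$ by the reversed perpetuity $P^j_{g^j_t}=\sum_{k}\big(\prod_{i=1}^{k-1}L^j_i\big)Q^j_k$, whose tail is dominated by $\prod_{i=1}^nL^j_i\to0$ and which therefore converges almost surely. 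For that version both the random-index convergence and condition \eqref{key} hold (this is exactly what Lemmas \ref{T1lem1} and \ref{T1lem2} establish), and the remainder of your argument then goes through unchanged.
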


A remark on moments of $I$ is made (similar to Remark 3.3 in \cite{lindskog2018exact}) below.

\begin{remark}
Moments for the stationary distribution of $I$ in \eqref{P1e01} can be computed recursively using the representation for $(A_j,B_j)$ in \eqref{AB}. From $V^{*}_j\stackrel{d}{=}A_jV^{*}_j+B_j$ follows that, for $m\in\mathbb{N}$,  
$$
(V^{*}_j)^{m}(1-A_j^m)=\sum_{k=0}^{m-1}{m \choose k}A_j^kB_j^{m-k}(V_j^{*})^{k}.
$$
If there exists $n\in\mathbb{N}$ such that $EA_j^{m}<\infty$ and $E[A_j^kB_j^{m-k}]<\infty$ for $0\le k\le m\le n$, then $E|V^{*}_j|^{n}<\infty$ and independence between $V^{*}_j$ and $(A_j,B_j)$ gives following recursive relation of moments
$$
E[(V^{*}_j)^{m}]=\frac{1}{1-EA^{m}_{j}}\sum_{k=0}^{m-1}{m\choose k}E\big[A^{k}_{j}B^{m-k}_{j}\big]E[(V^{*}_{j})^{k}].
$$
From the above representations moments of the limit distribution 
$$
E\Big[\big(I^{(a,b)}_{\infty}\big)^m\Big]=\sum_{j\in S}\pi_jE\Big[\Big(b(j)\int_{0}^{T^{j}}e^{-a(j)(T^{j}-s)}ds+e^{-a(j)T^{j}}V^{*}_j\Big)^{m}\Big]
$$
can be computed using the independence of $T^{j}, V_{j}^{*}$. 
\end{remark}

\section{Divergence and critical domains}\label{sec_unstable}
This section is devoted to the exploration of the long-term behavior of the process $(\Phi^{(a)},I^{(a,b)})$ under conditions that are different from Assumption \ref{as2}. In particular, it will be assumed that the stability condition $E_{\pi}a(\cdot)> 0$ in Assumption \ref{as2}(a) does not hold. We will see that the conditions  $E_{\pi}a(\cdot)=0$ and $E_{\pi}a(\cdot)<0$ correspond to critical case and divergent  (or transient) case respectively which will be further subdivided into cases when $$\sigma_j^2:=\Var\Big(\int_{\tau_{0}^{j}}^{\tau_{1}^{j}}\big(a(Y_{s})-E_{\pi}a(\cdot)\big)ds\Big)$$ is $\in(0,\infty),$ or  $=\infty$, or $=0$ for each $j\in S$ as below
\begin{itemize}
\item \textbf{Case A:} $E_{\pi}a(\cdot)\le 0,$ and $\sigma_{j}^{2}\in (0,\infty)$ for all $j\in S,$
\item \textbf{Case B:} $E_{\pi}a(\cdot)\le 0,$ and $\int_{\tau_{0}^{j}}^{\tau_{1}^{j}}(a(Y_{s})-E_{\pi}a(\cdot))ds$ is regularly varying with index $\alpha\in(1,2)$, for all $j\in S,$
\item \textbf{Case C:} $E_{\pi}a(\cdot)\le 0,$ and $\sigma_{j}^{2}=0$ for all $j\in S.$
\end{itemize}
\subsection{\textbf{Case A:\s $E_{\pi}a(\cdot)\le 0,$ and $\sigma_{j}^{2}\in (0,\infty)$} }

Observe that $I^{(a,b)}_{t}=\Phi^{(a)}_{t}\widetilde{I}^{(a,b)}_{t}$ where $\widetilde{I}^{(a,b)}_{t}=\int_{0}^{t}b(Y_{s})e^{\int_{0}^{s}a(Y_{r})dr}ds.$ When $E_{\pi}a(\cdot)<0,$ then $\widetilde{I}^{(a,b)}_{t}$ behaves similar to $I^{(a,b)}_{t}$ as $t\to\infty$ distributional limit in Theorem \ref{T1}, implies that as $t\to\infty,$ $$\frac{\log |I^{(a,b)}_{t}|}{t}\stackrel{as}{\to}-E_{\pi}a(\cdot)$$ as a consequence of ergodic renewal theory.  To be precise we restate the assumptions under which a scaled fluctuation of $(\Phi^{(a)},I^{(a,b)})$ will obtain the following limit theorem. 
\bas\label{as3}
The $S$-valued process $Y$ and the functions $a,b:S\to\R$ satisfy 
\begin{enumerate}[(a)]
\item $a$ is integrable with respect to $\pi$, and $E_{\pi} a(\cdot)\le 0$.
\item For every $j\in S$, $\sigma_j^2:=\Var\Big(\int_{\tau_{0}^{j}}^{\tau_{1}^{j}}\big(a(Y_{s})-E_{\pi}a(\cdot)\big)ds\Big)\in (0,\infty)$. 
\item Assumption \ref{as2}(b) holds.
\end{enumerate}
\eas
Let $(B,M)$ be a Brownian motion and its supremum jointly defined in the same space, i.e 
\beqn
\big((B_{s},M_{s}): B\,\,\text{is a standard Brownian motion},\,\,M_{s}:=\sup_{0\le t \le s}B_{t},\,\, s\ge 0\big).\label{Brown}
\eeqn
 
\bt\label{T2} Suppose Assumptions \ref{As0} and \ref{as3} hold. 
Let $U\sim \pi,$ and $N\sim \normal(0,1)$  are all mutually independent.
\begin{enumerate}[(a)]
\item If  $E_{\pi}a(\cdot)<0,$ as $t\to\infty$
\beqn
\bigg(\frac{\log\Phi^{(a)}_{t} + tE_{\pi}a(\cdot)}{\sqrt{t}},\frac{\log|I^{(a,b)}_{t}| + tE_{\pi}a(\cdot)}{\sqrt{t}}\bigg)\stackrel{d}{\to}\sum_{j\in S}\delta_{U}(\{j\})\frac{\sigma_{j}}{\sqrt{E|\mathfrak{I}_{1}^{j}|}}(N,N).\label{eT2(a)}
\eeqn
\item Suppose $E_{\pi}a(\cdot)=0, b\neq 0$ and Assumption \ref{as3}(c) is replaced by  \beqn
E\Big[\Big(\log \big|\int_{\tau_{0}^{j}}^{\tau_{1}^{j}}b^{}(Y_{s})e^{-\int_{s}^{\tau_{1}^{j}}a(Y_{r})dr}ds\big|\Big)^{2}\Big]<\infty\s \text{for every}\,\, j\in S,\label{Theorem2bcond*}
\eeqn
\beqn
\bigg(\frac{\log\Phi^{(a)}_{t}}{\sqrt{t}},\frac{\log|I^{(a,b)}_{t}|}{\sqrt{t}}\bigg)\stackrel{d}{\to}\sum_{j\in S}\delta_{U}(\{j\})\frac{\sigma_{j}}{\sqrt{E|\mathfrak{I}_{1}^{j}|}}(F_{1},F_{2})\s\text{as}\,\, t\to\infty,\label{eT2b}
\eeqn
where $U$ is independent of random variables $(F_{1},F_{2})$ having joint density
\beqn
P[F_{1}\in dx, F_{2}\in d\xi]=\sqrt{\frac{2}{\pi}} \big(2\xi - x\big)e^{-\frac{(2\xi - x)^{2}}{2}}1_{[0,\infty)\times (-\infty, \xi)}(\xi, x)d\xi dx .\label{F1F2}
\eeqn
\end{enumerate}
\et

\begin{remark}$(F_{1},F_{2})$ is identical with $(B_{1},M_{1})$ in distribution (using notation from \eqref{Brown}). The difference between $``E_{\pi}a(\cdot)<0"$ and $``E_{\pi}a(\cdot)=0"$ is reflected in the distributional part of RHS  weak limit of $\bigg(\frac{\log\Phi^{(a)}_{t} + tE_{\pi}a(\cdot)}{\sqrt{t}},\frac{\log|I^{(a,b)}_{t}| + tE_{\pi}a(\cdot)}{\sqrt{t}}\bigg),$ as it weakly transits from $(B_{1},B_{1})\stackrel{d}{=}(N,N)$ to $(B_{1},M_{1})$.  Marginally $M_{1}\stackrel{d}{=}|B_{1}|\stackrel{d}{=} |N|\stackrel{d}{=}F_{2},$ (a consequence of  the reflection principle) which is also celebrated as a consequence of the Erdos-Kac theorem \cite{ErdosKac1946}.
\end{remark}

\subsection{\textbf{Case B}: ($E_{\pi}a(\cdot)\le 0,$ and a case of $\sigma_{j}^{2}=\infty$ for all $j\in S$)} By $\widehat{a}(\cdot),$ denote the function $a(\cdot)-E_{\pi}a(\cdot).$  We consider a special case of $\sigma_{j}^{2}=\infty,$ such that the condition $$\int_{\tau_{0}^{j}}^{\tau_{1}^{j}}\widehat{a}(Y_{s})ds$$ is regularly varying with index $\alpha\in(1,2)$ holds for all $j\in S.$ Under additional model assumptions,  a limit result in this context will be established. Recall the definition of $\widetilde{A}_{k}^{j}$ from display \eqref{Atilde}, which represents the set of tuples corresponding to successive transitions during the $k$-th recursion at state $j$ by the underlying process $J$. If we allow some elements of $\mathcal{G}$ to be regularly varying heavy-tailed, under the assumption of divergence (i.e $E_{\pi}a(\cdot)\le 0$),  it is intuitive that the long time limit results will be influenced by the behavior of the process during sojourn times, that have the heaviest tails. As noted in the  Remark \ref{Rinfinity}, even if all sojourn times have an Exponentially light tail, it is still possible for $\int_{\tau_{0}^{j}}^{\tau_{1}^{j}}\widehat{a}(Y_{s})ds$ to exhibit a regularly varying heavy tail, driven purely by the state-interaction dynamics of $J$. Here, we examine a scenario where the heaviest tail of the sojourn times dominates the effect of the $J$ chain (via controlling the moments of $|\widetilde{A}_{1}^{j}\cap \mathbb{S}_{\alpha}|$ in \eqref{bound3}), in explaining the regular variation of $\int_{\tau_{0}^{j}}^{\tau_{1}^{j}}\widehat{a}(Y_{s})ds$. Following assumption, particularly Assumption \ref{as4}(b), formalizes this notion which is why it is somewhat lengthy.

\bas\label{as4}The $S$-valued process $Y$ and the functions $a,b:S\to\R$ satisfy
\begin{enumerate}[(a)]  
\item $a$ is integrable with respect to $\pi$, and $E_{\pi} a(\cdot)\le 0$.
\item  Suppose the set $\mathcal{G}$ contains at least one element that is regularly varying at $\infty.$ Let the heaviest tail index of the sojourn times (which are elements of $\mathcal{G}$) be $\alpha \in(1,2),$ and let $L(\cdot)$ represent the largest slowly varying function among them (the function that diverges to $\infty$ the fastest as the argument tends to $\infty$). Define the set 
\beqn
\mathbb{S}_{\alpha}:= &\big\{(i,j)\in S^{2}: P_{ij}>0, \,\, F_{ij}\in \mathcal{G}, \,\,\widehat{a}(i)\neq 0, \,\,\text{and}\,\, \overline{F}_{ij}(x)=c_{ij}L(x)x^{-\alpha}\non\\ &\s \text{such that}\s \forall (i_{k},i_{k+1})\notin \mathbb{S}_{\alpha},\,\,\overline{F}_{i_{k}i_{k+1}}(x)=o(\overline{F}_{ij}(x)) \big\}.\non
\eeqn
This set $\mathbb{S}_{\alpha}$ consists of all pairs $(i,j)$ (representing transitions of the $J$-chain) whose corresponding sojourn time distributions have the same heaviest regularly varying tails (upto multiplicative constants), characterized by the smallest index $\alpha$ and the largest slowly varying function $L(\cdot)$) at $+\infty$. We assume $\mathbb{S}_{\alpha}$ is non-empty.

In line with \eqref{bound}, we assume the existence of $\epsilon>0,$ and a universal constant $\widetilde{c}_{\epsilon}<\infty$  such that for all $y\ge 0,$
\begin{align}
&\sup_{(i_{k},i_{k+1})\in \mathbb{S}_{\alpha}}\frac{P[y \widetilde{F}_{i_{k}i_{k+1}}>x]}{P[\widetilde{F}_{i_{k}i_{k+1}}>x]}\le \widetilde{c}_{\epsilon}(1\vee y)^{\alpha+\epsilon},\s\text{and}\label{bound2}\\ &E\Big[\sum_{(i_{l},i_{l+1})\in \widetilde{A}_{1}^{j}\cap \mathbb{S}_{\alpha}}c_{i_{l}i_{l+1}}(|a(i_{l})||\widetilde{A}_{1}^{j}\cap \mathbb{S}_{\alpha}|)^{\alpha+\epsilon}\Big]<\infty, \,\,\forall j\in S,\label{bound3}
\end{align}
 where $\widetilde{F}_{i_{k}i_{k+1}}$ is a random variable distributed as $F_{i_{k}i_{k+1}}.$

Furthermore, there exist constants $x^{*}, a^{*}<\infty,$ such that following hold: 
\beqn
\sup_{x\ge x^{*}}\sup_{(j_{k},j_{k+1})\notin \mathbb{S}_{\alpha}}\frac{\overline{F}_{j_{k}j_{k+1}}(x)}{x^{-\alpha}L(x)}\le 1,\,\,\label{bound4}\\ \sup_{(j_{k},j_{k+1})\notin \mathbb{S}_{\alpha}}a(j_{k})=a^{*},\,\,\,\&\,\,\,E|\widetilde{A}_{1}^{j}\cap \mathbb{S}^{c}_{\alpha}|^{1+\alpha}<\infty \non
\eeqn
where $\mathbb{S}^{c}_{\alpha}=\mathbb{S}^{c,+}_{\alpha}\cup \mathbb{S}^{c,-}_{\alpha},$ with  
\begin{align}
&\mathbb{S}^{c,+}_{\alpha}:=\{(i_{k},i_{k+1})\notin \mathbb{S}^{}_{\alpha}:P_{i_{k}i_{k+1}}>0, \widehat{a}(i_{k})>0\},\s\text{and}\nonumber\\ &\mathbb{S}^{c,-}_{\alpha}:=\{(i_{k},i_{k+1})\notin \mathbb{S}^{}_{\alpha}: P_{i_{k}i_{k+1}}>0, \widehat{a}(i_{k})<0\}.
\end{align}
\item[(c)] Assumption \ref{as2}(b) holds.
\item[(d)] Define $\mathbb{S}_{\alpha}^{+}:=\{(i,j)\in \mathbb{S}_{\alpha}:\widehat{a}(i)>0\},\s \mathbb{S}_{\alpha}^{-}:=\{(i,j)\in \mathbb{S}_{\alpha}:\widehat{a}(i)<0\}.$ For any $j\in S,$ define further 
\beqn
\widetilde{\alpha}_{j}^{(+)}&:=&E\Big[\sum_{(i_{l},i_{l+1})\in \widetilde{A}_{1}^{j}\cap \mathbb{S}^{+}_{\alpha}}c_{i_{l}i_{l+1}}\big[a(i_{l})-E_{\pi}a(\cdot)\big]^{\alpha}\Big],\label{alpha+}\\
\widetilde{\alpha}_{j}^{(-)}&:=& E\Big[\sum_{(i_{l},i_{l+1})\in \widetilde{A}_{1}^{j}\cap  \mathbb{S}^{-}_{\alpha}}c_{i_{l}i_{l+1}}|a(i_{l})-E_{\pi}a(\cdot)|^{\alpha}\Big],\label{alpha-}\\
\sigma_{(j,\alpha)}&:=& \bigg(\widetilde{\alpha}_{j}^{(+)}+\widetilde{\alpha}_{j}^{(-)}\bigg)^{1/\alpha},\s \beta_{j}:=\frac{\widetilde{\alpha}_{j}^{(-)} - \widetilde{\alpha}_{j}^{(+)}}{\widetilde{\alpha}_{j}^{(+)}+\widetilde{\alpha}_{j}^{(-)}}\in [-1,1]\label{sig}
\eeqn
given that the RHS of \eqref{alpha+},\eqref{alpha-} and \eqref{sig} are all finite, specified by the marginal dynamics of $J$ chain and $a(\cdot).$
\end{enumerate}
\eas
\begin{remark}\label{verifycond}
\begin{enumerate}[(a)]
\item Under Assumptions \ref{As0} and \ref{as4} the quantities $\widetilde{\alpha}_{j}^{(+)},\widetilde{\alpha}_{j}^{(-)}$ can be simplified in closed form, given the information on the marginal Markovian dynamics $J$. For all $j\in S,$
\[
\widetilde{\alpha}_{j}^{(+)}=E|A_{k}^{j}| \sum_{(i_{l},i_{l+1})\in \mathbb{S}^{+}_{\alpha}}\mu_{i_{l}}P_{i_{l}i_{l+1}}c_{i_{l}i_{l+1}}[\widehat{a}(\cdot)(i_{l})]^{\alpha},\]
\[\widetilde{\alpha}_{j}^{(-)}=E|A_{k}^{j}| \sum_{(i_{l},i_{l+1})\in \mathbb{S}^{-}_{\alpha}}\mu_{i_{l}}P_{i_{l}i_{l+1}}c_{i_{l}i_{l+1}}|\widehat{a}(\cdot)(i_{l})|^{\alpha},\]
where $\mu$ is the stationary distribution of the marginal $S$-valued Markovian dynamics $J.$ 
\item In line of \eqref{bound}, \eqref{bound2} holds trivially by setting $\widetilde{c}_{\epsilon}=\sup_{(i_{k},i_{k+1})\in\mathbb{S}_{\alpha}}c_{i_{k}i_{k+1}}<\infty.$ 
\item Display \eqref{bound3} indicates that the tail of the heaviest sojourn time distribution will dominate the effect of the interaction dynamics of the $J$ chain, and it holds if we have $\sup_{x\in S}|a(x)| <\infty,$ and $E[|A_{1}^{j}|^{1+\alpha+\epsilon}]<\infty.$
\item[(d)] Since for any $(j_{k},j_{k+1})\notin \mathbb{S}_{\alpha},$ $\frac{\overline{F}_{j_{k}j_{k+1}}(x)}{x^{-\alpha}L(x)}\to 0$ as $x\to\infty,$ so does $\sup_{(j_{k},j_{k+1})\notin \mathbb{S}_{\alpha}}\frac{\overline{F}_{j_{k}j_{k+1}}(x)}{x^{-\alpha}L(x)}$ and \eqref{bound4} holds trivially.
\end{enumerate}
\end{remark}

\bt\label{T3} Suppose Assumptions \ref{As0} and \ref{as4} hold for some $\alpha\in (1,2)$ and slowly varying function $L(\cdot)$. 
Let $U\sim \pi,$ and $\alpha$-Stable processes $(\mathcal{\bf{S}}_{\alpha,\beta_{j}}:j\in S)$  are all mutually independent. 
\begin{enumerate}[(a)]
\item If  $E_{\pi}a(\cdot)<0,$  there exists a slowly varying function $L_{0}(\cdot)$ such that as $t\to\infty,$
\beqn
&&\bigg(\frac{\log\Phi^{(a)}_{t} + tE_{\pi}a(\cdot)}{t^{\frac{1}{\alpha}}L_{0}(t)},\frac{\log|I^{(a,b)}_{t}| + tE_{\pi}a(\cdot)}{t^{\frac{1}{\alpha}}L_{0}(t)}\bigg)\non\\ &&\s\stackrel{d}{\to}\,\,\sum_{j\in S}\delta_{U}(\{j\})\bigg(\frac{1}{E|\mathfrak{I}^{j}_{1}|}\bigg)^{\frac{1}{\alpha}}\sigma_{(j,\alpha)}\Big(\mathcal{S}_{\alpha}(1,\beta_{j},0),\mathcal{S}_{\alpha}(1,\beta_{j},0)\Big).\label{eT2a}
\eeqn
\item Suppose $E_{\pi}a(\cdot)=0, b\neq 0$ and Assumption \ref{as4}(c) is replaced by the following condition that there exists $\eps>0,$ such that following holds for all $j\in S,$ \beqn
E\Big[\Big(\log \big|\int_{\tau_{0}^{j}}^{\tau_{1}^{j}}b^{}(Y_{s})e^{-\int_{s}^{\tau_{1}^{j}}a(Y_{r})dr}ds\big|\Big)^{\alpha+\eps}\Big]<\infty.\label{Theorem2bcond**}
\eeqn
Then there exists a slowly varying function $L_{0}(\cdot)$ such that as $t\to\infty,$
\beqn
\bigg(\frac{\log\Phi^{(a)}_{t}}{t^{\frac{1}{\alpha}}L_{0}(t)},\frac{\log|I^{(a,b)}_{t}|}{t^{\frac{1}{\alpha}}L_{0}(t)}\bigg)\stackrel{d}{\to}\sum_{j\in S}\delta_{U}(\{j\})\bigg(\frac{1}{E|\mathfrak{I}^{j}_{1}|}\bigg)^{\frac{1}{\alpha}}\sigma_{(j,\alpha)}\big(\mathcal{\bf{S}}_{\alpha,\beta_{j}}(1),\sup_{0\le u \le 1}\mathcal{\bf{S}}_{\alpha,\beta_{j}}(u)\big).\non
\eeqn
\end{enumerate}
\et

\begin{remark}\label{Rinfinity}A major component of Theorem \ref{T3} is Lemma \ref{regV}, which states that under Assumption \ref{As0}, Assumption \ref{as4}(b) implies that the quantity $\int_{\tau_{0}^{j}}^{\tau_{1}^{j}}\widehat{a}(Y_{s})ds$ is regularly varying with index $\alpha \in (1, 2)$. We conjecture that in a finite state space ($|S| < \infty$), the reverse also holds, i.e under Assumption \ref{As0}, if the quantity $\int_{\tau_{0}^{j}}^{\tau_{1}^{j}}\widehat{a}(Y_{s})ds$ is regularly varying with index $\alpha \in (1, 2)$, then at least one element of $\mathcal{G}$ must be regularly varying at infinity with tail index $\alpha \in (1, 2)$. However, in the case of an infinite state space ($|S| = \infty$), this implication may not hold. For instance, it is possible to construct a continuous-time Markov chain (i.e, all sojourn times are Exponentially distributed) where $\int_{\tau_{0}^{j}}^{\tau_{1}^{j}}\widehat{a}(Y_{s})ds$ exhibits heavy-tailed behavior with index $\alpha \in (1, 2)$ purely due to the dynamics of $J$ in the infinite state space $S$. In such pathological cases, results analogous to Theorem \ref{T3}(a) and (b) will still hold, even in the absence of Assumption \ref{as4}(b) (or a modification), a scenario we plan to explore further in future.
\end{remark}

\subsection{\textbf{Case C}: ($E_{\pi}a(\cdot)\le 0,$ and  $\sigma_{j}^{2}=0,$ for all $j\in S$)}
We begin by stating the following proposition.
\begin{Proposition}\label{aconstant} If $S$ is finite, then the Assumption $\sigma^{2}_{j}=0$ implies that $a(\cdot)= $ a constant with probability $1.$ 
\end{Proposition}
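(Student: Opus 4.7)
The plan is to reduce the condition $\sigma_j^2 = 0$ to a pointwise identity for $\widehat{a}(\cdot) := a(\cdot) - E_{\pi}a(\cdot)$ by exploiting the absolute continuity of the sojourn-time laws in $\mathcal{G}$.

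First, I would apply the renewal-reward identity to the regenerative cycles $\{\mathfrak{I}_k^j\}$. The ergodic limit $\tfrac{1}{t}\int_0^t a(Y_s)\,ds \to E_{\pi}a(\cdot)$ a.s., together with the renewal-reward formula, yields
\[
E\!\left[\int_{\tau_0^j}^{\tau_1^j}\! a(Y_s)\,ds\right] \;=\; E_{\pi}a(\cdot)\cdot E|\mathfrak{I}_{1}^{j}|,
\]
so the centered integral $\int_{\tau_0^j}^{\tau_1^j}\widehat{a}(Y_s)\,ds$ has mean zero. Combined with $\sigma_j^2 = 0$, this forces
\[
\int_{\tau_0^j}^{\tau_1^j}\widehat{a}(Y_s)\,ds \;=\; 0 \quad\text{almost surely.}
\]

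Next, I would condition on the embedded path $\Pi := (J_{N_0^j}, J_{N_0^j+1},\ldots,J_{N_1^j}) = (j, i_1,\ldots,i_{n-1}, j)$ traversed by $J$ during the cycle, with random length $n = |A_1^j|$. On this event the integral decomposes as $\sum_{k=0}^{n-1}\widehat{a}(i_k)\, W_k$, where $W_k$ denotes the sojourn time at state $i_k$. By the product identity \eqref{Prod} the $W_k$ are conditionally mutually independent given $\Pi$, with $W_k \sim F_{i_k i_{k+1}}$; Assumption \ref{As0}(b) ensures each $F_{i_k i_{k+1}}$ is absolutely continuous and thus non-degenerate.

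The key elementary lemma is: if $W_0,\ldots,W_{n-1}$ are independent non-degenerate random variables and $\sum_k c_k W_k$ is a.s.\ constant, then every $c_k = 0$. Indeed, if some $c_l \neq 0$, then $W_l$ equals a measurable function of $(W_k)_{k\neq l}$ while being independent of it, which forces $W_l$ to be a.s.\ constant, contradicting absolute continuity. Applying this conclusion along every embedded path $\Pi$ of positive probability yields $\widehat{a}(i_k) = 0$ for every state $i_k$ visited in $\Pi$. Since $J$ is irreducible on $S$ by Assumption \ref{As0}(a) and $|S|<\infty$, every state of $S$ appears with positive probability in some cycle from $j$ back to $j$; hence $\widehat{a} \equiv 0$ on $S$, i.e.\ $a(\cdot) \equiv E_{\pi}a(\cdot)$ is a constant.

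The principal subtlety is handling the random cycle length $n$: one must condition jointly on both the path and its length before invoking the product independence structure \eqref{Prod}, which is why Assumption \ref{As0}(c) together with absolute continuity of the $F_{ij}$ is essential. Finiteness of $S$ enters only at the final step, to guarantee that every state of $S$ is actually visited in some positive-probability cycle from $j$ to itself.
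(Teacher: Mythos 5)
Your proof is correct, but it takes a genuinely different route from the paper's. The paper also first reduces $\sigma_j^2=0$ to $\int_{\tau_0^j}^{\tau_1^j}\widehat{a}(Y_s)\,ds=0$ a.s., but then rewrites this as a linear system $\sum_{i\in S}\widehat{a}(i)\widetilde{T}_i(\omega)=0$ in the occupation times $\widetilde{T}_i(\omega)$ over varying sample points $\omega$, and argues that because there are uncountably many $\omega$ and only finitely many states one can select $|S|$ sample points for which the occupation-time matrix $\{\widetilde{T}_i(\omega_j)\}$ is nonsingular, forcing $\widehat{a}\equiv 0$; this nonsingularity claim is asserted rather than proved and is exactly where finiteness of $S$ enters (the paper only conjectures the countable case in a remark). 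Your argument replaces that linear-algebra step with conditioning on the embedded path and the elementary fact that an a.s.\ constant linear combination of independent non-degenerate random variables must have all coefficients zero, which is fully rigorous given \eqref{Prod} and the absolute continuity in Assumption \ref{As0}(b). A bonus you may not have noticed: your argument never actually uses $|S|<\infty$ --- irreducibility and recurrence already guarantee that each state lies on some positive-probability cycle from $j$ to itself --- so contrary to your closing sentence, your proof settles the countably infinite case as well, which the paper leaves open.
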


We assume that the $a(\cdot)$ remains constant throughout \textbf{Case C} and let that constant be $a(\cdot)=a$ and as a consequence $\Phi^{(a)}(t)=e^{at}$. The following results show how under different conditions $I^{(a,b)}_{t}$ grow differently, from exponential with a random multiplicative coefficient to an asymptotically linear function of $t$ with different exponents.
\bt\label{T4} Suppose Assumptions \ref{As0} holds with $a(\cdot)=a$. Let $U\sim \pi,$ and $N\sim N(0,1)$ are independent of each other.  As $t\to\infty,$ for any $s>0$ 
\begin{enumerate}[(a)]
\item if  $a<0,$ and suppose $E\log^{+}|\int_{\tau_{0}^{j}}^{\tau_{1}^{j}}b(Y_{s})e^{a(s-\tau_{0}^{j})}ds|<\infty$ holds. Then as $t\to\infty,$
\beqn
e^{at}I^{(a,b)}_{t}\stackrel{d}{\to}\sum_{j\in S}^{}\delta_{U}(\{j\})\Big[\int_{0}^{\tau_{0}^{j}}b(Y_{s})e^{as}ds+ e^{a\tau_{0}^{j}} V_{j}^{*}\Big]
\eeqn
where $V_{j}^{*}$ satisfies \eqref{x=ax+b} with $(A,B)$ respectively are
$$(A,B)\stackrel{d}{=}\Big(e^{a(\tau_{1}^{j}-\tau_{0}^{j})},\int_{\tau_{0}^{j}}^{\tau_{1}^{j}}b(Y_{s})e^{a(s-\tau_{0}^{j})}ds\Big).$$

\item Suppose $a=0, b\neq 0$ and $b(\cdot)$ is integrable with respect to $\pi.$ As $t\to\infty,$
\beqn
\frac{I^{(0,b)}_{t}}{t}\stackrel{p}{\to}E_{\pi}b(\cdot). \non
\eeqn
This result can be further tri-furcated based on whether $$\sigma_{j,(b)}^{2}:=\Var\Big(\int_{\tau_{0}^{j}}^{\tau_{1}^{j}}\big(b(Y_{s}) - E_{\pi}b(\cdot)\big)ds\Big),$$ is finite, $=\infty,$ or $=0;$ and they are following:
\begin{enumerate}[(i)]
\item If $\sigma_{j,(b)}^{2}<\infty,$ as $t\to\infty,$ then $\frac{I^{(0,b)}_{t}}{\sqrt{t}}-\sqrt{t}E_{\pi}b(\cdot)\stackrel{d}{\to} \sum_{j\in S}^{}\delta_{U}(\{j\})\frac{\sigma_{j,(b)}}{\sqrt{E|\mathfrak{I}_{1}^{j}|}}N $.
\item Suppose Assumption \ref{as4}(a)(b)\& (d) hold replacing $a$ by $b$ and $\alpha$ by $\alpha_{b}$ (a special case of $\sigma_{j,(b)}^{2}=\infty$). The quartet $(\widetilde{\alpha}_{j,(b)}^{(+)},\widetilde{\alpha}_{j,(b)}^{(-)},\widetilde{\beta}_{j,(b)}, \sigma_{(j,\alpha_{b},(b))})$ is defined similarly as $(\widetilde{\alpha}_{j}^{(+)},\widetilde{\alpha}_{j}^{(-)},\widetilde{\beta}_{j},\sigma_{(j,\alpha)})$ in  \eqref{alpha+},\eqref{alpha-}, \eqref{sig} replacing $a$ by $b.$ Then as $t\to\infty$, $$\frac{I_{t}^{(0,b)}-tE_{\pi}b(\cdot)}{t^{\frac{1}{\alpha_{b}}}L_{1}(t)}\stackrel{d}{\to} \sum_{j\in S}^{}\delta_{U}(\{j\})\Big(\frac{1}{E|\mathfrak{I}_{1}^{j}|}\Big)^{\frac{1}{\alpha_{b}}}\sigma_{(j,\alpha_{b},(b))}\mathcal{S}_{\alpha_{b}}(1,-\widetilde{\beta}_{j,(b)},0)$$
for some slowly varying function $L_{1}(\cdot).$
\item If $b$ is a constant (case corresponding to $\sigma_{j,(b)}^{2}=0$), and then \(I_{t}^{(0,b)}=bt\) is a deterministic quantity.
\end{enumerate}
\end{enumerate}
\et

Following result indicates that the asymptotic behavior of $\log|I^{(a,b)}_{t}|$ from Theorems \ref{T2} and \ref{T3} also applies to the logarithm of any process that can be expressed as an arbitrary linear combination of $\{\Phi_{t}^{(a)}, I_{t}^{(a,b)}\}$ or $\{\Phi_{t}^{(a)}, |I_{t}^{(ma,b)}|^{\frac{1}{m}}\}$. However, the result for Theorem \ref{T4} will differ from those in Theorems \ref{T2} and \ref{T3}, which will be relevant for various applications. For brevity, we denote the weak limits of $\log|I^{(a,b)}_{t}|$ as $t \to \infty$ (after appropriate scaling and shifts) by $\mathcal{W}_{2(a)}$, $\mathcal{W}_{2(b)}$, $\mathcal{W}_{3(a)}$, and $\mathcal{W}_{3(b)}$, corresponding to Theorems \ref{T2}(a), \ref{T2}(b), \ref{T3}(a), and \ref{T3}(b), respectively, under the corresponding assumptions.

\begin{Corollary}\label{conn1}
Suppose two arbitrary stochastic processes $(\widetilde{Z}^{(1)}_{t}:t\ge 0), (\widetilde{Z}^{(2)}_{t}:t\ge 0)$ are marginally defined as a random linear combination of both $\Phi_{t}^{(a)}$ and $I_{t}^{(a,b)}$ weakly (for any given $a,b$ that are $S$ valued functions); i.e, for any $t>0,$ marginally 
\beqn
\widetilde{Z}^{(1)}_{t}\stackrel{d}{:=}a_{1}\Phi_{t}^{(a)}+b_{1}I_{t}^{(a,b)},\s \widetilde{Z}^{(2)}_{t}\stackrel{d}{:=}a_{2}\Phi_{t}^{(a)}+b_{2}|I_{t}^{(ma,b)}|^{\frac{1}{m}},\label{ecorr}
\eeqn
 for some $m\ge 0$, and any arbitrary real valued random variables $(a_{1},b_{1}), (a_{2},b_{2})$ that are independent with $(\Phi_{t}^{(a)},I_{t}^{(a,b)})$ and $(\Phi_{t}^{(a)},I_{t}^{(ma,b)})$ respectively. 
\begin{enumerate}[(a)]
\item
\begin{enumerate}[(1)]
\item Time asymptotic behaviors of both $\log|\widetilde{Z}^{(1)}_{t}|$ and $\log|I^{(a,b)}_{t}|$ will be identical in distribution under the same scaling and shifts, in all possible cases of Theorem \ref{T2}, Theorem \ref{T3} irrespective of any $(a_{1},b_{1})$. 
\item For $\log|\widetilde{Z}^{(2)}_{t}|$ we will use identical scaling and shifts as used for $\log|\widetilde{Z}^{(1)}_{t}|$.
\begin{enumerate}[(i)]
\item Under assumptions of Theorem \ref{T2}(a) with $(a,b)$ replaced by $(ma,b)$ in Assumption \ref{as3}(c),  as $t\to\infty,$ the quantity $$\frac{\log|\widetilde{Z}^{(2)}_{t}|}{\sqrt{t}}+\sqrt{t}E_{\pi}a(\cdot)\stackrel{d}{\to}\mathcal{W}_{2(a)},$$ irrespective of any $(a_{2},b_{2})$.  \\ Under assumptions of Theorem \ref{T2}(b) with $(a,b)$ replaced by $(ma,b)$ in \eqref{Theorem2bcond*} as $t\to\infty,$ the quantity $$\frac{\log|\widetilde{Z}^{(2)}_{t}|}{\sqrt{t}}\stackrel{d}{\to}\mathcal{W}_{2(b)}$$ irrespective of any $(a_{2},b_{2})$.
\item Under assumptions of Theorem \ref{T3}(a) with $(a,b)$ replaced by $(ma,b)$ in Assumption \ref{as4}(c),  as $t\to\infty,$ the quantity $$\frac{\log|\widetilde{Z}^{(2)}_{t}|+tE_{\pi}a(\cdot)}{t^{1/\alpha}L_{0}(t)}\stackrel{d}{\to}\mathcal{W}_{3(a)},$$ irrespective of any $(a_{2},b_{2})$.\\ Under assumptions of Theorem \ref{T3}(b) with $(a,b)$ replaced by $(ma,b )$ in \eqref{Theorem2bcond**} as $t\to\infty,$ the quantity $$\frac{\log|\widetilde{Z}^{(2)}_{t}|}{t^{1/\alpha}L_{0}(t)}\stackrel{d}{\to}\mathcal{W}_{3(b)},$$ irrespective of any $(a_{2},b_{2})$.
\end{enumerate}
\end{enumerate}

\item Suppose assumptions of Theorem \ref{T4} holds.
\begin{enumerate}[(i)]
\item If $a<0,$  and $E\log^{+}|\int_{\tau_{0}^{j}}^{\tau_{1}^{j}}b(Y_{s})e^{a(s-\tau_{0}^{j})}ds|<\infty,$ as $t\to\infty,$
\beqn
e^{at}\widetilde{Z}^{(1)}_{t}&\stackrel{d}{\to}& a_{1}+b_{1}\Big(\sum_{j\in S}^{}\delta_{U}(\{j\})\big[\int_{0}^{\tau_{0}^{j}}b(Y_{s})e^{as}ds+ e^{a\tau_{0}^{j}} V_{j}^{*}\big]\Big);\non
\eeqn
where $V_{j}^{*}$ is same as in Theorem \ref{T4}(a).

Suppose $a<0,\&\,\,\, E\log^{+}|\int_{\tau_{0}^{j}}^{\tau_{1}^{j}}b(Y_{s})e^{ma(s-\tau_{0}^{j})}ds|<\infty$ holds then as $t\to\infty,$
\beqn
e^{at}\widetilde{Z}^{(2)}_{t} &\stackrel{d}{\to}&a_{2}+b_{2}\Big(\sum_{j\in S}^{}\delta_{U}(\{j\})\big|\int_{0}^{\tau_{0}^{j}}b(Y_{s})e^{mas}ds+ e^{ma\tau_{0}^{j}} V_{j,m}^{*}\big|^{\frac{1}{m}}\Big)\non
\eeqn  where $V_{j,m}^{*}$ is defined similar to $V_{j}^{*}$ in \eqref{x=ax+b} with $(A,B)$ respectively defined as 
$$(A,B)\stackrel{d}{=}\Big(e^{ma(\tau_{1}^{j}-\tau_{0}^{j})},\int_{\tau_{0}^{j}}^{\tau_{1}^{j}}b(Y_{s})e^{ma(s-\tau_{0}^{j})}ds\Big).$$
\item If $a=0$ and conditions of Theorem \ref{T4}(b) hold, then $\frac{\widetilde{Z}^{(1)}_{t}}{t}\stackrel{d}{\to}  [E_{\pi}b(\cdot)] b_{1}$\,\,\, \&\,\,\, $\frac{\widetilde{Z}^{(2)}_{t}}{t^{\frac{1}{m}}}\stackrel{d}{\to}  \big|E_{\pi}b(\cdot)\big|^{\frac{1}{m}} b_{2}$ as $t\to\infty$.
\end{enumerate}
\end{enumerate}
\end{Corollary}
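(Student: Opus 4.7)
The backbone of part (a) is the pathwise identity
\[I_{t}^{(a,b)}=\Phi_{t}^{(a)}\widetilde{I}_{t}^{(a,b)},\qquad \widetilde{I}_{t}^{(a,b)}:=\int_{0}^{t}b(Y_{s})e^{\int_{0}^{s}a(Y_{r})dr}ds,\]
together with $\Phi_{t}^{(ma)}=(\Phi_{t}^{(a)})^{m}$ and $\Phi_{t}^{(a)}>0$, which give $|I_{t}^{(ma,b)}|^{1/m}=\Phi_{t}^{(a)}|\widetilde{I}_{t}^{(ma,b)}|^{1/m}$. By independence of $(a_{i},b_{i})$ from the $Y$-driven factors,
\[\widetilde{Z}_{t}^{(1)}\stackrel{d}{=}\Phi_{t}^{(a)}\bigl(a_{1}+b_{1}\widetilde{I}_{t}^{(a,b)}\bigr),\qquad \widetilde{Z}_{t}^{(2)}\stackrel{d}{=}\Phi_{t}^{(a)}\bigl(a_{2}+b_{2}|\widetilde{I}_{t}^{(ma,b)}|^{1/m}\bigr),\]
and comparing with $\log|I_{t}^{(a,b)}|=\log\Phi_{t}^{(a)}+\log|\widetilde{I}_{t}^{(a,b)}|$ reduces all of part (a) to showing that the difference $\log|\widetilde{Z}_{t}^{(i)}|-\log|I_{t}^{(a,b)}|$, rescaled by $\sqrt{t}$ (Theorem~\ref{T2}) or $t^{1/\alpha}L_{0}(t)$ (Theorem~\ref{T3}), vanishes in probability; Slutsky then matches the weak limit with $\mathcal{W}_{2(a)},\mathcal{W}_{2(b)},\mathcal{W}_{3(a)}$ or $\mathcal{W}_{3(b)}$.

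In the divergent regime $E_{\pi}a(\cdot)<0$ (Theorems~\ref{T2}(a), \ref{T3}(a)), the ergodic theorem yields $\int_{0}^{s}a(Y_{r})dr\to-\infty$ almost surely, so the integrands of $\widetilde{I}_{t}^{(a,b)}$ and $\widetilde{I}_{t}^{(ma,b)}$ are a.s.\ absolutely integrable on $[0,\infty)$ under Assumption~\ref{as3}(c)/\ref{as4}(c) and its $(ma,b)$-analogue. Hence $\widetilde{I}_{t}^{(a,b)}\to\widetilde{I}_{\infty}^{(a,b)}$ and $\widetilde{I}_{t}^{(ma,b)}\to\widetilde{I}_{\infty}^{(ma,b)}$ almost surely, making both $\log|a_{i}+b_{i}(\cdot)|$ and $\log|\widetilde{I}_{t}^{(a,b)}|$ equal to $O_{P}(1)$ terms (the cancellation atom has probability zero by independence of $(a_{i},b_{i})$ from the limits), so dividing by the diverging scaling renders the difference $o_{P}(1)$.

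The critical regime $E_{\pi}a(\cdot)=0$ (Theorems~\ref{T2}(b), \ref{T3}(b)) is the main obstacle, because $\widetilde{I}_{t}^{(a,b)}$ need not converge: one must instead exploit the joint weak convergence of $(\log\Phi_{t}^{(a)},\log|I_{t}^{(a,b)}|)$ established there. After the scaling, $\log|\Phi_{t}^{(a)}/I_{t}^{(a,b)}|$ converges weakly to a non-positive random variable ($\propto F_{1}-F_{2}$, or the analogue $\mathcal{\bf{S}}_{\alpha,\beta_{j}}(1)-\sup_{u\le 1}\mathcal{\bf{S}}_{\alpha,\beta_{j}}(u)$). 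Bounding $|a_{1}\Phi_{t}^{(a)}/I_{t}^{(a,b)}+b_{1}|\le|a_{1}||\Phi_{t}^{(a)}/I_{t}^{(a,b)}|+|b_{1}|$, the upper-tail event $\{\log|\cdot|>\epsilon\cdot\mathrm{scale}\}$ forces $\log|\Phi_{t}^{(a)}/I_{t}^{(a,b)}|>\epsilon\cdot\mathrm{scale}+O(1)$, whose probability vanishes; the lower-tail (cancellation) event confines $\Phi_{t}^{(a)}/I_{t}^{(a,b)}$ to a shrinking neighbourhood of $-b_{1}/a_{1}$, also vanishing by independence of $(a_{1},b_{1})$ from the $Y$-driven ratio and the non-degeneracy of its limiting law at that point. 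The argument for $\widetilde{Z}_{t}^{(2)}$ is identical, noting that $\int_{0}^{t}(ma(Y_{s})-mE_{\pi}a)ds=m\int_{0}^{t}(a(Y_{s})-E_{\pi}a)ds$, so the joint CLT driving $\log|\widetilde{I}_{t}^{(ma,b)}|^{1/m}$ and $\log|\widetilde{I}_{t}^{(a,b)}|$ is powered by the \emph{same} Brownian motion (or $\alpha$-stable process), causing the two logarithms to agree in the limit and cancel.

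For part (b), $a(\cdot)\equiv a$ gives $\Phi_{t}^{(a)}=e^{-at}$. When $a<0$, multiplying by $e^{at}$ yields
\[e^{at}\widetilde{Z}_{t}^{(1)}\stackrel{d}{=}a_{1}+b_{1}\bigl(e^{at}I_{t}^{(a,b)}\bigr),\qquad e^{at}\widetilde{Z}_{t}^{(2)}\stackrel{d}{=}a_{2}+b_{2}\bigl|e^{amt}I_{t}^{(ma,b)}\bigr|^{1/m},\]
and the weak limits of $e^{at}I_{t}^{(a,b)}$ and $e^{amt}I_{t}^{(ma,b)}$ supplied by Theorem~\ref{T4}(a) (the second with $a$ replaced by $ma$), combined with independence of $(a_{i},b_{i})$ and the continuous mapping theorem, give the stated limits. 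When $a=0$, $\Phi_{t}^{(a)}=1$ and the identities $\widetilde{Z}_{t}^{(1)}/t=a_{1}/t+b_{1}I_{t}^{(0,b)}/t$ and $\widetilde{Z}_{t}^{(2)}/t^{1/m}=a_{2}/t^{1/m}+b_{2}|I_{t}^{(0,b)}/t|^{1/m}$ immediately give the claims via Theorem~\ref{T4}(b) and continuous mapping.
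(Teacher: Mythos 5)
Your argument is correct in substance but follows a genuinely different route from the paper's. The paper writes $\widetilde{Z}^{(1)}_{t}=c_{t}\max\{\Phi^{(a)}_{t},|I^{(a,b)}_{t}|\}$ with $|c_{t}|\le|a_{1}|+|b_{1}|$, invokes $\log\max\{X,Y\}\stackrel{d}{=}\max\{\log X,\log Y\}$ (Lemma \ref{CorLem}) together with continuity of $(x,y)\mapsto\max\{x,y\}$, and then reads the answer off the \emph{joint} limits of $(\log\Phi^{(a)}_{t},\log|I^{(a,b)}_{t}|)$, using that in every case of Theorems \ref{T2}--\ref{T3} the limiting first coordinate is $\le$ the second; for $\widetilde{Z}^{(2)}_{t}$ it separately proves (Lemma \ref{Cor4lem}) that $(\log\Phi^{(a)}_{t},\tfrac1m\log|I^{(ma,b)}_{t}|)$ has the same joint limit. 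You instead factor out $\Phi^{(a)}_{t}$ in the divergent regime (where $\widetilde{I}^{(a,b)}_{t}$ converges a.s.\ as a perpetuity) and $I^{(a,b)}_{t}$ in the critical regime (where $\Phi^{(a)}_{t}/|I^{(a,b)}_{t}|\to0$ in probability because $P[F_{1}=F_{2}]=0$), and show the multiplicative correction is $e^{o_{P}(\mathrm{scale})}$. Your case split is necessary and correctly made: the ratio argument would fail under Theorem \ref{T2}(a), where the joint limit is $(N,N)$, and there you rightly switch to the perpetuity argument. What the paper's max trick buys is uniformity across regimes (no ratio control needed); what your route buys is a more explicit treatment of where the correction term could blow up. Two points deserve flagging. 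First, your one-sentence justification that the $(ma,b)$ process is ``powered by the same Brownian/stable motion'' is exactly the content the paper expands into Lemma \ref{Cor4lem}: beyond the identity $\log L^{j}_{k,m}=m\log L^{j}_{k}$, one must also use the $(ma,b)$ moment hypothesis to kill $\max_{k\le g^{j}_{t}}\log|Q^{j}_{k,m}|$ after scaling, so this step is a real (if routine) lemma rather than an observation. Second, the cancellation event: you need $P[b_{1}=0]=0$ (resp.\ $P[a_{1}+b_{1}\widetilde{I}^{(a,b)}_{\infty}=0]=0$) for the correction to be $O_{P}(1)$ rather than $-\infty$; the paper's proof has the mirror-image gap, asserting $\log|c_{t}|/a_{2}(t)\stackrel{P}{\to}0$ from only an upper bound on $|c_{t}|$, so strictly speaking ``irrespective of any $(a_{1},b_{1})$'' requires excluding these degeneracies in both treatments.
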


\begin{remark}
\begin{enumerate}[(i)]
\item Results of Corollary \ref{conn1} can be further extended in finding long-time limit results of any non-linear functions, such as polynomials of three quantities $\Phi_{t}^{(a)}, |I^{(a,b)}_{t}|,|I^{(ma,b)}_{t}|^{1/m}$ of any order as $t\to\infty$ in the divergent cases (i.e under $E_{\pi}a(\cdot)\le 0$).
\item The random coefficients $(a_{1},b_{1},a_{2},b_{2})$ in \eqref{ecorr}) can be time-dependent (or stochastic) as well.  All results of Corollary \ref{conn1}(a) will still identically hold if $(a_{1},b_{1},a_{2},b_{2})$ satisfies either $$\frac{\log\max\{|a_{1}|+|b_{1}|,|a_{2}|+|b_{2}|\}}{\sqrt{t}}\stackrel{P}{\to}0,\s \text{as}\s t\to\infty$$ in Corollary \ref{conn1}(a) part (i) under conditions of Theorem \ref{T2}; or  $$\frac{\log\max\{|a_{1}|+|b_{1}|,|a_{2}|+|b_{2}|\}}{t^{1/\alpha}L_{0}(t)}\stackrel{P}{\to}0,\s \text{as}\s t\to\infty$$  in Corollary \ref{conn1}(a) part (ii) under conditions of Theorem \ref{T3}.  Corollary \ref{conn1}(b) also holds identically if $(a_{1},b_{1},a_{2},b_{2})$ represents the $t\to\infty$ weak limits of their time-dependent/stochastic versions of the respective quantities (given that their weak limits exist uniquely).
\end{enumerate}
\end{remark}

\section{Applications}\label{secApplications}

\subsection{\textbf{Pitchfork Bifurcation}} In certain deterministic dynamical systems, a notable phenomenon called the `phase transition' takes place in the steady state behavior, depending on a critical parameter as its value shifts from one regime to another. While shifting, the previously stable steady state of the system becomes unstable and gives way to the emergence of more locally stable steady states. In computational terms, this transition corresponds to what is known as Hopf bifurcation. It occurs when one of the ``linearized" eigenvalues of the Jacobian matrix becomes purely imaginary. Importantly, the real part of this eigenvalue exhibits a disparity on either side of the bifurcation point, marking a critical change in the system's behavior.

We begin with the most common example in $2$-dimension which is known as the Hopf-Andronov's model (Pg. $57$, \cite{Yuri2013bifurcation}). A general form of it can be expressed in the polar coordinate $(\rho_{t},\phi_{t})$ form as
\beqn
\frac{d\rho_{t}}{dt}=\rho_{t}(a- b\rho_{t}^{2}),\s \frac{d\phi_{t}}{dt}=1.\label{model1}
\eeqn
Solving above system for $b=1$ (for simplicity) yields a closed-form solution for $\rho:=\big(\rho_{t}: t>0\big).$   
$$\rho^{2}_{t}=\bigg[\frac{1}{a}+\bigg[\frac{1}{\rho^{2}_{0}}-\frac{1}{a}\bigg]e^{-2a t}\bigg]^{-1}1_{\{a\neq 0\}}+\Big(t+\frac{1}{\rho^{2}_{0}}\Big)^{-1}1_{\{a = 0\}},\s \phi_{t}=\phi_{0}+t.$$
This leads to conclusions that 
\begin{itemize}
\item if $a\le 0,$ then $0$ is a globally stable steady state.
\item  If $a>0,$ then $\rho^{2}=a$ is a stable steady state (globally stable respectively in $(0,\infty)$) and $0$ is an unstable steady state. For $a>0,$ for initial value $\rho^{2}_{0},$ any arbitrary path of $\rho^{2}_{t}$ will exponentially converges to $a.$
\end{itemize}
 This is an example of Pitchfork bifurcation \cite{Yuri2013bifurcation} where $a=0$ is the bifurcation parameter value. For this problem, we give an exact structure of limit results when the bifurcation parameters are time-dependent and modulated by an $S$-valued semi-Markov process $Y$.

Consider the model described in \eqref{model2}. Unlike the deterministic part in \eqref{model1}, we see that sign of $E_{\pi}a(\cdot)$ will determine the long time behaviour of the process $\rho:=\big(\rho_{t}: t>0\big)$ in \eqref{model2}. In \cite{hurth2020random} authors found a closed form of the stationary distribution of \eqref{model2} under Markov-switching with $|S|=2.$ Two states made the computation a lot easier, however, for $|S|>2$  their technique does not yield a closed-form solution. Below for more general $S$-valued semi-Markov process $Y$, we account for explicit long-time results for $\rho^{2}$ in different cases determined by the sign of $E_{\pi}a(\cdot),$ and tail properties of the sojourn times of $Y$. These results will help construct the confidence intervals for $\rho^{2}$ that can be time-dependent, and asymptotically exact (in time).

\bt\label{Ex4}
Consider the model in \eqref{model2}. Suppose Assumptions \ref{As0} holds for the underlying semi-Markovian environment $Y$ with the limiting distribution $\pi$ and $U$ is a random variable such that $U\sim \pi.$ 
\begin{enumerate}[(A)]
\item  Suppose Assumption \ref{as2} holds with $(a,b)$ replaced by $(2a,b)$. Weak limit of $(\rho^{2}_{t},t\ge 0)$ will satisfy
\beqn
\rho^{2}_{t} \,\,\stackrel{d}{\to}\,\, \rho^{2}_{\infty}:= \sum_{j\in S}\delta_{U}(\{j\})\frac{1}{2 W^{(2a,b)}_{j}}\quad \text{as }\s  t\to\infty. \label{W}
\eeqn
where $U\ci (W^{(2a,b)}_{j})_{j\in S}$, and $W^{(2a,b)}_{j}$ is defined in \eqref{mixture} identically as $Z_{j}$ where $(a,b)$ is replaced by $(2a,b).$ 
\item Suppose Assumptions \ref{As0} \& \ref{as3} hold with with $(a,b)$ replaced by $(2a,b)$ and $$\widetilde{\sigma}_j^2:=4\Var\Big(\int_{\tau_{0}^{j}}^{\tau_{1}^{j}}\big(a(Y_{s})-E_{\pi}a(\cdot)\big)ds\Big)<\infty.$$
\begin{enumerate}[(i)] 
\item If $E_{\pi}a(\cdot)<0,$ then as $t\to\infty$
\beqn
\frac{-2\log|\rho_{t}| + 2tE_{\pi}a(\cdot)}{\sqrt{t}}\stackrel{d}{\to}\sum_{j\in S}\delta_{U}(\{j\})\frac{\widetilde{\sigma}_{j}}{\sqrt{E|\mathfrak{I}_{1}^{j}|}}N.
\eeqn
\item Suppose $E_{\pi}a(\cdot)=0, b\neq 0$ and Assumption \ref{as3}(c) is replaced by 
\beqn
E\Big[\Big(\log |\int_{\tau_{0}^{j}}^{\tau_{1}^{j}}b^{}(Y_{s})e^{-\int_{s}^{\tau_{1}^{j}}2a(Y_{r})dr}ds|\Big)^{2}\Big]<\infty,\label{Theorem2bcond1*}
\eeqn
then as $t\to\infty,$
\beqn
\frac{-2\log|\rho_{t}|}{\sqrt{t}}\stackrel{d}{\to}\sum_{j\in S}\delta_{U}\big(\{j\}\big)\frac{\widetilde{\sigma}_{j}}{\sqrt{E|\mathfrak{I}_{1}^{j}|}}|N|.
\eeqn
\end{enumerate} 
\item Suppose Assumption \ref{As0} \& \ref{as4} hold for some $\alpha\in (1,2)$ and slowly varying function $L(\cdot),$  and $(\widetilde{\alpha}_{j}^{(+)},\widetilde{\alpha}_{j}^{(-)},\beta_{j},\sigma_{(j,\alpha)})$ are re-defined with $(a,b)$ replaced by $(2a,b)$. Let $U\sim \pi,$ and Stable-$\alpha_{}$ process $\mathcal{\bf{S}}_{\alpha,\beta_{j}}$  be all mutually independent for each $j\in S.$ 
\begin{enumerate}[(i)]
\item If $E_{\pi}a(\cdot)<0,$ then as $t\to\infty,$ \beqn
\frac{-2\log|\rho_{t}| + 2tE_{\pi}a(\cdot)}{t^{\frac{1}{\alpha}}L_{0}(t)} \s\stackrel{d}{\to}\,\,\sum_{j\in S}\delta_{U}\big(\{j\}\big)\Big(\frac{1}{E|\mathfrak{I}_{1}^{j}|}\Big)^{\frac{1}{\alpha}}\sigma_{(j,\alpha)}\mathcal{S}_{\alpha}(1,\beta_{j},0).\label{eT2aEx1}
\eeqn
\item If $E_{\pi}a(\cdot)=0, b\neq 0$ and Assumption \ref{as4}(c) is replaced by  \beqn
E\Big[\Big(\log |\int_{\tau_{0}^{j}}^{\tau_{1}^{j}}b^{}(Y_{s})e^{-\int_{s}^{\tau_{1}^{j}}2a(Y_{r})dr}ds|\Big)^{\alpha+\epsilon}\Big]<\infty\s\text{for some }\,\, \epsilon>0.\label{Theorem2bcond*Ex1}
\eeqn
Then for a slowly varying function $L_{0}(\cdot)$ as $t\to\infty,$
\beqn
\frac{-2\log|\rho_{t}|}{t^{\frac{1}{\alpha}}L_{0}(t)}\stackrel{d}{\to}\sum_{j\in S}\delta_{U}(\{j\}) \Big(\frac{1}{E|\mathfrak{I}_{1}^{j}|}\Big)^{\frac{1}{\alpha}}\sigma_{(j,\alpha)}\sup_{0\le u \le 1}\mathcal{\bf{S}}_{\alpha,\beta_{j}}(u).\non
\eeqn
\end{enumerate} 
\item Suppose Assumption \ref{As0} holds and $a(\cdot)=$ a constant denoted by $a,$ and $U\sim\pi.$ As $t\to\infty,$ if
\begin{enumerate}[(i)]
\item $a<0,$ and $E\log^{+}|\int_{\tau_{0}^{j}}^{\tau_{1}^{j}}b(Y_{s})e^{2a(s-\tau_{0}^{j})}ds|<\infty,$  then $$e^{2at}\rho^{-2}_{t}\stackrel{d}{\to}\rho^{-2}_{0}+2\bigg(\sum_{j\in S}\delta_{U}(\{j\})\big[\int_{0}^{\tau_{0}^{j}}b^{}(Y_{s})e^{2as}ds +e^{2a\tau_{0}^{j}}V_{j}^{*}\big]\bigg),$$ \newline where $V_{j}^{*}$ satisfies \eqref{x=ax+b} with $$(A,B)\stackrel{d}{=}\Big(e^{2a(\tau_{1}^{j} - \tau_{0}^{j})},\int_{\tau_{0}^{j}}^{\tau_{1}^{j}}b^{}(Y_{s})e^{2a(s-\tau_{0}^{j})}ds\Big).$$
\item $a=0, \,\,\&\s E_{\pi}|b^{}|<\infty$. Then as $t\to\infty,$ $$\s\s\frac{\rho^{-2}_{t}}{t}\stackrel{d}{\to}E_{\pi}b(\cdot).$$
\end{enumerate}
\end{enumerate}
\et
\begin{remark}\label{rmk}
The Goldie-Kesten theorem (Theorem 2.4.4 in Buraczewski et al. \cite{buraczewski2016stochastic}) characterizes the heavy-tailed behavior of the solution to the fixed-point equation \eqref{x=ax+b}. If $A\ge 0$ a.s. and
$\mathcal{L}(\log A | A \ge 0)$ is non-arithmetic, $P[Ax + B = x] < 1$ for all $x \in \R$, and there exists $\alpha_{1} > 0$
such that 
$$EA^{\alpha_{1}} = 1, \s E|B|^{\alpha_{1}}<\infty,\s EA^{\alpha_{1}}\log^{+}A<\infty,$$
then there exist constants $c_{+}, c_{-} \ge 0$ with $c_{+} + c_{-} > 0$ such that  as $x\to\infty$
$$P[X > x] \sim c_{+}x^{-\alpha_{1}},\s P[X<-x] \sim c_{-}x^{-\alpha_{1}}.$$ 
Under $E_{\pi}a(\cdot)>0$ (the stable regime), irrespective of $\rho_{0},$ the trajectory $(\rho^{2}_{t}:t\ge 0)$ will converge to the weak limit \eqref{W}. Under this case, it can be shown that $P[\sup_{t\ge 0}\rho_{t}^{-2}<\infty]=1$ and hence $P[\inf|\rho_{t}|>0]=1.$ 
The aversion from $0$ in the weak limit of $\rho_{t}^{2}$ can be shown quantitatively, as a consequence of the Goldie-Kesten theorem result. Under Assumption \ref{As0} \& Assumption \ref{as2}, if $\inf_{j\in S}a(j)<0$ holds, then there exists $\nu^{*}>0,$ such that $$\nu^{*}:=\inf_{j\in S}\alpha_{j}>0,\,\,\text{ where }\,\,\alpha_{j}:=\sup\{\alpha>0: Ee^{-\alpha\int_{\tau_{0}^{j}}^{\tau_{1}^{j}} 2a(Y_{s})ds}=1\}.$$ Under regularity conditions of Goldie-Kesten theorem following holds, 
\beqn
\lim_{x\to\infty}x^{\nu^{*}}P\Big[\rho^{2}_{\infty}<\frac{1}{x}\Big]&=&\lim_{x\to\infty}\sum_{j\in S}\pi_{j}\Big[x^{\nu^{*}}P\big[W^{(2a,b)}_{j}>\frac{x}{2}\big]\Big]\to c,
\eeqn
for some explicit constant $c.$ This implies that if there exists at least one regime $x_{*} \in S$ where $a(x_{*}) < 0$ then  $P\big[\rho^{2}_{\infty} \le \epsilon\big]$ will scale as $c\epsilon^{\nu^{*}}$ as $\epsilon \to 0^{+}$, which is quite surprising.
\end{remark}

In epidemic model contexts, results on the deterministic SIS model under Markov switching are derived in Proposition 5.2 of  \cite{lindskog2018exact}. When the infection and the susceptibility rates are modulated by a semi-markovian $Y,$ similar results can be established in line with Theorem \ref{Ex4}. Also in a jump-type process context under Markovian regime-switching,  multivariate version of the fixed-point equation \eqref{x=ax+b} appears that is explored in \cite{cappelletti2021dynamics} with applications in stochastic mono-molecular biochemical reaction networks. Similar semi-Markovian extension can be established there as well.

\subsection{\textbf{Applications to regime-switching generalized O.U process}}

This framework can be used to establish long-term behaviors of a class of diffusions beyond just the Ornstein-Uhlenback type. Let $a(\cdot), b(\cdot): S\to \R$ be two arbitrary $S$-valued  functions. Consider the following descriptions of two semi-Markov modulated stochastic processes $X_{1}:=(X_{1,t}^{}:t\ge 0),\,\, X_{2}:=(X_{2,t}^{}:t\ge 0)$ satisfying stochastic differential equations (SDE) that do not immediately look like the Ornstein-Uhlenbeck processes:
\beqn
dX_{1,t}&=&(X^{2}_{1,t}+1)\Big[-a(Y_{t})\tan^{-1}(X_{1,t})+b^{2}(Y_{t})X_{1,t}\Big]dt+b(Y_{t}) (X^{2}_{1,t}+1)dW_{t},\non\\
dX_{2,t}&=&-[a(Y_{t})+\frac{b^{2}(Y_{t})}{2}e^{-2X_{2,t}}]dt+b(Y_{t})e^{-X_{2,t}}dW_{t}.\non
\eeqn

We generalize above examples in the following manner. Given any continuous and differentiable function $\beta(\cdot):\R\to\R_{\ge 0},$ that does not vanish in $\R,$ the function $h$ is defined as $$h(x):=\int_{c}^{x}\frac{1}{\beta(y)}dy$$ for some constant $c.$ The general structure $X:=(X_{t}:t\ge 0)$ with initial condition $X_{0}=x_{0}$ can be illustrated as the solution of the following SDE 
\beqn
dX_{t}=\Big[-a(Y_{t})h(X_{t})\beta(X_{t})+\frac{b^{2}(Y_{t})}{2}\beta(X_{t})\beta'(X_{t})\Big]dt+b(Y_{t})\beta(X_{t})dW_{t};\label{exe3}
\eeqn
that is modulated by an $S$-valued semi-Markovian environment $Y.$ Clearly $h$ is a monotone-increasing function, so the inverse exists. One can see that $X_{1}, X_{2}$ are special cases of \eqref{exe3} when $(\beta(x),c)$ is $(x^{2}+1,0)$ and $(e^{-x},-\infty)$ respectively $\forall x\ge 0.$ Observe that the simple Ornstein-Uhlenbeck process is a trivial case when $(h(x),\beta(x),c)$ is $(x,1,0)\s\forall x\ge 0.$  Following result displays the explicit long-time behavior of $X$ (in \eqref{exe3}) given the semi-Markov process $Y$:
\bt\label{Ex1}
Suppose Assumptions \ref{As0} holds for underlying semi-Markovian environment $Y$ with the limiting distribution $\pi$ and $U$ is a random variable such that $U\sim \pi.$ For any $h,\beta,c$ the long time vehaviour of the weak solution of $X:=(X_{t}:t\ge 0)$ defined in \eqref{exe3} will depend on different signs of $E_{\pi}a(\cdot)$ and some integrability conditions involving with $b(\cdot),$ and they are following: 
\begin{enumerate}[(A)]
\item Suppose Assumptions \ref{As0} and \ref{as2} hold  with $(a,b)$ replaced by $(2a,b^{2})$. Weak limit of $(X_{t},Y_{t})$ will satisfy
\beqn
(X_{t},Y_{t}) \,\,\stackrel{d}{\to}\,\, \bigg(\sum_{j\in S}\delta_U(\{j\})h^{-1}\big(\sqrt{Z_j} N\big),U\bigg)\quad \text{as } t\to\infty. \label{P1e1}
\eeqn
where $U\ci (Z_j)_{j\in S}$, $U\sim \pi$,   and $Z_j$ is defined in \eqref{mixture} with $(a,b)$ replaced by $(2a,b^{2}),$ and $N\sim N(0,1)$ is independent of everything.
\item Suppose Assumptions \ref{As0} \& \ref{as3} hold with $(a,b)$ replaced by $(2a,b^{2})$  in Assumption \ref{as3}(c). Let $U\sim \pi,$ and $N$ be a standard normal random variable that are mutually independent with each other.
\begin{enumerate}[(i)] 
\item If $E_{\pi}a(\cdot)<0,$ then as $t\to\infty$
\beqn
\frac{\log|h(X_{t})| + tE_{\pi}a(\cdot)}{\sqrt{t}}\stackrel{d}{\to}\sum_{j\in S}\delta_{U}(\{j\})\frac{\sigma_{j}}{\sqrt{E|\mathfrak{I}_{1}^{j}|}}N.
\eeqn
\item Suppose $E_{\pi}a(\cdot)=0, b\neq 0$ and Assumption \ref{as3}(c) is replaced by 
\beqn
E\Big[\Big(\log \int_{\tau_{0}^{j}}^{\tau_{1}^{j}}b^{2}(Y_{s})e^{-\int_{s}^{\tau_{1}^{j}}2a(Y_{r})dr}ds\Big)^{2}\Big]<\infty,\label{Theorem2bcond1*}
\eeqn
then as $t\to\infty,$
\beqn
\frac{\log|h(X_{t})|}{\sqrt{t}}\stackrel{d}{\to}\sum_{j\in S}\delta_{U}\big(\{j\}\big)\frac{\sigma_{j}}{\sqrt{E|\mathfrak{I}_{1}^{j}|}}|N|.
\eeqn
\end{enumerate} 
\item Suppose Assumptions \ref{As0} \& \ref{as4} hold for some $\alpha\in (1,2)$ and slowly varying function $L(\cdot),$  with $(a,b)$ replaced by $(2a,b^{2})$ in Assumption \ref{as4}(c). Let $U\sim \pi,$ and Stable-$\alpha_{}$ process $\mathcal{\bf{S}}_{\alpha,\beta_{j}}$  be all mutually independent for each $j\in S.$ As $t\to\infty,$ 
\begin{enumerate}[(i)] 
\item if $E_{\pi}a(\cdot)<0,$ \beqn
\frac{\log|h(X_{t})| + tE_{\pi}a(\cdot)}{t^{\frac{1}{\alpha}}L_{0}(t)} \s\stackrel{d}{\to}\,\,\sum_{j\in S}\delta_{U}\big(\{j\}\big)\Big(\frac{1}{E|\mathfrak{I}^{j}_{1}|}\Big)^{\frac{1}{\alpha}}\sigma_{(j,\alpha)}\mathcal{S}_{\alpha}(1,\beta_{j},0).\label{eT2aEx1}
\eeqn
\item if $E_{\pi}a(\cdot)=0, b\neq 0$ and Assumption \ref{as4}(c) is replaced by  \beqn
E\Big[\Big(\log \int_{\tau_{0}^{j}}^{\tau_{1}^{j}}b^{2}(Y_{s})e^{-\int_{s}^{\tau_{1}^{j}}2a(Y_{r})dr}ds\Big)^{\alpha+\epsilon}\Big]<\infty\s\text{for some }\,\, \epsilon>0.\label{Theorem2bcond*Ex1}
\eeqn
Then for a slowly varying function $L_{0}(\cdot)$ as $t\to\infty,$
\beqn
\frac{\log|h(X_{t})|}{t^{\frac{1}{\alpha}}L_{0}(t)}\stackrel{d}{\to}\sum_{j\in S}\delta_{U}(\{j\}) \Big(\frac{1}{E|\mathfrak{I}^{j}_{1}|}\Big)^{\frac{1}{\alpha}}\sigma_{(j,\alpha)}\sup_{0\le u \le 1}\mathcal{\bf{S}}_{\alpha,\beta_{j}}(u).\non
\eeqn
\end{enumerate} 
\item Supposse Assumption \ref{As0} holds and $a(\cdot)=$ a constant denoted by $a.$ $U\sim\pi,$ and $N\sim N(0,1)$ are independent. As $t\to\infty,$ if
\begin{enumerate}[(i)]
\item $a<0$ and $E\log^{+}\big[\int_{\tau_{0}^{j}}^{\tau_{1}^{j}}b^{2}(Y_{s})e^{2a(s-\tau_{0}^{j})}ds\big]<\infty,$ then $$e^{at}h(X_{t})\stackrel{d}{\to}h(X_{0})+\Big(\sum_{j\in S}\delta_{U}(\{j\})\sqrt{\int_{0}^{\tau_{0}^{j}}b^{2}(Y_{s})e^{2as}ds +e^{2a\tau_{0}^{j}}V_{j}^{*}}\Big)N,$$ \newline where $V_{j}^{*}$ satisfies \eqref{x=ax+b} with $$(A,B)\stackrel{d}{=}\Big(e^{2a(\tau_{1}^{j} - \tau_{0}^{j})},\int_{\tau_{0}^{j}}^{\tau_{1}^{j}}b^{2}(Y_{s})e^{2a(s-\tau_{0}^{j})}ds\Big).$$
\item $a=0, \,\,\&\s E_{\pi}b^{2}(\cdot)<\infty,$ then $\s\s\frac{h(X_{t})}{\sqrt{t}}\stackrel{d}{\to}\sqrt{E_{\pi}b^{2}(\cdot)}N.$
\end{enumerate}
\end{enumerate}
\et

We will see a little variations in the distributional weak limits in parts (A),(D) if we have a symmetric $\alpha-$stable noise process; but parts (B),(C) remain the same (except the integrability assumptions).

\begin{Corollary}\label{CC1} We consider a case where $(h(x),\beta(x),c)$ is $(x,1,0)\s\forall x\ge 0,$ in \eqref{exe3}, but the noise process is a symmetric stable-$\alpha^{*}$ process $\mathcal{\bf{S}}_{\alpha^{*},0}$ instead of a Wiener process for some $1< \alpha^{*}< 2$. Hence the process $X$ satisfies 
\beqn
dX_{t}=-a(Y_{t})X_{t}dt+b(Y_{t})d\mathcal{\bf{S}}_{\alpha^{*},0}(t).\label{SaS}
\eeqn
\begin{enumerate}[(A)]
\item Suppose Assumptions \ref{As0} and \ref{as2} hold  with $(a,b)$ replaced by $(\alpha^{*}a,b^{^{\alpha^{*}}})$. Weak limit of $(X_{t},Y_{t})$ will satisfy
\beqn
(X_{t},Y_{t}) \,\,\stackrel{d}{\to}\,\, \bigg(\sum_{j\in S}\delta_U(\{j\})|Z^{*}_j|^{1/\alpha^{*}} \mathcal{S}_{\alpha^{*}}(1,0,0),U\bigg)\quad \text{as } t\to\infty. \label{P1e1sas}
\eeqn
where $U\ci (Z^{*}_j)_{j\in S}$, $U\sim \pi$,   and $Z^{*}_j$ is defined in \eqref{mixture} with $(a,b)$ replaced by $(\alpha^{*}a,b^{^{\alpha^{*}}}),$ and $ \mathcal{S}_{\alpha^{*}}(1,0,0)$ is a symmetric-$\alpha^{*}$ stable random variable with scale $ =1$ and it is independent of everything. 
\item Suppose Assumptions \ref{As0} \& \ref{as3} hold with $(a,b)$ replaced by $(\alpha^{*}a,b^{^{\alpha^{*}}})$  in Assumption \ref{as3}(c). Let $U\sim \pi,$ and $N$ be a standard normal random variable that are mutually independent with each other.
\begin{enumerate}[(i)] 
\item If $E_{\pi}a(\cdot)<0,$ then as $t\to\infty$
\beqn
\frac{\log|X_{t}| + tE_{\pi}a(\cdot)}{\sqrt{t}}\stackrel{d}{\to}\sum_{j\in S}\delta_{U}(\{j\})\frac{\sigma_{j}}{\sqrt{E|\mathfrak{I}_{1}^{j}|}}N.
\eeqn
\item Suppose $E_{\pi}a(\cdot)=0, b\neq 0$ and Assumption \ref{as3}(c) is replaced by 
\beqn
E\Big[\Big(\log \int_{\tau_{0}^{j}}^{\tau_{1}^{j}}b^{^{\alpha^{*}}}(Y_{s})e^{-\int_{s}^{\tau_{1}^{j}}\alpha^{*} a(Y_{r})dr}ds\Big)^{2}\Big]<\infty,\label{Theorem2bcond1*}
\eeqn
then as $t\to\infty,$
\beqn
\frac{\log|X_{t}|}{\sqrt{t}}\stackrel{d}{\to}\sum_{j\in S}\delta_{U}\big(\{j\}\big)\frac{\sigma_{j}}{\sqrt{E|\mathfrak{I}_{1}^{j}|}}|N|.
\eeqn
\end{enumerate} 
\item Suppose Assumptions \ref{As0} \& \ref{as4} hold for some $\alpha\in (1,2)$ and slowly varying function $L(\cdot)$, with $(a,b)$ replaced by $(\alpha^{*}a,b^{^{\alpha^{*}}})$ in Assumption \ref{as4}(c). Let $U\sim \pi,$ and Stable-$\alpha_{}$ process $\mathcal{\bf{S}}_{\alpha,\beta_{j}}$  be all mutually independent for each $j\in S.$ As $t\to\infty,$ 
\begin{enumerate}[(i)] 
\item if $E_{\pi}a(\cdot)<0,$ \beqn
\frac{\log|X_{t}| + tE_{\pi}a(\cdot)}{t^{\frac{1}{\alpha}}L_{0}(t)} \s\stackrel{d}{\to}\,\,\sum_{j\in S}\delta_{U}\big(\{j\}\big)\Big(\frac{1}{E|\mathfrak{I}^{j}_{1}|}\Big)^{\frac{1}{\alpha}}\sigma_{(j,\alpha)}\mathcal{S}_{\alpha}(1,\beta_{j},0).\label{eT2aEx1}
\eeqn
\item if $E_{\pi}a(\cdot)=0, b\neq 0$ and Assumption \ref{as4}(c) is replaced by  \beqn
E\Big[\Big(\log \int_{\tau_{0}^{j}}^{\tau_{1}^{j}}b^{^{\alpha^{*}}}(Y_{s})e^{-\int_{s}^{\tau_{1}^{j}}\alpha^{*}a(Y_{r})dr}ds\Big)^{\alpha+\epsilon}\Big]<\infty\s\text{for some }\,\, \epsilon>0.\label{Theorem2bcond*Ex1}
\eeqn
Then for a slowly varying function $L_{0}(\cdot)$ as $t\to\infty,$
\beqn
\frac{\log|X_{t}|}{t^{\frac{1}{\alpha}}L_{0}(t)}\stackrel{d}{\to}\sum_{j\in S}\delta_{U}(\{j\}) \Big(\frac{1}{E|\mathfrak{I}^{j}_{1}|}\Big)^{\frac{1}{\alpha}}\sigma_{(j,\alpha)}\sup_{0\le u \le 1}\mathcal{\bf{S}}_{\alpha,\beta_{j}}(u).\non
\eeqn
\end{enumerate} 
\item Supposse Assumption \ref{As0} holds and $a(\cdot)= a$ (constant). Let $U\sim\pi,$ and $ \mathcal{S}_{\alpha^{*}}(1,0,0)$ be a symmetric-$\alpha$ stable random variable with scale$ =1,$ and they are independent. As $t\to\infty,$ if
\begin{enumerate}[(i)]
\item $a<0$ and $E\log^{+}\big[\int_{\tau_{0}^{j}}^{\tau_{1}^{j}}b^{\alpha^{*}}(Y_{s})e^{\alpha^{*}a(s-\tau_{0}^{j})}ds\big]<\infty,$ then $$e^{at}X_{t}\stackrel{d}{\to}X_{0}+\Big(\sum_{j\in S}\delta_{U}(\{j\})\big|\int_{0}^{\tau_{0}^{j}}b^{\alpha^{*}}(Y_{s})e^{\alpha^{*}as}ds +e^{\alpha^{*}a\tau_{0}^{j}}V_{j,\alpha^{*}}^{*}\big|^{\frac{1}{\alpha^{*}}}\Big)\mathcal{S}_{\alpha^{*}}(1,0,0),$$ \newline where $V_{j,\alpha^{*}}^{*}$ satisfies \eqref{x=ax+b} with $$(A,B)\stackrel{d}{=}\Big(e^{\alpha^{*}a(\tau_{1}^{j} - \tau_{0}^{j})},\int_{\tau_{0}^{j}}^{\tau_{1}^{j}}b^{\alpha^{*}}(Y_{s})e^{\alpha^{*}a(s-\tau_{0}^{j})}ds\Big).$$
\item $a=0, \,\,\&\s E_{\pi}b^{\alpha^{*}}(\cdot)<\infty,$ then $\s\s\frac{X_{t}}{t^{\frac{1}{\alpha^{*}}}}\stackrel{d}{\to}\big|E_{\pi}b^{^{\alpha^{*}}}(\cdot)\big|^{\frac{1}{\alpha^{*}}}\mathcal{S}_{\alpha^{*}}(1,0,0).$
\end{enumerate}
\end{enumerate}
\end{Corollary}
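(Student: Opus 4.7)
The plan is to derive an explicit mild representation of \eqref{SaS}, exploit the conditional $\alpha^{*}$-stable distribution of the stochastic integral given the regime path $Y$ to recognise $X_t$ as an instance of the random linear combination $\widetilde{Z}_t^{(2)}$ appearing in Corollary \ref{conn1}, and then read off each of (A)--(D) from previously established results with appropriately substituted parameters.

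Since $\Phi_t^{(a)}=e^{-\int_0^t a(Y_r)\,dr}$ has absolutely continuous, bounded-variation paths, the ordinary product rule applies without an It\^o correction, and variation of parameters yields
\be
X_t = \Phi_t^{(a)} X_0 + \int_0^t e^{-\int_s^t a(Y_r)\,dr} b(Y_s)\, d\mathcal{\bf{S}}_{\alpha^{*},0}(s).
\ee
Conditioning on $\mathcal{F}_t^Y$ and using independence of $\mathcal{\bf{S}}_{\alpha^{*},0}$ and $Y$, the integrand $g(s):=e^{-\int_s^t a(Y_r)\,dr}b(Y_s)$ becomes deterministic and pathwise bounded on $[0,t]$ (Assumption \ref{As0}(d) guarantees finitely many jumps of $Y$ in $[0,t]$), so the standard characteristic-function computation for stable integrals with deterministic integrands identifies the conditional law of the stochastic integral as $\mathcal{S}_{\alpha^{*}}\bigl((\int_0^t|g(s)|^{\alpha^{*}}ds)^{1/\alpha^{*}},0,0\bigr)$. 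Factoring out the conditional scale gives
\be
X_t \,\stackrel{d}{=}\, X_0\, \Phi_t^{(a)} + \bigl|I_t^{(\alpha^{*}a,\, |b|^{\alpha^{*}})}\bigr|^{1/\alpha^{*}}\, \mathcal{S}_{\alpha^{*}}(1,0,0),
\ee
with $\mathcal{S}_{\alpha^{*}}(1,0,0) \ci (\Phi_t^{(a)}, I_t^{(\alpha^{*}a,|b|^{\alpha^{*}})},Y_t)$. This is exactly $\widetilde{Z}_t^{(2)}$ of Corollary \ref{conn1} with $m=\alpha^{*}$, random coefficients $(a_2,b_2)=(X_0,\mathcal{S}_{\alpha^{*}}(1,0,0))$, and integrand pair $(a,|b|^{\alpha^{*}})$.

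All four conclusions now follow by reading off existing results. In (A), $E_\pi a(\cdot)>0$ forces $\Phi_t^{(a)} X_0\to 0$, while Theorem \ref{T1} applied with $(a,b)\leftarrow(\alpha^{*}a,|b|^{\alpha^{*}})$ gives $I_t^{(\alpha^{*}a,|b|^{\alpha^{*}})}\stackrel{d}{\to}\sum_j\delta_U(\{j\})Z_j^{*}$; the continuous mapping theorem with $x\mapsto|x|^{1/\alpha^{*}}$, combined with the independent stable factor, then delivers \eqref{P1e1sas}. Parts (B) and (C) are immediate from Corollary \ref{conn1}(a)(2)(i)--(ii) applied to the representation above: the integrability hypothesis on the random coefficients in the Remark following Corollary \ref{conn1} is trivial because $X_0$ is deterministic and $\log|\mathcal{S}_{\alpha^{*}}(1,0,0)|=O_P(1)$, so the coefficient-log divided by either $\sqrt{t}$ or $t^{1/\alpha}L_0(t)$ vanishes in probability, and the limits $\mathcal{W}_{2(a)},\mathcal{W}_{2(b)},\mathcal{W}_{3(a)},\mathcal{W}_{3(b)}$ are inherited verbatim. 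Part (D) follows analogously from Corollary \ref{conn1}(b)(i)--(ii) with the same substitutions, the $V_{j,\alpha^{*}}^{*}$ in the statement being the stationary solution of the SRE prescribed there under $b\mapsto|b|^{\alpha^{*}}$ and $m=\alpha^{*}$. The only non-routine ingredient is the conditional stable factorisation, which is classical for deterministic integrands against symmetric stable L\'evy processes; beyond that the entire argument is a direct translation of notation between the mild solution and $\widetilde{Z}_t^{(2)}$, explaining why no new analytic work is required once the representation \eqref{SaS} has been put into mild form.
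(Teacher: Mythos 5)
Your proposal is correct and follows essentially the same route as the paper: variation of parameters gives the mild form, conditioning on $\mathcal{F}^{Y}_{t}$ yields $X_{t}\stackrel{d}{=}X_{0}\Phi^{(a)}_{t}+|I^{(\alpha^{*}a,b^{\alpha^{*}})}_{t}|^{1/\alpha^{*}}\mathcal{S}_{\alpha^{*}}(1,0,0)$, part (A) follows from Theorem \ref{T1} with the continuous mapping theorem, and parts (B)--(D) are read off from Corollary \ref{conn1} with $m=\alpha^{*}$, $(a_{2},b_{2})=(X_{0},\mathcal{S}_{\alpha^{*}}(1,0,0))$. Your explicit justification of the conditional stable factorisation and of why the random coefficient $b_{2}=\mathcal{S}_{\alpha^{*}}(1,0,0)$ is harmless after the logarithmic scaling is a welcome (if routine) elaboration of steps the paper leaves implicit.
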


\begin{remark}\label{RR1*}
\begin{enumerate}[(i)]
\item Both Theorem \ref{exe3} and Corollary \ref{CC1} show that weak limits depend on the noise process \textbf{only} in the stable domain (i.e., when \( E_{\pi}a(\cdot) > 0 \)) and in the divergence domain when \( \sigma_{j}^{2} = 0 \) for all \( j \in S \). However, in other divergent cases (B) \& (C) (where \( \sigma_{j}^{2} \) is finite or infinite for all \( j \in S \)) the weak limits do not depend on the volatility term \( b(\cdot) \). Consequently, the limit results in cases (B) and (C) are universal and remain unchanged regardless of choice of noise processes, provided the relevant integrability conditions (Assumptions \ref{as3}(c), \ref{as4}(c), or conditions \eqref{Theorem2bcond1*}, \eqref{Theorem2bcond*Ex1}, with exponent \( 2 \) replaced by \( \alpha^{*} \) in all stable \( \alpha^{*} \) cases) hold. This phenomenon arising from the logarithmic transformation and associated scaling used in the divergent cases is quite surprising.

\item The stable case of Corollary \ref{CC1}(A) provides an explicit form of the limiting measure, clearly showing how its heavy tail structure arises from the stable distribution of noise process and the stochastic environments through the mixing measure. This explicit form can be used for parametric statistical inference to estimate $\alpha^{*}$ when partial information about the latent dynamics is available.
\end{enumerate}
\end{remark}

\section{An important Lemma}\label{Lem2} 
By notation $\mathcal{F}^{Y}_{\tau_{g^{j}_{t}}^{j}},$ define the sigma field generated by the class of sets $\{A: A\cap \{g_{t}^{j}=n\}\in \mathcal{F}^{Y}_{\tau_{n}^{j}}\}.$ Crucial elements of main proofs rely on the following lemma on asymptotic conditional independence of $t-\tau_{g_{t}^{j}}^{j}$ and a $\Big\{\mathcal{F}^{Y}_{\tau_{g^{j}_{t}}^{j}}\Big\}_{t\ge 0}$ measurable $\R^{d}$ valued process $(H_{t}:t\ge 0)$ conditioned on $\{Y_{t}=j\}.$ We present Lemma \ref{lem01} separately, It is useful in eliminating $Y_{t}=j$ in the $t \to \infty$ limit from the conditioning part of a probability, particularly when the main event is regarding some functional of $Y,$ that exhibits a marginal weak limit with some properties (see \eqref{key}). By $\mb{0}_{d},I_{d}$ we respectively denote a $d$-dimensional vector of zeros and a diagonal matrix of order $d.$

\begin{lemma}\label{lem01}
Fix $j\in S.$ Suppose $(H_{t}:t\ge 0)$ is a $\Big\{\mathcal{F}^{Y}_{\tau_{g^{j}_{t}}^{j}}\Big\}_{t\ge 0}$ measurable $\R^{d}$ valued process such that there exists a random variable $H_{\infty},$ and an increasing deterministic function $\epsilon(\cdot):\R_{\ge 0}\to\R_{\ge 0}$ such that  as $t\to\infty$
\beqn
H_{t}\stackrel{d}{\to} H_{\infty},\s\frac{\epsilon(t)}{t}\to 0,\,\,\,\,\epsilon(t)\to\infty,\,\,\text{as}\,\,\, t\to\infty, \s\text{and}\s H_{t}-H_{t-\epsilon(t)}\stackrel{P}{\to} \mb{0}_{d}\label{key}
\eeqn
 and $Y$ is a semi-Markov process with Assumption \ref{As0}. Suppose further that $\big(L^{(1)}_{t},L^{(2)}_{t}\big)_{t\ge 0}$ are arbitrary $d$-dimensional vector and matrix valued processes satisfying $\big(L^{(1)}_{t},L^{(2)}_{t}\big)\stackrel{P}{\to} (\mb{0}_{d},I_{d})$ as $t\to\infty$. Then, $L^{(1)}_{t}+L^{(2)}_{t}H_{t}$ and $(t-\tau_{g_{t}^{j}}^{j})$ are asymptotically independent conditioned on $\{Y_{t}=j\};$ i.e, for any $A\in\mathcal{B}(\R^{d})$ such that $P[H_{\infty}\in \partial A]=0$, 

\begin{align}
\lim_{t\to\infty}P\Big[L^{(1)}_{t}+L^{(2)}_{t}H_{t}\in A,\,\,t-\tau_{g_{t}^{j}}^{j}>x \mid Y_{t}=j\Big]=P\Big[H_{\infty}\in A\Big]\,\,\,\frac{\sum_{k\in S}P_{jk}\int_{x}^{\infty}(1-F_{jk}(y))dy}{m_{j}},\non
\end{align}
for any $x\in\R_{\ge 0}.$
\end{lemma}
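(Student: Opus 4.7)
The plan is to split the argument into four stages, combining Slutsky-type reductions with a strong regenerative factorization at an engineered stopping time.

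\emph{Stage 1 (Slutsky and time-shift).} Using $L^{(1)}_t \stackrel{P}{\to} \mb{0}_d$, $L^{(2)}_t \stackrel{P}{\to} I_d$, and tightness of $H_t$ (which follows from $H_t \stackrel{d}{\to} H_\infty$), one obtains $L^{(1)}_t + L^{(2)}_t H_t - H_t \stackrel{P}{\to} \mb{0}_d$. For any continuity set $A$ of $H_\infty$, thickening $1_A$ by $\delta$-neighborhoods of $\partial A$ and using $P[H_\infty \in \partial A]=0$ gives $|P[L^{(1)}_t + L^{(2)}_t H_t \in A,\ \cdot] - P[H_t \in A,\ \cdot]| \to 0$, so it suffices to work with $H_t$. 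The same device, applied to the hypothesis $H_t - H_{t-\epsilon(t)} \stackrel{P}{\to} \mb{0}_d$, lets one replace $H_t$ by $H_{t-\epsilon(t)}$. The pay-off is that $H_{t-\epsilon(t)}$ is $\mathcal{F}^Y_{\tau_{g_{t-\epsilon(t)}^j}^j}$-measurable and hence ``frozen'' strictly before time $t-\epsilon(t)$.

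\emph{Stage 2 (Regeneration at an engineered stopping time).} Introduce the $\mathcal{F}^Y$-stopping time $\sigma_t := \inf\{s \ge t-\epsilon(t) : Y_s = j\}$, the first visit to $j$ at or after $t-\epsilon(t)$. Residual-lifetime estimates of the type \eqref{eRes} (whose hypotheses follow from Assumption \ref{As0}) give $\sigma_t - (t-\epsilon(t)) = O_P(1)$, so $P[\sigma_t \le t] \to 1$ and, similarly, $P[Y_t = j,\ \sigma_t > t] \to 0$. On $\{\sigma_t \le t\}$, $H_{t-\epsilon(t)}$ is $\mathcal{F}^Y_{\sigma_t}$-measurable. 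By the strong regenerative property of $Y$ at the hitting time $\sigma_t$ (using Assumption \ref{As0}(c) for the reset of sojourn-time distributions), the shifted process $\widetilde{Y}_s := Y_{\sigma_t + s}$ is a fresh, non-delayed semi-Markov copy starting at $j$ and independent of $\mathcal{F}^Y_{\sigma_t}$. A direct identification of the renewal cycles on $\{\sigma_t \le t\}$ gives $Y_t = \widetilde{Y}_{t-\sigma_t}$ and $t-\tau_{g_t^j}^j = \widetilde{R}_{t-\sigma_t}$, where $\widetilde{R}_s$ denotes the backward residual time of $\widetilde{Y}$ at the state $j$.

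\emph{Stage 3 (Renewal limit and assembly).} A standard renewal-reward computation on $\widetilde{Y}$ yields
\[
\psi(s) := P\big[\widetilde{Y}_s = j,\ \widetilde{R}_s > x\big] \;\longrightarrow\; \pi_j \cdot \frac{\sum_{k\in S} P_{jk} \int_x^\infty (1 - F_{jk}(y))\, dy}{m_j} \quad \text{as } s \to \infty,
\]
since the numerator on the right is the expected per-cycle time spent at $j$ with age exceeding $x$ and $m_j/\pi_j = E|\mathfrak{I}_1^j|$ is the mean cycle length (a consequence of \eqref{pi}). Conditioning on $\mathcal{F}^Y_{\sigma_t}$ and invoking the Stage 2 independence gives
\[
P\big[H_{t-\epsilon(t)} \in A,\ t-\tau_{g_t^j}^j > x,\ Y_t = j,\ \sigma_t \le t\big] = E\big[1_A(H_{t-\epsilon(t)})\,1_{\{\sigma_t \le t\}}\,\psi(t-\sigma_t)\big].
\]
Since $t - \sigma_t = \epsilon(t) - O_P(1) \to \infty$, we have $\psi(t-\sigma_t) \stackrel{P}{\to} \pi_j\, \pi_j^*((x,\infty))$; combined with $H_{t-\epsilon(t)} \stackrel{d}{\to} H_\infty$ and bounded convergence, the right-hand side tends to $P[H_\infty \in A]\,\pi_j\, \pi_j^*((x,\infty))$. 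Division by $P[Y_t = j] \to \pi_j$ (from \eqref{lim1}) yields the claimed identity.

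\emph{Main obstacle.} The principal difficulty is the strong regenerative factorization of the joint law of $(H_{t-\epsilon(t)},\, t-\tau_{g_t^j}^j,\, 1_{\{Y_t = j\}})$ at the random time $\sigma_t$. In the Markovian setting of \cite{lindskog2018exact}, exponential sojourns provide memorylessness and the residual term factors automatically, so no $\epsilon(t)$-shift is needed. Here the twin uses of the $\epsilon(t)$-shift --- first to make $H_{t-\epsilon(t)}$ pre-$\sigma_t$ measurable, then to force $t-\sigma_t \to \infty$ so that the renewal limit for $\psi$ activates --- together neutralize the non-Markovian dependence of the current sojourn at $j$ on its predecessor and on the next regime state.
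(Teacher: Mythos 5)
Your overall architecture — Slutsky reduction to $H_{t}$, then to $H_{t-\epsilon(t)}$; regeneration at a hitting time of $j$ shortly after $t-\epsilon(t)$; then a renewal limit for the backward residual time over the remaining stretch of length $\approx\epsilon(t)\to\infty$ — is a legitimate and arguably cleaner packaging of what the paper does. However, there is a concrete flaw in Stage 2: the time $\sigma_{t}:=\inf\{s\ge t-\epsilon(t):Y_{s}=j\}$ is \emph{not} a regeneration time on the event $\{Y_{t-\epsilon(t)}=j\}$, which has limiting probability $\pi_{j}>0$. On that event $\sigma_{t}=t-\epsilon(t)$ falls in the middle of a sojourn at $j$; the process $\widetilde{Y}_{s}=Y_{\sigma_{t}+s}$ then begins with the \emph{residual} of the current sojourn (whose law is not $F_{j}$, and whose length is coupled with the already-determined next state via $Q_{jk}$), and it is not independent of $\mathcal{F}^{Y}_{\sigma_{t}}$. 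The semi-Markov process regenerates only at the jump times into $j$, i.e.\ at the $\tau^{j}_{k}$. The fix is to take $\sigma_{t}:=\tau^{j}_{g^{j}_{t-\epsilon(t)}+1}$, the first jump into $j$ after the cycle containing $t-\epsilon(t)$. Then $H_{t-\epsilon(t)}$ is $\mathcal{F}^{Y}_{\tau^{j}_{g^{j}_{t-\epsilon(t)}}}\subset\mathcal{F}^{Y}_{\sigma_{t}}$ measurable, $\sigma_{t}-(t-\epsilon(t))$ is the forward residual time $B_{j}(t-\epsilon(t))=O_{P}(1)$ by \eqref{eRes}, the post-$\sigma_{t}$ process is a fresh non-delayed copy independent of $\mathcal{F}^{Y}_{\sigma_{t}}$ (this is exactly the content of Proposition \ref{Pr1}, which you should invoke rather than assert), and your identifications $Y_{t}=\widetilde{Y}_{t-\sigma_{t}}$, $t-\tau^{j}_{g^{j}_{t}}=\widetilde{R}_{t-\sigma_{t}}$ hold on $\{\sigma_{t}\le t\}$. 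With this correction, and noting that $P[Y_{t}=j,\,\sigma_{t}>t]\to 0$ now requires the (easy) observation that a single renewal cycle cannot cover $[t-\epsilon(t),t]$ with non-vanishing probability, your Stage 3 assembly goes through.

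For comparison, the paper reaches the same conclusion by conditioning on $\{Y_{t-\epsilon(t)}=j'\}$ and summing over $j'$ (which forces a Pratt's-lemma interchange of sum and limit), introducing the event $K_{t}$ that two full $j$-cycles fit between $t-\epsilon(t)$ and $\tau^{j}_{g^{j}_{t}}$ (two rather than one, precisely because the extra event $\{Y_{t-\epsilon(t)}=j'\}$ is measurable up to cycle $g^{j}_{t-\epsilon(t)}+1$), and proving a ratio $M_{t}\to1$ for conditional probabilities; your route, by conditioning on $\mathcal{F}^{Y}_{\sigma_{t}}$ directly and working with joint rather than conditional probabilities, avoids both the sum over $j'$ and the two-cycle bookkeeping. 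Your Stage 3 renewal limit for $\psi$ is exactly the paper's Step 1 (its Lemma \ref{step1}, proved via a Markov renewal equation and the key renewal theorem); stating it as "standard" is acceptable in a sketch, but a complete write-up should either prove it or cite that argument, since the directly-Riemann-integrable verification there is where Assumption \ref{As0}(b) actually enters.
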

\begin{remark}
Both the process $(H_{t}:t\ge 0)$ and the random variable $H_{\infty}$ depend on $j\in S,$ but for notational simplicity the subscript $j$ is omitted.
\end{remark}
\begin{proof} Since under necessary assumptions $\lim_{t\to\infty}P[Y_{t}=j]=\pi_{j}=\frac{\mu_{j}m_{j}}{\sum_{k\in S}\mu_{k}m_{k}},$ the main assertion will follow if we prove that, for any $A\in\mathcal{B}(\R^{d})$ with $P\big[H_{\infty}\in \partial A\big]=0$, as $t\to\infty,$ 
\beqn
\s P\Big[H_{t}\in A,\,\,t-\tau_{g_{t}^{j}}^{j}>x, Y_{t}=j\Big] \to P\Big[H_{\infty}\in A\Big]\,\,\,\frac{\mu_{j}\sum_{k\in S}P_{jk}\int_{x}^{\infty}(1-F_{jk}(y))dy}{\sum_{k\in S}\mu_{k}m_{k}},\label{cond0.02}
\eeqn
then, by Slutsky's theorem, the assertion will hold as
$$\lim_{t\to\infty}P\Big[L_{t}^{(1)}+L_{t}^{(2)}H_{t}\in A,\,\,t-\tau_{g_{t}^{j}}^{j}>x, Y_{t}=j\Big] =\lim_{t\to\infty}P\Big[H_{t}\in A,\,\,t-\tau_{g_{t}^{j}}^{j}>x, Y_{t}=j\Big].$$
We prove \eqref{cond0.02} in following steps sequentially. 
\begin{itemize}
\item \textbf{Step $1$:} $\lim_{t\to\infty}P_{_i}\Big[t-\tau_{g_{t}^{j}}^{j}>x, Y_{t}=j\Big] =\frac{\mu_{j}\sum_{k\in S}P_{jk}\int_{x}^{\infty}(1-F_{jk}(y))dy}{\sum_{k\in S}\mu_{k}m_{k}}.$
\item \textbf{Step $2$:} $\lim_{t\to\infty}P\Big[H_{t-\epsilon(t)}\in A\,\mid\,t-\tau_{g_{t}^{j}}^{j}>x, Y_{t}=j\Big]=P[H_{\infty}\in A].$
\item \textbf{Step $3$:} $\lim_{t\to\infty}P\Big[H_{t}\in A\,\mid\,t-\tau_{g_{t}^{j}}^{j}>x, Y_{t}=j\Big]=\lim_{t\to\infty}P\Big[H_{t-\epsilon(t)}\in A\,\mid\,t-\tau_{g_{t}^{j}}^{j}>x, Y_{t}=j\Big]$ for any $A$ such that $P\big[H_{\infty}\in\partial A\big]=0.$
\end{itemize}
\textbf{Step 1} follows by similar arguments used in proving asymptotic time limit of forward residual time $\lim_{t\to\infty}P[\tau^{j}_{g_{t}^{j}+1} - t>x, Y_{t}=j]$ at display (8.5) in \cite{cinlar1969markov}. For completeness we give a proof in the appendix as Lemma \ref{step1}. 
\item \textbf{Step 2:} Denote the event $\{Y_{t}=j,t-\tau^{j}_{g_{t}^{j}}>x\}$ by $\widetilde{C}^{(j)}_{t,x}.$ Observe that for any set $A\in\mathcal{B}(\R^d),$
\begin{eqnarray}
P\Big[H_{t-\epsilon(t)}\in A\,\mid\,\widetilde{C}^{(j)}_{t,x}\Big] &=&\sum_{j'\in S}P\Big[H_{t-\epsilon(t)}\in A\,\mid\, Y_{t-\epsilon(t)}=j',\widetilde{C}^{(j)}_{t,x}\Big]P\Big[Y_{t-\epsilon(t)}=j'\mid \widetilde{C}^{(j)}_{t,x}\Big]\non\\
&=&\sum_{j'\in S}\frac{P\big[\widetilde{C}^{(j)}_{t,x}\mid Y_{t-\epsilon(t)}=j', H_{t-\epsilon(t)}\in A\big]}{P\big[\widetilde{C}^{(j)}_{t,x}\big]}P\big[H_{t-\epsilon(t)}\in A, Y_{t-\epsilon(t)}=j'\big]\non\\
&=& P[H_{t-\epsilon(t)}\in A]+ \sum_{j'\in S}P\Big[H_{t-\epsilon(t)}\in A,\, Y_{t-\epsilon(t)}=j'\Big]\label{eqSt2.10}\\ &&\s\s\times\Bigg(\frac{P[\widetilde{C}^{(j)}_{t,x}\mid H_{t-\epsilon(t)}\in A, Y_{t-\epsilon(t)}=j' ]}{P[\widetilde{C}^{(j)}_{t,x}]}-1\Bigg).\non
\end{eqnarray}

Denote the second term in RHS of \eqref{eqSt2.10} by $\mathcal{D}_{t}$ and the quantity in first bracket by $$M_{t}:=\frac{P[t-\tau_{g_{t}^{j}}^{j}>x, Y_{t}=j\mid H_{t-\epsilon(t)}\in A, Y_{t-\epsilon(t)}=j' ]}{P[t-\tau_{g_{t}^{j}}^{j}>x, Y_{t}=j]}.$$ In order to show $\mathcal{D}_{t}\to 0,$ we first prove that $\lim_{t\to\infty}M_{t}\to 1.$  

Denote the set $\{\tau_{g_{t-\epsilon(t)}^{j}+2}^{j}\le \tau_{g_{t}^{j}}^{j}\}$ by $K_{t}.$ Observe that 

$$K_{t}=\Big\{B_{j}(t-\epsilon(t))+(\tau_{g^{j}_{t-\epsilon(t)}+2}^{j}-\tau_{g^{j}_{t-\epsilon(t)}+1}^{j})+A_{j}(t)\le \epsilon(t)\Big\}.$$ 

Clearly $P[K_{t}]\to 1,$ as $t\to\infty$ (since all quantities $B_{j}(t-\epsilon(t)), \tau_{g^{j}_{t-\epsilon(t)}+2}^{j}-\tau_{g^{j}_{t-\epsilon(t)}+1}^{j}$ and $A_{j}(t)$ are $O_{_P}(1)$ (using \eqref{eRes} and Proposition \ref{Pr1}(i)) and $t-\epsilon(t)\to\infty,\epsilon(t)\to\infty$ as $t\to\infty$).

Define  $\widetilde{a}_{t},\widetilde{b}_{t}$ as
\beqn
\widetilde{a}_{t}&:=& P\big[K_{t}\mid Y_{t-\epsilon(t)}=j', H_{t-\epsilon(t)}\in A\big],\non\\
\widetilde{b}_{t}&:=& P\big[K_{t}^{c}, (t-\tau_{g_{t}^{j}}^{j})>x, Y_{t}=j\mid H_{t-\epsilon(t)}\in A, Y_{t-\epsilon(t)}=j'\big],\non
\eeqn
 and it is easy to see  that as $t\to\infty,$ $$(\widetilde{a}_{t},\widetilde{b}_{t})\to (1,0).$$

Observe that the numerator of $M_{t}$
\begin{align}
&P\big[t-\tau_{g_{t}^{j}}^{j}>x, Y_{t}=j\mid H_{t-\epsilon(t)}\in A, Y_{t-\epsilon(t)}=j'\big]\non\\&\s=\,\,\,\,\,\,\widetilde{a}_{t}\,\,P\big[t-\tau_{g_{t}^{j}}^{j}>x, Y_{t}=j\mid K_{t}, H_{t-\epsilon(t)}\in A, Y_{t-\epsilon(t)}=j'\big]+\widetilde{b}_{t}\non\\
&\s\stackrel{(a)}{=}\,\,\,\,\,\,\,\,\widetilde{a}_{t}\,\, P\big[t-\tau_{g_{t}^{j}}^{j}>x, Y_{t}=j\mid  K_{t}\big] +\widetilde{b}_{t}\,\,  ,\label{eq_Mt1}
\end{align}
where equality in $(a)$ follows by observing that conditioned on $K_{t}, \{Y_{t-\epsilon(t)}=j'\}$ the event $\{t-\tau^{j}_{g_{t}^{j}}>x, Y_{t}=j\}$ is $\sigma\{\mathcal{H}_{i}: i\ge g_{t-\epsilon(t)}^{j}+2\}$ measurable where $H_{t-\epsilon(t)}$ is $\sigma\{\mathcal{H}_{i}: i\le g_{t-\epsilon(t)}^{j}\}$ measurable so using Proposition \ref{Pr1}(iii) one can remove $\{H_{t-\epsilon(t)}\in A\}$ from the conditioning event. Furthermore, $\{Y_{t-\epsilon(t)}=j'\}$ can be removed from the conditional part as well, since it is $\mathcal{F}^{Y}_{\tau_{g^{j}_{t-\epsilon(t)}+1}^{j}}$ measurable which is independent of $\sigma\{\mathcal{H}_{i}: i\ge g_{t-\epsilon(t)}^{j}+2\}$ under which $\{t-\tau^{j}_{g_{t}^{j}}>x, Y_{t}=j\}$ is measurable conditioned on $K_{t}.$

Since $(\widetilde{a}_{t},\widetilde{b}_{t})\to (1,0),$ both the numerator and denominator of $M_{t}$ will have identical time limit, it follows that $\lim_{t\to\infty}M_{t}\to 1.$ Now we prove $D_{t}\to0$ as $t\to\infty.$

 Denote $\frac{\mu_{j}\sum_{k\in S}P_{jk}\int_{x}^{\infty}(1-F_{jk}(y))dy}{\sum_{k\in S}\mu_{k}m_{k}}$ by $\pi_{j,x}.$ For a fixed $j\in S, x\ge 0,$ and any $\delta_{x} \in (0,\pi_{j,x})$  there exists $t_{\delta_{x}}>0$ such that 
\beqn
\Big|P\big[t-\tau_{g_{t}^{j}}^{j}>x, Y_{t}=j\big]-\pi_{j,x}\Big|\leq \delta_{x},\label{estep2}
\eeqn
 for $t\ge t_{\delta_{x}}$.

Observe that each summand in the second term of the RHS of \eqref{eqSt2.10} converges to $0$ as $t\to\infty$. Also note that, for any $x>0, t\ge t_{\delta_{x}}$, the $j'$-th term has the following upper bound
\begin{align*}
&P\big[H_{t-\varepsilon(t)}\in A, Y_{t-\varepsilon(t)}=j'\big]\big|M_{t}-1\big| \\
&\s\s\le P\big[Y_{t-\varepsilon(t)}=j'\big]\frac{2}{P\big[t-\tau_{g_{t}^{j}}^{j}>x, Y_{t}=j\big]} \\
&\s\s\le P\big[Y_{t-\varepsilon(t)}=j'\big]\frac{2}{\pi_{j,x}-\delta_{x}}
\end{align*} 
and the upper bound is summable over $j'\in S$ and has a limit as $t\to\infty$ which is also summable over $j'\in S$. Hence, by Pratt's lemma (p.~101 in Schilling \cite{Schilling05}), the sum in the RHS of \eqref{eqSt2.10} converges to $D_{t}\to0$ as $t\to\infty$.

From assumption ($\frac{\epsilon(t)}{t}\to 0$) the assertion $H_{t-\epsilon(t)}\stackrel{d}{\to} H_{\infty}$ follows, showing the first term in \eqref{eqSt2.10} converging to $P[H_{\infty}\in A]$ as asserted.

\textbf{Step 3:} Take any $\delta>0$ and define $A^{\delta}:=\{x: d(x,A)<\delta\},A^{-\delta}:=\{x\in A: d(x,A^c)> \delta\}$ and note that $A^{-\delta}\subseteq A\subseteq A^{\delta}$. Denoting $B_{t}:=H_{t}-H_{t-\epsilon(t)}$ one has 
\begin{eqnarray}
P\big[H_{t}\in A \,\mid\,t-\tau_{g_{t}^{j}}^{j}>x, Y_{t}=j\big]=P\big[H_{t-\epsilon(t)}+B_{t}\in A, |B_{t}|\le \delta \,\mid\,t-\tau_{g_{t}^{j}}^{j}>x, Y_{t}=j\big]+C_{t}\non\label{cond0.03}
\end{eqnarray}
where $$ \limsup_{t\to\infty}C_{t}\le \limsup_{t\to\infty}\frac{P[|B_{t}|>\delta]}{P\big[t-\tau_{g_{t}^{j}}^{j}>x, Y_{t}=j\big]}=0.$$  
Observe that
\beqn
P\big[H_{t-\epsilon(t)}\in A^{-\delta}\mid \,\,t-\tau_{g_{t}^{j}}^{j}>x, Y_{t}=j\big]&\le& P\big[H_{t-\epsilon(t)}+B_{t}\in A, |B_{t}|\le \delta\mid \,\,t-\tau_{g_{t}^{j}}^{j}>x, Y_{t}=j\big]\non\\
&\le& P\big[H_{t-\epsilon(t)}\in A^{\delta}\mid \,\,t-\tau_{g_{t}^{j}}^{j}>x, Y_{t}=j\big].\non
\eeqn
Step $3$ follows by letting $\delta\to 0$ after taking $t\to\infty$ to both sides of aforementioned step and applying result  obtained from Step 2 (using continuity set assumption of $A$ i.e \\ $P[H_{\infty}\in \partial A]=0$).
\end{proof}

\section{Conclusion and future directions} \label{secConclusion}
\begin{enumerate}[(i)]
\item  
The regime-switching framework of CTMC in \cite{lindskog2018exact} is extended to a semi-Markovian regime switching while preserving the renewal regenerative structure. This extension is crucial for the modeling point of view as well as for making long-term prediction of observed stochastic processes via finding the scaled limits. Exact long-time behaviors of several classes of stochastic models, admitting a function of a  specific integral  $\int_{0}^{t}b(Y_{s})e^{-\int_{s}^{t}a(Y_{r})dr}ds,$  become explicit in terms of the parameters that determine the dynamics of the underlying regime process. We applied it further in two examples (Pitchfork bifurcation and regime switching diffusions) and got explicit long-time behaviors in different cases with several interesting observations (Remark \ref{rmk} and Remark \ref{RR1*}). We are curious whether, for regime-switching diffusions that are more general than \eqref{exe3} and \eqref{SaS}, phenomena similar to those in Remark \ref{RR1*} still occur or not. Specifically, we wonder if the universal weak limit results emerge in the divergent cases, that do not depend on the noise process (as in cases (B) and (C)) in the right hand side weak limit, while strong dependence on the noise process is observed in the stable case (A) and the special divergent case (D) in both Theorem \ref{Ex1} and Corollary \ref{CC1}.

 Can this framework be extended to more general regime processes while maintaining the renewal regenerative structure? To address this partially, several works on long-memory (see \cite{berbee1987chains}) and infinite-memory processes (see \cite{graham2021regenerative}) suggest that under certain restrictive conditions, the regenerative renewal property may still hold. However, integrating such formulation with our technique requires the distributional estimate of the asymptotic residual time $$\mathcal{L}\Big(t-\tau^{j}_{g_{t}^{j}}\mid Y_{t}=j\Big)$$ as well as developing an analogous version of Lemma \ref{lem01}. Such derivations may not be as explicit as those for the semi-Markov regime process that we have here, which will be addressed later.

As mentioned in Remark \ref{R0} the renewal regenerative structure of $Y$ will be absent if Assumption \ref{As0}(c) does not hold. Can we have any alternative methodology to circumvent this or at least have some closed-form limit results in such non-regular cases? We conjecture that in the divergent cases, one may still get similar limit theorems for $\frac{\log |I_{t}^{(a,b)}|}{a'(t)}-b'(t)$ for suitable choices of $a'(\cdot),b'(\cdot)$ depending on different cases. Such results for non-regenerative $Y,$ will be explored further in future.

\item In contrast to \cite{lindskog2018exact}, the varying tail structures of sojourn time distributions play a crucial role in shaping the different behaviors of the scaling limits. In the stable region that change is not abrupt as it only affects the common law of $\{(L_{i}^{j},Q_{i}^{j}):i\ge 1\}$ as well as the law of the residual time $(t-\tau^{j}_{g_{t}^{j}}),$ but in the transient region it changes the scaling factor of $\log |I_{t}|$ as it controls the rate on how it will diverge in a long time. In a way, this work illustrates universal results on long-time behaviour of some semi-markovian regime-switching stochastic processes with different tail structures of the sojourn times. 

\vspace{0.5 cm}

As hinted in Remark \ref{Rinfinity}, tails of $|\widetilde{A}_{1}^{j}\cap \mathbb{S}^{c}_{\alpha}|$ and $|\widetilde{A}_{1}^{j}\cap \mathbb{S}^{}_{\alpha}|$ are governed by the underlying chain $J,$ and the assumptions in \eqref{bound3} and $E|\widetilde{A}_{1}^{j}\cap \mathbb{S}^{c}_{\alpha}|^{1+\alpha}<\infty$ are quite restrictive in a sense that they force $|\widetilde{A}_{1}^{j}\cap \mathbb{S}^{c}_{\alpha}|, |\widetilde{A}_{1}^{j}\cap \mathbb{S}^{}_{\alpha}|$ to have tails lighter than $\alpha,$ which is the heaviest tails of the sojourn times. In pathological cases when the tail index of $|\widetilde{A}_{1}^{j}\cap \mathbb{S}^{}_{\alpha}|$ is equal to $\alpha$ or less than that, it will drive the aggregate tail index of $\int_{\tau_{0}^{j}}^{\tau_{1}^{j}}\big(a(Y_{s})-E_{\pi}a(\cdot)\big)ds$ in line of Lemma 3.7 (part (2) and other parts) of \cite{jessen2006regularly} and that will change the statement of Theorem \ref{T3} as well (which will be explored with details in future).

\item As a standing condition Assumption \ref{As0}(b) ensures that all elements of $\mathcal{G}$ should have finite first moment. If that is violated, several problems will arise such as $\int_{\tau_{0}^{j}}^{\tau_{1}^{j}}(a(Y_{s})-E_{\pi}a(\cdot))ds$ can be regularly varying with some $\alpha\in (0,1),$ and as a consequence almost surely/ in probability convergence of $\frac{g_{t}^{j}}{t}$ will not hold anymore,  instead for an increasing function $\alpha(\cdot)$ one has $\frac{g_{t}^{j}}{\alpha(t)}\stackrel{d}{\to} X$ for some strictly positive random variable $X.$ As a consequence, Anscome's law via verifying the contiguity condition \eqref{Anscombe} will not be useful as it requires in probability convergence of the stopped random time to a constant, such as $\frac{g_{t}^{j}}{\alpha(t)}\stackrel{p}{\to} a$ for some $a>0$. There should be a way to circumvent this issue, and it will be addressed in the future.

\item Lemma \ref{lem01} can be seen as a conditional version of the Anscombe's law when the stopped random time (here the renewal time $g_{t}^{j}$) appears from the semi-Markovian contexts and it can be used in more general directions such as in finding the weak limit of  $H_{t}:=\sup_{t\ge 0}I_{t}$. It would be interesting to use similar methods in the context of extremal processes under general regime-switching structures as well.
\end{enumerate}
\section*{Acknowledgements}
The author thanks Prof. Filip Lindskog and Prof. Jochen Broecker for their valuable comments on the first draft, and Prof. Jeffrey Collamore for suggesting the article \cite{berbee1987chains} during his visit to Stockholm University.

\section{Set-up of proof-ideas \& notations}\label{proof} We proceed with the convention $\sum_{i=j}^k a_i=0$ and $\prod_{i=j}^k a_i=1$ if $j>k$ for any $a_i$.
For any functions $c,d:S\to\R$ and $j\in S$, define 
\beqn
\,\,\,\,\, G^{c,d}_{j}(x):=\int_{0}^{x}d(j)e^{-c(j)(x-s)}ds=x d(j)1_{\{c(j)=0\}}+ \frac{d(j)}{c(j)}\Big(1-e^{-xc(j)}\Big)1_{\{c(j)\neq 0\}}.
\label{Gfun}
\eeqn
Let $e^{-\int_{s}^{t}a(Y_{r})dr}$ be denoted by $\Phi(s,t)$ and by this notation $\Phi_{t}=\Phi(0,t).$ Given two functions $a(\cdot),b(\cdot): S\to \R$, we define some random variables $\big\{\big(L_{i}^{j},Q_{i}^{j}\big):i\ge 1\big\}, \big(L_{0}^{j},Q_{0}^{j}\big)$ as
\beqn
(L_{i}^{j},Q_{i}^{j})&:=&\bigg(e^{-\int_{\tau_{i-1}^{j}}^{\tau_{i}^{j}}a(Y_{s})ds},\int_{\tau_{i-1}^{j}}^{\tau_{i}^{j}}b(Y_{s})e^{-\int_{s}^{\tau_{i}^{j}}a(Y_{r})dr}ds\bigg)\,\,\,\,\, \forall i\ge 1,\s\text{and}\label{LKintro1}\\
(L_{0}^{j},Q_{0}^{j})&:=&\bigg(e^{-\int_{0}^{\tau_{0}^{j}}a(Y_{s})ds},\int_{0}^{\tau_{0}^{j}}b(Y_{s})e^{-\int_{s}^{\tau_{0}^{j}}a(Y_{r})dr}ds\bigg).\label{LKintro2}
\eeqn
Observe that if $Y_{0}=j$, then $\tau_{0}^{j}=0$ and $(L^{j}_{0},Q^{j}_{0})=(1,0)$. The sequence of intervals $(\mathfrak{I}^{j}_{k})_{k\ge 1}$ of renewal cycles (recall $\mathfrak{I}^{j}_{k}:=[\tau_{k-1}^{j},\tau_{k}^{j})$) is an i.i.d. sequence and therefore also $(L^{j}_{i},Q^{j}_{i})_{i=1}^{\infty}$ is an i.i.d. sequence under Assumption \ref{As0}. However, for fixed $t>0,$ $(L^{j}_{i},Q^{j}_{i})_{i=1}^{g^{j}_{t}}$ is not an i.i.d. sequence since $g^{j}_{t}$, defined in \eqref{g_{t}}, is a renewal time which depends on the sum of all renewal cycle lengths before time $t$.

Given an arbitrary sequence of iid random variables $\{(A_{i},B_{i}):i\ge 1\}$ having common law $\mathcal{L}(A,B),$ define the random variables $\big\{\big(P^{}_{n}(A,B),\widetilde{P}^{}_{n}(A,B)\big):n\ge 1\big\}$ as following 
\beqn
P^{}_{n}(A,B):=\sum_{k=1}^{n}\bigg(\prod_{i=1}^{k-1}A_{i}^{}\bigg)B_{k}^{},\s\s\widetilde{P}^{}_{n}(A,B):=\sum_{k=1}^{n}\bigg(\prod_{i=k+1}^{n}A_{i}^{}\bigg)B_{k}^{}.\label{PP}
\eeqn
Further note that in general $\{\widetilde{P}^{}_{n}(A,B):n\ge 1\}$ is Markovian and have a random coefficient autoregressive process type evolution $$\widetilde{P}^{}_{n+1}(A,B)=A_{n+1}^{}\widetilde{P}^{}_{n}(A,B)+B_{n+1}^{},\s\text{ with }\widetilde{P}^{}_{n}(A,B)\ci \big(A_{n+1}^{},B_{n+1}^{}\big),$$ while the sequence $\{P^{}_{n}(A,B):n\ge 1\}$ is not Markovian but admits the following representation $$P^{}_{n+1}(A,B)=P^{}_{n}(A,B)+\Big(\prod_{i=1}^{n}A_{i}^{}\Big)B_{n+1}^{}.$$  Observe that $\{(L_{i}^{j},Q_{i}^{j}):i\ge 1\}$ are iid under Assumption \ref{As0}. For notational simplicity, by $P_{g_{t}^{j}}^{j},\widetilde{P}_{g_{t}^{j}}^{j}$ we denote $P_{g_{t}^{j}}^{}(L_{i}^{j},Q_{i}^{j}),\widetilde{P}_{g_{t}^{j}}^{}(L_{i}^{j},Q_{i}^{j})$ respectively. Define 
\beqn
R_{t}:=\big(R_{t}^{(1)}, R_{t}^{(2)}\big):=\bigg(\bigg(\prod_{k=1}^{g_{t}^{j}}L^{j}_{k}\bigg)^{} L^{j}_{0},\s\bigg(\prod_{k=1}^{g_{t}^{j}}L^{j}_{k}\bigg)^{} Q^{j}_{0}+P^{j}_{g_{t}^{j}}\bigg).\label{Rt}
\eeqn
From the aforementioned definitions, $R_{t}$ depends on $j\in S$ but for notational simplicity $j$ is omitted.

\section{Proof of Theorem \ref{T1}} 
We use the following two lemmas to prove the theorem.

\begin{lemma}\label{T1lem1}
Under Assumption \ref{As0}, for any $t>0$ and $A=(A_{1},A_{2})\in\mathcal{B}(\R^{2})$,
\begin{align*}
P\big[(\Phi^{(a)}_{t},I^{(a,b)}_{t}) \in A \mid Y_{0}=i,Y_{t}=j\big]=P\big[\big(Q^{(1)}_{t},Q^{(2)}_{t}\big)\in (A_{1},A_{2}) \mid Y_0=i,Y_t=j\big],
\end{align*}
where $(Q^{(1)}_{t},Q^{(2)}_{t})_{t\ge 0}$ is an $(\mathcal{F}_{t}^{Y})_{t\ge 0}$-adapted process defined as
\beqn
 (Q^{(1)}_{t},Q_{t}^{(2)})&:=&\bigg(e^{ -a(j)(t-\tau_{g_{t}^{j}}^{j})}\bigg(\prod_{k=1}^{g^{j}_{t}}L^{j}_{k}\bigg)L^{j}_{0}\s, \s \label{joint}\\ &&\s\s G_{j}^{a,b^{}}\big(t-\tau_{g_{t}^{j}}^{j}\big)+ e^{-a(j)(t-\tau_{g_{t}^{j}}^{j})}\bigg[\bigg(\prod_{k=1}^{g_{t}^{j}}L^{j}_{k}\bigg)^{} Q^{j}_{0}+P^{j}_{g_{t}^{j}}\bigg]\bigg). \non
\eeqn
\end{lemma}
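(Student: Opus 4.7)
The plan is to carry out a pathwise decomposition of both $\Phi^{(a)}_t$ and $I^{(a,b)}_t$ along the hitting-time partition $0\le \tau_0^j\le \tau_1^j\le\cdots\le \tau_{g_t^j}^j\le t$, and then close the gap between the resulting formula and the stated one by an exchangeability argument on the inner renewal blocks.

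On the event $\{Y_0=i,Y_t=j\}$ we have $Y_s=j$ for every $s\in[\tau_{g_t^j}^j,t]$, because $\tau_{g_t^j}^j$ is by definition the last hit of $j$ in $[0,t]$. Splitting $\int_0^t a(Y_r)\,dr$ into $\int_0^{\tau_0^j}$, the blocks $\int_{\tau_{k-1}^j}^{\tau_k^j}$ for $k=1,\dots,g_t^j$, and $\int_{\tau_{g_t^j}^j}^t = a(j)(t-\tau_{g_t^j}^j)$, and using the definitions \eqref{LKintro1}--\eqref{LKintro2}, gives immediately
$$
\Phi^{(a)}_t = e^{-a(j)(t-\tau_{g_t^j}^j)}\bigg(\prod_{k=1}^{g_t^j}L_k^j\bigg)L_0^j,
$$
which is exactly $Q_t^{(1)}$. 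Applying the same splitting to $I^{(a,b)}_t$ and factoring the forward discount $e^{-\int_{\tau_k^j}^t a(Y_r)\,dr}=e^{-a(j)(t-\tau_{g_t^j}^j)}\prod_{l=k+1}^{g_t^j}L_l^j$ out of each block integral yields
$$
I^{(a,b)}_t = G_j^{a,b}(t-\tau_{g_t^j}^j) + e^{-a(j)(t-\tau_{g_t^j}^j)}\bigg[\bigg(\prod_{k=1}^{g_t^j}L_k^j\bigg)Q_0^j + \widetilde{P}_{g_t^j}^{j}\bigg],
$$
which is the expression for $Q_t^{(2)}$ except that the \emph{suffix-weighted} sum $\widetilde{P}_{g_t^j}^{j}$ appears in place of the \emph{prefix-weighted} $P_{g_t^j}^{j}$ from \eqref{PP}.

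To bridge this gap I would enlarge the conditioning to include $g_t^j=n$ and the hitting times $\tau_0^j,\dots,\tau_n^j$. Under Assumption \ref{As0}(c), the regenerative property of $Y$ at state $j$ makes the triples $\{(L_k^j,Q_k^j,\tau_k^j-\tau_{k-1}^j)\}_{k\ge 1}$ i.i.d.\ and independent of $(L_0^j,Q_0^j,\tau_0^j)$. The additional event $\{Y_t=j\}$ refers only to the renewal cycle containing $t$, which starts at $\tau_n^j$, and by Proposition \ref{Pr1}(ii)--(iii) is independent of the first $n$ cycles given $(\tau_0^j,\dots,\tau_n^j)$. Moreover the event $\{g_t^j=n,\tau_n^j=v\}$ depends on $\{\tau_k^j-\tau_{k-1}^j\}_{k=1}^n$ only through their sum, so the conditional law of $(L_k^j,Q_k^j)_{k=1}^n$ is exchangeable. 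Applying the reverse-order permutation $k\mapsto n+1-k$ leaves the symmetric product $\prod_{k=1}^n L_k^j$ invariant and transforms $\widetilde{P}_n(L^j,Q^j)$ into $P_n(L^j,Q^j)$, yielding the claimed distributional identity once the auxiliary conditioning is integrated out.

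The main technical delicacy is the exchangeability step: one must verify that conditioning on $\{Y_t=j\}$ in addition to $g_t^j$ and the hitting times does not destroy the exchangeability of $(L_k^j,Q_k^j)_{k=1}^{g_t^j}$. This is precisely where Proposition \ref{Pr1}(ii)--(iii) and the ``no memory on the previous state'' property of Assumption \ref{As0}(c) are needed, in order to isolate the information carried by the last partial sojourn at $j$ from the preceding regenerative blocks.
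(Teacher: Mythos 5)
Your proposal is correct and follows essentially the same route as the paper: the same pathwise decomposition along $[0,\tau_0^j)\cup_{k}[\tau_{k-1}^j,\tau_k^j)\cup[\tau_{g_t^j}^j,t)$ producing the suffix-weighted sum $\widetilde{P}^j_{g_t^j}$, followed by the same order-reversal of the inner regenerative blocks to replace $\widetilde{P}^j_{g_t^j}$ by $P^j_{g_t^j}$ in distribution. Your conditioning on $\{g_t^j=n\}$ and the hitting times, with the observation that this event depends on the cycle lengths only through their sum, is just a more explicit justification of the paper's distributional identity \eqref{invar}.
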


\begin{lemma}\label{T1lem2}
Suppose Assumptions \ref{As0} and \ref{as2} hold.
\begin{enumerate}[(a)]
\item For each $j\in S,$ as $t\to\infty,$ the process $R:=(R_{t}:t\ge 0)$ defined in \eqref{Rt} marginally satisfies $$R_{t}=\big(R_{t}^{(1)}, R_{t}^{(2)}\big)\stackrel{d}{\to}(0,V^{*}_j)$$  where the random variable $V^{*}_j$ is distributed identically as in \eqref{mixture}.
\item The process $\big(R_{t}^{(2)}:t\ge 0\big)$ satisfies all properties of $(H_{t})_{t\ge 0}$ in \eqref{key} of Lemma \ref{lem01}.
\end{enumerate}
\end{lemma}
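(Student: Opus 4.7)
The strategy is to treat the two components of $R_{t}$ separately, exploiting the fact that under Assumption~\ref{as2} the random product $\prod_{k=1}^{n}L_{k}^{j}$ decays exponentially fast almost surely, while the forward perpetuity series $P_{n}^{j}$ converges almost surely (not merely in distribution) to a random variable with the prescribed law of $V_{j}^{*}$. This pointwise convergence along the full sequence $n\in\mathbb{N}$ will make the random-time substitution $n\mapsto g_{t}^{j}$ automatic, bypassing any Anscombe-type contiguity argument.

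\textbf{Part (a).} For $R_{t}^{(1)}$, one can write
\be
R_{t}^{(1)}=L_{0}^{j}\exp\Big(-\int_{\tau_{0}^{j}}^{\tau_{g_{t}^{j}}^{j}}a(Y_{s})\,ds\Big).
\ee
The ergodic theorem for the semi-Markov process $Y$ gives $t^{-1}\int_{0}^{t}a(Y_{s})\,ds\to E_{\pi}a(\cdot)>0$ a.s., while the residual time $t-\tau_{g_{t}^{j}}^{j}$ is $O_{P}(1)$ by~\eqref{eRes}, so the exponent diverges to $-\infty$ and $R_{t}^{(1)}\to 0$ in probability. For $R_{t}^{(2)}=L_{0}^{j}Q_{0}^{j}\prod_{k=1}^{g_{t}^{j}}L_{k}^{j}+P_{g_{t}^{j}}^{j}$ the first summand vanishes by the same argument, and for the second I plan to show that
\be
P_{\infty}^{j}:=\sum_{k=1}^{\infty}L_{1}^{j}L_{2}^{j}\cdots L_{k-1}^{j}\,Q_{k}^{j}
\ee
converges absolutely almost surely. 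The strong law applied to the iid sequence $(\log L_{i}^{j})_{i\ge 1}$, whose common mean equals $-E|\mathfrak{I}_{1}^{j}|\,E_{\pi}a(\cdot)<0$, gives $|L_{1}^{j}\cdots L_{k-1}^{j}|\le e^{-\gamma_{j}k/2}$ eventually for some $\gamma_{j}>0$, and Assumption~\ref{as2}(b) combined with a Borel--Cantelli argument yields $\limsup_{k}k^{-1}\log|Q_{k}^{j}|\le 0$ a.s., so the $k$-th term of the series is geometrically small. Hence $P_{n}^{j}\to P_{\infty}^{j}$ a.s., and since $g_{t}^{j}\to\infty$ a.s.\ by renewal theory, also $P_{g_{t}^{j}}^{j}\to P_{\infty}^{j}$ a.s. Finally, the identity $P_{\infty}^{j}=Q_{1}^{j}+L_{1}^{j}\widehat{P}_{\infty}^{j}$, with $\widehat{P}_{\infty}^{j}$ an independent copy of $P_{\infty}^{j}$ independent of $(L_{1}^{j},Q_{1}^{j})$, exhibits $P_{\infty}^{j}$ as an a.s.\ solution of~\eqref{x=ax+b} with $(A,B)$ as in~\eqref{AB}, hence $P_{\infty}^{j}\stackrel{d}{=}V_{j}^{*}$.

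\textbf{Part (b).} The adaptedness of $R_{t}^{(2)}$ to $\mathcal{F}^{Y}_{\tau_{g_{t}^{j}}^{j}}$ is immediate: on $\{g_{t}^{j}=n\}$, $R_{t}^{(2)}$ is a function of $Y$ on $[0,\tau_{n}^{j}]$ and hence $\mathcal{F}^{Y}_{\tau_{n}^{j}}$-measurable. Any choice of $\epsilon(t)$ with $\epsilon(t)\to\infty$ and $\epsilon(t)/t\to 0$ (take $\epsilon(t)=\sqrt{t}$ for concreteness) forces both $t$ and $t-\epsilon(t)$ to diverge, so by the a.s.\ convergence established in part~(a) one has $R_{t}^{(2)}\to P_{\infty}^{j}$ and $R_{t-\epsilon(t)}^{(2)}\to P_{\infty}^{j}$ a.s.\ to the same random variable. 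Consequently $R_{t}^{(2)}-R_{t-\epsilon(t)}^{(2)}\to 0$ a.s., a fortiori in probability, and all conditions listed in~\eqref{key} are satisfied.

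\textbf{Main obstacle.} The nontrivial point is the almost sure (rather than merely distributional) convergence of $P_{n}^{j}$. The SRE literature \cite{buraczewski2016stochastic} treats the backward iteration $\widetilde{P}_{n}^{j}$, which is only Markovian and converges in law; however, $P_{n}^{j}$ is an absolutely convergent series under Assumption~\ref{as2}, and this is precisely what allows the random renewal time $g_{t}^{j}$ to be plugged in without any Anscombe-type contiguity verification. All remaining ingredients (ergodicity of $Y$, renewal theorem for $g_{t}^{j}$, and the $O_{P}(1)$ bound on the backward residual time in~\eqref{eRes}) are in place from Section~\ref{sec2}.
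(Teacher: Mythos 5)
Your proposal is correct, and it follows the paper's decomposition (kill $R_t^{(1)}$ and the $Q_0^j$-term by the negative drift of $\log L_k^j$; identify the limit of $P_{g_t^j}^j$ with the unique solution of \eqref{x=ax+b}), but it handles the random-time substitution by a genuinely different mechanism. The paper first shows that $P^j_\infty$ solves the SRE and then proves $P^j_{g_t^j}\to P^j_\infty$ \emph{in probability} by factoring $P^j_\infty-P^j_{g_t^j}=\big(\prod_{i=1}^{g_t^j}L_i^j\big)\cdot[\text{tail series}]$ and arguing that the tail series started after $g_t^j$ is equal in law to $P^j_\infty$ (a regenerative identity that needs Proposition \ref{Pr1}(ii)), hence $O_P(1)$; the same factorization is repeated separately to verify the increment condition of \eqref{key} in part (b). You instead establish almost sure absolute convergence of the forward series directly (SLLN for $\log L_i^j$ plus the Borel--Cantelli consequence $k^{-1}\log^+|Q_k^j|\to 0$ a.s.\ of $E\log^+|Q_1^j|<\infty$), after which $P^j_{g_t^j}\to P^j_\infty$ is a pointwise statement requiring only $g_t^j\to\infty$ a.s., and part (b) comes for free since $R^{(2)}_t$ and $R^{(2)}_{t-\epsilon(t)}$ converge a.s.\ to the same limit. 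What your route buys is the elimination of the distributional tail argument and of the second factorization in part (b); what it costs is having to prove the a.s.\ summability explicitly, though this is exactly the content of Vervaat's theorem which the paper also invokes for uniqueness. Two small points to tighten: you should note (as the paper does in its Lemma \ref{lem_T1finite}) that $L_0^j$ and $Q_0^j$ are a.s.\ finite so that the prefactor terms indeed vanish, and in identifying $\mathcal{L}(P^j_\infty)=\mathcal{L}(V_j^*)$ you should say explicitly that uniqueness of the distributional solution under \eqref{sre_conv_conds} is what closes the argument.
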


We sketch how aforementioned lemmas will help us proving the assertion in \eqref{P1e01}. For any $A_{3}\in\mathcal{B}(\R_{\ge 0}\times \R^{}\times S)$ taking $t\to\infty$ limit on both sides of 

\[
P_{i}[(\Phi^{(a)}_{t},I^{(a,b)}_{t},Y_{t}) \in A_{3}] = \sum_{j\in S}^{}P[Y_{t}=j \mid Y_{0}=i]P\big[(\Phi^{(a)}_{t},I^{(a,b)}_{t},j) \in A_{3} \mid Y_{0}=i,Y_{t}=j\big],\]

and using $P[Y_{t}=j \mid Y_{0}=i]\to \pi_{j}$ (which holds under Assumption \ref{As0}), main assertion will be a consequence of Dominated Convergence Theorem if we show that as $t\to\infty,$

\[
P\big[(\Phi^{(a)}_{t},I^{(a,b)}_{t},Y_{t}) \in A_{3} \mid Y_{0}=i,Y_{t}=j\big]\to P[(0,Z_{j},j)\in A_{3}]\s
\]

where for each $j\in S,$ the random variable $Z_{j}$ is defined in \eqref{P1e01}. Since on conditioning with respect to $\{Y_{t}=j\}$ last co-ordinate of $(\Phi^{(a)}_{t},I^{(a,b)}_{t},Y_{t})$ becomes deterministic, we proceed with $P\big[(\Phi^{(a)}_{t},I^{(a,b)}_{t}) \in A \mid Y_{0}=i,Y_{t}=j\big]$ which has a distributional characterization from Lemma \ref{T1lem1} through $(Q^{(1)}_{t},Q_{t}^{(2)})$. The assertion holds if  we show that as $t\to\infty,$

\beqn
P\Big[(Q^{(1)}_{t},Q^{(2)}_{t})\in A_{} \mid Y_{0}=i,Y_{t}=j\Big]\to P\Big[(0,Z_j)\in A_{}\Big]
\quad A_{}\in \mathcal{B}(\R_{\ge 0}\times \R^{}). \label{toprove2}
\eeqn

Observe that 

\beqn
(Q^{(1)}_{t},Q_{t}^{(2)})=\Big(e^{ -a(j)(t-\tau_{g_{t}^{j}}^{j})}R_{t}^{(1)},G_{j}^{a,b^{}}\big(t-\tau_{g_{t}^{j}}^{j}\big)+e^{ -a(j)(t-\tau_{g_{t}^{j}}^{j})}R_{t}^{(2)}\Big).\label{z_to_r}
\eeqn

The limit in \eqref{toprove2} holds if one can find $t\to\infty$ limit of $\mathcal{L}\Big((t-\tau_{g_{t}^{j}}^{j}), R_{t}\mid Y_{0}=i,Y_{t}=j\Big).$ From Lemma \ref{T1lem2}(a) one has the unconditional $t\to\infty$ weak limit for $R_{t}.$ As $R_{t}$ is a $\mathcal{F}^{Y}_{\tau_{g^{j}_{t}}^{j}}$ measurable process with $R_{t}^{(1)}\stackrel{d}{\to} 0$ established in Lemma \ref{T1lem2}(a), It suffices to find the limit of $\mathcal{L}\Big((t-\tau_{g_{t}^{j}}^{j}), R^{(2)}_{t}\mid Y_{0}=i,Y_{t}=j\Big).$

With the help of Lemma \ref{T1lem2}(b),  Lemma \ref{lem01} is applied by setting $H_{t}:=R_{t}^{(2)}, (L_{t}^{(1)},L_{t}^{(2)})=(0,1),$ and we yield for any $A_{1}\in\mathcal{B}(\R), A_{2}\in\mathcal{B}(\R_{\ge 0})$ as $t\to\infty$

\[P_{i}\big[\big(R_{t}^{(2)},(t-\tau_{g_{t}^{j}}^{j})\big)\in A_{1}\times A_{2}\mid Y_{t}=j\big]\to P[V^{*}_{j}\in A_{1}] \pi_{j}^{*}(A_{2})\]

  where $V^{*}_{j}$ is defined as the unconditional limit of $R^{(2)}_{t}$ in Lemma \ref{T1lem2}(a) and the measure $\pi_{j}^{*}(\cdot)$ is defined in \eqref{pi_j^*}. This way \eqref{toprove2} is established by setting the random variable $T^{j}$ (that is independent of rest of the random variables) such that $\mathcal{L}(T^{j})=\pi^{*}_{j},$ and observing  
\begin{align}
&\mathcal{L}\Big(e^{ -a(j)(t-\tau_{g_{t}^{j}}^{j})}R_{t}^{(1)}, G_{j}^{a,b^{}}\big(t-\tau_{g_{t}^{j}}^{j}\big)+e^{ -a(j)(t-\tau_{g_{t}^{j}}^{j})}R_{t}^{(2)}\mid Y_{0}=i, Y_{t}=j\Big)\non\\ &\s\s\s\s\stackrel{w}{\to}\mathcal{L}\big(0, G_{j}^{a,b^{}}(T^{j})+e^{-a(j)T^{j}}V^{*}_{j}\big)\non\\ &\s\s\s\s
=\mathcal{L}(0,Z_{j})\non
\end{align}

as $t\to\infty$ (where $Z_{j}$ is defined in \eqref{mixture}) proving the assertion in \eqref{toprove2}. 

\hfill$\square$

\subsection{Proof of Lemma \ref{T1lem1}} 
\begin{proof}
Define $\Phi^{}(s,t)$ as $e^{-\int_{s}^{t}a(Y_{r})dr}$ for any $0<s<t.$ On set $\{Y_{0}=i,Y_{t}=j\}=\{\omega\in\Omega: Y_{0}(\omega)=i,Y_{t}(\omega)=j\}$ we decompose the integral type expressions of $(\Phi^{(a)}_{t},I^{(a,b)}_{t})$ through partitioning the interval $[0,t)$ into disjoint intervals as $[0,t)=[0,\tau_{0}^{j})\cup_{k=1}^{g_{t}^{j}}[\tau_{k-1}^{j},\tau_{k}^{j})\cup [\tau^{j}_{g_{t}^{j}},t).$ One has the following representation using the notations in \eqref{LKintro1},\eqref{LKintro2} and \eqref{Rt}
\beqn
\begin{pmatrix}
\Phi^{(a)}_ {t}\\
I^{(a,b)}_{t}  
\end{pmatrix} =\begin{pmatrix}
\Phi(0,\tau_{0}^{j})\Big(\prod_{i=1}^{g_{t}^{j}} \Phi\big(\tau_{i-1}^{j}, \tau_{i}^{j}\big)\Big)\Phi(\tau_{g_{t}^{j}}^{j},t)\\
\int_{0}^{\tau_{0}^{j}}b^{}(Y_{s})\Phi^{}(s,t)ds+\sum_{k=1}^{g_{t}^{j}}\int_{\tau_{k-1}^{j}}^{\tau_{k}^{j}}b^{}(Y_{s})\Phi^{}(s,t)ds  \quad+\int_{\tau_{g_{t}^{j}}^{j}}^{t}b^{}(Y_{s})\Phi^{}(s,t)ds.
\end{pmatrix} \non\\
\stackrel{}{=} \begin{pmatrix}
L^{j}_{0}\bigg(\prod_{k=1}^{g^{j}_{t}}L^{j}_{k}\bigg)\Phi(\tau_{g_{t}^{j}}^{j},t)\\
\Phi^{}(\tau_{g_{t}^{j}}^{j},t)\Big[Q_{0}^{j}\prod_{k=1}^{g_{t}^{j}}L_{k}^{j}+  \sum_{k=1}^{g_{t}^{j}}\Big(\prod_{i=k}^{g_{t}^{j}-1} L_{i+1}^{j}\Big)Q_{k}^{j}\Big]+\int_{\tau_{g_{t}^{j}}^{j}}^{t}b^{}(Y_{s})\Phi^{}(s,t)ds
\end{pmatrix}\non\\
\stackrel{\{Y_{0}=i,Y_{t}=j\}}{=} \begin{pmatrix}
L^{j}_{0}\bigg(\prod_{k=1}^{g^{j}_{t}}L^{j}_{k}\bigg)e^{ -a(j)(t-\tau_{g_{t}^{j}}^{j})}\\
e^{-a(j)(t-\tau_{g_{t}^{j}}^{j})}\bigg[\bigg(\prod_{k=1}^{g_{t}^{j}}L^{j}_{k}\bigg)^{} Q^{j}_{0}+\widetilde{P}^{j}_{g_{t}^{j}}\bigg]+G_{j}^{a,b^{}}\big(t-\tau_{g_{t}^{j}}^{j}\big)\label{joint1}
\end{pmatrix}
\eeqn
where the last equality is a consequence of the fact that on $\{Y_{0}=i,Y_{t}=j\}$ the latent process $Y$ will always be at state $j$ in interval $[\tau_{g_{t}^{j}}^{j},t)$ and that implies 

\[
\int_{\tau_{g_{t}^{j}}^{j}}^{t}b^{}(Y_{s})\Phi^{}(s,t)ds
= \int_{\tau_{g_{t}^{j}}^{j}}^{t} b(j)e^{-a(j)(t-s)}ds=\int_{0}^{t-\tau_{g_{t}^{j}}^{j}} b(j)e^{-a(j)(t-r)}dr= G_{j}^{a,b^{}}\big(t-\tau_{g_{t}^{j}}^{j}\big).\]

 Note that in \eqref{joint} we have $P_{g_{t}^{j}}^{j}$ which is not same as $\widetilde{P}_{g_{t}^{j}}^{j}$ in \eqref{joint1}.  Observe that 
\beqn
&\mathcal{L}\bigg((L_{0}^{j},Q_{0}^{j}), (L_{1}^{j},Q_{1}^{j}),(L_{2}^{j},Q_{2}^{j}),\ldots,(L_{g_{t}^{j}}^{j},Q_{g_{t}^{j}}^{j}),(t-\tau_{g_{t}^{j}}^{j})\mid Y_{0}=i,Y_{t}=j\bigg)\non\\
&\,\,\,\s\s\s\stackrel{}{=}\mathcal{L}\bigg((L_{0}^{j},Q_{0}^{j}), (L_{g_{t}^{j}}^{j},Q_{g_{t}^{j}}^{j}), (L_{g_{t}^{j}-1}^{j},Q_{g_{t}^{j}-1}^{j}),\ldots,(L_{1}^{j},Q_{1}^{j}),(t-\tau_{g_{t}^{j}}^{j})\mid Y_{0}=i,Y_{t}=j\bigg).\label{invar}
\eeqn
This follows from the fact that \(g_{t}^{j}\) represents the renewal time, which depends solely on the total sum of the interval lengths \( [0, \tau_{0}^{j}) \cup_{k=1}^{g_{t}^{j}} [\tau_{k-1}^{j}, \tau_{k}^{j}) \). This sum remains unchanged if we reverse the order of the regenerating renewal intervals in \( \cup_{k=1}^{g_{t}^{j}} [\tau_{k-1}^{j}, \tau_{k}^{j}) \) by replacing each \(k\) with \(g_{t}^{j}-k+1\) for \(k=1,\ldots, g_{t}^{j}\). As a result, the equality in distribution holds for both sides of \eqref{invar}.

 Since $P_{g_{t}^{j}}^{j}$ and $\widetilde{P}_{g_{t}^{j}}^{j}$ are outputs of identical function of the random variables in LHS and RHS of \eqref{invar}. This implies that
\beqn
\Big((L_{0}^{j},Q_{0}^{j}), P_{g_{t}^{j}}^{j},(t-\tau_{g_{t}^{j}}^{j})\Big)\stackrel{\mathcal{L}(\cdot\mid Y_{0}=i,Y_{t}=j)}{=}\Big((L_{0}^{j},Q_{0}^{j}), \widetilde{P}_{g_{t}^{j}}^{j},(t-\tau_{g_{t}^{j}}^{j})\Big)\label{ePP}
\eeqn
 are weakly identical. Hence, 
\beqn
&&\mathcal{L}\bigg(L^{j}_{0}\bigg(\prod_{k=1}^{g^{j}_{t}}L^{j}_{k}\bigg)e^{ -a(j)(t-\tau_{g_{t}^{j}}^{j})}\s,\s e^{-a(j)(t-\tau_{g_{t}^{j}}^{j})}\bigg[\bigg(\prod_{k=1}^{g_{t}^{j}}L^{j}_{k}\bigg)^{} Q^{j}_{0}+\widetilde{P}^{j}_{g_{t}^{j}}\bigg]\non\\&&\s\s\s\s\s\s\s\s\s\s\s\s\s\s\s+\,\,G_{j}^{a,b^{}}\big(t-\tau_{g_{t}^{j}}^{j}\big)\mid Y_{0}=i,Y_{t}=j\bigg)
\non\\&&\s=\mathcal{L}\Big(Q_{t}^{(1)},Q_{t}^{(2)}\mid Y_{0}=i,Y_{t}=j\Big),\non
\eeqn
holds, proving the assertion of Lemma \ref{T1lem1}.
\end{proof}

Proof of Lemma \ref{T1lem2} is given in the Supplementary Material. Since the proof of Theorem \ref{T2} closely follows the approach of Theorem \ref{T3} (and Theorem 2 of \cite{lindskog2018exact}), it is added in the Supplementary material.

\section{Proof of Theorem \ref{T3}}
Denote $t^{1/\alpha}L_{0}(t), n^{1/\alpha}L_{0}(n)$ by $c_{t},c_{n}$ respectively. We begin by stating the following Lemma and prove the theorem in the following subsections. 
\begin{lemma}\label{regV}
Suppose Assumptions \ref{As0} and \ref{as4}(a,b) hold. Then for each $j\in S,$ there exist $\alpha\in(1,2)$ and a slowly varying function $L(\cdot)$ (specified in Assumption \ref{as4}(b)) such that 
\beqn
\lim_{x\to\infty}\frac{P\Big[\int_{\tau_{0}^{j}}^{\tau_{1}^{j}}\big(a(Y_{s})-E_{\pi}a(\cdot)\big)ds>x\Big]}{x^{-\alpha}L(x)}&=&\widetilde{\alpha}_{j}^{(+)}=\frac{1-\beta_{j}}{2}\sigma^{\alpha}_{(j,\alpha)},\label{reg1}\\ \lim_{x\to\infty}\frac{P\Big[\int_{\tau_{0}^{j}}^{\tau_{1}^{j}}\big(a(Y_{s})-E_{\pi}a(\cdot)\big)ds < -x\Big]}{x^{-\alpha}L(x)}&=&\widetilde{\alpha}_{j}^{(-)}=\frac{1+\beta_{j}}{2}\sigma^{\alpha}_{(j,\alpha)}.\label{reg2}
\eeqn
\end{lemma}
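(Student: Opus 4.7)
\textbf{Proof proposal for Lemma \ref{regV}.} The plan is to decompose the integral cycle-wise, isolate the finitely many heavy-tailed sojourn contributions, and reduce to a conditional version of the ``principle of a single big jump'' for finite sums of independent regularly varying summands, followed by a dominated-convergence argument to integrate out the random embedded path $\widetilde{A}_1^j$.

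First, conditioning on the embedded Markov path we write
\[
\int_{\tau_0^j}^{\tau_1^j}\widehat{a}(Y_s)\,ds \;=\; \sum_{(i_l,i_{l+1})\in \widetilde{A}_1^j} \widehat{a}(i_l)\,\widetilde{F}_{i_l i_{l+1}},
\]
where, by Assumption \ref{As0}(c) and \eqref{Prod}, the sojourn variables $\{\widetilde{F}_{i_l i_{l+1}}\}$ are conditionally independent with $\widetilde{F}_{i_l i_{l+1}}\sim F_{i_l i_{l+1}}$. Split this sum into three parts according to the partition $\widetilde{A}_1^j = (\widetilde{A}_1^j\cap\mathbb{S}_\alpha^+)\sqcup(\widetilde{A}_1^j\cap\mathbb{S}_\alpha^-)\sqcup(\widetilde{A}_1^j\cap\mathbb{S}_\alpha^c)$. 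Call these $S_j^+$, $S_j^-$, and $R_j$ respectively; observe that on $\mathbb{S}_\alpha^c$ the factor $\widehat{a}$ may vanish, but $|\widehat{a}|\le |a^*|+|E_\pi a|$ is bounded on $\mathbb{S}_\alpha^c$ by the last line of Assumption \ref{as4}(b). The goal is to show $P[S_j^+>x]/(x^{-\alpha}L(x))\to \widetilde{\alpha}_j^{(+)}$ and similarly for $S_j^-$, while $P[|R_j|>x]=o(x^{-\alpha}L(x))$.

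Next I would handle $S_j^+$. Condition on the path $\widetilde{A}_1^j$: this reduces $S_j^+$ to a finite sum of scaled independent regularly varying variables $\widehat{a}(i_l)\widetilde{F}_{i_l i_{l+1}}$ with common tail index $\alpha$ and respective slowly-varying constants $c_{i_l i_{l+1}}L(x)$. By Embrechts--Goldie--Veraverbeke (see e.g.\ Lemma 3.1 in Jessen--Mikosch or Corollary A3.20 in Embrechts, Kl\"uppelberg, Mikosch), conditional on the path,
\[
\lim_{x\to\infty}\frac{P\bigl[S_j^+>x\mid \widetilde{A}_1^j\bigr]}{x^{-\alpha}L(x)}\;=\;\sum_{(i_l,i_{l+1})\in \widetilde{A}_1^j\cap\mathbb{S}_\alpha^+} c_{i_l i_{l+1}}\bigl[\widehat{a}(i_l)\bigr]^{\alpha}\quad \text{a.s.}
\]
The same lemma also gives negligibility of the negative tail of $S_j^+$ since all its positive summands have $\widehat{a}(i_l)>0$. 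To lift this conditional limit to an unconditional one, I would apply dominated convergence to
\[
\frac{P[S_j^+>x]}{x^{-\alpha}L(x)} \;=\; E\!\left[\frac{P[S_j^+>x\mid \widetilde{A}_1^j]}{x^{-\alpha}L(x)}\right],
\]
where the dominating function comes from Potter's bound \eqref{bound2}: for $x$ large,
\[
\frac{P[\widehat{a}(i_l)\widetilde{F}_{i_l i_{l+1}}>x]}{x^{-\alpha}L(x)} \;\le\; \widetilde{c}_\epsilon \bigl(1\vee|\widehat{a}(i_l)|\bigr)^{\alpha+\epsilon}\,c_{i_l i_{l+1}},
\]
and hence the conditional ratio is bounded by $\widetilde{c}_\epsilon\sum_{l\in\widetilde{A}_1^j\cap\mathbb{S}_\alpha}c_{i_l i_{l+1}}(|a(i_l)||\widetilde{A}_1^j\cap\mathbb{S}_\alpha|)^{\alpha+\epsilon}$ (absorbing the union-bound factor $|\widetilde{A}_1^j\cap\mathbb{S}_\alpha|^{\alpha}$ into the extra exponent $\epsilon$). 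This dominating random variable is integrable precisely by \eqref{bound3}, so Lebesgue's theorem yields
\[
\lim_{x\to\infty}\frac{P[S_j^+>x]}{x^{-\alpha}L(x)}
= E\!\!\sum_{(i_l,i_{l+1})\in \widetilde{A}_1^j\cap\mathbb{S}_\alpha^+}\!\! c_{i_l i_{l+1}}\bigl[\widehat{a}(i_l)\bigr]^{\alpha}
= \widetilde{\alpha}_j^{(+)}.
\]
An identical argument applied to $-S_j^-$ gives $\lim P[-S_j^->x]/(x^{-\alpha}L(x))=\widetilde{\alpha}_j^{(-)}$.

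Finally I would control the remainder $R_j=\sum_{(i_l,i_{l+1})\in \widetilde{A}_1^j\cap \mathbb{S}_\alpha^c}\widehat{a}(i_l)\widetilde{F}_{i_l i_{l+1}}$. Using $|\widehat{a}|\le a^*+|E_\pi a|$ on $\mathbb{S}_\alpha^c$, a union bound, \eqref{bound4}, and Wald-type conditioning on $|\widetilde{A}_1^j\cap\mathbb{S}_\alpha^c|$ gives, for $x\ge x^*(a^*+|E_\pi a|)$,
\[
\frac{P[|R_j|>x]}{x^{-\alpha}L(x)} \;\le\; (a^*+|E_\pi a|)^{\alpha}\,E\bigl|\widetilde{A}_1^j\cap\mathbb{S}_\alpha^c\bigr|^{1+\alpha}\cdot \sup_{y\ge x^*}\sup_{(j_k,j_{k+1})\notin\mathbb{S}_\alpha}\!\!\!\frac{\overline{F}_{j_kj_{k+1}}(y)}{y^{-\alpha}L(y)} \cdot \frac{L(x/|\widetilde{A}_1^j\cap\mathbb{S}_\alpha^c|)}{L(x)},
\]
which by the $o(\cdot)$-property of the tails in $\mathbb{S}_\alpha^c$ (encoded in Assumption \ref{as4}(b)) tends to $0$ as $x\to\infty$. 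Combining the three estimates and noting that the joint tail of a sum of two asymptotically regularly varying variables adds linearly (again by the single-big-jump principle applied to $S_j^++S_j^-+R_j$), we get the announced limits and the identifications $\widetilde{\alpha}_j^{(+)}=\tfrac{1-\beta_j}{2}\sigma_{(j,\alpha)}^\alpha$, $\widetilde{\alpha}_j^{(-)}=\tfrac{1+\beta_j}{2}\sigma_{(j,\alpha)}^\alpha$ are just algebraic rearrangements of definitions \eqref{sig}.

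\textbf{Expected main obstacle.} The delicate point is the dominated-convergence step: one needs a single integrable majorant for the ratios $P[S_j^+>x\mid\widetilde{A}_1^j]/(x^{-\alpha}L(x))$ uniform in $x$. This is exactly why Assumption \ref{as4}(b) carries the somewhat technical pair \eqref{bound2}--\eqref{bound3}: \eqref{bound2} provides a uniform Potter-type bound across the (possibly infinite) family $\mathbb{S}_\alpha$, and \eqref{bound3} upgrades the pathwise bound to an $L^1$ bound over the random path $\widetilde{A}_1^j$. Verifying that the majorant absorbs both the slowly varying fluctuations and the random cardinality $|\widetilde{A}_1^j\cap\mathbb{S}_\alpha|$ is the core technical step; everything else is either a direct application of the single-big-jump lemma or routine bookkeeping.
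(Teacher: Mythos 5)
Your proposal is correct and follows essentially the same route as the paper's proof: condition on the embedded path $\widetilde{A}_1^j$, apply the single-big-jump tail asymptotics to the resulting finite sum of independent scaled regularly varying sojourn times, and lift the conditional limit via dominated convergence using the Potter-type bound \eqref{bound2} together with the integrability condition \eqref{bound3}, with \eqref{bound4} controlling the light-tailed remainder. The only (cosmetic) difference is organizational — the paper groups the $\mathbb{S}_\alpha^c$ terms with the heavy-tailed terms of the same sign and absorbs them via the "lighter tail is negligible" lemma before taking the difference of the positive and negative parts, whereas you isolate the entire $\mathbb{S}_\alpha^c$ contribution as a separate remainder — but the estimates used are identical.
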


\subsection{Proof of Theorem \ref{T3}(a)}
\begin{proof}
We begin with $I_{t}^{(a,b)}=\Phi_{t}^{(a)}\widetilde{I}_{t}^{(a,b)},$ where $\widetilde{I}^{(a,b)}_{t}=\int_{0}^{t}b(Y_{s})e^{\int_{0}^{s}a(Y_{r})dr}ds.$ Fix arbitrary $i,j\in S$. Observe that on $\{Y_{0}=i,Y_{t}=j\}$ using renewal regenerative structure of $Y$ (as a consequence of Assumption 1)
\beqn
\Phi_{t}^{(a)}=L_{0}^{j}\Big(\prod_{i=1}^{g_{t}^{j}}L_{i}^{j}\Big)e^{-a(j)(t-\tau_{g_{t}^{j}}^{j})},\,\,\, \widetilde{I}^{(a,b)}_{t}=\bigg[\frac{Q_{0}^{j}+S^{**}_{g_{t}^{j}}}{L_{0}^{j}}+\frac{G_{j}^{(a,b)}(t-\tau_{g_{t}^{j}})}{\Phi_{t}^{(a)}}\bigg]\non
\eeqn
where for any integer $n\ge 1,\s$ $S^{**}_{n}:=P_{n}\big(\frac{1}{L_{1}^{j}},\frac{Q_{1}^{j}}{L_{1}^{j}}\big)=\sum_{i=1}^{n}\frac{Q_{i}^{j}}{L_{i}^{j}}\prod_{k=1}^{i-1}\Big(\frac{1}{L_{k}^{j}}\Big).$

 It follows that
\begin{align}
&\mathcal{L}\bigg[\bigg(\frac{\log \Phi_{t}^{(a)} +tE_{\pi}a(\cdot)}{t^{\frac{1}{\alpha}}L_{0}(t)},\frac{\log |I_{t}^{(a,b)}| +tE_{\pi}a(\cdot)}{t^{\frac{1}{\alpha}}L_{0}(t)}\bigg)| Y_{0}=i, Y_{t}=j\bigg]\non\\
&\s = \mathcal{L}\bigg[\Big(\widetilde{G}^{*(1)}_{t}+\widetilde{G}^{(2)}_{t},\widetilde{G}^{*(1)}_{t}+\widetilde{G}^{(2)}_{t}+\widetilde{G}^{(3)}_{t}\Big)| Y_{0}=i, Y_{t}=j\bigg],\label{T3e1}
\end{align}
where
\begin{align}
&\widetilde{G}^{*(1)}_{t}=\frac{\sum_{i=1}^{g_{t}^{j}}\log L_{i}^{j}+tE_{\pi}a(\cdot)}{t^{\frac{1}{\alpha}}L_{0}(t)},\,\,\s\widetilde{G}^{(2)}_{t}=\frac{\int_{0}^{\tau_{0}^{j}}a(Y_{s})ds+a(j)(t-\tau_{g_{t}^{j}}^{j})}{t^{\frac{1}{\alpha}}L_{0}(t)},\non\\
&\widetilde{G}^{(3)}_{t}=\frac{\log\Big|\frac{Q_{0}^{j}+S^{**}_{g_{t}^{j}}}{L_{0}^{j}}+\frac{G_{j}^{(a,b)}(t-\tau_{g_{t}^{j}})}{\Phi_{t}^{(a)}}\Big|}{t^{\frac{1}{\alpha}}L_{0}(t)}.\non
\end{align}
Hence for any $A_{1},A_{2}\in\mathcal{B}(\R_{\ge 0}),$
\begin{align}
& P\bigg[\bigg(\frac{\log \Phi_{t}^{(a)} +tE_{\pi}a(\cdot)}{t^{\frac{1}{\alpha}}L_{0}(t)},\frac{\log |I_{t}^{(a,b)}| +tE_{\pi}a(\cdot)}{t^{\frac{1}{\alpha}}L_{0}(t)}\bigg)\in  \cdot | Y_{0}=i\bigg]\label{T3e2}\\
&\s= \sum_{j\in S}P[Y_{t}=j|Y_{0}=i]P\bigg[\big(\widetilde{G}^{*(1)}_{t}+\widetilde{G}^{(2)}_{t},\widetilde{G}^{*(1)}_{t}+\widetilde{G}^{(2)}_{t}+\widetilde{G}^{(3)}_{t}\big)\in \cdot | Y_{t}=j, Y_{0}=i\bigg].\non
\end{align}
Observe that
\beqn
\widetilde{G}^{*(1)}_{t} &=&\frac{\sum_{i=1}^{g_{t}^{j}}(\log L_{i}^{j}+ E_{\pi}a(\cdot) |\mathfrak{I}_{i}^{j}|)}{t^{\frac{1}{\alpha}}L_{0}(t)}+\frac{\big[\tau_{0}^{j}+(t-\tau_{g_{t}^{j}}^{j})\big]E_{\pi}a(\cdot)}{t^{\frac{1}{\alpha}}L_{0}(t)}\non\\
&=&-\frac{c_{g_{t}^{j}}}{c_{t}}\frac{\sum_{i=1}^{g_{t}^{j}}\int_{\tau_{i-1}^{j}}^{\tau_{i}^{j}}\big(a(Y_{s})-E_{\pi}a(\cdot)\big)ds}{(g_{t}^{j})^{\frac{1}{\alpha}}L_{0}(g_{t}^{j})} +\frac{\big[\tau_{0}^{j}+(t-\tau_{g_{t}^{j}}^{j})\big]E_{\pi}a(\cdot)}{t^{\frac{1}{\alpha}}L_{0}(t)}.\non
\eeqn
Now clearly $\Big\{-\int_{\tau_{i-1}^{j}}^{\tau_{i}^{j}}(a(Y_{s})-E_{\pi}a(\cdot))ds: i\ge 1\Big\}$ are iid regularly varying with $1<\alpha<2$, mean $=0,$ with right asymptotic tail-index $\widetilde{\alpha}_{j}^{(-)}=\frac{1+\beta_{j}}{2}\sigma^{\alpha}_{(j,\alpha)}$, left asymptotic tail-index $\widetilde{\alpha}_{j}^{(+)}=\frac{1-\beta_{j}}{2}\sigma^{\alpha}_{(j,\alpha)}$, after scaling by $x^{-\alpha}L(x)$ (from Lemma \ref{regV}). Furthermore, $\lim_{t\to\infty}\frac{c_{g_{t}^{j}}}{c_{t}}=\Big(\frac{1}{E|\mathfrak{I}^{j}_{1}|}\Big)^{\frac{1}{\alpha}}$ (see Lemma \ref{regV2}(a)).

Using for each $j\in S$ the quantity $\beta_{j}$ in \eqref{sig}, applying the stable central limit Theorem 4.5.1 of \cite{whitt2002stochastic}, as sample size $n\to\infty$ 
\beqn
-c^{-1}_{n}\Big(\sum_{i=1}^{n}\int_{\tau_{i-1}^{j}}^{\tau_{i}^{j}}\big(a(Y_{s})-E_{\pi}a(\cdot)\big)ds\Big)\stackrel{d}{\to}\sigma_{(j,\alpha)}\mathcal{S}_{\alpha}(1,\beta_{j},0).\label{stable3a}
\eeqn 
Using \eqref{stable3a} and Theorem 3.2 of \cite{gut2009stopped} for the stopped random time $g_{t}^{j}$ as $t\to\infty,$ 
$$-\frac{\sum_{i=1}^{g_{t}^{j}}\int_{\tau_{i-1}^{j}}^{\tau_{i}^{j}}\big(a(Y_{s})-E_{\pi}a(\cdot)\big)ds}{(g_{t}^{j})^{\frac{1}{\alpha}}L_{0}(g_{t}^{j})}\stackrel{d}{\to}\sigma_{(j,\alpha)}\mathcal{S}_{\alpha}(1,\beta_{j},0).$$
Furthermore since $\tau_{0}^{j}+(t-\tau_{g_{t}^{j}}^{j})$ is $O_{p}(1)$ and  hence 
\beqn
\mathcal{L}\Big(\widetilde{G}_{t}^{*(1)}| Y_{0}=i\Big)\stackrel{w}{\to} \mathcal{L}\bigg(\big(E|\mathfrak{I}^{j}_{1}|\big)^{-\frac{1}{\alpha}}\sigma_{(j,\alpha)} \mathcal{S}_{\alpha}(1,\beta_{j},0)\bigg).\label{clt01}
\eeqn
Since both $\log L_{0}^{j}, (t-\tau_{g_{t}^{j}}^{j})$ are $O_{P}(1)$, hence
\beqn
\mathcal{L}(\widetilde{G}_{t}^{(2)}|Y_{0}=i)\stackrel{w}{\to} \delta_{\{0\}}\label{e3a2}
\eeqn
Lastly using similar arguments used in proof of Theorem \ref{T2}(a), as $t\to\infty,$ one has $S^{**}_{g_{t}^{j}}\stackrel{d}{\to} S^{**}$ where $S^{**}$ satisfies the stochastic recurrence equation
\beqn
S^{**}\stackrel{d}{=}\frac{1}{L_{i}^{j}} S^{**}+\frac{Q_{i}^{j}}{L_{i}^{j}},\s S^{**} \perp \Big(\frac{1}{L_{i}^{j}},\frac{Q_{i}^{j}}{L_{i}^{j}}\Big).\label{e2SRE1}
\eeqn

Also $\Phi_{t}^{(a)}\to\infty$ as $t\to\infty,$ we have $\log \big|\frac{Q_{0}^{j}+S^{**}_{g_{t}^{j}}}{L_{0}^{j}}+\frac{G_{j}^{(a,b)}(t-\tau_{g_{t}^{j}})}{\Phi_{t}^{(a)}}\big|$ is $O_{P}(1).$ 
\beqn
\mathcal{L}(\widetilde{G}_{t}^{(3)}|Y_{0}=i)\stackrel{w}{\to} \delta_{\{0\}}\label{e3a3}
\eeqn
Combining \eqref{clt01},\eqref{e3a2},\eqref{e3a3} and taking 
$$ L_{t}^{(1)}=(\widetilde{G}^{(2)}_{t},\widetilde{G}^{(2)}_{t}+\widetilde{G}^{(3)}_{t})',\s L_{t}^{(2)}=\begin{pmatrix} 1 & 0\\ 0 &1
\end{pmatrix}, \s H_{t}= \begin{pmatrix} \widetilde{G}^{*(1)}_{t} \\ \widetilde{G}^{*(1)}_{t} 
\end{pmatrix}.$$
  in Lemma \ref{lem01} as $t\to\infty$
\begin{align}
&\mathcal{L}\Big(\big(\widetilde{G}^{*(1)}_{t}+\widetilde{G}^{(2)}_{t},\widetilde{G}^{*(1)}_{t}+\widetilde{G}^{(2)}_{t}+\widetilde{G}^{(3)}_{t}\big) | Y_{t}=j, Y_{0}=i\Big)\non\\ &\s\s\stackrel{w}{\to}\big(E|\mathfrak{I}^{j}_{1}|\big)^{-\frac{1}{\alpha}}\sigma_{(j,\alpha)}\big(\mathcal{S}_{\alpha}(1,\beta_{j},0),\mathcal{S}_{\alpha}(1,\beta_{j},0)\big)\label{e3am}
\end{align}
where $\widetilde{G}^{*(1)}_{t}$ satisfies \eqref{key}, i.e,
\beqn
\widetilde{G}^{*(1)}_{t}-\widetilde{G}^{*(1)}_{t-\varepsilon(t)}\to 0,\s\text{as}\s t\to\infty,\label{e3aML}
\eeqn
for some $\varepsilon(t)\to\infty, \& \frac{\varepsilon(t)}{t}\to 0,$  which follows from Lemma \ref{lemverify3a}. Using \eqref{e3am} and $P[Y_{t}=j| Y_{0}=i]\to\pi_{j}$ in \eqref{T3e2} as $t\to\infty$ the assertion will follow. 
\end{proof}
\begin{lemma}\label{lemverify3a}
There exists an increasing function $t\to \varepsilon(t)$ such that $\varepsilon(t)\to\infty$ and $\frac{\varepsilon(t)}{t}\to 0$ as $t\to\infty$ for which \eqref{e3aML} holds. 
\end{lemma}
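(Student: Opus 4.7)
The plan is to exhibit a concrete choice of $\varepsilon(t)$ satisfying $\varepsilon(t)\to\infty$ and $\varepsilon(t)/t\to 0$, for instance $\varepsilon(t)=\sqrt{t}$, and then verify $\widetilde{G}^{*(1)}_{t}-\widetilde{G}^{*(1)}_{t-\varepsilon(t)}\stackrel{P}{\to}0$. Writing $X_i^*:=\int_{\tau_{i-1}^{j}}^{\tau_{i}^{j}}(a(Y_s)-E_\pi a(\cdot))ds$, $S_n:=\sum_{i=1}^n X_i^*$, and $c_n:=n^{1/\alpha}L_0(n)$, the alternative representation of $\widetilde{G}^{*(1)}_t$ derived just before the lemma gives
$$\widetilde{G}^{*(1)}_t=-\frac{c_{g_t^j}}{c_t}\cdot\frac{S_{g_t^j}}{c_{g_t^j}}+o_P(1),$$
where the $o_P(1)$ remainder comes from $[\tau_0^j+(t-\tau_{g_t^j}^j)]E_\pi a(\cdot)/c_t$, since $\tau_0^j+(t-\tau_{g_t^j}^j)=O_P(1)$ while $c_t\to\infty$; the analogous expansion holds with $t$ replaced by $t-\varepsilon(t)$. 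By the elementary renewal theorem $g_t^j/t\stackrel{a.s.}{\to}1/E|\mathfrak{I}_1^j|$ and regular variation of $c_n$, both $c_{g_t^j}/c_t$ and $c_{g_{t-\varepsilon(t)}^j}/c_{t-\varepsilon(t)}$ converge almost surely to $(E|\mathfrak{I}_1^j|)^{-1/\alpha}$, so the task reduces to showing
$$\frac{S_{g_t^j}}{c_{g_t^j}}-\frac{S_{g_{t-\varepsilon(t)}^j}}{c_{g_{t-\varepsilon(t)}^j}}\stackrel{P}{\to}0.$$

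I decompose this difference as $(S_{g_t^j}-S_{g_{t-\varepsilon(t)}^j})/c_{g_t^j}$ plus $\bigl(S_{g_{t-\varepsilon(t)}^j}/c_{g_{t-\varepsilon(t)}^j}\bigr)\bigl(c_{g_{t-\varepsilon(t)}^j}/c_{g_t^j}-1\bigr)$. The second summand is $O_P(1)\cdot o_P(1)=o_P(1)$ because $S_n/c_n$ is tight (by the stable CLT used in the main proof) and $g_{t-\varepsilon(t)}^j/g_t^j\stackrel{a.s.}{\to}1$, which together with slow variation of $L_0$ forces $c_{g_{t-\varepsilon(t)}^j}/c_{g_t^j}\to 1$. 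The first summand is a sum of approximately $\varepsilon(t)/E|\mathfrak{I}_1^j|$ freshly sampled iid regularly varying increments; applying the stable CLT to these increments together with Theorem 3.2 of \cite{gut2009stopped} shows $(S_{g_t^j}-S_{g_{t-\varepsilon(t)}^j})/c_{g_t^j-g_{t-\varepsilon(t)}^j}$ is tight, and multiplying by the ratio $c_{g_t^j-g_{t-\varepsilon(t)}^j}/c_{g_t^j}=((g_t^j-g_{t-\varepsilon(t)}^j)/g_t^j)^{1/\alpha}L_0(g_t^j-g_{t-\varepsilon(t)}^j)/L_0(g_t^j)$, which vanishes because $\varepsilon(t)/t\to 0$ and $L_0$ is slowly varying, produces $o_P(1)$.

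The main subtlety lies in the stopped-time stable-CLT step applied to the freshly sampled block: one must verify the Anscombe contiguity condition \eqref{Anscombe} for the partial-sum sequence $\{S_n/c_n\}$, which follows from standard maximal inequalities for iid regularly varying summands in the domain of attraction of an $\alpha$-stable law together with the already-established marginal convergence $S_n/c_n\stackrel{d}{\to}\sigma_{(j,\alpha)}\mathcal{S}_\alpha(1,\beta_j,0)$ from Lemma \ref{regV}. Once this is in place, any $\varepsilon(t)$ with $\varepsilon(t)\to\infty$ and $\varepsilon(t)/t\to 0$ works; for definiteness I take $\varepsilon(t)=\sqrt{t}$, completing the verification of \eqref{e3aML}.
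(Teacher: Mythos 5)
Your proposal is correct and follows essentially the same route as the paper's proof: the same split into a freshly regenerated block plus the old partial sum multiplied by a vanishing ratio of normalizers, with tightness supplied by the stopped-time stable CLT (Gut, Theorem 3.2), the renewal LLN for $g_t^j/t$, and Potter's bound for the slowly varying factor. The only cosmetic difference is that you normalize by $c_{g_t^j}$ before decomposing while the paper works directly with $c_t$ and $c_{t-\varepsilon(t)}$; the ingredients and the order of estimates are otherwise identical.
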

Proofs of Lemma \ref{regV} and Lemma \ref{lemverify3a} are given in the supplementary material.

\subsection{Proof of Theorem \ref{T3}(b)}
\begin{proof}
We will expand $I_{t}^{(a,b)}$ in a different manner than in Theorem \ref{T3}(a). Fix $i,j\in S$. Given $E_{\pi}a(\cdot)=0,$ the random walk generated by the process $\big(\sum_{k=1}^{n}\log L_{k}^{j}:n\ge 1\big)$ has zero drift as the $k$-th increment $\log L_{k}^{j}=-\int_{\tau_{i-1}^{j}}^{\tau_{i}^{j}}a(Y_{s})ds$ has mean $0,$ but Var$(\log L_{k}^{j})$ doesn't exist unlike Theorem \ref{T2}(b). Let $$S_{n}^{\otimes j}:=\sum_{i=1}^{n}\log L_{i}^{j},\s M_{n}^{\otimes j}:=\max_{1\le k \le n}\Big\{\sum_{i=1}^{k}\log L_{i}^{j}\Big\},\s\text{ and }\s Z_{n}^{\otimes j}:=\max_{1\le k \le n}\Big\{\big(\prod_{i=1}^{k-1}L_{i}^{j}\big)|Q_{k}^{j}|\Big\}.$$ Observe that on $\{Y_{0}=i, Y_{t}=j\},$ using \eqref{joint}
\beqn
\big(\log\Phi_{t}^{(a)},\log |I_{t}^{(a,b)}|\big)&\stackrel{\mathcal{L}(\cdot|Y_{0}=i, Y_{t}=j)}{=}&
\bigg(\log L^{j}_{0}+\log\bigg(\prod_{k=1}^{g^{j}_{t}}L^{j}_{k}\bigg)+\log(e^{ -a(j)(t-\tau_{g_{t}^{j}}^{j})}),\non\\&& \log\bigg|e^{-a(j)(t-\tau_{g_{t}^{j}}^{j})}\bigg[\bigg(\prod_{k=1}^{g_{t}^{j}}L^{j}_{k}\bigg)^{} Q^{j}_{0}+\widetilde{P}^{j}_{g_{t}^{j}}\bigg]+G_{j}^{a,b^{}}\big(t-\tau_{g_{t}^{j}}^{j}\big)\bigg|\bigg)\non\\
&\stackrel{d}{=}&\bigg(S_{g_{t}^{j}}^{\otimes j}+[\log L^{j}_{0}-a(j)(t-\tau_{g_{t}^{j}}^{j})], \non\\&&\s\s
\log Z_{g_{t}^{j}}^{\otimes j} + \log\bigg|B_{t}^{(1)}+\frac{B_{t}^{(2)}}{Z_{g_{t}^{j}}^{\otimes j}}P_{g_{t}^{j}}^{j}\bigg|\bigg)\label{T2be1}
\eeqn
where 
\[B_{t}^{(1)}=\frac{e^{-a(j)(t-\tau_{g_{t}^{j}}^{j})}\bigg(\prod_{k=1}^{g_{t}^{j}}L^{j}_{k}\bigg)^{} Q^{j}_{0}+G_{j}^{a,b^{}}\big(t-\tau_{g_{t}^{j}}^{j}\big) }{Z_{g_{t}^{j}}^{\otimes j}},\s\s
B_{t}^{(2)}=e^{-a(j)(t-\tau_{g_{t}^{j}}^{j})},\]
and denote $\bigg(B_{t}^{(1)}+\frac{B_{t}^{(2)}}{Z_{g_{t}^{j}}^{\otimes j}}P_{g_{t}^{j}}^{j}\bigg)$ by $B^{(3)}_{t}.$ Consider the probability 
\begin{align}
&P\Big[\Big(\frac{\log \Phi_{t}^{(a)}}{t^{\frac{1}{\alpha}}L_{0}(t)},\frac{\log|I_{t}^{(a,b)}|}{t^{\frac{1}{\alpha}}L_{0}(t)}\Big)'\in \cdot |Y_{0}=i\Big]\non\\
&\s=\sum_{j\in S}P[Y_{t}=j|Y_{0}=i]P\Big[\Big(\frac{\log \Phi_{t}^{(a)}}{t^{\frac{1}{\alpha}}L_{0}(t)},\frac{\log|I_{t}^{(a,b)}|}{t^{\frac{1}{\alpha}}L_{0}(t)}\Big)'\in \cdot |Y_{0}=i, Y_{t}=j\Big]\non\\
&\s=\sum_{j\in S}P[Y_{t}=j|Y_{0}=i]P\Big[\widetilde{L}_{t}^{(1)}+L_{t}^{(2)}\widetilde{H}_{t}\in \cdot |Y_{0}=i, Y_{t}=j\Big]\label{T3be1}
\end{align}
where the identity \eqref{T2be1} is used in \eqref{T3be1} and 
$$\widetilde{L}_{t}^{(1)}=\Big(\frac{\log L_{0}^{j}-a(j)(t-\tau_{g_{t}^{j}}^{j})}{t^{\frac{1}{\alpha}}L_{0}(t)},\frac{\log|B_{t}^{(3)}|}{t^{\frac{1}{\alpha}}L_{0}(t)}\Big)',\,\, L_{t}^{(2)}= \begin{pmatrix} 1 & 0\\ 0 &1
\end{pmatrix},\, \widetilde{H}_{t}=\Big(\frac{S^{\otimes}_{g_{t}^{j}}}{t^{\frac{1}{\alpha}}L_{0}(t)},\frac{\log Z_{g_{t}^{j}}^{\otimes}}{t^{\frac{1}{\alpha}}L_{0}(t)}\Big)'.$$ 
We will apply Lemma \ref{lem01} with the above specifications. Observe that both \( B_{t}^{(1)} \) and \( B_{t}^{(2)} \) are \( O_{p}(1) \). For each \( 1 \le k \le n \), we have
\[
-Z_{n}^{\otimes j} \le Q_{k}^{j} \left( \prod_{i=1}^{k-1} L_{i}^{j} \right) \le Z_{n}^{\otimes j}.
\]
This implies that \( -nZ_{n}^{\otimes j} \le P_{n}^{j} \le n Z_{n}^{\otimes j} \) for all \( n \ge 1 \), which further implies that
$\left| \frac{P_{g_{t}^{j}}^{j}}{Z_{g_{t}^{j}}^{\otimes j}} \right| \le g_{t}^{j}.$ Hence, \( |B_{t}^{(3)}| = O_{p}(t) \) and is strictly positive.

Since $L_{0}^{j}, (t-\tau_{g_{t}^{j}}^{j})$ are both $O_{P}(1),$ hence $\widetilde{L}_{t}^{(1)}\stackrel{P}{\to}(0,0)'$ and by Lemma \ref{lemverify3b}(c) $ \widetilde{H}'_{t}$ verifies the condition $\widetilde{H}'_{t}-\widetilde{H}'_{t - \varepsilon(t)}\stackrel{P}{\to} (0,0)$ as $t\to\infty.$ The proof will be done if we show as $t\to\infty$ $$\widetilde{H}_{t}\stackrel{d}{\to}\bigg(\frac{1}{E|\mathfrak{I}^{j}_{1}|}\bigg)^{\frac{1}{\alpha}}\sigma_{(j,\alpha)}\big(\mathcal{\bf{S}}_{\alpha,\beta_{j}}(1),\sup_{0\le u \le 1}\mathcal{\bf{S}}_{\alpha,\beta_{j}}(u)\big),$$
 since then using $P[Y_{t}=j|Y_{0}=i]\to\pi_{j},$ after applying the Lemma \ref{lem01} the LHS of \eqref{T3be1} will lead to
\begin{align}
&\lim_{t\to\infty}P\Big[\Big(\frac{\log \Phi_{t}^{(a)}}{t^{\frac{1}{\alpha}}L_{0}(t)},\frac{\log|I_{t}^{(a,b)}|}{t^{\frac{1}{\alpha}}L_{0}(t)}\Big)\in \cdot |Y_{0}=i\Big]\non\\
&\s=\sum_{j\in S}^{}\pi_{j}\lim_{t\to\infty}P\Big[\Big(\frac{S_{g_{t}^{j}}^{\otimes j}}{t^{\frac{1}{\alpha}}L_{0}(t)}, \frac{\log Z_{g_{t}^{j}}^{\otimes j}}{t^{\frac{1}{\alpha}}L_{0}(t)} \Big)\in \cdot |Y_{0}=i\Big] \label{T3be2}
\end{align}
where we got rid of $\{Y_{t}=j\}$ in the conditioning part of RHS of \eqref{T3be1} by virtue of the Lemma \ref{lem01}. From the definitions of $M_{n}^{\otimes j}, Z_{n}^{\otimes j},$ it follows that
\beqn
M_{g_{t}^{j}}^{\otimes j} -\max_{1\le k\le g_{t}^{j}}\log|Q_{k}^{j}|\s\le\s\log Z_{g_{t}^{j}}^{\otimes j}\s\le\s M_{g_{t}^{j}}^{\otimes j}+\max_{1\le k\le g_{t}^{j}}\log|Q_{k}^{j}|\label{th*}
\eeqn
(see also (4.4) of \cite{hitczenko2011renorming}). Now under the condition \eqref{Theorem2bcond**} Lemma \ref{lemverify3b}(a) holds and Lemma \ref{lemverify3b}(b) below verifies the weak convergence result as $t\to\infty$. Combining these in \eqref{T3be2} the assertion follows in $t\to\infty$ limit. 
\end{proof}

\begin{lemma}\label{lemverify3b}
Under assumptions of Theorem \ref{T3}(b) as $t\to\infty,$
\beqn
&&(a)\s\text{ under \eqref{Theorem2bcond**}\s}\frac{\max_{1\le k\le g_{t}^{j}}\log |Q_{k}^{j}|}{t^{\frac{1}{\alpha}}L_{0}(t)} \stackrel{a.s}{\to} 0,\s\non\\&& (b)\s 
\bigg(\frac{S_{g_{t}^{j}}^{\otimes j}}{t^{\frac{1}{\alpha}}L_{0}(t)}, \frac{ M_{g_{t}^{j}}^{\otimes j}}{t^{\frac{1}{\alpha}}L_{0}(t)} \bigg)\stackrel{d}{\to}\bigg(\frac{1}{E|\mathfrak{I}^{j}_{1}|}\bigg)^{\frac{1}{\alpha}}\sigma_{(j,\alpha)}\big(\mathcal{\bf{S}}_{\alpha,\beta_{j}}(1),\sup_{0\le u \le 1}\mathcal{\bf{S}}_{\alpha,\beta_{j}}(u)\big)\non\\
&&(c) \s \widetilde{H}'_{t}=\bigg(\frac{S_{g_{t}^{j}}^{\otimes j}}{t^{\frac{1}{\alpha}}L_{0}(t)}, \frac{\log Z_{g_{t}^{j}}^{\otimes j}}{t^{\frac{1}{\alpha}}L_{0}(t)} \bigg)\s \text{satisfies the condition in \eqref{key}, i.e }\non\\ &&\s\s\s \widetilde{H}_{t}-\widetilde{H}_{t - \varepsilon(t)}\stackrel{P}{\to} (0,0)'\s\text{for some }\s \varepsilon(\cdot)\s\text{as specified in \eqref{key}}.\non
\eeqn
\end{lemma}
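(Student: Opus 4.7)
The plan is to handle the three assertions sequentially, exploiting that under Assumption \ref{As0} both $\{\log L^{j}_i\}_{i\ge 1}$ and $\{Q^{j}_i\}_{i\ge 1}$ are i.i.d.\ sequences, and under $E_\pi a(\cdot)=0$ the increments $\log L^{j}_i$ are centred and regularly varying with index $\alpha\in(1,2)$ by Lemma \ref{regV}. Throughout, the elementary renewal theorem gives $g^{j}_t/t\stackrel{a.s.}{\to} 1/E|\mathfrak{I}^{j}_1|$, which will serve as the pivotal stopped-time asymptotic. For \textbf{part (a)}, under the moment bound $E[(\log|Q^{j}_1|)^{\alpha+\epsilon}]<\infty$ coming from \eqref{Theorem2bcond**}, I would invoke Markov's inequality to get $P[|\log|Q^{j}_k||>c_k]\le E[(\log|Q^{j}_k|)^{\alpha+\epsilon}]/c_k^{\alpha+\epsilon}$ with $c_k=k^{1/\alpha}L_0(k)$. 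Since $\sum_k c_k^{-(\alpha+\epsilon)}=\sum_k k^{-(1+\epsilon/\alpha)}L_0(k)^{-(\alpha+\epsilon)}<\infty$ (the slowly varying factor is irrelevant for summability), Borel--Cantelli yields $|\log|Q^{j}_k||/c_k\to 0$ a.s., and since $g^{j}_t\le Ct$ eventually a.s., the ratio $\max_{1\le k\le g^{j}_t}\log|Q^{j}_k|/c_t$ tends to $0$ a.s.

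For \textbf{part (b)}, the approach is a functional stable-limit principle followed by composition with the random renewal time. By Lemma \ref{regV} and Theorem 4.5.3 of \cite{whitt2002stochastic}, the centred partial-sum process satisfies
\begin{equation*}
\big(c_n^{-1}S^{\otimes j}_{\lfloor nu\rfloor}\big)_{u\in[0,1]}\stackrel{d}{\to}\sigma_{(j,\alpha)}\big(\mathcal{\bf{S}}_{\alpha,\beta_j}(u)\big)_{u\in[0,1]}
\end{equation*}
in Skorokhod $J_1$ topology. The continuous mapping theorem applied to $\phi\mapsto(\phi(1),\sup_{u\in[0,1]}\phi(u))$ gives joint convergence of $(c_n^{-1}S^{\otimes j}_n,c_n^{-1}M^{\otimes j}_n)$; this map is a.s.\ continuous at the limit law, since a stable Lévy process has no fixed-time jump at $u=1$. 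Next, $g^{j}_t/t\to 1/E|\mathfrak{I}^{j}_1|$ a.s., and since $L_0$ is slowly varying, $c_{g^{j}_t}/c_t\to (1/E|\mathfrak{I}^{j}_1|)^{1/\alpha}$. Combining via Theorem 3.2 of \cite{gut2009stopped} (the stopped-time version of the stable CLT), or equivalently via continuity of the composition $(x,s)\mapsto x(s)$ on paths continuous at $s$, yields the asserted bivariate limit.

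For \textbf{part (c)}, choose $\varepsilon(t)=t^\gamma$ with any $\gamma\in(0,1)$. The first coordinate increment $c_t^{-1}\big(S^{\otimes j}_{g^{j}_t}-S^{\otimes j}_{g^{j}_{t-\varepsilon(t)}}\big)$ is a sum of $g^{j}_t-g^{j}_{t-\varepsilon(t)}=O_P(\varepsilon(t))$ i.i.d.\ stable-domain terms; applying part (b) at horizon $\varepsilon(t)$, the numerator is of order $O_P(\varepsilon(t)^{1/\alpha}L_0(\varepsilon(t)))$. Dividing by $c_t$ gives a factor $(\varepsilon(t)/t)^{1/\alpha}(L_0(\varepsilon(t))/L_0(t))=t^{(\gamma-1)/\alpha}(1+o(1))\to 0$, since $L_0$ is slowly varying. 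For the second coordinate, the sandwich \eqref{th*} bounds $|\log Z^{\otimes j}_{g^{j}_t}-\log Z^{\otimes j}_{g^{j}_{t-\varepsilon(t)}}|$ by $|M^{\otimes j}_{g^{j}_t}-M^{\otimes j}_{g^{j}_{t-\varepsilon(t)}}|+2\max_{1\le k\le g^{j}_t}\log|Q^{j}_k|$. The running-maximum increment has the same $O_P(\varepsilon(t)^{1/\alpha}L_0(\varepsilon(t)))$ order (by the functional CLT applied to the sup functional over $[0,\varepsilon(t)/t]$), and part (a) kills the $\max\log|Q^{j}_k|$ contribution after division by $c_t$.

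The main obstacle is the genuine joint convergence in part (b): the supremum functional is only continuous in $J_1$ at limit paths that are themselves continuous at the endpoint, so the argument relies on the probabilistic fact that the $\alpha$-stable Lévy process has no jump at a deterministic time, which is what allows the continuous mapping theorem to deliver joint, not merely marginal, weak convergence. A subsidiary delicate point is that the random time change $g^{j}_t/t\to 1/E|\mathfrak{I}^{j}_1|$ must be composed with the sup functional in a way consistent with self-similarity of $\mathcal{\bf{S}}_{\alpha,\beta_j}$, producing the prefactor $(1/E|\mathfrak{I}^{j}_1|)^{1/\alpha}$; rigorously this is an Anscombe-type step that needs the contiguity condition \eqref{Anscombe} to be verified at the functional level, which in turn is the very condition (c) that the lemma asserts.
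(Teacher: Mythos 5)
Your parts (a) and (c) are essentially sound and follow the same architecture as the paper. In (a) the paper first normalizes by $n^{1/(\alpha+\epsilon)}$ and then passes to $c_n$ via $\log L_0(n)/\log n\to 0$ (Lemma \ref{regV2}(c)); your direct Markov/Borel--Cantelli bound against $c_k$ works equally well once you run it with $\epsilon c_k$ for every $\epsilon>0$ (as stated, Borel--Cantelli only gives $\limsup\le 1$). In (c) your bound $0\le M^{\otimes j}_{n+m}-M^{\otimes j}_n\le\max_{n<k\le n+m}(S^{\otimes j}_k-S^{\otimes j}_n)$ is a cleaner route than the paper's three-term decomposition around $M^{\otimes j}_{[\mu_j t]}$, but note that the quantity in (c) is $c_t^{-1}S^{\otimes j}_{g^j_t}-c_{t-\varepsilon(t)}^{-1}S^{\otimes j}_{g^j_{t-\varepsilon(t)}}$, not $c_t^{-1}(S^{\otimes j}_{g^j_t}-S^{\otimes j}_{g^j_{t-\varepsilon(t)}})$; you must also control the cross term $c_{t-\varepsilon(t)}^{-1}S^{\otimes j}_{g^j_{t-\varepsilon(t)}}\,(c_{t-\varepsilon(t)}/c_t-1)$, which the paper handles via Potter's bound showing $c_{t-\varepsilon(t)}/c_t\to 1$.

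The genuine gap is in part (b), precisely at the step you defer. Establishing the functional stable limit for deterministic $n$ and applying the continuous-mapping theorem to $y\mapsto(y(1),\sup_{s\le 1}y(s))$ is the easy half; the hard half is replacing $n$ by the random renewal time $g^j_t$ \emph{jointly} for the pair $(S^{\otimes j}_n/c_n,M^{\otimes j}_n/c_n)$. The paper does this by verifying Anscombe's contiguity condition \eqref{Anscombe} for the bivariate sequence, and because the increments $\log L^j_i$ have infinite variance the Kolmogorov maximal inequality used in the finite-variance case (Theorem \ref{T2}(b)) is unavailable; the paper substitutes the Montgomery--Smith maximal inequality together with a truncated moment of order $\alpha'\in(1,\alpha)$ and repeated Potter-bound estimates on $c_{n(1+\delta)}/c_n$. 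None of this appears in your proposal: citing Theorem 3.2 of Gut presupposes exactly this contiguity verification, and your alternative via the composition map $(x,s)\mapsto x(s)$ would be a legitimate (and arguably cleaner) substitute only if you actually prove joint convergence of the path process with the time change $g^j_t/t$ and a.s.\ continuity of the bivariate functional at the limit, which you only gesture at. Moreover, your closing sentence asserts that the required contiguity condition ``is the very condition (c) that the lemma asserts'' --- this is a confusion: condition (c) is the hypothesis \eqref{key} needed later for Lemma \ref{lem01}, it is logically distinct from \eqref{Anscombe}, and the paper's proof of (b) does not rely on (c) (rather, (c) is proved afterwards using (b)). As written, part (b) of your argument is therefore incomplete at its most technical point.
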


Proof of Lemma \ref{lemverify3b}, along with proofs of results in Remark \ref{verifycond}, Proposition \ref{aconstant}, Theorem \ref{T4} are all given in the supplementary material.

\section{ Corollary \ref{conn1}}

\subsection{Proof of Corollary \ref{conn1}}
For some time-dependent deterministic functions $a_{2}(\cdot),b_{2}(\cdot)$ (such that $a_{2}(t)\to\infty$ as $t\to\infty$)  limit results of $\log|I^{(a,b)}_{t}|$ in Theorem \ref{T2} \& Theorem \ref{T3}  can be summarized as $$\mathcal{L}\Big(\frac{\log|I^{(a,b)}_{t}|}{a_{2}(t)}-b_{2}(t)\Big)\stackrel{w}{\to}\text{some mixture type law},$$
where the functions $a_{2}(\cdot)$ and $b_{2}(\cdot)$ can be figured out separately from the results of Theorems \ref{T2} and \ref{T3}. Notably, $a_{2}(t)$ grows polynomially in $t$, specifically as $t^{\frac{1}{2}}$ in Theorem \ref{T2}, and as $t^{\frac{1}{\alpha}}L_{0}(t)$ for some slowly varying function $L_{0}(\cdot)$ in Theorem \ref{T3}. The term $b_{2}(\cdot)$ takes the form $(-\sqrt{t} E_{\pi}a(\cdot) \& 0)$ (respectively $(t^{1-\frac{1}{\alpha}}L^{-1}_{0}(t) \& 0)$) in Theorem \ref{T2}(a) and \ref{T2}(b) (respectively in Theorem \ref{T3}(a) and \ref{T3}(b)). 

We first prove part (a) for $|\widetilde{Z}^{(1)}_{t}|$ in a general setting, while the proof for $|\widetilde{Z}^{(2)}_{t}|$ requires additional work. Under the assumptions of Theorems \ref{T2}(a) and \ref{T2}(b), we establish the results, with analogous conclusions for Theorems \ref{T3}(a) and \ref{T3}(b), which we omit here. The proof of part (b) follows similar steps as in Theorem \ref{T4}, completing the assertion.

\subsubsection{Proof of part (a)}
\begin{proof}  For simplicity of notation, we denote the scaling and shifting functions as $a_{2}(\cdot),b_{2}(\cdot).$ We use the fact that for any two non-negative random variables $X,Y$ one has $\log \max\{X,Y\}\stackrel{d}{=} \max\{\log X,\log Y\}$ (See Lemma \ref{CorLem}).

Observe that $\widetilde{Z}^{(1)}_{t}=\widetilde{Z}^{*}_{t}c_{t}$ where $\widetilde{Z}^{*}_{t}:=\max\{\Phi_{t}^{(a)}, |I^{(a,b)}_{t}|\}$ and  $c_{t}:=a_{1}\frac{\Phi_{t}^{(a)}}{\widetilde{Z}^{*}_{t}}+b_{1}\frac{|I^{(a,b)}_{t}|}{\widetilde{Z}^{*}_{t}}.$ Note that for all $t\ge 0,$ almost surely $$0\le|c_{t}|\le |a_{1}|+|b_{1}|,$$
showing that $c_{t}$ is $O_{p}(1).$ 
\beqn
\frac{\log|\widetilde{Z}^{(1)}_{t}|}{a_{2}(t)}-b_{2}(t)&=&\frac{\log|\widetilde{Z}^{*}_{t}c_{t}|}{a_{2}(t)}-b_{2}(t)\non\\
&=&\bigg(\frac{\log|\widetilde{Z}^{*}_{t}|}{a_{2}(t)}-b_{2}(t)\bigg)+\frac{\log|c_{t}|}{a_{2}(t)}.\non
\eeqn
Since $\frac{\log|c_{t}|}{a_{2}(t)}\stackrel{P}{\to}0,$ it is enough to show that $t\to\infty$ weak limit of $\frac{\log|\widetilde{Z}^{*}_{t}|}{a_{2}(t)}-b_{2}(t)$ matches with $t\to\infty$ weak limit of $\frac{\log|I^{(a,b)}_{t}|}{a_{2}(t)}-b_{2}(t).$ Using Lemma \ref{CorLem} we observe that
\beqn
\frac{\log|\widetilde{Z}^{*}_{t}|}{a_{2}(t)}-b_{2}(t)&=&\frac{\log\max\{\Phi_{t}^{(a)}, |I^{(a,b)}_{t}|\}}{a_{2}(t)}-b_{2}(t)\non\\
&\stackrel{d}{=}&\max\Big\{\frac{\log \Phi_{t}^{(a)}}{a_{2}(t)}-b_{2}(t),\frac{\log|I^{(a,b)}_{t}|}{a_{2}(t)}-b_{2}(t)\Big\}.\non
\eeqn
Note that any two variables $x,y;$ the function $\max\{x,y\}=\frac{(x+y)}{2}+\frac{|x-y|}{2},$ is a continuous function in its arguments. Using the continuous mapping theorem it is easy to verify that the weak limit of $\max\Big\{\frac{\log \Phi_{t}^{(a)}}{a_{2}(t)}-b_{2}(t),\frac{\log|I^{(a,b)}_{t}|}{a_{2}(t)}-b_{2}(t)\Big\}$ matches with the $t\to\infty$ marginal weak limit of $\frac{\log|I^{(a,b)}_{t}|}{a_{2}(t)}-b_{2}(t)$ in all cases of Theorem \ref{T2} and Theorem \ref{T3}, proving the assertion (1) of part (a).

 For simplicity, let the bivariate $t\to\infty$ weak limit of $\Big(\frac{\big(\log \Phi_{t}^{(a)},\log|I^{(a,b)}_{t}|\big)}{a_{2}(t)}-b_{2}(t)\Big)$ be represented by the laws $\mathcal{W}^{*}_{2(a)},\mathcal{W}^{*}_{2(b)},\mathcal{W}^{*}_{3(a)}, \mathcal{W}^{*}_{3(b)}$ under the assumptions of Theorem \ref{T2}(a),\ref{T2}(b),\ref{T3}(a) and Theorem \ref{T3}(b) respectively, whose second marginals are laws $\mathcal{W}_{2(a)},\mathcal{W}_{2(b)},\mathcal{W}_{3(a)}$ and $\mathcal{W}_{3(b)}$ respectively as defined before Corollary \ref{conn1}.

\vspace{0.3 cm}
To prove the result (2) of part (a) for $\widetilde{Z}^{(2)}_{t},$ we show that the time asymptotic behaviour of $\big(\log \Phi_{t}^{(a)},\frac{\log|I^{(ma,b)}_{t}|}{m}\big)$ and $\big(\log \Phi_{t}^{(a)},\log|I^{(a,b)}_{t}|\big)$ will be identical in distribution under same scaling and shifts, under conditions of Theorem \ref{T2}(a) and \ref{T2}(b) (with some modifications). We show that under conditions of Theorem \ref{T2}(a) with $(a,b)$ replaced by $(ma,b)$ in Assumption \ref{as3}(c) 
\begin{align}
\lim_{t\to\infty}\mathcal{L}\bigg(\frac{\big(\log \Phi_{t}^{(a)},\log|I^{(ma,b)}_{t}|^{\frac{1}{m}}\big)}{a_{2}(t)}-b_{2}(t)\bigg) \stackrel{}{=}\mathcal{W}^{*}_{2(a)},\label{eq4a2}
\end{align}
and under conditions of Theorem \ref{T2}(b) with $(a,b)$ replaced by $(ma,b)$ in \eqref{Theorem2bcond*}
\begin{align}
\lim_{t\to\infty}\mathcal{L}\bigg(\frac{\big(\log \Phi_{t}^{(a)},\log|I^{(ma,b)}_{t}|^{\frac{1}{m}}\big)}{a_{2}(t)}\bigg) \stackrel{}{=}\mathcal{W}^{*}_{2(b)}.\label{eq4a3}
\end{align}

The verifications of \eqref{eq4a2} and \eqref{eq4a3} will be done in Lemma \ref{Cor4lem}. Now we finish the proof by proceeding similar to the proof of $|\widetilde{Z}^{(1)}_{t}|$ and using \eqref{eq4a2} \& \eqref{eq4a3}. Replacing $|I^{(a,b)}_{t}|$ by $|I^{(ma,b)}_{t}|^{\frac{1}{m}},$ we define $$\widetilde{Z}^{(2)}_{t}:=\widetilde{Z}^{*}_{m,t}c_{m,t}$$ where $\widetilde{Z}^{*}_{m,t}:=\max\{\Phi_{t}^{(a)}, |I^{(ma,b)}_{t}|^{\frac{1}{m}}\}$ and  $c_{m,t}:=a_{2}\frac{\Phi_{t}^{(a)}}{\widetilde{Z}^{*}_{m,t}}+b_{2}\frac{|I^{(ma,b)}_{t}|^{1/m}}{\widetilde{Z}^{*}_{m,t}}.$ Since $c_{m,t}$ is asymptotically $O_{p}(1),$ hence $\lim_{t\to\infty}\frac{|c_{m,t}|}{a_{2}(t)}\stackrel{P}{\to} 0.$ Following along the lines of proof of $|\widetilde{Z}^{(1)}_{t}|,$ 
\beqn
\lim_{t\to\infty}\mathcal{L}\bigg(\frac{\log|\widetilde{Z}^{(2)}_{t}|}{a_{2}(t)}-b_{2}(t)\bigg)&=&\lim_{t\to\infty}\mathcal{L}\bigg(\frac{\log|\widetilde{Z}^{*}_{m,t}|}{a_{2}(t)}-b_{2}(t)\bigg)\non\\
&=&\lim_{t\to\infty}\mathcal{L}\bigg(\frac{\log\max\{\Phi_{t}^{(a)}, |I^{(ma,b)}_{t}|^{\frac{1}{m}}\}}{a_{2}(t)}-b_{2}(t)\bigg)\non\\
&\stackrel{d}{=}&\lim_{t\to\infty}\mathcal{L}\bigg(\max\Big\{\frac{\log \Phi_{t}^{(a)}}{a_{2}(t)}-b_{2}(t),\frac{\log|I^{(ma,b)}_{t}|^{\frac{1}{m}}}{a_{2}(t)}-b_{2}(t)\Big\}\bigg).\non
\eeqn
Using Lemma \ref{Cor4lem}(a) \& (b), the result for $|\widetilde{Z}^{(2)}_{t}|$ will follow. 

Analog versions (i.e the second part of part (a)(2)) under conditions of Theorem \ref{T3}(a) and \ref{T3}(b) (with $(a,b)$ replaced by $(ma,b)$ in Assumption \ref{as4}(c) and in \eqref{Theorem2bcond**} respectively) will be similar and we omit their proofs. 
\begin{lemma}\label{Cor4lem}
\begin{enumerate}[(a)]
\item \eqref{eq4a2} holds under assumptions of Theorem \ref{T2}(a) with $(a,b)$ replaced by $(ma,b)$ in Assumption \ref{as3}(c).
\item \eqref{eq4a3} holds under assumptions of Theorem \ref{T2}(b) with $(a,b)$ replaced by $(ma,b)$ in \eqref{Theorem2bcond*}.
\end{enumerate}
\end{lemma}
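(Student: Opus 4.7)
The plan is to derive both parts directly from Theorem \ref{T2} by exploiting the homogeneity of the entire system under the rescaling $a \mapsto ma$ (with $m>0$). Specifically, I will apply Theorem \ref{T2}(a) and Theorem \ref{T2}(b) with the pair $(a,b)$ replaced by $(ma,b)$, and then use the elementary identities $\log \Phi^{(ma)}_{t} = m \log \Phi^{(a)}_{t}$ and $\log|I^{(ma,b)}_{t}|^{1/m} = \frac{1}{m}\log|I^{(ma,b)}_{t}|$ to recover the claimed limits in the original parametrisation.

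First I would verify that the standing hypotheses of Theorem \ref{T2} propagate cleanly under this substitution: integrability of $ma$ with respect to $\pi$ is inherited from that of $a$; the sign of $E_{\pi} ma(\cdot) = m E_{\pi} a(\cdot)$ matches that of $E_{\pi} a(\cdot)$, so the divergent/critical dichotomy is preserved; and the variance $\mathrm{Var}\bigl(\int_{\tau_{0}^{j}}^{\tau_{1}^{j}}(ma(Y_{s}) - E_{\pi} ma(\cdot))\,ds\bigr) = m^{2} \sigma_{j}^{2}$ stays in $(0,\infty)$ whenever $\sigma_{j}^{2}$ does. The integrability condition on $b$ assumed in Lemma \ref{Cor4lem} --- Assumption \ref{as3}(c) with $(a,b)$ replaced by $(ma,b)$ in part (a), respectively \eqref{Theorem2bcond*} with the same replacement in part (b) --- is precisely the form required to legitimise the application of Theorem \ref{T2} to the pair $(ma,b)$.

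For part (a), Theorem \ref{T2}(a) applied to $(ma,b)$ yields
\begin{equation*}
\left(\frac{\log \Phi^{(ma)}_{t} + t E_{\pi} ma(\cdot)}{\sqrt{t}},\ \frac{\log|I^{(ma,b)}_{t}| + t E_{\pi} ma(\cdot)}{\sqrt{t}}\right) \stackrel{d}{\to} \sum_{j\in S} \delta_{U}(\{j\}) \frac{m \sigma_{j}}{\sqrt{E|\mathfrak{I}_{1}^{j}|}} (N, N).
\end{equation*}
Next I would divide both coordinates of the pre-limit vector by $m$ --- a continuous linear map, so the continuous mapping theorem applies --- and substitute $\log \Phi^{(ma)}_{t}/m = \log \Phi^{(a)}_{t}$, $E_{\pi} ma(\cdot)/m = E_{\pi} a(\cdot)$, and $\log|I^{(ma,b)}_{t}|/m = \log|I^{(ma,b)}_{t}|^{1/m}$. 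The scaling factor on the right collapses as $(m \sigma_{j})/m = \sigma_{j}$, reproducing exactly the bivariate limit law $\mathcal{W}^{*}_{2(a)}$ described just before \eqref{eq4a2}, which establishes \eqref{eq4a2}. Part (b) proceeds identically from Theorem \ref{T2}(b) applied to $(ma,b)$: the pre-limit scaling $m \sigma_{j}/\sqrt{E|\mathfrak{I}_{1}^{j}|}$ multiplies $(F_{1}, F_{2}) \stackrel{d}{=} (B_{1}, M_{1})$, the terminal value and running supremum of a standard Brownian motion, which are jointly homogeneous of degree one under positive scalars; dividing by $m$ collapses this to the limit $\mathcal{W}^{*}_{2(b)}$ and establishes \eqref{eq4a3}.

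No genuine obstacle appears in this argument --- the proof is essentially a homogeneity observation under the linear reparametrisation $a \mapsto ma$, combined with the continuous mapping theorem. The only routine care needed is in confirming that the replaced integrability assumptions supplied by Lemma \ref{Cor4lem} match what Theorem \ref{T2} requires for the pair $(ma,b)$, which is handled at the outset.
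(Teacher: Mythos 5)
Your argument is correct, and it reaches the conclusion by a genuinely different (and shorter) route than the paper's. You invoke Theorem \ref{T2} as a black box for the pair $(ma,b)$ --- after checking that Assumption \ref{as3} propagates, with the variance parameter becoming $m\sigma_{j}$ --- and then use the pathwise identities $\log\Phi_{t}^{(ma)}=m\log\Phi_{t}^{(a)}$ and $\log|I_{t}^{(ma,b)}|^{1/m}=\tfrac{1}{m}\log|I_{t}^{(ma,b)}|$ together with the continuous mapping theorem to divide both coordinates by $m$; the factor $m$ cancels against the rescaled $m\sigma_{j}$, giving exactly $\mathcal{W}^{*}_{2(a)}$ (resp.\ $\mathcal{W}^{*}_{2(b)}$). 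The paper instead re-runs the decomposition from the proof of Theorem \ref{T2}: it writes $|I_{t}^{(ma,b)}|^{1/m}=\Phi_{t}^{(a)}|\widetilde{I}_{t}^{(ma,b)}|^{1/m}$, keeps $G_{t}^{(1)},G_{t}^{(2)}$ (built from the original $a$) unchanged, replaces the remainder by $\widetilde{G}_{t}^{(3,m)}$ built from the modified quantities $(L^{j}_{i,m},Q^{j}_{i,m})$ and $S^{**}_{g_{t}^{j},m}$, verifies that this remainder (resp., in part (b), $\tfrac{1}{m}\log|B^{(3)}_{t,m}|/\sqrt{t}$ and $\max_{k}\log|Q^{j}_{k,m}|/(m\sqrt{t})$ via the sandwich on $Z^{\otimes j}_{g_{t}^{j},m}$) is asymptotically negligible, and then re-applies Lemma \ref{lem01}. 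Your homogeneity observation buys brevity and avoids re-verifying the SRE convergence and the maximal-term estimates; the paper's version keeps the first coordinate in the original parametrisation throughout and exhibits the negligible terms explicitly. The only caveat, shared by both arguments and implicit in the corollary's statement, is that $m>0$ is required so that the sign of $E_{\pi}(ma)(\cdot)$ is preserved and the division by $m$ is legitimate.
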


Assertions of part (b) of the Corollary \ref{conn1} follow similarly by proceeding like proofs of Theorem \ref{T4} (hence omitted).
\end{proof}
Proofs of Lemma \ref{Cor4lem} (a),(b) are given in the supplementary material.

\subsection{Proof of Theorem \ref{Ex4}}
\begin{proof}We begin with the equation at display \eqref{model2} with initial value $\rho^{}_{0}$. Dividing both sides by $\rho^{3}_{t}$ will yield
\beqn
\frac{1}{\rho^{3}_{t}}\frac{d\rho^{}_{t}}{dt}&=&\rho^{-2}_{t}a(Y_{t})-b(Y_{t}),\non\\
-\frac{1}{2}\frac{d\big(\rho^{-2}_{t}\big)}{dt}&=&\big(\rho^{-2}_{t}\big)a(Y_{t})-b(Y_{t}).\non
\eeqn
Denoting $X_{t}:=\rho^{-2}_{t},$ we get a linear ODE for with semi-Markov $Y$ modulated coefficients $a(Y_{t}),b(Y_{t})$ as
\beqn
\frac{dX_{t}}{dt}&=&2b(Y_{t})-2a(Y_{t})X_{t}.\label{andronov0}\\
\frac{d\big[e^{\int_{0}^{t}2a(Y_{s})ds}X_{t}\big]}{dt}&=&2b(Y_{t})e^{\int_{0}^{t}2a(Y_{s})ds}.\non\\
X_{t}\Phi_{t}^{(-2a)}-X_{0}\Phi^{(-2a)}_{0}&=& \int_{0}^{t}2b(Y_{s})\Phi^{(-2a)}_{s}ds,\label{andronov1}\\
&=&2 I^{(-2a,b)}_{t},\non
\eeqn 
where $\Phi^{(-2a)}_{0}=1,$ and \eqref{andronov1} is obtained by multiplying \eqref{andronov0} by $\Phi_{t}^{(-2a)}$ in both sides and integrating in $[0,t],$ using the notations of integrals in \eqref{modelint}. After simplifying we get that conditioned on $\mathcal{F}_{t}^{Y},$
\beqn
X_{t}&=& X_{0} \Phi_{t}^{(2a)}+2I_{t}^{(2a,b)},\non\\
\rho^{-2}_{t}&=&\rho^{-2}_{0}\Phi_{t}^{(2a)}+2I_{t}^{(2a,b)},\label{andronov3}\\
\rho^{2}_{t}&=&\frac{1}{\rho_{0}^{-2}\Phi_{t}^{(2a)}+2I_{t}^{(2a,b)}}.\label{andronov2}
\eeqn
We finish the proof of part (a) when $E_{\pi}a(\cdot)>0.$ Observe that unconditionally under assumptions of Theorem \ref{Ex4}(a) the result of Theorem \ref{T1} translates to
$$(\Phi_{t}^{(2a)},I_{t}^{(2a,b)},Y_{t})\stackrel{d}{\to}\Big(0,\sum_{j\in S}\delta_{U}(\{j\})W^{(2a,b)}_{j},U\Big) \s \text{as} \s t\to\infty,$$
where $W^{(2a,b)}_{j}$ is defined in \eqref{mixture} identically as $Z_{j}$ where $(a,b)$ is replaced by $(2a,b).$

Hence using the continuous mapping theorem, one gets 
\begin{eqnarray}
\lim_{t\to\infty}\mathcal{L}(\rho^{2}_{t},Y_{t})&=&\lim_{t\to\infty}\mathcal{L}\bigg(\frac{1}{\rho_{0}^{-2}\Phi_{t}^{(2a)}+2I_{t}^{(2a,b)}}, Y_{t}\bigg)\non\\
 &=&\Big(\sum_{j\in S}\delta_{U}(\{j\})\frac{1}{2 W^{(2a,b)}_{j}}, U\Big)\non
\end{eqnarray}
proving the assertion in \eqref{W} of the theorem.

Clearly in divergent cases (i.e, $E_{\pi}a(\cdot)\le 0$) the state-space representation of the process $(\rho^{-2}_{t}:t\ge 0)$ in \eqref{andronov3} marginally evolves like $(\widetilde{Z}_{t}^{(1)}:t\ge 0)$ in Corollary \ref{conn1} with $a_{1}=\rho_{0}^{-2},b_{1}=2,$ and $(a,b)$ is replaced by $(2a,b).$ Hence the results for $\log \rho_{t}^{-2}=-2\log |\rho_{t}|$  will be identical with the results for $\log |\widetilde{Z}_{t}^{(1)}|$ under the assumptions of Theorem \ref{T2}, Theorem \ref{T3}, and Theorem \ref{T4} with  $a(\cdot)$ replaced by $2a(\cdot)$ as the only change.  As a result in part (B), the quantity $\widetilde{\sigma}^{2}_{j}$ will be identical with the term $\sigma_{j}^{2}$ in Theorem \ref{T2}. Using Corollary \ref{conn1} the rest parts of Theorem \ref{Ex4} will follow. 
\end{proof}

\subsection{Proof of Theorem \ref{Ex1}}
\begin{proof}
Beginning with the set-up at \eqref{exe3} using differentiability of $\beta(\cdot)$ we observe that $h'(x)=\frac{1}{\beta(x)}, h''(x)=-\frac{\beta'(x)}{\beta^{2}(x)}$. Given the trajectory of $Y$ in [0,t], the SDE of $X$ is defined by
\beqn
dX_{s}&=&\Big[-a(Y_{s})h(X_{s})\beta(X_{s})+\frac{b^{2}(Y_{s})}{2}\beta(X_{s})\beta'(X_{s})\Big]dt+b(Y_{s})\beta(X_{s})dW_{s},\s X_{0}=x_{0}.\non\eeqn
 Applying Ito's lemma with the transformation $h(\cdot)$ on $X$ above yields 
\beqn
dh(X_{s})&=&\Big[0+h'(X_{s})\Big(-a(Y_{s})h(X_{s})\beta(X_{s})+\frac{b^{2}(Y_{s})}{2}\beta(X_{s})\beta'(X_{s})\Big) \non\\&&+ \frac{b^{2}(Y_{s})}{2}h''(X_{s}) \beta^{2}(X_{s})\Big]ds+\Big(h'(X_{s})b(Y_{s})\beta(X_{s})\Big)dW_{s}\non\\
&=& -a(Y_{s})h(X_{s})ds+b(Y_{s})dW_{s}.\non
\eeqn
Now integrating both sides with $s$ in the range $[0,t]$ yields, 
\beqn
h(X_{t})&=& h(x_{0})e^{-\int_{0}^{t}a(Y_{s})ds}+ \int_{0}^{t}b(Y_{s})e^{-\int_{s}^{t}a(Y_{r})dr}dW_s\non\\
&\stackrel{d}{=}& h(x_{0})\Phi^{(a)}_{t}+\sqrt{\int_{0}^{t}b^{2}(Y_{s})e^{-\int_{s}^{t}2a(Y_{r})dr}ds}N\non\\
&\stackrel{}{=}& h(x_{0})\Phi^{(a)}_{t}+\sqrt{I^{(2a,b^{2})}_{t}} N\label{OUe1}
\eeqn
for some $N\sim N(0,1)$ independent of $Y,$ hence independent of $(\Phi^{(a)}_{t},I^{(2a,b^{2})}_{t}).$ Since $h$ does not vanish in $\mathbb{R},$ and it is monotone, hence the inverse $h^{-1}(\cdot)$ exists. Using representation \eqref{OUe1},  under Assumptions \ref{As0} and \ref{as2} using the result of Theorem \ref{T1} and continuous mapping theorem following holds:
$$(h(X_{t}),Y_{t})\stackrel{d}{\to}\Big(\sum_{j\in S}\delta_{U}(\{j\})\sqrt{Z_{j}}N, U\Big)$$
where $U\sim \pi$ independent of $Z_{j}$ and $N\sim N(0,1),$ where $Z_{j}$ satisfies weak solution of \eqref{mixture} with $(a,b)$ replaced by $(2a,b^{2}).$ Now using the continuity of $h^{-1}(\cdot)$ and continuous mapping theorem the result in part (A) follows.

For the rest of part (B), (C), (D) we observe that marginal evolution of the process $$h(X_{t})\stackrel{d}{=}h(x_{0})\Phi^{(a)}_{t}+\sqrt{I^{(2a,b^{2})}_{t}} N,$$ resembles with $Z_{t}^{(2)}$ in Corollary \ref{conn1} with $a_{2}=h(x_{0}),b_{2}= N, m=2;$ hence all the results will follow by replacing $(a,b^{})$ by $(a,b^{2})$ in Corollary \ref{conn1} for $|Z_{t}^{(2)}|$. 
\end{proof}

Proof of Corollary \ref{CC1} is given in the supplementary material.

\section{Appendix}
\begin{lemma}\label{step1} Under Assumption \ref{As0},
$$\lim_{t\to\infty}P_{_i}\Big[t-\tau_{g_{t}^{j}}^{j}>x, Y_{t}=j\Big] =\frac{\mu_{j}\sum_{k\in S}P_{jk}\int_{x}^{\infty}(1-F_{jk}(y))dy}{\sum_{k\in S}\mu_{k}m_{k}}.$$
\end{lemma}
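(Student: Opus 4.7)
\textbf{Proof proposal for Lemma \ref{step1}.} The plan is to express $P_i[Y_t = j, t - \tau_{g_t^j}^j > x]$ as a renewal convolution and then apply the Key Renewal Theorem. On $\{Y_t = j\}$, the process is in the sojourn at $j$ that began at $\tau_{g_t^j}^j$, so this event coincides with $\{W_j^{(g_t^j+1)} > t - \tau_{g_t^j}^j\}$, where $W_j^{(n+1)}$ denotes the sojourn time of $Y$ at its $(n+1)$-th visit to $j$. Decomposing over the index $n$ of the last visit yields
\beqn
P_i[Y_t = j,\ t - \tau_{g_t^j}^j > x] = \sum_{n=0}^{\infty} P_i[\tau_n^j \le t - x,\ W_j^{(n+1)} > t - \tau_n^j]. \nonumber
\eeqn
Assumption \ref{As0}(c) together with the regenerative structure of $Y$ at entries to $j$ render $W_j^{(n+1)}$ independent of $\tau_n^j$ and distributed as $F_j(\cdot) = \sum_{k \in S} P_{jk} F_{jk}(\cdot)$. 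Letting $U_i(ds) := \sum_{n \ge 0} P_i[\tau_n^j \in ds]$ be the delayed renewal measure of successive entries to $j$ and $\overline{F}_j := 1 - F_j$, the right-hand side rewrites as $\int_0^{t-x} \overline{F}_j(t-s)\,U_i(ds)$.

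Next I invoke the Key Renewal Theorem applied to $h(u) := \overline{F}_j(u)\,\mathbf{1}_{\{u \ge x\}}$. Assumption \ref{As0}(b) makes every $F_{ij}$ absolutely continuous, hence the inter-renewal distribution $\mathcal{L}(|\mathfrak{I}_1^j|)$ is non-arithmetic. Since $\int_0^{\infty} \overline{F}_j(u)\,du = m_j = \sum_{k \in S} P_{jk}\,m_{jk} < \infty$ by Assumption \ref{As0}(b) and $\overline{F}_j$ is monotone, $h$ is directly Riemann integrable. Therefore
\beqn
\lim_{t \to \infty} P_i[Y_t = j,\ t - \tau_{g_t^j}^j > x] = \frac{\int_x^{\infty} \overline{F}_j(u)\,du}{E|\mathfrak{I}_1^j|} = \frac{\sum_{k \in S} P_{jk} \int_x^{\infty}\bigl(1 - F_{jk}(u)\bigr)\,du}{E|\mathfrak{I}_1^j|}. \nonumber
\eeqn

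To close the computation I identify $E|\mathfrak{I}_1^j|$. Positive recurrence of $J$ (Assumption \ref{As0}(a)) combined with the elementary Markov-chain renewal identity gives $E_j\bigl[\#\{0 \le n < N_1^j - N_0^j : J_n = k\}\bigr] = \mu_k/\mu_j$ for each $k \in S$. A Wald-type decomposition, identical to the one used in Section \ref{sec2} to verify $E|\mathfrak{I}_1^j| < \infty$, then yields
\beqn
E|\mathfrak{I}_1^j| = \sum_{k \in S} \frac{\mu_k}{\mu_j}\,m_k = \frac{1}{\mu_j}\sum_{k \in S} \mu_k m_k, \nonumber
\eeqn
whose substitution produces exactly the claimed right-hand side. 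The main obstacle I anticipate is justifying the Wald-type interchange of sum and expectation when $|S| = \infty$, which I would handle by truncating $S$ to a finite set, applying monotone convergence, and invoking the uniform bound $\sup_{i,j \in S} m_{ij} < \infty$ from Assumption \ref{As0}(b) together with $E|A_1^j| = 1/\mu_j < \infty$ to dominate the tail. The remaining steps mirror the proof of the forward residual time limit at display (8.5) of \cite{cinlar1969markov}, adapted from the remaining to the elapsed portion of the current sojourn.
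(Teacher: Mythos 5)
Your proof is correct and follows the same renewal-theoretic template as the paper's (express the probability as a renewal convolution, then apply a key renewal theorem), but the route through both key steps is genuinely different. The paper does not decompose over the last entrance to $j$; it sets up the Markov renewal equation $U=G+A\star U$ over the whole state space, with $G_j(t)=\delta_{\{i=j\}}1_{\{t>x\}}(1-F_j(t))$, invokes Cinlar's existence/uniqueness theory for the minimal solution to identify $P_i[t-\tau^j_{g^j_t}>x,\,Y_t=j]=\int_0^t R_{ij}(du)\,1_{\{t-u>x\}}(1-F_j(t-u))$, and then applies the Markov-renewal version of the key renewal theorem, which delivers the normalization $\mu_j/\sum_k\mu_k m_k$ in one stroke. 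Your last-entrance decomposition produces the same convolution -- your $U_i$ coincides with the measure $R_{ij}(du)$, since both count expected visits to $j$ by time $t$ -- but you obtain it by elementary conditioning rather than by solving a renewal equation, and you then invoke the classical one-dimensional key renewal theorem for the delayed renewal process of entries to $j$, which forces a separate identification $E|\mathfrak{I}_1^j|=\mu_j^{-1}\sum_k\mu_k m_k$ via Kac's occupation formula. Your route buys self-containedness (no appeal to the minimal-solution machinery for Markov renewal equations); the paper's buys that the limiting constant falls out automatically from the Markov-renewal KRT. Two small remarks: the interchange you worry about for $|S|=\infty$ needs no truncation, since every summand in $E\sum_{n\in A_1^j}m_{J_n}$ is nonnegative and Tonelli applies directly (and the paper's bound $E|\mathfrak{I}_1^j|\le(\sup_{i,j}m_{ij})E|\widetilde{A}_1^j|$ already gives finiteness); and your non-arithmeticity and d.R.i.\ verifications correctly supply the hypotheses that the paper discharges instead through Proposition 4.1 of Asmussen and the recurrence/aperiodicity of $Y$.
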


\begin{lemma}\label{lem_T1finite}
Under Assumptions \ref{As0} and \ref{as2} the quantities $\log L_{0}^{j}$ and $|Q_{0}^{j}|$ are $O_{P}(1).$
\end{lemma}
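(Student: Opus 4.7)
The key observation is that $L_0^j$ and $Q_0^j$ are single random variables (not $t$-indexed sequences), so the $O_P(1)$ claim reduces to almost-sure finiteness; for a single random variable, tightness is equivalent to being a.s.\ finite. The plan is to deduce this directly from the regenerative structure in Assumption \ref{As0}, without invoking Assumption \ref{as2} beyond the defining property that $a,b:S\to\R$ are real-valued.

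First I would argue $\tau_0^j<\infty$ almost surely. Irreducibility and positive recurrence of $J$ (Assumption \ref{As0}(a)) give $N_0^j:=\inf\{n:J_n=j\}<\infty$ a.s., and each sojourn time $T_i$ ($i\ge 1$) has conditional distribution $F_{J_{i-1},J_i}\in\mathcal G$ with finite mean (Assumption \ref{As0}(b)), hence is a.s.\ finite. Together with $T_0=0$ this yields
$$\tau_0^j=\sum_{i=0}^{N_0^j}T_i<\infty \quad \text{a.s.}$$

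Next I would expand both quantities as finite sums over sojourn segments. Using $Y_s=J_k$ on $[S_k,S_{k+1})$,
$$-\log L_0^j=\int_0^{\tau_0^j}a(Y_s)\,ds=\sum_{k=0}^{N_0^j-1}a(J_k)\,T_{k+1},$$
and
$$|Q_0^j|\le \Big(\max_{0\le k<N_0^j}|b(J_k)|\Big)\,\tau_0^j\,\exp\Big(\sum_{k=0}^{N_0^j-1}|a(J_k)|\,T_{k+1}\Big).$$
Since $a,b$ are real-valued on $S$ and $N_0^j$ is a.s.\ finite, both right-hand sides are sums (respectively products) of almost surely finitely many almost surely finite terms, hence a.s.\ finite. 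An a.s.\ finite random variable is automatically tight, yielding the claimed $O_P(1)$ bounds.

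There is no genuine obstacle here; the proof is essentially bookkeeping once one recognises that the statement is purely qualitative. The heavier integrability conditions in Assumption \ref{as2}, namely $E\log^+|B|<\infty$ and $E\log|A|<0$ for the SRE in \eqref{AB}, are not needed for this lemma and are reserved for Lemma \ref{T1lem2}(a), where one must show that $\prod_{k=1}^{n}L_k^j$ vanishes in probability and that the SRE admits a unique stationary law into which $R_t^{(2)}$ converges.
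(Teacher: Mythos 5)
Your proof is correct, and it takes a genuinely different and more elementary route than the paper's. You first observe that $L_{0}^{j}$ and $Q_{0}^{j}$ are fixed random variables (not $t$-indexed families), so $O_{P}(1)$ reduces to almost-sure finiteness, and you then obtain finiteness from $N_{0}^{j}<\infty$ a.s.\ (irreducibility and positive recurrence of $J$), the a.s.\ finiteness of each sojourn time, and the fact that $a,b$ are real-valued on $S$; this uses only Assumption \ref{As0}. The paper instead decomposes the trajectory on $[0,\tau_{0}^{j})$ into excursions of $Y$ from the initial state $i$ back to itself, stops at the first excursion $k^{*}$ that visits $j$, bounds $\log|L_{0}^{j}|$ by a geometric sum of excursion functionals with finite expectation, and controls $|Q_{0}^{j}|$ via the absolutely convergent perpetuity series $\sum_{l}\big(\prod_{n>l}L_{n}^{i}\big)Q_{l}^{i}$, which is where Assumption \ref{as2} enters. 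The trade-off: your argument is lighter and shows the lemma needs no integrability hypotheses at all, while the paper's excursion decomposition in fact delivers more than tightness (e.g.\ $E|\log L_{0}^{j}|<\infty$ under $E_{\pi}|a|<\infty$), although only tightness is used downstream, where these quantities are divided by $g_{t}^{j}$ or $\sqrt{t}$ and sent to zero. Both arguments are adequate for that purpose.
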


\begin{lemma}\label{regV2}
\begin{enumerate}[(a)]
\item Suppose Assumption \ref{As0} holds and $c_{\cdot}$ is a function defined on $\mathbb{N}$ as \eqref{cn} and its definition is extended to the whole $\R_{\ge 0}$ for any arbitrary slowly varying function $L(\cdot)$. Let $g^{j}_{t}$ be defined as the number of total renewals to the state $j,$ before time $t.$ Then for any fixed $s>0,$ $$\lim_{t\to\infty}c^{-1}_{g_{st}^{j}}/c^{-1}_{t}\stackrel{a.s}{\to} \Big(\frac{s}{E|\mathfrak{I}^{j}_{1}|}\Big)^{-\frac{1}{\alpha}}.$$
\item For any function $L(\cdot)$ slowly varying at $+\infty,$ and any two functions $a_{1}(\cdot),b_{1}(\cdot):\R_{\ge 0}\to \R_{\ge 0}$ such that both $a_{1}(t),b_{1}(t)\to \infty$ as $t\to\infty$ and $a_{1}\le b_{1};$ for any $\eps>0$ there exists  $t_{0,\eps}\ge 0,$ such that for all $t\ge t_{0,\eps},$ 
\beqn
(1-\eps)\Big(\frac{b_{1}(t)}{a_{1}(t)}\Big)^{-\eps}\le\frac{L(b_{1}(t))}{L(a_{1}(t))}\le (1+\eps)\Big(\frac{b_{1}(t)}{a_{1}(t)}\Big)^{\eps}\label{regVp3}
\eeqn
\item For any function $L(\cdot)$ slowly varying at $t\to\infty,$  $\lim_{t\to\infty}\frac{\log L(t)}{\log t}\to 0$. 
\end{enumerate}
\end{lemma}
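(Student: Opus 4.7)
The plan is to attack the three parts in the order (b), (c), (a), since (b) is essentially a restatement of Potter's bound, (c) is a direct corollary of (b), and (a) combines (b) with the renewal SLLN.

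For part (b), I would invoke the classical Potter bound (Theorem 1.5.6 of Bingham--Goldie--Teugels, \emph{Regular Variation}): for every $\delta>0$ there exists $T_{\delta}$ such that for all $x\ge y\ge T_{\delta}$,
$$
(1-\delta)\left(\tfrac{x}{y}\right)^{-\delta}\le \frac{L(x)}{L(y)}\le (1+\delta)\left(\tfrac{x}{y}\right)^{\delta}.
$$
Since $a_{1}(t)\to\infty$ and $a_{1}(t)\le b_{1}(t)$, both arguments exceed $T_{\eps}$ for all $t$ large enough; substituting $x=b_{1}(t)$, $y=a_{1}(t)$ yields \eqref{regVp3} with $\delta$ replaced by $\eps$. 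For part (c), specialize (b) by taking $a_{1}(t)\equiv c$ (any fixed constant large enough that $L(c)\in(0,\infty)$) and $b_{1}(t)=t$. For each $\eps>0$, eventually $(1-\eps)(t/c)^{-\eps}\le L(t)/L(c)\le (1+\eps)(t/c)^{\eps}$; taking logarithms and dividing by $\log t$ sandwiches $\log L(t)/\log t$ between $-\eps+o(1)$ and $\eps+o(1)$, and letting $\eps\downarrow 0$ gives the claim.

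For part (a), the two ingredients are the renewal strong law and the uniform convergence theorem for slowly varying functions. Under Assumption \ref{As0} the cycle lengths $\{|\mathfrak{I}_{k}^{j}|\}_{k\ge 1}$ are i.i.d.\ with finite positive mean $E|\mathfrak{I}_{1}^{j}|$, so Kolmogorov's SLLN and standard renewal theory (with $g_{t}^{j}$ playing the role of the renewal counting process at state $j$) give
$$
\frac{g_{t}^{j}}{t}\stackrel{a.s.}{\longrightarrow}\frac{1}{E|\mathfrak{I}_{1}^{j}|},\qquad\text{so that}\qquad \frac{g_{st}^{j}}{t}\stackrel{a.s.}{\longrightarrow}\frac{s}{E|\mathfrak{I}_{1}^{j}|}>0 .
$$
Writing $c_{n}=n^{1/\alpha}L_{0}(n)$ as in \eqref{cn}, I decompose
$$
\frac{c_{g_{st}^{j}}^{-1}}{c_{t}^{-1}}=\frac{c_{t}}{c_{g_{st}^{j}}}=\left(\frac{t}{g_{st}^{j}}\right)^{1/\alpha}\frac{L_{0}(t)}{L_{0}(g_{st}^{j})}.
$$
The first factor converges a.s.\ to $(E|\mathfrak{I}_{1}^{j}|/s)^{1/\alpha}$. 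For the second, on the probability-one event $\{g_{st}^{j}/t\to s/E|\mathfrak{I}_{1}^{j}|\}$ the ratio $g_{st}^{j}/t$ lies eventually in a compact subset of $(0,\infty)$; applying part (b) with $a_{1}(t)=\min(t,g_{st}^{j})$ and $b_{1}(t)=\max(t,g_{st}^{j})$ gives $L_{0}(g_{st}^{j})/L_{0}(t)\to 1$ a.s. Multiplying the two limits produces $(E|\mathfrak{I}_{1}^{j}|/s)^{1/\alpha}=(s/E|\mathfrak{I}_{1}^{j}|)^{-1/\alpha}$, as required.

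The only mild subtlety is the null-set management in applying a deterministic Potter bound along the random argument $g_{st}^{j}$, but since the driving ratio converges almost surely to a strictly positive deterministic constant, one can restrict to that almost-sure event and treat each sample path as a deterministic sequence tending to $\infty$ with a bounded-away-from-zero ratio, at which point part (b) applies pathwise. No deep obstacle remains; the entire lemma reduces to Karamata regular variation plus the renewal SLLN.
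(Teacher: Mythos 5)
Your proposal is correct, and for part (a) it is essentially the paper's own argument: the same factorization $c_{t}/c_{g_{st}^{j}}=(t/g_{st}^{j})^{1/\alpha}\,L_{0}(t)/L_{0}(g_{st}^{j})$, the renewal SLLN for the first factor, and a pathwise Potter-type bound for the second. Where you diverge is in parts (b) and (c): the paper proves both directly from Karamata's representation $L(x)=c_{1}(x)e^{\int_{x_{0}}^{x}\eta(s)s^{-1}ds}$, bounding the exponent by $\pm\eps\log(b_{1}(t)/a_{1}(t))$ for (b) and splitting the integral at a threshold $t_{\delta}$ for (c), whereas you import the classical Potter bound as a black box for (b) and then deduce (c) from it. Both routes are standard and equally valid; the paper's is self-contained modulo Karamata, yours is shorter but leans on the cited theorem. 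One small technical point: your derivation of (c) takes $a_{1}(t)\equiv c$ constant, which violates the hypothesis $a_{1}(t)\to\infty$ in part (b) as stated, so strictly you are not specializing (b) but rather the underlying Potter bound with a fixed reference point $c\ge T_{\eps}$; the sandwich $\log L(t)\le \log L(c)+\log(1+\eps)+\eps(\log t-\log c)$ and its lower counterpart then give the claim after dividing by $\log t$ and letting $\eps\downarrow 0$. This is a cosmetic fix, not a gap.
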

\begin{lemma}\label{CorLem}
For any two non-negative random variables $X,Y$ one has $$\log \max\{X,Y\}\stackrel{d}{=} \max\{\log X,\log Y\}.$$
\end{lemma}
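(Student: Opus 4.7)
\textbf{Proof Proposal for Lemma \ref{CorLem}.}

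The plan is to prove the stronger pointwise (pathwise) statement $\log\max\{X,Y\} = \max\{\log X,\log Y\}$, from which equality in distribution follows immediately. The key observation is that $\log:[0,\infty)\to[-\infty,\infty)$ (with the convention $\log 0 := -\infty$) is monotone non-decreasing, and any monotone non-decreasing function commutes with $\max$.

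More precisely, I would argue as follows. For any two values $x,y\in[0,\infty)$, assume without loss of generality that $x\le y$, so that $\max\{x,y\}=y$. By monotonicity, $\log x\le \log y$ (in $[-\infty,\infty)$), so $\max\{\log x,\log y\}=\log y=\log\max\{x,y\}$. Since this identity holds for every pair of non-negative reals, I would apply it pointwise to each $\omega\in\Omega$ with $x=X(\omega)$, $y=Y(\omega)$, yielding $\log\max\{X(\omega),Y(\omega)\}=\max\{\log X(\omega),\log Y(\omega)\}$ almost surely. Equality in distribution is then a trivial consequence.

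There is essentially no obstacle here: the lemma is a one-line consequence of the monotonicity of $\log$, and the only mild subtlety is the handling of the boundary case $X=0$ or $Y=0$, which is dealt with by the convention $\log 0=-\infty$ (and causes no issue since $\max\{-\infty,c\}=c$ for $c\in\R$, while $\log 0 = -\infty = \max\{-\infty,-\infty\}$ when both vanish). The lemma is stated as a distributional identity only because that is all that is required at the point of use in the proof of Corollary \ref{conn1}.
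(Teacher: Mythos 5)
Your proposal is correct and takes essentially the same route as the paper: both establish the pointwise (hence almost sure, hence distributional) identity $\log\max\{X,Y\}=\max\{\log X,\log Y\}$, the paper merely organizing the argument as a case split over $\{X>Y\}$, $\{X=Y\}$, $\{X<Y\}$ to conclude $P[\log\max\{X,Y\}=\max\{\log X,\log Y\}]=1$. Your direct appeal to the monotonicity of $\log$ (with the convention $\log 0=-\infty$) is a cleaner packaging of the same observation.
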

Proofs of these lemmas are given in the \textbf{Supplementary material}.

\newpage
\section{Supplementary materials of  ``Long time behavior of semi-Markov modulated perpetuity and some related processes"}

The supplementary section contains the proofs of 
\begin{itemize}
\item Lemma \ref{T1lem2} (a)(b) of Theorem \ref{T1}; 
\item Theorem \ref{T2}; 
\item Lemma \ref{regV}, Lemma \ref{lemverify3b} and Lemma \ref{lemverify3a} of Theorem \ref{T3}; 
\item proofs of results in Remark \ref{verifycond}, Proposition \ref{aconstant};
\item Theorem \ref{T4}; 
\item Lemma \ref{Cor4lem}(a)(b) of Corollary \ref{conn1}; Corollary \ref{CC1}, and the results in the Appendix appearing in the main article.
\end{itemize}

\section{Proof of supplementary results of Theorem 1}

We prove Lemma 9.2 here. 
\subsection{Proof of \textbf{Lemma 9.2(a)}} 

We prove this lemma by showing
\begin{itemize}
\item\textbf{Step $1$:} $R_{t}^{(1)}$ and absolute value of first term of $R_{t}^{(2)}$ both will converge to $0$ in probability as $t\to\infty,$ 
\item \textbf{Step $2$:} $P_{g_{t}^{j}}^{j}\stackrel{d}{\to}V_{j}^{*}$ where $V_{j}^{*}$ is defined in (3.3), (3.4) (i.e second term of $R_{t}^{(2)}$ will converge to the solution of SRE),
\end{itemize}
and as a consequence of Slutsky's theorem, the assertion of this lemma holds. 

To show \textbf{Step $1$}, observe that $R_{t}^{(1)}$ and first term of $R_{t}^{(2)}$ are respectively 
\beqn
\bigg(\exp\Big\{t\Big(\frac{\sum_{i=1}^{g_{t}}\log L^{j}_{i}}{g_{t}}+\frac{\log L^{j}_{0}}{g_t}\Big)\frac{g_{t}}{t}\Big\},\exp\Big\{t\Big(\frac{\sum_{i=1}^{g_{t}}\log L^{j}_{i}}{g_{t}}+\frac{\log |Q^{j}_{0}|}{g_t}\Big)\frac{g_{t}}{t}\Big\}\bigg).\non
\eeqn
A consequence of renewal theorem, law of large number (validated under Assumption 1) will give that 
\beqn
\frac{\sum_{i=1}^{g_{t}}\log L^{j}_{i}}{g_{t}}\stackrel{P}{\to} E_{\pi}\log L^{j}_{i}=-E|\mathfrak{I}^{j}_{1}|E_{\pi}a(\cdot)<0\s \text{and}\s\frac{g_{t}}{t}\stackrel{a.s}{\to}\frac{1}{E|\mathfrak{I}^{j}_{1}|}.\label{eRenewal}
\eeqn
Since $\log L_{0}^{j}, |Q_{0}^{j}|$ are both $O_{_{P}}(1)$ a consequence of Lemma 12.2, both of $\frac{\log L^{j}_{0}}{g_t}, \frac{\log |Q^{j}_{0}|}{g_t}$ will go to $0$ in probability. This implies as $t\to\infty,$  $$\bigg(\bigg(\prod_{k=1}^{g_{t}^{j}}L^{j}_{k}\bigg)^{} L^{j}_{0},\bigg(\prod_{k=1}^{g_{t}^{j}}L^{j}_{k}\bigg)^{} |Q^{j}_{0}|\bigg)\stackrel{P}{\to}(0,0).$$ 
\begin{lemma}\label{lemPerp}
\textbf{Step $2$} holds under assumptions of Theorem 1, i.e under Assumptions 1 and 2, $P_{g_{t}^{j}}^{j}\stackrel{d}{\to}V_{j}^{*}$ as $t\to\infty$.
\end{lemma}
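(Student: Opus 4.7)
The plan is to invoke the classical existence theorem for solutions of affine stochastic recurrence equations applied to the i.i.d.\ cycle sequence $\{(L_{i}^{j},Q_{i}^{j})\}_{i\ge 1}$, and then compose with the random index $g_{t}^{j}$. The crucial observation is that the \emph{forward} partial sum representation of $V_{j}^{*}$ converges almost surely, which sidesteps any Anscombe-type contiguity verification at the stopped time.

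First I would verify the conditions \eqref{sre_conv_conds} for the pair $(A,B)\stackrel{d}{=}(L_{1}^{j},Q_{1}^{j})$. Positivity $P[A=0]=0$ is immediate since $L_{1}^{j}=e^{-\int_{\tau_{0}^{j}}^{\tau_{1}^{j}}a(Y_{s})ds}>0$ almost surely. Because the renewal cycles at state $j$ are i.i.d.\ under Assumption~\ref{As0} and $E|\mathfrak{I}_{1}^{j}|<\infty$, the renewal-reward identity together with the definition \eqref{pi} of $\pi$ gives $E\log A=-E\!\int_{\tau_{0}^{j}}^{\tau_{1}^{j}}a(Y_{s})ds=-E|\mathfrak{I}_{1}^{j}|\,E_{\pi}a(\cdot)<0$ by Assumption~\ref{as2}(a). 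Finally, $E\log^{+}|B|=E\log^{+}|Q_{1}^{j}|<\infty$ is precisely Assumption~\ref{as2}(b). By Corollary 2.1.2 and Theorem 2.1.3 of Buraczewski et al.~\cite{buraczewski2016stochastic} (alternatively, Vervaat~\cite{vervaat1979stochastic}), the series $\sum_{k=1}^{\infty}\bigl(\prod_{i=1}^{k-1}L_{i}^{j}\bigr)Q_{k}^{j}$ then converges absolutely almost surely, to a random variable whose law is the unique strictly stationary solution $V_{j}^{*}$ of \eqref{x=ax+b} with $(A,B)$ as in \eqref{AB}. Equivalently, the forward partial sums satisfy $P_{n}^{j}\stackrel{a.s.}{\to}V_{j}^{*}$ as $n\to\infty$.

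To transfer this to the random index $g_{t}^{j}$, I would appeal to the elementary renewal theorem at state $j$: under Assumption~\ref{As0} one has $g_{t}^{j}/t\stackrel{a.s.}{\to}1/E|\mathfrak{I}_{1}^{j}|>0$, as already invoked in \eqref{eRenewal}. In particular $g_{t}^{j}\to\infty$ almost surely, so composing the a.s.\ convergent deterministic-index sequence $\{P_{n}^{j}\}$ with this a.s.\ divergent random index immediately yields $P_{g_{t}^{j}}^{j}\stackrel{a.s.}{\to}V_{j}^{*}$, which in turn implies the claimed convergence in distribution.

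The main subtlety, and the reason I would avoid working with the Markovian backward sum $\widetilde{P}_{n}^{j}$, is that only the forward partial sum is pathwise convergent: although $\widetilde{P}_{n}^{j}\stackrel{d}{=}P_{n}^{j}$ for each \emph{fixed} $n$ (by the i.i.d.\ index-reversal underlying \eqref{ePP}), the backward process $\widetilde{P}_{n}^{j}$ converges only weakly in $n$, so replacing $n$ by the renewal time $g_{t}^{j}$ via that representation would force us to verify the Anscombe contiguity condition \eqref{Anscombe}. Working directly with $P_{n}^{j}$ bypasses that obstacle entirely.
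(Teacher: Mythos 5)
Your proof is correct, and it shares the paper's central idea: work with the \emph{forward} partial sums $P_{n}^{j}$ rather than the Markovian backward sums $\widetilde P_{n}^{j}$, identify the a.s.\ limit $P^{j}_{\infty}=\sum_{k\ge1}\bigl(\prod_{i=1}^{k-1}L_{i}^{j}\bigr)Q_{k}^{j}$ as having the law of the unique SRE solution $V_{j}^{*}$, and note explicitly that this choice is what lets one avoid any Anscombe-type verification at the stopped time. Where you diverge is in the last step, the passage from the deterministic index $n$ to the random index $g_{t}^{j}$. The paper establishes $P_{g_{t}^{j}}^{j}\stackrel{P}{\to}P_{\infty}^{j}$ by writing $P^{j}_{\infty}-P^{j}_{g_{t}^{j}}=\bigl(\prod_{i=1}^{g_{t}^{j}}L_{i}^{j}\bigr)\cdot\bigl[\sum_{k= g_{t}^{j}+1}^{\infty}\bigl(\prod_{i=g_{t}^{j}+1}^{k-1}L_{i}^{j}\bigr)Q_{k}^{j}\bigr]$, showing the prefactor tends to $0$ in probability via \eqref{eRenewal} and that the tail sum is $O_{P}(1)$ because, by the regenerative structure (conditioning on $g_{t}^{j}$), the shifted cycle sequence has the same law as the original, so the tail sum is distributed as $P^{j}_{\infty}$ itself. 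You instead use the elementary deterministic fact that an a.s.\ convergent sequence composed with an a.s.\ divergent integer-valued index still converges a.s., since $P_{n}^{j}\stackrel{a.s.}{\to}P_{\infty}^{j}$ (absolute convergence under \eqref{sre_conv_conds}) and $g_{t}^{j}\stackrel{a.s.}{\to}\infty$. Your route is shorter, dispenses with the distributional-shift argument for the tail sum, and yields the stronger conclusion of almost sure convergence of $P_{g_{t}^{j}}^{j}$; the paper's route only needs (and only proves) convergence in probability, but makes the regenerative mechanism explicit, which is the same mechanism reused elsewhere (e.g.\ in Lemma \ref{T1lem2}(b)). Both correctly reduce the lemma to the classical existence theory for \eqref{x=ax+b} with $(A,B)$ as in \eqref{AB}.
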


\begin{subsubsection}{Proof of Lemma \ref{lemPerp}}
\begin{proof}
Define $P^{j}_{\infty}:=\sum_{k=1}^{\infty}\bigg(\prod_{i=1}^{k-1}L_{i}^{j}\bigg)Q_{k}^{j},$ and we show that $P^{j}_{g_{t}^{j}}\stackrel{P}{\to}P^{j}_{\infty}$ and $P^{j}_{\infty}$ satisfies unique solution of SRE that is identical as $\mathcal{L}(V_{j}^{*})$ (defined in (3.3), (3.4)). Note that
\beqn
P^{j}_{\infty}=\sum_{k=1}^{\infty}\bigg(\prod_{i=1}^{k-1}L_{i}^{j}\bigg)Q_{k}^{j}=Q_{1}^{j}+L_{1}^{j}\Big(\sum_{k=2}^{\infty}\bigg(\prod_{i=2}^{k-1}L_{i}^{j}\bigg)Q_{k}^{j}\Big).\non
\eeqn
Now since $(L_{1}^{j},Q_{1}^{j})\perp \{(L_{i}^{j},Q_{i}^{j}): i\ge 2\},$ it follows that the second term inside first bracket of RHS $\sum_{k=2}^{\infty}\bigg(\prod_{i=2}^{k-1}L_{i}^{j}\bigg)Q_{k}^{j}\stackrel{d}{=} P^{j}_{\infty} $ and also $(L_{1}^{j},Q_{1}^{j})\perp \sum_{k=2}^{\infty}\bigg(\prod_{i=2}^{k-1}L_{i}^{j}\bigg)Q_{k}^{j},$ it follows that $P^{j}_{\infty}$ satisfies the SRE at (3.3). Now since under Assumption 2 $$E\log|L_{1}^{j}|=-E|\mathfrak{I}^{j}_{1}|E_{\pi}a(\cdot)<0,\s\text{ and }\s E\log^{+}|Q_{1}^{j}|<\infty,$$ it follows that the solution to the SRE $X\stackrel{d}{=}Q_{1}^{j}+L_{1}^{j}X$ is unique in distribution (Lemma 1.4(a) and Vervaat's Theorem 1.5 in \cite{vervaat1979stochastic}). So it follows that $\mathcal{L}(P_{\infty}^{j})=\mathcal{L}(V_{j}^{*})$ in (3.3), (3.4).

Observe that for any $t>0$,  
\beqn
P^{j}_{\infty}-P^{j}_{g_{t}^{j}}&\stackrel{}{=}& \Big(\prod_{i=1}^{g_{t}^{j}}L_{i}^{j}\Big)\bigg[\sum_{k=g_{t}^{j}+1}^{\infty}\Big(\prod_{i=g_{t}^{j}+1}^{k-1}L_{i}^{j}\Big)Q^{j}_{k}\bigg]\non\\
&=& \exp\Big\{t\Big(\frac{\sum_{i=1}^{g_{t}}\log L^{j}_{i}}{g_{t}}\Big)\frac{g_{t}^{j}}{t}\Big\} \bigg[\sum_{k=g_{t}^{j}+1}^{\infty}\bigg(\prod_{i=g_{t}^{j}+1}^{k-1}L_{i}^{j}\Big)Q^{j}_{k}\bigg].\non
\eeqn
As $t\to\infty,$ using \eqref{eRenewal} the first term $\prod_{i=1}^{g_{t}^{j}}L_{i}^{j}\stackrel{P}{\to}0.$ To prove the assertion, we prove that the second term (in the RHS above) is $O_{P}(1).$ Observe that for any $t\ge 0,$
\beqn
\big((L^{j}_{g^{j}_{t}+1},Q^{j}_{g^{j}_{t}+1}),(L^{j}_{g^{j}_{t}+2},Q^{j}_{g^{j}_{t}+2}),\ldots \big)\stackrel{d}{=}\big((L^{j}_{1},Q^{j}_{1}),(L^{j}_{2},Q^{j}_{2}),\ldots\big)\label{s'2}
\eeqn
which follows by conditioning w.r.t $g^{j}_{t}$ and using argument similar to Proposition 2.2 (ii). A consequence of \eqref{s'2} gives 
\beqn
P^{j}_{\infty}\stackrel{d}{=} \sum_{k=g_{t}^{j}+1}^{\infty}\bigg(\prod_{i=g_{t}^{j}+1}^{k-1}L_{i}^{j}\Big)Q^{j}_{k}=Q^{j}_{g_{t}+1}+L^{j}_{g_{t}+1}\big(Q^{j}_{g_{t}+2}+L^{j}_{g_{t}+2}\big(\ldots\big)\big),\label{s2'}
\eeqn
and since distribution of $P^{j}_{\infty}$ uniquely exists under Assumption 2, the random quantity $\sum_{k=g_{t}^{j}+1}^{\infty}\bigg(\prod_{i=g_{t}^{j}+1}^{k-1}L_{i}^{j}\Big)Q^{j}_{k}
$ is $O_{_P}(1),$ proving our assertion. 
\end{proof}
\end{subsubsection}

\subsection{Proof of \textbf{Lemma 9.2(b)}} Since first term of $R^{(2)}_{t}$ i.e $\big(\prod_{k=1}^{g_{t}^{j}}L^{j}_{k}\big)^{} Q^{j}_{0}\stackrel{P}{\to}0$ as $t\to\infty.$ The assertion holds if we show (6.1) of Lemma 6.1  with $P_{g_{t}^{j}}^{j}$ as $H_{t}.$ Note that from Lemma 9.2(a) it follows that unconditionally $P_{g_{t}^{j}}^{j}\stackrel{d}{\to}V^{*}_{j},$ so we can take $V^{*}_{j}$ as $H_{\infty}$  in (6.1). We are only left to prove $P_{g_{t}^{j}}^{j} - P_{g_{t-\epsilon(t)}^{j}}^{j}\stackrel{P}{\to} 0$ for some function $\epsilon(\cdot)$ such that $\epsilon(t)\to\infty$ and $\frac{\epsilon(t)}{t}\to 0$ as $t\to\infty.$

\beqn
P_{g_{t}^{j}}^{j}- P_{g_{t-\epsilon(t)}^{j}}^{j}&=&\sum_{k=1}^{g_{t}^{j}}\Big(\prod_{i=1}^{k-1}L_{i}^{j}\Big)Q_{k}^{j}- \sum_{k=1}^{g_{t-\epsilon(t)}^{j}}\Big(\prod_{i=1}^{k-1}L_{i}^{j}\Big)Q_{k}^{j}\non\\
&=&\bigg(\prod_{i=1}^{g_{t-\epsilon(t)}^{j}}L_{i}^{j}\bigg)\bigg[\sum_{k=g^{j}_{t-\epsilon(t)}+1}^{g^{j}_{t}}\bigg(\prod_{i=g_{t-\epsilon(t)}^{j}+1}^{k-1}L_{i}^{j}\bigg)Q_{k}^{j}\bigg]\label{st3e1}
\eeqn
Observe that absolute value of second term of the product in \eqref{st3e1} can be bounded by 
\beqn
\bigg|\sum_{k=g^{j}_{t-\epsilon(t)}+1}^{g^{j}_{t}}\bigg(\prod_{i=g_{t-\epsilon(t)}^{j}+1}^{k-1}L_{i}^{j}\bigg)Q_{k}^{j}\bigg|&\le& \sum_{k=g^{j}_{t-\epsilon(t)}+1}^{g_{t}^{j}}\bigg(\prod_{i=g_{t-\epsilon(t)}^{j}+1}^{k-1}L_{i}^{j}\bigg)|Q_{k}^{j}| \non\\ &\le& \sum_{k=g^{j}_{t-\epsilon(t)}+1}^{\infty}\bigg(\prod_{i=g_{t-\epsilon(t)}^{j}+1}^{k-1}L_{i}^{j}\bigg)|Q_{k}^{j}|.\non
\eeqn
Note that $\sum_{k=g^{j}_{t-\epsilon(t)}+1}^{\infty}\bigg(\prod_{i=g_{t-\epsilon(t)}^{j}+1}^{k-1}L_{i}^{j}\bigg)|Q_{k}^{j}|\stackrel{d}{=}\sum_{k=1}^{\infty}\bigg(\prod_{i=1}^{k-1}L_{i}^{j}\bigg)|Q_{k}^{j}|$ (using ideas similar to \eqref{s'2} and \eqref{s2'}) which satisfies the unique solution of the SRE $$X=L_{1}^{j}X+|Q_{1}^{j}|,\s X\perp (L_{1}^{j},Q_{1}^{j})$$
under conditions $E\log L_{k}^{j}=-E|\mathfrak{I}^{j}_{1}|E_{\pi}a(\cdot)<0,E\log^{+}|Q_{k}^{j}|<\infty$  ensured by Assumption 2. This implies that second term of the product in \eqref{st3e1} is $O_{P}(1)$.

Observe that first term in the product of \eqref{st3e1} is $$\bigg(\prod_{i=1}^{g_{t-\epsilon(t)}^{j}}L_{i}^{j}\bigg)= \exp\Big\{(t-\epsilon(t))\Big(\frac{\sum_{i=1}^{g_{t-\epsilon(t)}}\log L^{j}_{i}}{g_{t-\epsilon(t)}}\Big)\frac{g_{t-\epsilon(t)}^{j}}{t-\epsilon(t)}\Big\}.$$
A consequence of Renewal Theorem and Law of large number leads to \eqref{eRenewal} with $t$ replaced by $t-\epsilon(t)$ and as $t-\epsilon(t) \to\infty,$ one has $\prod_{i=1}^{g_{t-\epsilon(t)}^{j}}L_{i}^{j}\stackrel{P}{\to}0.$

\hfill$\square$

\section{Proof of Theorem 2}
\subsection{Proof of \textbf{Theorem 2(a)}}
$\vspace{0.2cm}$

We begin with $I^{(a,b)}_{t}=\Phi^{(a)}_{t}\widetilde{I}^{(a,b)}_{t}$ where $\widetilde{I}^{(a,b)}_{t}=\int_{0}^{t}b(Y_{s})e^{\int_{0}^{s}a(Y_{r})dr}ds.$ Fix arbitrary $i,j\in S$. Observe that on $\{Y_{0}=i,Y_{t}=j\}$ using renewal regenerative structure of $Y$ (as a consequence of Assumption 1)
\beqn
\Phi_{t}^{(a)}=L_{0}^{j}\Big(\prod_{i=1}^{g_{t}^{j}}L_{i}^{j}\Big)e^{-a(j)(t-\tau_{g_{t}^{j}}^{j})},\,\,\, \widetilde{I}^{(a,b)}_{t}=\bigg[\frac{Q_{0}^{j}+S^{**}_{g_{t}^{j}}}{L_{0}^{j}}+\frac{G_{j}^{(a,b)}(t-\tau_{g_{t}^{j}})}{\Phi_{t}^{(a)}}\bigg]\non
\eeqn
where for any integer $n\ge 1,\s$ $S^{**}_{n}:=P_{n}\big(\frac{1}{L_{1}^{j}},\frac{Q_{1}^{j}}{L_{1}^{j}}\big)=\sum_{i=1}^{n}\frac{Q_{i}^{j}}{L_{i}^{j}}\prod_{k=1}^{i-1}\Big(\frac{1}{L_{k}^{j}}\Big).$

Let $G^{(1)}_{t},G^{(2)}_{t},G^{(3)}_{t}$ be defined as 
\beqn
&&G^{(1)}_{t}=\frac{ \Big(\prod_{i=1}^{g_{t}^{j}}L_{i}^{j}\Big)^{\frac{1}{\sqrt{t}}}}{e^{-\sqrt{t}E_{\pi}a(\cdot)}},\s G^{(2)}_{t}=\Big[L_{0}^{j} e^{-a(j)(t-\tau_{g_{t}^{j}}^{j})}\Big]^{\frac{1}{\sqrt{t}}},\,\non\\
&& G^{(3)}_{t}=\bigg[\frac{Q_{0}^{j}+S^{**}_{g_{t}^{j}}}{L_{0}^{j}}+\frac{G_{j}^{(a,b)}(t-\tau_{g_{t}^{j}})}{\Phi_{t}^{(a)}}\bigg]^{\frac{1}{\sqrt{t}}}.\non
\eeqn

Observe that 
\beqn
\mathcal{L}\Big(|\widetilde{I}^{(a,b)}_{t}|^{\frac{1}{\sqrt{t}}}\mid Y_{0}=i,Y_{t}=j\Big)=\mathcal{L}\Big(G^{(3)}_{t}\mid Y_{0}=i,Y_{t}=j\Big)
\eeqn
and
\beqn
&& \mathcal{L}\bigg(\bigg(\frac{[\Phi^{(a)}_{t}]^{\frac{1}{\sqrt{t}}}}{e^{-\sqrt{t}E_{\pi}a(\cdot)}}, \frac{|I^{(a,b)}_{t}|^{\frac{1}{\sqrt{t}}}}{e^{-\sqrt{t}E_{\pi}a(\cdot)}}\bigg)\mid Y_{0}=i,Y_{t}=j\bigg)\non\\ &&\,\,\,\,\,\,\,\,\,\,=\mathcal{L}\Big(\big(G^{(1)}_{t}G^{(2)}_{t},G^{(1)}_{t}G^{(2)}_{t}G^{(3)}_{t}\big)\mid Y_{0}=i,Y_{t}=j\Big).\non
\eeqn
For any $A_{1},A_{2}\in \mathcal{B}(\R_{\ge 0}),$
\beqn
&&P\bigg[\bigg(\frac{[\Phi^{(a)}_{t}]^{\frac{1}{\sqrt{t}}}}{e^{-\sqrt{t}E_{\pi}a(\cdot)}}, \frac{|I^{(a,b)}_{t}|^{\frac{1}{\sqrt{t}}}}{e^{-\sqrt{t}E_{\pi}a(\cdot)}}\bigg) \in A_{1}\times A_{2}\mid Y_{0}=i\bigg]\non\\
&&\,\,\,\,\,\,=\s\sum_{j\in S}P\bigg[\bigg(\frac{[\Phi^{(a)}_{t}]^{\frac{1}{\sqrt{t}}}}{e^{-\sqrt{t}E_{\pi}a(\cdot)}}, \frac{|I^{(a,b)}_{t}|^{\frac{1}{\sqrt{t}}}}{e^{-\sqrt{t}E_{\pi}a(\cdot)}}\bigg) \in A_{1}\times A_{2}, Y_{t}=j\mid Y_{0}=i\bigg]\non\\
&&\,\,\,\,\,\,=\s\sum_{j\in S} P[Y_{t}=j \mid Y_{0}=i] P\Big[\big(G^{(1)}_{t}G^{(2)}_{t},G^{(1)}_{t}G^{(2)}_{t}G^{(3)}_{t}\big)\in A_{1}\times A_{2}\mid Y_{0}=i,Y_{t}=j\Big].\label{beg}
\eeqn

As a consequence of the Ergodic theorem and Renewal theorem, 
$$E\log L_{1}^{j}=-E|\mathfrak{I}^{j}_{1}| E_{\pi}a(\cdot),\,\,\,\,\&\,\, \,\,\frac{g_{t}^{j}}{t} \stackrel{a.s}{\to}\frac{1}{E|\mathfrak{I}^{j}_{1}|}\,\, \,\,\text{as}\,\,\,\, t\to\infty.$$

Applying Central Limit Theorem for the renewal reward process $\big(\sum_{k=1}^{g_{t}^{j}}\log L_{k}^{j}: t\ge 0\big)$, with the reward $\log L_{k}^{j}$ for the $k$-th interval $\mathfrak{I}_{k}^{j}$ (see Theorem 2.2.5 of \cite{tijms2003first}), with the continuous mappling theorem using the map $x\to e^{x},$ with $N\sim N(0,1),$ 

\beqn
\mathcal{L}\Big(G_{t}^{(1)}| Y_{0}=i\Big)\stackrel{w}{\to} \mathcal{L}\bigg(exp\Big\{\frac{\sigma_{j}}{\sqrt{E|\mathfrak{I}_{k}^{j}|}}N\Big\}\bigg)\label{clt}
\eeqn 

Using arguments similar to step 2 of Lemma 9.2(a)  in Theorem 1, with the conditions (i.e, $E_{\pi}a(\cdot)<0$ \& Assumption 3(c))
$$E\Big[\frac{1}{L_{i}^{j}}\Big]<0, \s E\log^{+}\Big(\frac{Q_{i}^{j}}{L_{i}^{j}}\Big)<\infty,$$
one finds that $S^{**}_{g^{j}_{t}}\to S^{**}$ as $t\to\infty,$ where 
\beqn
S^{**}\stackrel{d}{=}\frac{1}{L_{i}^{j}} S^{**}+\frac{Q_{i}^{j}}{L_{i}^{j}},\s S^{**} \perp \Big(\frac{1}{L_{i}^{j}},\frac{Q_{i}^{j}}{L_{i}^{j}}\Big).\label{e2SRE}
\eeqn

Observe further, that in the transient regime, as $t\to\infty,$
$$\frac{\log \Phi_{t}^{(a)}}{t}\to - E_{\pi}a(\cdot),$$
showing that $\Phi_{t}^{(a)}$ diverges exponentially fast as $t\to\infty.$ Also, under Assumption 1 $G^{(a,b)}_{j}(t-\tau_{g_{t}^{j}})$ is $O_{p}(1)$ and using $S^{**}_{g^{j}_{t}}\to S^{**},$ as $t\to\infty$
\beqn
\mathcal{L}\Big(\big(G_{t}^{(3)}\big)^{\sqrt{t}}|Y_{0}=i\Big)\stackrel{w}{\to} \mathcal{L}\Big(\frac{Q_{0}^{j} + S^{**}}{L_{0}^{j}}\Big),\label{SREdiv}
\eeqn
and this further implies that 
\beqn
\mathcal{L}\big(G_{t}^{(3)}|Y_{0}=i\big)\stackrel{w}{\to} \delta_{\{1\}}\s\text{as} \s t\to\infty.\label{eCLT2}
\eeqn
 Since $G_{t}^{(2)}=e^{\frac{\log L_{0}^{j}}{\sqrt{t}}-a(j)\frac{(t-\tau_{g_{t}^{j}})}{\sqrt{t}}},$ and both $L_{0}^{j},(t-\tau_{g_{t}^{j}})$ are $O_{p}(1),$ as $t\to\infty$
\beqn
\mathcal{L}\big(G_{t}^{(2)}|Y_{0}=i\big)\stackrel{w}{\to} \delta_{\{1\}}.\label{eCLT3}
\eeqn
Combining \eqref{clt}, \eqref{eCLT2}, \eqref{eCLT3} and taking
$$L_{t}^{(1)}=(0 , 0)',\s L_{t}^{(2)}=\begin{pmatrix} G_{t}^{(2)} & 0\\ 0 & G_{t}^{(2)}G_{t}^{(3)}
\end{pmatrix}, \s H_{t}= \begin{pmatrix} G_{t}^{(1)}\\ G_{t}^{(1)}
\end{pmatrix},$$
in Lemma 6.1 as $t\to\infty,$
\beqn
&&\mathcal{L}\Big(G_{t}^{(1)}G_{t}^{(2)},G_{t}^{(1)}G_{t}^{(2)}G_{t}^{(3)}| Y_{0}=i, Y_{t}=j\Big)\non\\ &&\s\stackrel{w}{\to} \mathcal{L}\bigg(exp\Big\{\frac{\sigma_{j}}{\sqrt{E|\mathfrak{I}_{k}^{j}|}}N\Big\},exp\Big\{\frac{\sigma_{j}}{\sqrt{E|\mathfrak{I}_{k}^{j}|}}N\Big\}\bigg)\label{efin}
\eeqn
given $H_{t}$ satisfies the condition (6.1), which holds if $G^{(1)}_{t}$ satisfies (6.1). This holds if $\log G^{(1)}_{t}$ satisfies (6.1), 
\beqn
\log G^{(1)}_{t} - \log G^{(1)}_{t-\epsilon(t)} \stackrel{P}{\to} 0, \s \text{as} \s t\to\infty,\label{everify2a}
\eeqn
for some $\epsilon(\cdot)$ such that $\frac{\epsilon(t)}{t}\to 0$ which follows from Lemma \ref{lemverify2a}.

It follows that, using $\log(\cdot)$ transformation on both sides of \eqref{beg}
\beqn
&& P\bigg[\bigg(\frac{\log\Phi^{(a)}_{t} + tE_{\pi}a(\cdot)}{\sqrt{t}},\frac{\log|I^{(a,b)}_{t}| + tE_{\pi}a(\cdot)}{\sqrt{t}}\bigg)\in A_{1}\times A_{2} | Y_{0}=i\bigg]\non\\
&&\s =\sum_{j\in S}P\big[\big(\log(G^{(1)}_{t}G^{(2)}_{t}),\log(G^{(1)}_{t}G^{(2)}_{t}G^{(3)}_{t})\big)\in A_{1}\times A_{2}| Y_{0}=i, Y_{t}=j\big]\non\\ && \s\s\s \times P[Y_{t}=j | Y_{0}=i],\label{efin2}
\eeqn
Observe that $P[Y_{t}=j | Y_{0}=i]\to \pi_{j},$  and taking $\log(\cdot)$ transformation in both co-ordinates of \eqref{efin} desired limit of each term inside the sum will be obtained. Applying the dominated convergence theorem using the trivial upper bound $1,$ for the first term in the above sum completes the proof. Finally taking the $t\to \infty$ limit, the assertion will follow.
\hfill$\square$

\begin{lemma}\label{lemverify2a}
There exists an increasing function $t\to \epsilon(t)$ such that $\epsilon(t)\to\infty$ and $\frac{\epsilon(t)}{t}\to 0$ as $t\to\infty,$ such that \eqref{everify2a} holds for $\log G_{t}^{(1)}.$
\end{lemma}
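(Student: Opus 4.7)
The plan is to first rewrite $\log G^{(1)}_{t}$ in a form that exposes a centered i.i.d.\ random walk stopped at a renewal time. Using $\log L_{i}^{j} = -\int_{\tau_{i-1}^{j}}^{\tau_{i}^{j}} a(Y_{s})\,ds$ and introducing the centered i.i.d.\ sequence
\[
\xi_{i} := \log L_{i}^{j} + |\mathfrak{I}_{i}^{j}|\, E_{\pi}a(\cdot),\qquad E\xi_{i}=0,\qquad \Var(\xi_{i})=\sigma_{j}^{2}<\infty,
\]
a direct computation gives
\[
\sqrt{t}\,\log G^{(1)}_{t} \;=\; S_{g_{t}^{j}} \;+\; \big(t-\tau_{g_{t}^{j}}^{j}+\tau_{0}^{j}\big)\,E_{\pi}a(\cdot),\quad\text{where }\, S_{n}:=\sum_{i=1}^{n}\xi_{i}.
\]
The residual $\big(t-\tau_{g_{t}^{j}}^{j}+\tau_{0}^{j}\big)$ is $O_{P}(1)$ by \eqref{eRes} and Lemma \ref{lem_T1finite}, so $\log G^{(1)}_{t}=\frac{1}{\sqrt{t}} S_{g_{t}^{j}}+O_{P}(1/\sqrt{t})$ and likewise for $t-\varepsilon(t)$.

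Next, I would decompose the target increment as
\[
\log G^{(1)}_{t}-\log G^{(1)}_{t-\varepsilon(t)}
\;=\;\underbrace{\frac{1}{\sqrt{t}}\big(S_{g_{t}^{j}}-S_{g_{t-\varepsilon(t)}^{j}}\big)}_{(\mathrm{I})}
\;+\;\underbrace{\Big(\tfrac{1}{\sqrt{t}}-\tfrac{1}{\sqrt{t-\varepsilon(t)}}\Big)S_{g_{t-\varepsilon(t)}^{j}}}_{(\mathrm{II})}
\;+\;O_{P}(1/\sqrt{t-\varepsilon(t)}).
\]
For (II), a Taylor expansion gives $\big|\tfrac{1}{\sqrt{t}}-\tfrac{1}{\sqrt{t-\varepsilon(t)}}\big|\sim \tfrac{\varepsilon(t)}{2t^{3/2}}$, while the CLT combined with Anscombe's theorem (already invoked in \eqref{clt}) yields $S_{g_{t-\varepsilon(t)}^{j}}=O_{P}(\sqrt{t-\varepsilon(t)})=O_{P}(\sqrt{t})$, so $(\mathrm{II})=O_{P}(\varepsilon(t)/t)\to 0$ whenever $\varepsilon(t)/t\to 0$. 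For (I), the renewal SLLN gives $(g_{t}^{j}-g_{t-\varepsilon(t)}^{j})/\varepsilon(t)\to 1/E|\mathfrak{I}_{1}^{j}|$ a.s., and since $\{\xi_{i}\}_{i\ge g_{t-\varepsilon(t)}^{j}+1}$ is distributionally a fresh i.i.d.\ copy of $\{\xi_{i}\}_{i\ge 1}$ by the regenerative property (Proposition \ref{Pr1}(ii)), Anscombe's theorem gives $S_{g_{t}^{j}}-S_{g_{t-\varepsilon(t)}^{j}}=O_{P}(\sqrt{\varepsilon(t)})$, so $(\mathrm{I})=O_{P}\big(\sqrt{\varepsilon(t)/t}\big)\to 0$ whenever $\varepsilon(t)/t\to 0$.

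It only remains to pick $\varepsilon(t)\to\infty$ with $\varepsilon(t)/t\to 0$; any choice such as $\varepsilon(t)=\sqrt{t}$ or $t^{3/4}$ works and simultaneously meets the hypothesis of Lemma \ref{lem01} required elsewhere in the proof of Theorem \ref{T2}(a). The step that requires some care is (I): one must argue that after reindexing by $n\mapsto n+g_{t-\varepsilon(t)}^{j}$, the tail sequence is independent of $g_{t-\varepsilon(t)}^{j}$ and distributed as the original i.i.d.\ sequence, so that Anscombe's contiguity condition \eqref{Anscombe} applies to the stopped random time $g_{t}^{j}-g_{t-\varepsilon(t)}^{j}$ with scaling $\varepsilon(t)/E|\mathfrak{I}_{1}^{j}|$. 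This is the only slightly delicate point; everything else is a Taylor expansion and renewal-theoretic book-keeping.
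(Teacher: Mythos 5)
Your proof is correct and follows essentially the same route as the paper's: the same recentering of $\sqrt{t}\,\log G^{(1)}_{t}$ to a zero-mean i.i.d.\ random walk stopped at $g_{t}^{j}$ plus an $O_{P}(1)$ boundary term, and the same split into (old sum)$\times$(change of normalization) plus the fresh increments over the window $[t-\varepsilon(t),t]$, with $\varepsilon(t)=\sqrt{t}$ an admissible choice. The only difference is that you control the two pieces by $O_{P}(\sqrt{n})$ bounds from the CLT and Anscombe's theorem, whereas the paper kills them with renewal law-of-large-numbers limits; both arguments are valid and give the same conclusion.
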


\begin{proof} 
Take $\varepsilon(t):=\sqrt{t}$. 
Define $\widetilde{G}^{(1)}_{t} :=\sum_{i=1}^{g_{t}^{j}}(Y^{*}_{i}-Z^{*}_{i})/\sqrt{t}$, where $$Y^{*}_{k}:=\log L^{j}_{k}-E\log L^{j}_{k}, \s Z^{*}_{k}:=( |\mathfrak{I}^{j}_{k}|-E|\mathfrak{I}^{j}_{k}|)E_{\pi}a(\cdot), $$ for $k=1,\ldots,g_{t}^{j}$ are two sequences of random variables with zero means. Note that it is sufficient to prove condition \eqref{everify2a} for $\widetilde{G}^{(1)}_{t}$ since 
\beqn
\log G^{(1)}_{t}&=&\frac{\sum_{i=1}^{g^{j}_{t}}\log L_{i}^{j} - t E_{\pi}a(\cdot)}{\sqrt{t}}\non\\ &=& \widetilde{G}^{(1)}_{t} - \frac{[\tau_{0}+(t-\tau_{g^{j}_{t}})]E_{\pi}a(\cdot)}{\sqrt{t}}
\eeqn
and the second term in the right-hand side above is $o_{P}(1).$ Now observe 
\beqn
\widetilde{G}^{(1)}_{t} - \widetilde{G}^{(1)}_{t-\sqrt{t}}  
&=&\frac{\sum_{i=1}^{g^{j}_{t-\sqrt{t}}}Y^{*}_{i}}{\sqrt{t}}\Big[1-\Big(1-\frac{1}{\sqrt{t}}\Big)^{-1/2}\Big]+\frac{\sum_{i=g^{j}_{t-\sqrt{t}}+1}^{g^{j}_{t}}Y^{*}_{i}}{\sqrt{t}}\non\\
&&- \frac{\sum_{i=1}^{g^{j}_{t-\sqrt{t}}}Z^{*}_{i}}{\sqrt{t}}\Big[1-\Big(1-\frac{1}{\sqrt{t}}\Big)^{-1/2}\Big]-\frac{\sum_{i=g^{j}_{t-\sqrt{t}}+1}^{g^{j}_{t}}Z^{*}_{i}}{\sqrt{t}}
\eeqn
Using the asymptotic approximation 
$$
\Big(1-\frac{1}{\sqrt{t}}\Big)^{-1/2}=1+\frac{1}{2\sqrt{t}}+O(t^{-1})
$$ 
shows that
$$
\frac{\sum_{i=1}^{g^{j}_{t-\sqrt{t}}}Y^{*}_{i}}{\sqrt{t}}\Big[1-\Big(1-\frac{1}{\sqrt{t}}\Big)^{-1/2}\Big]
=-\frac{\sum_{i=1}^{g^{j}_{t-\sqrt{t}}}Y^{*}_{i}}{2t}+o_{_P}(t^{-1/2})\stackrel{P}{\to}-\frac{EY^{*}_{k}}{2E|\mathfrak{I}^{j}_{1}|}=0.
$$ 
 by the renewal version of the law of large numbers. Also $\frac{\sum_{i=1}^{g^{j}_{t-\sqrt{t}}}Z^{*}_{i}}{\sqrt{t}}\Big[1-\Big(1-\frac{1}{\sqrt{t}}\Big)^{-1/2}\Big]\stackrel{P}{\to} 0$ holds similarly. By a similar argument as $t\to\infty,$
\beqn
\frac{\sum_{i=g^{j}_{t-\sqrt{t}}+1}^{g^{j}_{t}}Y^{*}_{i}}{\sqrt{t}}
=\frac{\sum_{i=g^{j}_{t-\sqrt{t}}+1}^{g^{j}_{t}}Y^{*}_{i}}{g^{j}_{t} - g^{j}_{t-\sqrt{t}}}\frac{g^{j}_{t} - g^{j}_{t-\sqrt{t}}}{\sqrt{t}} &\stackrel{\text{a.s.}}{\to}&\frac{1}{E|\mathfrak{I}^{j}_{1}|} EY^{*}_{k}=0,\s\text{and}\non\\
\frac{\sum_{i=g^{j}_{t-\sqrt{t}}+1}^{g^{j}_{t}}Z^{*}_{i}}{\sqrt{t}}&\stackrel{a.s.}{\to}&\frac{EZ^{*}_{k}}{E|\mathfrak{I}^{j}_{1}|} =0.\non
\eeqn
The verification is complete.
\end{proof}

\subsection{Proof of \textbf{Theorem 2(b)}} 
\begin{proof}
We will expand $I_{t}^{(a,b)}$ in a different manner than in Theorem 2(a), but before that we set few ideas and notations regarding random walk with zero drift. Fix $i,j\in S$. Given $E_{\pi}a(\cdot)=0,$ the random walk generated by the process $\big(\sum_{k=1}^{n}\log L_{k}^{j}:n\ge 1\big)$ has zero drift as the $k$-th increment $\log L_{k}^{j}=-\int_{\tau_{i-1}^{j}}^{\tau_{i}^{j}}a(Y_{s})ds$ has mean $0$ and following variance $$\text{Var}\big(\log L_{k}^{j}\big)=\text{Var}\Big(-\int_{\tau_{i-1}^{j}}^{\tau_{i}^{j}}a(Y_{s})ds\Big)=\sigma_{j}^{2}\s\text{(since}\,\, E_{\pi}a(\cdot)=0).$$ Let $$S_{n}^{\otimes j}:=\sum_{i=1}^{n}\log L_{i}^{j},\s M_{n}^{\otimes j}:=\max_{1\le k \le n}\Big\{\sum_{i=1}^{k}\log L_{i}^{j}\Big\},\s\text{ and }\s Z_{n}^{\otimes j}:=\max_{1\le k \le n}\Big\{\big(\prod_{i=1}^{k-1}L_{i}^{j}\big)|Q_{k}^{j}|\Big\}.$$ Observe that on $\{Y_{0}=i, Y_{t}=j\},$ using (9.1)

\beqn
\big(\log\Phi_{t}^{(a)},\log |I_{t}^{(a,b)}|\big)&\stackrel{\mathcal{L}(\cdot|Y_{0}=i, Y_{t}=j)}{=}&
\bigg(\log L^{j}_{0}+\log\bigg(\prod_{k=1}^{g^{j}_{t}}L^{j}_{k}\bigg)+\log(e^{ -a(j)(t-\tau_{g_{t}^{j}}^{j})}),\non\\&& \log\bigg|e^{-a(j)(t-\tau_{g_{t}^{j}}^{j})}\bigg[\bigg(\prod_{k=1}^{g_{t}^{j}}L^{j}_{k}\bigg)^{} Q^{j}_{0}+\widetilde{P}^{j}_{g_{t}^{j}}\bigg]+G_{j}^{a,b^{}}\big(t-\tau_{g_{t}^{j}}^{j}\big)\bigg|\bigg)\non\\
&\stackrel{d}{=}&\bigg(S_{g_{t}^{j}}^{\otimes j}+[\log L^{j}_{0}-a(j)(t-\tau_{g_{t}^{j}}^{j})], \non\\&&\s\s
\log Z_{g_{t}^{j}}^{\otimes j} + \log\bigg|B_{t}^{(1)}+\frac{B_{t}^{(2)}}{Z_{g_{t}^{j}}^{\otimes j}}P_{g_{t}^{j}}^{j}\bigg|\bigg)\label{T2be1}
\eeqn

where 
\beqn
B_{t}^{(1)}&=&\frac{e^{-a(j)(t-\tau_{g_{t}^{j}}^{j})}\bigg(\prod_{k=1}^{g_{t}^{j}}L^{j}_{k}\bigg)^{} Q^{j}_{0}+G_{j}^{a,b^{}}\big(t-\tau_{g_{t}^{j}}^{j}\big) }{Z_{g_{t}^{j}}^{\otimes j}},\non\\
B_{t}^{(2)}&=&e^{-a(j)(t-\tau_{g_{t}^{j}}^{j})},\non
\eeqn
and denote $\bigg(B_{t}^{(1)}+\frac{B_{t}^{(2)}}{Z_{g_{t}^{j}}^{\otimes j}}P_{g_{t}^{j}}^{j}\bigg)$ by $B^{(3)}_{t}.$ We now consider the scaled limit of $\Big(\frac{\log\Phi_{t}^{(a)}}{\sqrt{t}},\frac{\log |I_{t}^{(a,b)}|}{\sqrt{t}}\Big)$ as $t\to\infty.$

Observe that both $B_{t}^{(1)},B_{t}^{(2)}$ are $O_{p}(1),$ and for each $1\le k\le n,$ $$-Z_{n}^{\otimes j}\le Q_{k}^{j}\Big(\prod_{i=1}^{k-1}L_{i}^{j}\Big)\le Z_{n}^{\otimes j}.$$ 
This implies that $-nZ_{n}^{\otimes j}\le P_{n}^{j} \le n Z_{n}^{\otimes j},$ for all $n\ge 1,$ which further implies that $\Big|\frac{P_{g_{t}^{j}}^{j}}{Z_{g_{t}^{j}}^{\otimes j}}\Big|\le g_{t}^{j}$; hence $|B^{(3)}_{t}|=O_{p}(t)$ and strictly positive. Hence as $t\to\infty,$
$$\frac{\log|B_{t}^{(3)}|}{\sqrt{t}}=\frac{\log \Big|B_{t}^{(1)}+\frac{B_{t}^{(2)}}{Z_{g_{t}^{j}}^{\otimes j}}P_{g_{t}^{j}}^{j}\Big|}{\sqrt{t}}\stackrel{P}{\to} 0.$$

Consider the probability,
\beqn
&&P\bigg[\Big(\frac{\log \Phi_{t}^{(a)}}{\sqrt{t}},\frac{\log |I_{t}^{(a,b)}|}{\sqrt{t}}\Big)\in \cdot | Y_{0}=i\bigg]\non\\ \s\s\s\s &&=\sum_{j\in S} P[Y_{t}=j|Y_{0}=i]P\Big[\Big(\frac{\log \Phi_{t}^{(a)}}{\sqrt{t}},\frac{\log |I_{t}^{(a,b)}|}{\sqrt{t}}\Big)\in \cdot  | Y_{0}=i, Y_{t}=j\Big].\label{T2be2}
\eeqn

Let $t\to\infty,$ in both sides of \eqref{T2be2}, and to get rid of $\{Y_{t}=j\}$ in the conditioning part of the RHS in the limit, we apply the Lemma 6.1 with 
$$ L_{t}^{(1)}=\bigg(\frac{[\log L^{j}_{0}-a(j)(t-\tau_{g_{t}^{j}}^{j})]}{\sqrt{t}} , \frac{\log|B_{t}^{(3)}|}{\sqrt{t}}\bigg)',\s L_{t}^{(2)}=\begin{pmatrix} 1 & 0\\ 0 &1
\end{pmatrix}, \s H_{t}= \begin{pmatrix} \frac{S_{g_{t}^{j}}^{\otimes j}}{\sqrt{t}} \\ \frac{\log Z_{g_{t}^{j}}^{\otimes j}}{\sqrt{t}} 
\end{pmatrix}.$$
Since both $\log L^{j}_{0}$ and $a(j)(t-\tau_{g_{t}^{j}}^{j})$ are $O_{P}(1),$ hence $L_{t}^{(1)}\stackrel{P}{\to}0,$ and by Lemma \ref{lemverify2b}(c) $H_{t}$ verifies the condition in (6.1). We will be done if we prove that $H'_{t}\stackrel{d}{\to}\frac{\sigma_{j}}{\sqrt{E|\mathfrak{I}^{j}_{1}|}}(F_{1},F_{2})$ as $t\to\infty.$ This is because using $P\big[Y_{t}=j|Y_{0}=i\big]\to\pi_{j},$ LHS of \eqref{T2be2} would lead to
\beqn
&&\lim_{t\to\infty}P\bigg[\Big(\frac{\log \Phi_{t}^{(a)}}{\sqrt{t}},\frac{\log |I_{t}^{(a,b)}|}{\sqrt{t}}\Big)\in \cdot | Y_{0}=i\bigg]\non\\
\s\s &&=\sum_{j\in S}\pi_{j}\lim_{t\to\infty} P\bigg[\bigg(\frac{S_{g_{t}^{j}}^{\otimes j}}{\sqrt{t}},\frac{\log Z_{g_{t}^{j}}^{\otimes j}}{\sqrt{t}}\bigg)\in \cdot | Y_{0}=i\bigg]\label{T2be3}
\eeqn
proving the result. From the definitions of $M_{n}^{\otimes j}, Z_{n}^{\otimes j},$ it follows that
\beqn
M_{g_{t}^{j}}^{\otimes j} -\max_{1\le k\le g_{t}^{j}}\log|Q_{k}^{j}|\s\le\s\log Z_{g_{t}^{j}}^{\otimes j}\s\le\s M_{g_{t}^{j}}^{\otimes j}+\max_{1\le k\le g_{t}^{j}}\log|Q_{k}^{j}|\label{th*}
\eeqn
(see also (4.4) of \cite{hitczenko2011renorming}). Now under the condition (4.3) Lemma \ref{lemverify2b}(a) holds and Lemma \ref{lemverify2b}(b) below verifies the weak convergence result as $t\to\infty$. Combining these in \eqref{T2be3} the assertion follows by taking $t\to\infty$ limit.

\begin{lemma}\label{lemverify2b}
Under assumptions of Theorem 2(b) as $t\to\infty,$
\beqn
&&(a)\s\frac{\max_{1\le k\le g_{t}^{j}}\log |Q_{k}^{j}|}{\sqrt{t}} \stackrel{a.s}{\to} 0,\s (b)\s 
\bigg(\frac{S_{g_{t}^{j}}^{\otimes j}}{\sqrt{t}}, \frac{ M_{g_{t}^{j}}^{\otimes j}}{\sqrt{t}} \bigg)\stackrel{d}{\to}\frac{\sigma_{j}}{\sqrt{E|\mathfrak{I}^{j}_{1}|}} (F_{1},F_{2})\non\\
&&(c) \s H'_{t}=\bigg(\frac{S_{g_{t}^{j}}^{\otimes j}}{\sqrt{t}}, \frac{\log Z_{g_{t}^{j}}^{\otimes j}}{\sqrt{t}} \bigg)\s \text{satisfies the condition in (6.1), i.e }\non\\ &&\s\s\s H_{t}-H_{t - \varepsilon(t)}\stackrel{P}{\to} (0,0)'\s\text{for some }\s \varepsilon(\cdot)\s\text{specified in (6.1)}.\non
\eeqn
\end{lemma}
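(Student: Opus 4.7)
\textbf{Proof plan for Lemma \ref{lemverify2b}.}

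For part (a), I would invoke the standard fact that for i.i.d.\ random variables $X_k$ with $EX_k^2<\infty$, one has $\max_{1\le k\le n}|X_k|/\sqrt{n}\stackrel{a.s.}{\to}0$. Indeed, by assumption \eqref{Theorem2bcond*}, $E[(\log|Q_k^j|)^2]<\infty$, so Borel--Cantelli applied to $\sum_k P[|\log|Q_k^j||>\epsilon\sqrt{k}]\le \epsilon^{-2}E(\log|Q_1^j|)^2<\infty$ gives $|\log|Q_k^j||/\sqrt{k}\to 0$ a.s., whence $\max_{1\le k\le n}|\log|Q_k^j||/\sqrt{n}\to 0$ a.s. by a routine truncation argument. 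Since $g_t^j/t\to 1/E|\mathfrak{I}_1^j|$ a.s.\ under Assumption \ref{As0}, applying this with $n=g_t^j$ gives the claim.

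For part (b), the increments $\{\log L_k^j\}_{k\ge 1}$ are i.i.d.\ with mean $0$ (since $E_\pi a(\cdot)=0$) and variance $\sigma_j^2\in(0,\infty)$. Donsker's functional CLT yields $\big(S^{\otimes j}_{\lfloor nu\rfloor}/(\sigma_j\sqrt{n})\big)_{u\in[0,1]}\stackrel{d}{\to}(B_u)_{u\in[0,1]}$ in $D[0,1]$, where $B$ is standard Brownian motion. Applying the continuous functional $x\mapsto(x(1),\sup_{0\le u\le 1}x(u))$ gives $(S_n^{\otimes j}/\sqrt{n},M_n^{\otimes j}/\sqrt{n})\stackrel{d}{\to}\sigma_j(B_1,M_1)=\sigma_j(F_1,F_2)$. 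To handle the random time $g_t^j$, I will verify Anscombe's contiguity condition \eqref{Anscombe} for the bivariate sequence $(S_n^{\otimes j}/\sqrt{n},M_n^{\otimes j}/\sqrt{n})$ (which is routine since it holds componentwise by stationarity and maximal inequalities), and then invoke $g_t^j/t\to 1/E|\mathfrak{I}_1^j|$ a.s.\ and the Anscombe-type result (Theorem 3.2 of \cite{gut2009stopped}) to conclude
\[
\bigg(\frac{S^{\otimes j}_{g_t^j}}{\sqrt{g_t^j}},\frac{M^{\otimes j}_{g_t^j}}{\sqrt{g_t^j}}\bigg)\stackrel{d}{\to}\sigma_j(F_1,F_2),
\]
and the final $\sqrt{g_t^j/t}\to(E|\mathfrak{I}_1^j|)^{-1/2}$ gives the stated scaling constant.

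For part (c), I would choose $\varepsilon(t):=\sqrt{t}$, which satisfies $\varepsilon(t)\to\infty$ and $\varepsilon(t)/t\to 0$. The first coordinate is easy: $S^{\otimes j}_{g_t^j}-S^{\otimes j}_{g^{j}_{t-\sqrt{t}}}=\sum_{k=g^{j}_{t-\sqrt{t}}+1}^{g_t^j}\log L_k^j$, a sum of $g_t^j-g^{j}_{t-\sqrt{t}}=O_p(\sqrt{t})$ many i.i.d.\ mean-zero terms with finite variance, hence of order $O_p(t^{1/4})$, which divided by $\sqrt{t}$ tends to $0$ in probability. The harder coordinate involves the running maximum. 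Using monotonicity of $M_n^{\otimes j}$ and the identity $M_n^{\otimes j}=\max\{M_m^{\otimes j},\max_{m<k\le n}S_k^{\otimes j}\}$ for $m<n$, one has the deterministic bound
\[
0\le M^{\otimes j}_{g_t^j}-M^{\otimes j}_{g^{j}_{t-\sqrt{t}}}\le \max_{g^{j}_{t-\sqrt{t}}<k\le g_t^j}\bigl(S_k^{\otimes j}-S^{\otimes j}_{g^{j}_{t-\sqrt{t}}}\bigr).
\]
Since the random walk is independent of the past (strong Markov at $g^{j}_{t-\sqrt{t}}$) and the segment length is $O_p(\sqrt{t})$, Donsker applied to this increment shows the right-hand side is $O_p(t^{1/4})=o_p(\sqrt{t})$. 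Combined with \eqref{th*} and part (a), the difference $\log Z^{\otimes j}_{g_t^j}-\log Z^{\otimes j}_{g^{j}_{t-\sqrt{t}}}$ divided by $\sqrt{t}$ tends to zero in probability, as required.

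The main obstacle is precisely this control of the running-maximum increment in part (c), since unlike partial sums the maximum is not additive. The key trick is the one-sided bound above coupled with the strong-Markov / regenerative decomposition of $Y$ at the renewal time $\tau^{j}_{g^{j}_{t-\sqrt{t}}+1}$, which reduces the excess-over-past-maximum to a fresh random walk segment of well-controlled length.
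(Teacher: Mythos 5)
Your overall architecture matches the paper's: Borel--Cantelli for (a), Donsker plus the continuous functional $x\mapsto(x(1),\sup_{u\le 1}x(u))$ plus Anscombe and the renewal LLN for (b), and control of the windowed increments of $S^{\otimes j}$ and $M^{\otimes j}$ for (c). Two remarks on where your sketch differs from, or underestimates, the actual work.

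First, in (b) you dismiss the Anscombe verification as "routine \dots by stationarity and maximal inequalities." For the partial-sum coordinate this is fair (Kolmogorov's maximal inequality does it), but for the running-maximum coordinate it is the bulk of the paper's proof: the paper conditions on $(S_n^{\otimes j},M_n^{\otimes j})$, uses independent increments to reduce $P[M^{\otimes j}_{n(1+\delta)}-M^{\otimes j}_n>\gamma\sqrt{n}/4]$ to $P[M^{\otimes j}_{n\delta}/\sqrt{n\delta}>\gamma/(4\sqrt{\delta})]$, and then invokes the Erd\H{o}s--Kac limit $M^{\otimes j}_n/\sqrt{n}\stackrel{d}{\to}\sigma_j|N|$. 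Your one-sided bound from part (c), $0\le M^{\otimes j}_{n'}-M^{\otimes j}_n\le\bigl(\max_{n<k\le n'}(S_k^{\otimes j}-S_n^{\otimes j})\bigr)^{+}$, combined with Kolmogorov's inequality, would in fact deliver this more directly than the paper's conditioning argument --- but it has to be said, not waved at.

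Second, in (c) you regenerate at the random index $g^j_{t-\sqrt{t}}$, whereas the paper centers at the deterministic index $[\mu_j t]$ and reuses the part-(b) estimates on deterministic windows, intersected with $\{|g^j_t/t-\mu_j|\le\delta\}$. Your route is valid but needs two repairs that the sketch omits: (i) $g^j_{t-\sqrt{t}}$ is a last-renewal index, so the cycle indexed $g^j_{t-\sqrt{t}}+1$ is the size-biased straddling cycle and the clean i.i.d.\ regeneration (Proposition \ref{Pr1}) only starts one cycle later; the single exceptional increment is $O_P(1)$ under Assumption \ref{as3}(b), so this costs nothing after division by $\sqrt{t}$, but "strong Markov at $g^j_{t-\sqrt{t}}$" is not literally available. (ii) Condition \eqref{key} compares $H_t$ with $H_{t-\varepsilon(t)}$, whose normalizations are $\sqrt{t}$ and $\sqrt{t-\sqrt{t}}$ respectively; the cross term $\tfrac{S^{\otimes j}_{g^j_{t-\sqrt{t}}}}{\sqrt{t-\sqrt{t}}}\bigl[(1-t^{-1/2})^{1/2}-1\bigr]$ (and its analogue for $M^{\otimes j}$) must be shown to vanish, which it does since the scaled quantity is $O_P(1)$ and the bracket is $O(t^{-1/2})$ --- this is exactly the first term in the paper's display for the $S$-coordinate. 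Also note your deterministic bound on $M^{\otimes j}_{n'}-M^{\otimes j}_n$ needs the positive part on the right-hand side. None of these is fatal, but they are precisely the places where the lemma's content lives.
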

\subsubsection{Proof of \textbf{Lemma \ref{lemverify2b} (a) \& (b)}}

\begin{proof} 
For ease of notations, write $\mu_{j}:=1/E|\mathfrak{I}^{j}_{1}|$. Since $\frac{g_{t}^{j}}{t}\to \mu_{j}$ as $t\to\infty.$

Part (a) will be proved if we show $\frac{\max_{1\le k\le g_{t}^{j}}\log |Q_{k}^{j}|}{\sqrt{g_{t}^{j}}} \stackrel{a.s}{\to} 0.$ As $(\log |Q^{j}_{i}|)_{i\ge 1}$ is an i.i.d. sequence and (4.3) implies that $E\big[\log |Q_{1}^{j}|\big]^{2}<\infty.$ By the Borel-Cantelli lemma, we will show that
\beqn
\frac{\max_{1\le k \le n}\log |Q^{j}_{k}|}{\sqrt{n}}\stackrel{\text{a.s.}}{\to}0 \quad\text{as }n\to\infty. \label{2be1}
\eeqn

We first show $\log |Q^{j}_{n}|/\sqrt{n}\stackrel{\text{a.s.}}{\to}0$ as a consequence of Borel-Cantelli Lemma and $E\big[\log |Q_{1}^{j}|\big]^{2}<\infty.$ This is evident as $E\big[\log |Q_{1}^{j}|\big]^{2}=E\big[\log |Q_{n}^{j}|\big]^{2}<\infty$ implies
\begin{align*}
&\sum_{n=1}^{\infty}P\big[\big(\log |Q_{n}^{j}|\big)^{2}>\varepsilon^{2} n\big]<\infty \quad\text{for every } \varepsilon>0,\\
&\quad \Leftrightarrow P\big[\log |Q_{n}^{j}|>\varepsilon \sqrt{n} \text{ i.o.}\big]=0\quad\text{for every } \varepsilon>0.
\end{align*}
Since only finitely many of $\{\log |Q_{k}^{j}|/\sqrt{k}, 1\le k\le n\}$ can exceed $\varepsilon$ for any arbitrarily small $\varepsilon>0,$ \eqref{2be1} holds. Then from this and $g_{t}\stackrel{\text{a.s.}}{\to}\infty;$  part (a) will follow by observing 
\beqn
\Big\{\frac{\max_{1\le k\le g_{t}^{j}}\log |Q_{k}^{j}|}{\sqrt{g_{t}^{j}}}\not\to 0\Big\}= \Big\{\frac{\max_{1\le k \le n}\log |Q^{j}_{k}|}{\sqrt{n}}\not\to 0\Big\}\cup\{g_{t}\not\to\infty\},\label{BClem_a}
\eeqn 
and using Theorem 2.1 in Gut \cite{gut2009stopped} by replacing $n$ by $g_{t}^{j}$ (as $g_{t}^{j}\stackrel{a.s}{\to}\infty$) in
\eqref{2be1}.
\vspace{0.5 cm}

\subsubsection{Proof of part (b)}

Since $\bigg(\frac{S_{g_{t}^{j}}^{\otimes j}}{\sqrt{t}}, \frac{ M_{g_{t}^{j}}^{\otimes j}}{\sqrt{t}} \bigg)=\bigg(\frac{S_{g_{t}^{j}}^{\otimes j}}{\sqrt{g_{t}^{j}}}, \frac{ M_{g_{t}^{j}}^{\otimes j}}{\sqrt{g_{t}^{j}}} \bigg)\sqrt{\frac{g_{t}^{j}}{t}},$ and under Assumption 1, $g_{t}^{j}/t\stackrel{a.s}{\to}\mu_{j},$ part (b) will follow if we prove following two steps,
\begin{itemize}
\item \textbf{Step 1 :} $\Big(\frac{S_{n}^{\otimes j}}{\sqrt{n}},\frac{M_{n}^{\otimes j}}{\sqrt{n}}\Big)\stackrel{d}{\to}\sigma_{j}(F_{1},F_{2})$ as $n\to\infty;$
\item \textbf{Step 2 :} $\Big(\frac{S_{n}^{\otimes j}}{\sqrt{n}},\frac{M_{n}^{\otimes j}}{\sqrt{n}}\Big)$ satisfy Anscombe's contiguity condition for $R_{n}$ in (2.11);
\end{itemize} 
then by replacing $n$ by $g_{t}^{j}$ (as $g_{t}^{j}\stackrel{a.s}{\to}\infty$) the result will follow.
\begin{enumerate}[(A)]
\item (\textbf{Proof of Step 1}) Let $W:=(W_{t}:0\le t\le 1)$ be a Wiener process in $[0,1]$. Let the random curve $x_{(n)}:=(x_{(n)}(t):0\le t\le 1)$ in $[0,1]$ be defined in the following manner
\beqn
x_{(n)}(t):=n^{-\frac{1}{2}}S_{[nt]}^{\otimes j}+(nt-[nt])\frac{\log L_{[nt]+1}^{j}}{\sqrt{n}},\s 0\le t\le 1,
\eeqn
where it represents the continuous time interpolated version of the discrete partial sum process $\{S_{n}^{\otimes j}:n\in\mathbb{N}\}.$ By $(\mathcal{C}[0,1],\mathcal{C})$ denote the space of continuous functions in $[0,1]$ with topology induced under supremum norm. Under conditions of Theorem 2(b) the weak convergence of $x_{(n)}$ in $\mathcal{C}[0,1]$ holds (a.k.a the functional central limit theorem (FCLT)),  
\beqn
x_{(n)}\stackrel{w}{\Longrightarrow}\sigma_{j} W\s \text{in}\s \mathcal{C}[0,1]\label{2bs1e1}
\eeqn 
as $n\to\infty.$ We will use the traditional invariance principle. Note that for any element $x\in \mathcal{C}[0,1],$ the functional $f:\mathcal{C}[0,1]\to (\mathcal{C}[0,1]\times \mathcal{C}[0,1])$ defined as $$f(x):=\Big(\big(x(t),\sup_{0\le s\le t}x(s)\big):t\in[0,1]\Big),$$ is a continuous under supremum norm. Using continuous mapping theorem as $n\to\infty,$ one has $f(x_{(n)})\stackrel{w}{\to}f(\sigma_{j}W)$ in $\mathcal{C}[0,1],$ and as a consequence
\beqn
f(x_{(n)})(1)\stackrel{d}{\to} f(\sigma_{j}W)(1)=\sigma_{j}\big(W_{1},\sup_{0\le s \le 1}W_{s}\big)\s .\label{contmapp}
\eeqn
Observe that $f(x_{(n)})(1)=n^{-\frac{1}{2}}\big(S^{\otimes j}_{n}, M^{\otimes j}_{n}\big).$ Hence \eqref{contmapp} implies that
$$n^{-\frac{1}{2}}\Big(S^{\otimes j}_{n}, M^{\otimes j}_{n}\Big)\stackrel{d}{\to}\sigma_{j}\big(W_{1},\sup_{0\le s \le 1}W_{s}\big),\s \text{as}\s n\to\infty.$$
Corollary 6.5.5 of \cite{resnick1992adventures} gives an explicit form of density of $\big(W_{1},\sup_{0\le s \le 1}W_{s}\big)$  that is identical to the joint density of $(F_{1},F_{2})$ in (4.5).
\item (\textbf{Proof of Step 2}) 

We prove \textbf{Step 2} by showing Anscombe's condition 
(uniform continuity in probability, condition (A) on page 16 in \cite{gut2009stopped}) 
which in our context requires showing the following: Given $\gamma>0,\eta>0$, there exist $\delta>0,n_{0}>0$ such that 
\beqn
P\Big[\max_{\{k: |k-n|<n\delta\}}\Big|\Big(\frac{S^{\otimes j}_{k}}{\sqrt{k}},\frac{M^{\otimes j}_{k}}{\sqrt{k}} \Big)-\Big(\frac{S^{\otimes j}_{n}}{\sqrt{n}},\frac{M^{\otimes j}_{n}}{\sqrt{n}}\Big)\Big|> \gamma \Big]<\eta\quad \text{for } n\ge n_{0}.\label{cond0.001}
\eeqn 
 For any two $a=(a_{1},a_{2}),b=(b_{1},b_{2}),$ we use the $L_{1}$ norm $|a-b|=|a_{1}-a_{2}|+|b_{1}-b_{2}|$ without loss of generality. \eqref{cond0.001} holds if we show that given $\gamma>0,\eta>0$, there exist $\delta>0,n_{1},n_{2}>0$ such that 
\beqn
P\Big[\max_{\{k: |k-n|<n\delta\}}\Big|\frac{S^{\otimes j}_{k}}{\sqrt{k}}-\frac{S^{\otimes j}_{n}}{\sqrt{n}}\Big|> \frac{\gamma}{2} \Big]<\frac{\eta}{2}\quad \text{for } n\ge n_{1}\label{cond0.001e1}\\
P\Big[\max_{\{k: |k-n|<n\delta\}}\Big|\frac{M^{\otimes j}_{k}}{\sqrt{k}} - \frac{M^{\otimes j}_{n}}{\sqrt{n}}\Big|> \frac{\gamma}{2} \Big]<\frac{\eta}{2}\quad \text{for } n\ge n_{2}\label{cond0.001e2}
\eeqn 
by setting $n_{0}=n_{1}\vee n_{2}.$ 

Since $\{k: |k-n|<n\delta\}=\{n\le k\le n(1+\delta)\}\cup\{n(1-\delta)\le k\le n\}$ an upper bound for the probability in \eqref{cond0.001e2}  is 
\beqn
&& P\Big[\max_{\{k: n\le k\le n+n\delta\}}\Big|\frac{M^{\otimes j}_{k}}{\sqrt{k}}-\frac{M^{\otimes j}_{n}}{\sqrt{n}}\Big|> \frac{\gamma}{2} \Big]\label{s2ub2}\\&&\s+  P\Big[\max_{\{k: n(1-\delta)\le k \le n\}}\Big|\frac{M^{\otimes j}_{k}}{\sqrt{k}}-\frac{M^{\otimes j}_{n}}{\sqrt{n}}\Big|>\frac{\gamma}{2} \Big].\non
\eeqn
We prove that the first term is smaller than $\eta/4$. Similar arguments show that second term is smaller than $\eta/4$ from which \eqref{cond0.001e2} follows. For notational simplicity we assume $n\delta$ to be an integer (even if it's not then abuse the notation $n\delta$ to denote $[n\delta]$).
Observe that
\begin{align}
&P\Big[\max_{\{k: n\le k\le n(1+\delta)\}}\bigg|\frac{M^{\otimes j}_{k}}{\sqrt{k}}-\frac{M^{\otimes j}_{n}}{\sqrt{n}}\Big|> \frac{\gamma}{2} \bigg]  \non \\
&\quad \le P\Big[\max_{\{k: n\le k\le n(1+\delta)\}}\Big(\Big|\frac{M^{\otimes j}_{k}}{\sqrt{k}}-\frac{M^{\otimes j}_{k}}{\sqrt{n}}\Big|+\Big|\frac{M^{\otimes j}_{k}}{\sqrt{n}}- \frac{M^{\otimes j}_{n}}{\sqrt{n}}\Big|\Big)> \frac{\gamma}{2} \Big]\non\\
&\quad\le P\Big[\frac{|M^{\otimes j}_{n(1+\delta)}|}{\sqrt{n(1+\delta)}}[(1+\delta)^{\frac{1}{2}}-1]>\frac{\gamma}{4}\Big]+P\Big[\frac{M^{\otimes j}_{n(1+\delta)}-M^{\otimes j}_{n}}{\sqrt{n}}>\frac{\gamma}{4}\Big].\label{cond0.003}
\end{align}
For all $\delta\ge 0$, $(1+\delta)^{\frac{1}{2}}\le (1+\frac{\delta}{2})$. Consequently, for all $\delta\ge 0$,
$$
P\Big[\frac{|M^{\otimes j}_{n(1+\delta)}|}{\sqrt{n(1+\delta)}}[(1+\delta)^{\frac{1}{2}}-1]>\frac{\gamma}{4}\Big]
\le P\Big[\frac{\delta}{2} \frac{|M^{\otimes j}_{n(1+\delta)}|}{\sqrt{n(1+\delta)}}>\frac{\gamma}{4}\Big].
$$
Note that the marginal density of $F_{2}$ will be identical as of a $|N|$ where $N$ is a Normal $(0,1)$ distributed random variable and this can be derived from the joint density of $(F_{1},F_{2})$ in step 2. This implies that, as $n\to\infty$ 
\beqn
\frac{M^{\otimes j}_{n}}{\sqrt{n}}\stackrel{d}{\to} \sigma_{j}|N|,\label{erdosKac}
\eeqn 
which is also celebrated as the Erdos-Kac theorem (see Theorem I in Erd\"os and Kac \cite{ErdosKac1946}). As a consequence of \eqref{erdosKac}, for given $\gamma>0,\eta>0$ one can choose a small $\delta_{\eta}>0$ and a large $n_{2}^{(1)}$ such that for all $\delta\le \delta^{(1)}_{\eta}$,  
$$
P\Big[\frac{\delta}{2} \frac{|M^{\otimes j}_{n(1+\delta)}|}{\sqrt{n(1+\delta)}}>\frac{\gamma}{4}\Big]<\frac{\eta}{8}, \quad \text{for } n\ge n_{2}^{(1)}.
$$
Second term in \eqref{cond0.003} can be shown to be smaller than $\eta/8$ as well. This can be shown by first writing, with  
$A_{1}:=\{(x,y): 0\le y<\infty,-\infty<x<y\}$, 
\begin{align}
&P\Big[M^{\otimes j}_{n(1+\delta)}-M^{\otimes j}_{n}> \frac{\gamma}{4} \sqrt{n}\Big] \non \\
&\quad=\int_{A_{1}}P\big[M^{\otimes j}_{n(1+\delta)}-M^{\otimes j}_{n}>\frac{\gamma}{4}\sqrt{n}\mid (S^{\otimes j}_{n},M^{\otimes j}_{n})=(s,t)\big]f_{(S^{\otimes j}_{n},M^{\otimes j}_{n})}(s,t)dsdt\non\\
&\quad=\int_{A_{1}}P\Big[\frac{M^{\otimes j}_{n\delta}}{\sqrt{n\delta}}>\frac{t-s}{\sqrt{n\delta}}+\frac{\gamma\sqrt{n}}{4\sqrt{n\delta}}\Big]f_{(S^{\otimes j}_{n},M^{\otimes j}_{n})}(s,t)dsdt, \label{cond0.0045}
\end{align}
where \eqref{cond0.0045} follows from the independent increment property of the mean $0,$ random walk $(S_{n}^{\otimes j})_{n\ge 1}$. 
For $(s,t)\in A_{1}$, one has $t-s>0,$ and as a consequence
$$
P\Big[\frac{M^{\otimes j}_{n\delta}}{\sqrt{n\delta}}>\frac{t-s}{\sqrt{n\delta}}+\frac{\gamma}{4\sqrt{\delta}}\Big]\le P\Big[\frac{M^{\otimes j}_{n\delta}}{\sqrt{n\delta}}> \frac{\gamma}{4\sqrt{\delta}}\Big].
$$ 
By choosing $\delta^{(2)}_{\eta,\gamma}>0$ and $\delta<\delta^{(2)}_{\eta,\gamma}$ such that $\gamma/(4\sqrt{\delta})$ is sufficiently large and choosing $n_{\eta,\gamma}^{(2)}$ large, 
as a consequence of \eqref{erdosKac}, 
$$
P\Big[\frac{M^{\otimes j}_{n\delta}}{\sqrt{n\delta}}> \frac{\gamma}{4\sqrt{\delta}}\Big]<\frac{\eta}{8}, 
\quad \text{for } n\ge n_{\eta,\gamma}^{(2)}.
$$
Putting this upper bound in \eqref{cond0.0045}, the first probability term in \eqref{s2ub2} can be shown to be less than $\eta/4,$ which follows by taking $n\ge n_{0}^{(1)} \vee n_{\eta,\gamma}^{(2)}$ and $\delta\le \delta^{(1)}_{\eta}\wedge \delta^{(2)}_{\eta,\gamma}$. Hence \eqref{cond0.001e2} holds by setting $n_{2}=n_{0}^{(1)} \vee n_{\eta,\gamma}^{(2)}$.

To show \eqref{cond0.001e1}, we can decompose its upper-bound like \eqref{s2ub2} and then show that given $\gamma,\eta>0$ there exists $\delta,n_{1}$ such that
\beqn
P\Big[\max_{\{n\le k\le n(1+\delta)\}}\Big|\frac{S^{\otimes j}_{k}}{\sqrt{k}}-\frac{S^{\otimes j}_{n}}{\sqrt{n}}\Big|> \frac{\gamma}{2} \Big]<\frac{\eta}{4}\quad \text{for } n\ge n_{1}\label{s2ubb1}
\eeqn
The LHS of \eqref{s2ubb1} can be shown to be less than
\begin{align}
&P\Big[\frac{|S^{\otimes j}_{n}|}{\sqrt{n(1+\delta)}}\big[(1+\delta)^{\frac{1}{2}}-1\big]> \frac{\gamma}{4} \Big]+
P\Big[\frac{\max_{\{n\le k\le n(1+\delta)\}}\Big|S^{\otimes j}_{k}-S^{\otimes j}_{n}\Big|}{\sqrt{n}}> \frac{\gamma}{4} \Big]\non\\
&\s\s\le P\Big[\frac{\delta}{2}\frac{|S^{\otimes j}_{n}|}{\sqrt{n}}> \frac{\gamma}{4} \Big]+
P\bigg[\frac{\max_{0\le k \le n\delta}|S_{k}^{\otimes j}|}{\sqrt{n}}> \frac{\gamma}{4} \bigg]\non\\
&\s\s= P\Big[\frac{|S^{\otimes j}_{n}|}{\sqrt{n}}> \frac{\gamma}{2\delta} \Big]+
P\bigg[\frac{\max_{0\le k \le n\delta}|S_{k}^{\otimes j}|}{\sqrt{n}}> \frac{\gamma}{4} \bigg]\label{s2ubb1f}
\end{align}
\end{enumerate}
Since marginally $\frac{S_{n}^{\otimes j}}{\sqrt{n}}\stackrel{d}{\to}\sigma_{j}N$, as $n\to\infty$ (follows from step $1$), given $\gamma,\eta>0$ we can always find $\delta_{0}>0$ small enough and $n_{1}$ large enough such that for $n\ge n_{1},$ the first term in RHS of \eqref{s2ubb1f} is less than $\eta/8.$ Regarding the second term Choose $\delta\le \frac{\eta \gamma^{2}}{128 \sup_{j\in S}\sigma_{j}^{2}},$ using Kolmogorov's maximal inequality it follows that
\beqn
P\bigg[\frac{\max_{0\le k \le n\delta}|S_{k}^{\otimes j}|}{\sqrt{n}}> \frac{\gamma}{4} \bigg]\le  \frac{16\sum_{i=1}^{n\delta}\text{Var}(\log L_{i}^{j})}{(\gamma\sqrt{n})^{2}}\le\frac{16 \delta \sup_{j\in S}\sigma^{2}_{j}}{\gamma^{2}}\le \frac{\eta}{8}.\label{maximalKM}
\eeqn
 Combining both terms \eqref{s2ubb1} is proved. \eqref{cond0.001e1} and \eqref{cond0.001e2} together show \eqref{cond0.001}, and the verification of step 2 is complete.

\end{proof}

\subsubsection{Proof of Lemma \ref{lemverify2b} (c)}
\begin{proof}
Part (c) follows if  both  $S^{\otimes j}_{g^j_{t}}/\sqrt{t}$ and $M^{\otimes j}_{g^j_{t}}/\sqrt{t}$ verify \eqref{everify2a} separately as $\log G_{t}^{(1)}$ (since $M^{\otimes j}_{g^j_{t}}/\sqrt{t}$ and $\log Z^{\otimes j}_{g^j_{t}}/\sqrt{t}$ are equivalent in probability using part (a)). 
Take $t\mapsto \varepsilon(t)$ to be an increasing function satisfying $\lim_{t\to\infty}\varepsilon(t)=\infty$ and $\lim_{t\to\infty}\varepsilon(t)/t=0$. 
Observe that
\beqn
\frac{M^{\otimes j}_{g^j_{t}}}{\sqrt{t}}-\frac{M^{\otimes j}_{g^j_{t-\varepsilon(t)}}}{\sqrt{t-\varepsilon(t)}}&=&\frac{M^{\otimes j}_{g^j_{t}}-M^{\otimes j}_{[\mu_{j}t]}}{\sqrt{t}} \non\\&&\s +\frac{M^{\otimes j}_{[\mu_{j}t]}}{\sqrt{t}}  - \frac{M^{\otimes j}_{[\mu_{j}(t-\varepsilon(t))]}}{\sqrt{t-\varepsilon(t)}}+ \frac{M^{\otimes j}_{[\mu_{j}(t-\varepsilon(t))]}-M^{\otimes j}_{g^j_{t-\varepsilon(t)}}}{\sqrt{t-\varepsilon(t)}}.\label{cond0.005}  
\eeqn
For $\delta>0,$ set $A^{}_{(\delta,j)}:=\{n\in\mathbb{N}:| n/t-\mu_{j}|\le \delta \}$ and observe that 
$\lim_{t\to\infty}P[g^{j}_{t}\in A^{}_{(\delta,j)}]=1$. For the first term on the right-hand side in \eqref{cond0.005}, 
\begin{align*}
&P\Big[\frac{|M^{\otimes j}_{g^j_{t}}-M^{\otimes j}_{[\mu_{j}t]}|}{\sqrt{t}}>\gamma\Big] \\
&\quad\le P\Big[\frac{|M^{\otimes j}_{g^j_{t}}-M^{\otimes j}_{[\mu_{j}t]}|}{\sqrt{t}} >\gamma, g^j_{t}\in A^{}_{(\delta,j)}\Big]+P\Big[\frac{|M^{\otimes j}_{g^j_{t}}-M^{\otimes j}_{[\mu_{j}t]}|}{\sqrt{t}} >\gamma, g^j_{t}\in A^{c}_{(\delta,j)}\Big]\\
&\quad\le P\Big[\frac{M^{\otimes j}_{[(\mu_{j}+\delta)t]}-M^{\otimes j}_{[\mu_{j}t]}}{\sqrt{t}} >\gamma\Big]
+P\Big[\frac{M^{\otimes j}_{[\mu_{j}t]}-M^{\otimes j}_{[(\mu_{j}-\delta)t]}}{\sqrt{t}} >\gamma\Big]
+P\Big[g^j_{t}\notin A^{c}_{(\delta,j)}\Big].
\end{align*}
The arguments used to prove part (b) can be applied to prove that the first two terms on the right-hand side above goes to $0,$ by making $\delta$ arbitrarily small, and taking $t\to\infty$. 
The third term goes to $0$ as $t\to\infty$ since $g^j_t/t\stackrel{\text{a.s.}}{\to}\mu_j$ as $t\to\infty$.
Consequently, the first term on the right-hand side in \eqref{cond0.005} goes to $0$ in probability as $t\to\infty$. 
Similarly, the third term on the right-hand side in \eqref{cond0.005} goes to $0$ in probability as $t\to\infty$ since $t-\varepsilon(t)\to\infty$ as $t\to\infty$. 
The second term on the right-hand side in \eqref{cond0.005} goes to $0$ in probability as $t\to\infty$ by using an argument similar to \eqref{cond0.003} with $\varepsilon(t)/t\to 0$ as $t\to\infty$. 

Regarding verification for the $S^{\otimes j}_{g^j_{t}}/\sqrt{t},$ observe that
\begin{align}
\frac{S^{\otimes j}_{g^j_{t}}}{\sqrt{t}}-\frac{S^{\otimes j}_{g^j_{t-\varepsilon(t)}}}{\sqrt{t-\varepsilon(t)}}=\frac{S^{\otimes j}_{g^j_{t-\varepsilon(t)}}}{\sqrt{t-\varepsilon(t)}}\bigg[\Big(1-\frac{\varepsilon(t)}{t}\Big)^{\frac{1}{2}}-1\bigg]+ \frac{\sum_{i=g^{j}_{t-\varepsilon(t)}+1}^{g^{j}_{t}}\log L_{i}^{j}}{\sqrt{t}}.\label{cond0.005p2}
\end{align}
Without loss of generality assume that $t$ is large enough so that $\varepsilon(t)\le t,$ for some $t\ge t_{0}.$ Observe that  $(1-\delta_{1})^{\frac{1}{2}}\le 1-\frac{\delta_{1}}{2}$ for all $0\le \delta_{1}\le 1.$ Hence, the upper bound can be expressed as
\begin{align}
\Big|\frac{S^{\otimes j}_{g^j_{t}}}{\sqrt{t}}-\frac{S^{\otimes j}_{g^j_{t-\varepsilon(t)}}}{\sqrt{t-\varepsilon(t)}}\Big|\le \frac{\varepsilon(t)}{2t} \frac{|S^{\otimes j}_{g^j_{t-\varepsilon(t)}}|}{\sqrt{t-\varepsilon(t)}}+ \sqrt{\frac{\varepsilon(t)}{t}}\frac{|\sum_{i=g^{j}_{t-\varepsilon(t)}+1}^{g^{j}_{t}}\log L_{i}^{j}|}{\sqrt{\varepsilon(t)}},\non
\end{align}
for $t\ge t_{0}$. Since $\frac{\varepsilon(t)}{t}\to 0$ as $t\to\infty$ and both quantities $ \frac{|S^{\otimes j}_{g^j_{t-\varepsilon(t)}}|}{\sqrt{t-\varepsilon(t)}}, \,\, \frac{|\sum_{i=g^{j}_{t-\varepsilon(t)}+1}^{g^{j}_{t}}\log L_{i}^{j}|}{\sqrt{\varepsilon(t)}}$ are $O_{P}(1)$ which follows from part (b) as $(t-\varepsilon(t))$ is large. Hence by Slutsky's theorem the LHS of \eqref{cond0.005p2} $\stackrel{P}{\to} 0$ as $t\to\infty,$ and this concludes the proof of part (c).
\end{proof}

\end{proof}

\section{Proof of supplementary results of Theorem 3}
\subsection{Proof of \textbf{Lemma 10.1}}

\begin{proof}
Denote $\widehat{a}(\cdot)=a(\cdot)-E_{\pi}a(\cdot).$ Conditioned on $\widetilde{A}_{1}^{j},$ observe that
\beqn
\int_{\tau_{0}^{j}}^{\tau_{1}^{j}}\widehat{a}(Y_{s})ds =\sum_{i\in A_{1}^{j}}^{}\widehat{a}(J_{i})T_{i+1}\stackrel{d}{=}\sum_{(i_{k},i_{k+1})\in \widetilde{A}_{1}^{j}} \widehat{a}(i_{k})\widetilde{F}_{i_{k}i_{k+1}}.\non
\eeqn
 Since for $(i_{k},i_{k+1})\in \widetilde{A}_{1}^{j}\cap \mathbb{S}_{\alpha}^{-},$ the  quantity $\widehat{a}(i_{k})<0,$ hence conditioned on $\widetilde{A}_{1}^{j},$ we have
\beqn
\int_{\tau_{0}^{j}}^{\tau_{1}^{j}}\widehat{a}(Y_{s})ds &\stackrel{d}{=}&\sum_{(i_{k},i_{k+1})\in \widetilde{A}_{1}^{j}} \widehat{a}(i_{k})\widetilde{F}_{i_{k}i_{k+1}}\non\\ &=&\sum_{(i_{k},i_{k+1})\in \widetilde{A}_{1}^{j}\cap \mathbb{S}_{\alpha}^{+}}\widehat{a}(i_{k})\widetilde{F}_{i_{k}i_{k+1}} -\sum_{(i_{k},i_{k+1})\in \widetilde{A}_{1}^{j}\cap \mathbb{S}_{\alpha}^{-}}|\widehat{a}(i_{k})|\widetilde{F}_{i_{k}i_{k+1}}\non\\&&\s\s +\sum_{(i_{k},i_{k+1})\in \widetilde{A}_{1}^{j}\cap \mathbb{S}^{^c}_{\alpha}}\widehat{a}(i_{k})\widetilde{F}_{i_{k}i_{k+1}}.\non
\eeqn
 For any event $D$
\begin{align}
&\s P\Big[\int_{\tau_{0}^{j}}^{\tau_{1}^{j}}\widehat{a}(Y_{s})ds\in D\Big]=  EP\Big[\sum_{(i_{k},i_{k+1})\in \widetilde{A}_{1}^{j}} \widehat{a}(i_{k})\widetilde{F}_{i_{k}i_{k+1}}\in D\mid \widetilde{A}_{1}^{j}\Big]\non\\
&\s=EP\bigg[\Big(\sum_{(i_{k},i_{k+1})\in \widetilde{A}_{1}^{j}\cap S_{\alpha}^{+}}\widehat{a}(i_{k})\widetilde{F}_{i_{k}i_{k+1}}+\sum_{(j_{k},j_{k+1})\in \widetilde{A}_{1}^{j}\cap S^{^{c,+}}_{\alpha}}\widehat{a}(j_{k})\widetilde{F}_{j_{k}j_{k+1}}\Big)\non\\&\s\s-\Big(\sum_{(i_{k},i_{k+1})\in \widetilde{A}_{1}^{j}\cap S_{\alpha}^{-}}|\widehat{a}(i_{k})|\widetilde{F}_{i_{k}i_{k+1}}+\sum_{(j_{k},j_{k+1})\in \widetilde{A}_{1}^{j}\cap S^{^{c,-}}_{\alpha}}|\widehat{a}(j_{k})|\widetilde{F}_{j_{k}j_{k+1}}\Big)\in D| \widetilde{A}_{1}^{j}\bigg]\label{e1RegV}\\
&\s=:EP\Big[\widetilde{X}^{(1)}_{1} - \widetilde{X}^{(2)}_{1}\in D|\widetilde{A}_{1}^{j}\Big]\non
\end{align}
where
\beqn
\widetilde{X}^{(1)}_{1}&:=& \sum_{(i_{k},i_{k+1})\in \widetilde{A}_{1}^{j}\cap S_{\alpha}^{+}}\widehat{a}(i_{k})\widetilde{F}_{i_{k}i_{k+1}}+\sum_{(j_{k},j_{k+1})\in \widetilde{A}_{1}^{j}\cap S^{^{c,+}}_{\alpha}}\widehat{a}(j_{k})\widetilde{F}_{j_{k}j_{k+1}},\non\\
\widetilde{X}^{(2)}_{1}&:=&\sum_{(i_{k},i_{k+1})\in \widetilde{A}_{1}^{j}\cap S_{\alpha}^{-}}|\widehat{a}(i_{k})|\widetilde{F}_{i_{k}i_{k+1}}+\sum_{(j_{k},j_{k+1})\in \widetilde{A}_{1}^{j}\cap S^{^{c,-}}_{\alpha}}|\widehat{a}(j_{k})|\widetilde{F}_{j_{k}j_{k+1}}.\non
\eeqn
We use the following Lemma \ref{regV21} few times in this proof.

\begin{lemma}\label{regV21}
For any arbitrary class of non-negative independent regularly varying random variables $\{\widetilde{X}_{i}:i=1,\ldots,n\}$ with index $\alpha\ge 0,$  one has the following asymptotic results as $x\to\infty$
\begin{enumerate}[(i)]
\item $P[\widetilde{X}_{1}-\widetilde{X}_{2}>x]\sim P[\widetilde{X}_{1}>x],\s$ and $\s P[\widetilde{X}_{1}-\widetilde{X}_{2}<-x]\sim P[\widetilde{X}_{2}>x].$
\item $P[\sum_{i=1}^{n}\widetilde{X}_{i}>x]\sim \sum_{i=1}^{n}P[\widetilde{X}_{i}>x].$ 
\item If $P[\widetilde{X}_{2}>x]\sim o(P[\widetilde{X}_{1}>x]),$ then $P[\widetilde{X}_{1}+\widetilde{X}_{2}>x]\sim P[\widetilde{X}_{1}>x].$
\end{enumerate}
\end{lemma}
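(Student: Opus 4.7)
The three statements are classical consequences of the theory of regularly varying (indeed, subexponential) distributions, so my plan is to dispatch them by stitching together Potter's bound and a "principle of the single big jump" argument, rather than reinvent anything.

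First I would prove (ii) by induction on $n$, with the base case $n=2$ being the workhorse. For the upper bound on $P[\widetilde{X}_1+\widetilde{X}_2>x]$, I would split the event according to which summand exceeds $(1-\delta)x$:
\[
\{\widetilde{X}_1+\widetilde{X}_2>x\}\subseteq \{\widetilde{X}_1>(1-\delta)x\}\cup\{\widetilde{X}_2>(1-\delta)x\}\cup\{\widetilde{X}_1>\delta x,\widetilde{X}_2>\delta x\},
\]
where the last ``cross'' event has probability at most $P[\widetilde{X}_1>\delta x]P[\widetilde{X}_2>\delta x]$, which is $o(P[\widetilde{X}_i>x])$ by regular variation. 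Dividing by $\sum_i P[\widetilde{X}_i>x]$, taking $x\to\infty$ and then $\delta\downarrow 0$ via $P[\widetilde{X}_i>(1-\delta)x]\sim(1-\delta)^{-\alpha}P[\widetilde{X}_i>x]$ yields the upper bound. The matching lower bound follows from the disjoint inclusion
\[
\{\widetilde{X}_1+\widetilde{X}_2>x\}\supseteq \{\widetilde{X}_1>x,\widetilde{X}_2\le M\}\cup\{\widetilde{X}_2>x,\widetilde{X}_1\le M\}
\]
and letting $x\to\infty$ then $M\to\infty$. This is precisely Lemma 1.3.1 of \cite{mikosch1999regular}, so I would simply cite it after sketching this argument. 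The induction step is immediate, treating $\sum_{i=1}^{n-1}\widetilde{X}_i$ as the new ``$\widetilde{X}_1$'' and noting that the inductive hypothesis keeps the class of regularly varying tails with index $\alpha$ closed under finite sums.

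Part (iii) is then an immediate corollary: since by (ii), $P[\widetilde{X}_1+\widetilde{X}_2>x]\sim P[\widetilde{X}_1>x]+P[\widetilde{X}_2>x]$, and the assumption $P[\widetilde{X}_2>x]=o(P[\widetilde{X}_1>x])$ removes the second term.

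For (i), the upper bound $P[\widetilde{X}_1-\widetilde{X}_2>x]\le P[\widetilde{X}_1>x]$ is immediate from $\widetilde{X}_2\ge 0$. For the lower bound I would fix $M>0$ and estimate
\[
P[\widetilde{X}_1-\widetilde{X}_2>x]\ge P[\widetilde{X}_1>x+M,\,\widetilde{X}_2\le M]=P[\widetilde{X}_1>x+M]\,P[\widetilde{X}_2\le M],
\]
using independence. Dividing by $P[\widetilde{X}_1>x]$, the ratio $P[\widetilde{X}_1>x+M]/P[\widetilde{X}_1>x]\to 1$ as $x\to\infty$ by regular variation at infinity (indeed by the uniform convergence theorem for slowly varying functions), so
\[
\liminf_{x\to\infty}\frac{P[\widetilde{X}_1-\widetilde{X}_2>x]}{P[\widetilde{X}_1>x]}\ge P[\widetilde{X}_2\le M],
\]
and letting $M\to\infty$ completes the argument. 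The second assertion of (i) is obtained by the symmetric argument applied to $\widetilde{X}_2-\widetilde{X}_1$.

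No real obstacle is anticipated: every ingredient (Potter bound, uniform convergence of slowly varying functions, the single-big-jump principle) is already invoked elsewhere in the paper and directly available from \cite{mikosch1999regular,kulik2020heavy}. The only mildly delicate point is justifying the truncation-and-limit interchange in (i) when $\widetilde{X}_2$ itself has infinite mean, but since one only needs $P[\widetilde{X}_2\le M]\uparrow 1$ as $M\to\infty$, which holds for any proper random variable, this causes no issue.
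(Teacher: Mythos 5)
Your proof is correct, and for part (i) it follows essentially the same sandwich strategy as the paper: bound $P[\widetilde{X}_1-\widetilde{X}_2>x]$ above by $P[\widetilde{X}_1>x]$ and below by the probability of the independent intersection "$\widetilde{X}_1$ large, $\widetilde{X}_2$ small". The only difference is your truncation level: you freeze $\widetilde{X}_2$ at a fixed $M$ and use $P[\widetilde{X}_1>x+M]/P[\widetilde{X}_1>x]\to 1$ (long-tailedness), whereas the paper truncates at $\delta x$ and invokes $P[\widetilde{X}_1>(1+\delta)x]/P[\widetilde{X}_1>x]\to(1+\delta)^{-\alpha}$ before sending $\delta\downarrow 0$; your version is marginally more economical since it needs only long-tailedness of $\widetilde{X}_1$ rather than the full regular-variation asymptotic. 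For (ii) and (iii) the paper simply cites Lemma 1.3.4 and Remark 1.3.5 of Mikosch, while you supply the standard single-big-jump proof of (ii) and then deduce (iii) from it; both routes are fine, and your decomposition and limits ($x\to\infty$, then $\delta\downarrow 0$ or $M\to\infty$) are in the right order. One point worth stating explicitly in your induction step is that a finite sum of slowly varying functions is slowly varying, so that $\sum_{i=1}^{n-1}\widetilde{X}_i$ is again regularly varying with the same index and the base case applies.
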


Suppose both sets $\mathbb{S}_{\alpha}^{+}, \mathbb{S}_{\alpha}^{-}$ are non-empty. Choose $\widetilde{A}_{1}^{j}$ such that $\widetilde{A}_{1}^{j}\cap \mathbb{S}_{\alpha}^{+}\neq \emptyset$ and $\widetilde{A}_{1}^{j}\cap \mathbb{S}_{\alpha}^{-}\neq \emptyset$ (since if one of them doesn't hold then later we will see that the corresponding one of (10.1) or (10.2) will vacuously hold true in limit, as both sides will be $0$).  

Conditioned on $\widetilde{A}^{j}_{1}$ observe that $\{\widetilde{F}_{i_{k}i_{k+1}}: (i_{k},i_{k+1})\in \widetilde{A}_{1}^{j}\cap \mathbb{S}_{\alpha}^{+}\}$ are all independent and regularly varying at $+\infty$ with rate $x^{-\alpha}L(x).$ As a result of Lemma \ref{regV21}(ii) the following holds
\beqn
 P\Big[\sum_{(i_{k},i_{k+1})\in \widetilde{A}_{1}^{j}\cap \mathbb{S}_{\alpha}^{+}}\widehat{a}(i_{k})\widetilde{F}_{i_{k}i_{k+1}}>x|\widetilde{A}^{j}_{1}\Big]&\sim&\sum_{(i_{k},i_{k+1})\in \widetilde{A}_{1}^{j}\cap \mathbb{S}_{\alpha}^{+}}^{}P[\widehat{a}(i_{k})\widetilde{F}_{i_{k}i_{k+1}}>x]\non\\
&\sim& \bigg(\sum_{(i_{k},i_{k+1})\in \widetilde{A}_{1}^{j}\cap S_{\alpha}^{+}}^{} c_{i_{k},i_{k+1}}\widehat{a}^{\alpha}(i_{k})\bigg) x^{-\alpha}L(x).\label{preDCT}
\eeqn

For any $(j_{k},j_{k+1})\in\widetilde{A}^{j}_{1}\cap \mathbb{S}^{^{c,+}}_{\alpha},$ the quantity $P[\widehat{a}(j_{k})\widetilde{F}_{j_{k}j_{k+1}}>x|\widetilde{A}^{j}_{1}]=o(x^{-\alpha}L(x)),$ and as a result it follows that

 $$P[\widehat{a}(j_{k})\widetilde{F}_{j_{k}j_{k+1}}>x|\widetilde{A}^{j}_{1}]=o\Big(P\Big[\sum_{(i_{k},i_{k+1})\in \widetilde{A}_{1}^{j}\cap \mathbb{S}_{\alpha}^{+}}\widehat{a}(i_{k})\widetilde{F}_{i_{k}i_{k+1}}>x|\widetilde{A}^{j}_{1}\Big]\Big),$$ 

since for any $(j_{k},j_{k+1})\in \mathbb{S}^{^{c,+}}_{\alpha}$ the sojourn time will have tails lighter than any sojourn time corresponding to the element $(i_{k},i_{k+1})\in \mathbb{S}^{+}_{\alpha}.$ Hence as a consequence of Lemma \ref{regV21}(iii) we have as $x\to\infty$
\beqn
P\big[\widetilde{X}^{(1)}_{1}>x|\widetilde{A}^{j}_{1}\big]&\sim& P\Big[\sum_{(i_{k},i_{k+1})\in \widetilde{A}_{1}^{j}\cap \mathbb{S}_{\alpha}^{+}}\widehat{a}(i_{k})\widetilde{F}_{i_{k}i_{k+1}}>x|\widetilde{A}^{j}_{1}\Big],\non\\
\s\text{\& similarly}\s \s P\big[\widetilde{X}^{(2)}_{1}>x|\widetilde{A}^{j}_{1}\big]&\sim& P\Big[\sum_{(i_{k},i_{k+1})\in \widetilde{A}_{1}^{j}\cap \mathbb{S}_{\alpha}^{-}}|\widehat{a}(i_{k})|\widetilde{F}_{i_{k}i_{k+1}}>x|\widetilde{A}^{j}_{1}\Big].\label{reg2nd}
\eeqn

Clearly conditioned on $\widetilde{A}_{1}^{j}$ both $\widetilde{X}^{(1)}_{1},\widetilde{X}^{(2)}_{1}$ are independent and regularly varying at $+\infty$ with rate $x^{-\alpha}L(x).$ By virtue of Lemma \ref{regV21}(i) it follows that almost surely 
\beqn
P\big[\widetilde{X}^{(1)}_{1} - \widetilde{X}^{(2)}_{1} >x|\widetilde{A}^{j}_{1}\big]&\sim& P\big[\widetilde{X}^{(1)}_{1}>x|\widetilde{A}^{j}_{1}\big]\non\\&\sim& P\Big[\sum_{(i_{k},i_{k+1})\in \widetilde{A}_{1}^{j}\cap \mathbb{S}_{\alpha}^{+}}\widehat{a}(i_{k})\widetilde{F}_{i_{k}i_{k+1}}>x|\widetilde{A}^{j}_{1}\Big]\non\\&\sim&  \bigg(\sum_{(i_{k},i_{k+1})\in \widetilde{A}_{1}^{j}\cap \mathbb{S}_{\alpha}^{+}}^{} c_{i_{k},i_{k+1}}\widehat{a}^{\alpha}(i_{k})\bigg) x^{-\alpha}L(x).\label{limitreg}
\eeqn

We will need an upper bound of $\frac{P\big[\widetilde{X}^{(1)}_{1} - \widetilde{X}^{(2)}_{1} >x|\widetilde{A}^{j}_{1}\big]}{x^{-\alpha}L(x)}$ with finite expectation for large $x,$ in order to use the Dominated Convergence Theorem. Observe that
\beqn
&&P\big[\widetilde{X}^{(1)}_{1} - \widetilde{X}^{(2)}_{1} >x|\widetilde{A}^{j}_{1}\big]\le P\big[\widetilde{X}^{(1)}_{1}>x|\widetilde{A}^{j}_{1}\big]\non\\
&\le& P\Big[\sum_{(i_{k},i_{k+1})\in \widetilde{A}_{1}^{j}\cap S_{\alpha}^{+}}\widehat{a}(i_{k})\widetilde{F}_{i_{k}i_{k+1}}>\frac{x}{2}|\widetilde{A}^{j}_{1}\Big]+P\Big[\sum_{(j_{k},j_{k+1})\in \widetilde{A}_{1}^{j}\cap S^{^{c,+}}_{\alpha}}\widehat{a}(j_{k})\widetilde{F}_{j_{k}j_{k+1}}>\frac{x}{2}|\widetilde{A}^{j}_{1}\Big].\non
\eeqn

Working with the first-term yields
\beqn
P\Big[\sum_{(i_{k},i_{k+1})\in \widetilde{A}_{1}^{j}\cap \mathbb{S}_{\alpha}^{+}}\widehat{a}(i_{k})\widetilde{F}_{i_{k}i_{k+1}}>x|\widetilde{A}^{j}_{1}\Big]&\le& \sum_{(i_{k},i_{k+1})\in \widetilde{A}_{1}^{j}\cap \mathbb{S}_{\alpha}^{+}}P\Big[ \widehat{a}(i_{k})\widetilde{F}_{i_{k}i_{k+1}}>\frac{x}{|\widetilde{A}_{1}^{j}\cap S_{\alpha}^{+}|}|\widetilde{A}^{j}_{1}\Big],\non
\eeqn
and after scaling by $x^{-\alpha}L(x)$ we get
\beqn
\frac{P\Big[\sum_{(i_{k},i_{k+1})\in \widetilde{A}_{1}^{j}\cap \mathbb{S}_{\alpha}^{+}}\widehat{a}(i_{k})\widetilde{F}_{i_{k}i_{k+1}}>x|\widetilde{A}^{j}_{1}\Big]}{x^{-\alpha}L(x)} &\le& \sum_{(i_{k},i_{k+1})\in \widetilde{A}_{1}^{j}\cap \mathbb{S}_{\alpha}^{+}}\frac{P\Big[ \widehat{a}(i_{k})\widetilde{F}_{i_{k}i_{k+1}}>\frac{x}{|\widetilde{A}_{1}^{j}\cap \mathbb{S}_{\alpha}^{+}|}|\widetilde{A}^{j}_{1}\Big]}{x^{-\alpha}L(x)}\non\\  &\stackrel{(a)}{\le}&\widetilde{c}_{\epsilon}\sum_{(i_{k},i_{k+1})\in \widetilde{A}_{1}^{j}\cap \mathbb{S}_{\alpha}^{+}}  c_{i_{k}i_{k+1}}\Big(1\vee \widehat{a}(i_{k})|\widetilde{A}_{1}^{j}\cap \mathbb{S}_{\alpha}^{+}|\Big)^{\alpha+\epsilon}\non
\eeqn
where (a) holds due to (4.6) and the expectation of the final upper bound in the above RHS is finite due to (4.7). 

The second term after scaled by $p(x):=x^{-\alpha}L(x)$ will give the upper bound for any $x\ge 2a^{*}|\widetilde{A}_{1}^{j}|x^{*},\& \,\,\epsilon_{1}\in (0,\alpha)$
\begin{align}
&\frac{P\Big[\sum_{(j_{k},j_{k+1})\in \widetilde{A}_{1}^{j}\cap \mathbb{S}^{^{c,+}}_{\alpha}}\widehat{a}(j_{k})\widetilde{F}_{j_{k}j_{k+1}}>\frac{x}{2}|\widetilde{A}^{j}_{1}\Big]}{x^{-\alpha}L(x)}\non\\&\s\le \sum_{(j_{k},j_{k+1})\in \widetilde{A}_{1}^{j}\cap \mathbb{S}^{^{c,+}}_{\alpha}}\frac{P\Big[\widehat{a}(j_{k})\widetilde{F}_{j_{k}j_{k+1}}>\frac{x}{2|\widetilde{A}_{1}^{j}\cap \mathbb{S}^{^{c,+}}_{\alpha}|}|\widetilde{A}^{j}_{1}\Big] }{x^{-\alpha}L(x)}\non\\&\s=\sum_{(j_{k},j_{k+1})\in \widetilde{A}_{1}^{j}\cap \mathbb{S}^{^{c,+}}_{\alpha}}\frac{\overline{F}_{j_{k}j_{k+1}}\Big(\frac{x}{2\widehat{a}(j_{k}) |\widetilde{A}_{1}^{j}\cap \mathbb{S}^{^{c,+}}_{\alpha}|}\Big)}{p\Big(\frac{x}{2\widehat{a}(j_{k}) |\widetilde{A}_{1}^{j}\cap \mathbb{S}^{^{c,+}}_{\alpha}|}\Big)}\frac{p\Big(\frac{x}{2\widehat{a}(j_{k}) |\widetilde{A}_{1}^{j}\cap \mathbb{S}^{^{c,+}}_{\alpha}|}\Big)}{p(x)}\non\\&\s\stackrel{(a)}{\le} \sum_{(j_{k},j_{k+1})\in \widetilde{A}_{1}^{j}\cap \mathbb{S}^{^{c,+}}_{\alpha}}\frac{p\Big(\frac{x}{2\widehat{a}(j_{k}) |\widetilde{A}_{1}^{j}\cap \mathbb{S}^{^{c,+}}_{\alpha}|}\Big)}{p(x)}\non\\
&\s\stackrel{(b)}{\le} \sum_{(j_{k},j_{k+1})\in \widetilde{A}_{1}^{j}\cap \mathbb{S}^{^{c,+}}_{\alpha}} (1+\epsilon_{1})\Big(2\widehat{a}(j_{k}) |\widetilde{A}_{1}^{j}\cap \mathbb{S}^{^{c,+}}_{\alpha}|\Big)^{\alpha -\epsilon_{1}}\non\\
&\s\stackrel{(c)}{\le} (1+\epsilon_{1}) (4a^{*})^{\alpha -\epsilon_{1}} \sum_{(j_{k},j_{k+1})\in \widetilde{A}_{1}^{j}\cap \mathbb{S}^{^{c,+}}_{\alpha}} |\widetilde{A}_{1}^{j}\cap \mathbb{S}^{^{c,+}}_{\alpha}|^{\alpha}\non\\
&\s\stackrel{(d)}{=} (1+\epsilon_{1}) (4a^{*})^{\alpha -\epsilon_{1}} |\widetilde{A}_{1}^{j}\cap \mathbb{S}^{c}_{\alpha}|^{1+\alpha}\non
\end{align}
where (a) follows by using the first condition of (4.8) with $x\ge 2a^{*}|\widetilde{A}_{1}^{j}|x^{*},$ (b) follows by using the Potter's bound using any arbitrary $\epsilon_{1}\in (0,\alpha).$ The inequality at (c) holds by taking supremum of $\widehat{a}(\cdot)$ over the summand, and taking it outside of the sum and having the upper bound by ignoring $\epsilon_{1};$ and finally, the result follows by observing that the final upper bound in the RHS of (d) has finite expectation due to the third condition of (4.8).

Hence from \eqref{limitreg} the assertion holds for the right tail (10.1)  as a consequence of Dominated Convergence Theorem, i.e, as $x\to\infty,$
\beqn
\frac{P\big[\widetilde{X}^{(1)}_{1} - \widetilde{X}^{(2)}_{1} >x\big]}{x^{-\alpha}L(x)}&=&\frac{EP\big[\widetilde{X}^{(1)}_{1} - \widetilde{X}^{(2)}_{1} >x|\widetilde{A}^{j}_{1}\big]}{x^{-\alpha}L(x)}\non\\&\to& E\bigg[\sum_{(i_{k},i_{k+1})\in \widetilde{A}_{1}^{j}\cap \mathbb{S}_{\alpha}^{+}}^{} c_{i_{k},i_{k+1}}\widehat{a}^{\alpha}(i_{k})\bigg]\non\\
&=&\widetilde{\alpha}_{j}^{(+)}=\frac{1+\beta_{j}}{2}\sigma^{\alpha}_{(j,\alpha)}.\non
\eeqn
Similarly (10.2) holds working with the $\{\widetilde{X}^{(1)}_{1} - \widetilde{X}^{(2)}_{1}<-x\}$ type of events.

If $\widetilde{A}_{1}^{j}\cap \mathbb{S}_{\alpha}^{+}$ is empty then (10.1) holds vacuously true as both sides are $0$ in limit, and the same holds for (10.2) corresponding to the case $\widetilde{A}_{1}^{j}\cap \mathbb{S}_{\alpha}^{-}=\emptyset$ as well.

Hence the assertion is proved for the case when both $\mathbb{S}_{\alpha}^{+}, \mathbb{S}_{\alpha}^{-}$ non-empty. If one of $\mathbb{S}_{\alpha}^{+}, \mathbb{S}_{\alpha}^{-}$ is empty, then the RHS of one of (10.1) and (10.2) will be trivially $0,$ that can be easily verified after scaling the probability in the corresponding LHS by $x^{-\alpha}L(x)$ (using similar arguments as above).
\end{proof}

\subsubsection{Proof of \textbf{Lemma 10.2}}
\begin{proof}
Observe that $$\widetilde{G}^{*(1)}_{t}=\frac{\sum_{i=1}^{g_{t}^{j}}Y_{i}^{**}}{t^{\frac{1}{\alpha}}L_{0}(t)}+\frac{\big[\tau_{0}^{j}+(t-\tau_{g_{t}^{j}}^{j})\big]E_{\pi}a(\cdot)}{t^{\frac{1}{\alpha}}L_{0}(t)},$$
where $Y_{i}^{**}=-\int_{\tau_{i-1}^{j}}^{\tau_{i}^{j}}\Big(a(Y_{s})-E_{\pi}a(\cdot)\Big)ds.$ Since the quantity $\tau_{0}^{j}+(t-\tau_{g_{t}^{j}}^{j})$ is $O_{P}(1),$ the second term of $\widetilde{G}^{*(1)}_{t}$ is $o_{P}(1).$ Denoting $t^{\frac{1}{\alpha}}L_{0}(t),(g_{t}^{j})^{1/\alpha}L_{0}(g_{t}^{j})$ by $c_{t},c_{g_{t}^{j}}$ respectively,  it is enough to show that as $t\to\infty,$
\beqn
\frac{\sum_{i=1}^{g_{t}^{j}}Y_{i}^{**}}{c_{t}}-\frac{\sum_{i=1}^{g_{t-\varepsilon(t)}^{j}}Y_{i}^{**}}{c_{t-\varepsilon(t)}} \stackrel{P}{\to}0 \label{tplem3a}
\eeqn
for some increasing $t\to\varepsilon(t)$, such that $\varepsilon(t)\to\infty$ and $\frac{\varepsilon(t)}{t}\to 0$ as $t\to\infty.$

Observe that 
\begin{align}
&\frac{\sum_{i=1}^{g_{t}^{j}}Y_{i}^{**}}{c_{t}}-\frac{\sum_{i=1}^{g_{t-\varepsilon(t)}^{j}}Y_{i}^{**}}{c_{t-\varepsilon(t)}}\non\\ &\s =\frac{\sum_{i=g_{t-\varepsilon(t)}^{j}+1}^{g_{t}^{j}}Y_{i}^{**}}{c_{t}}+\sum_{i=1}^{g_{t-\varepsilon(t)}^{j}}Y^{**}_{i}\Big[\frac{1}{c_{t}}-\frac{1}{c_{t-\varepsilon(t)}}\Big]\non\\
&\s =\frac{c_{\varepsilon(t)}}{c_{t}}\frac{c_{g_{t}^{j}-g_{t-\varepsilon(t)}^{j}}}{c_{\varepsilon(t)}}\frac{\sum_{i=g_{t-\varepsilon(t)}^{j}+1}^{g_{t}^{j}}Y_{i}^{**}}{c_{g_{t}^{j}-g_{t-\varepsilon(t)}^{j}}}+\frac{c_{g_{t-\varepsilon(t)}^{j}}}{c_{t-\varepsilon(t)}}\frac{\sum_{i=1}^{g_{t-\varepsilon(t)}^{j}}Y^{**}_{i}}{c_{g_{t-\varepsilon(t)}^{j}}}\Big[\frac{c_{t-\varepsilon(t)}}{c_{t}}-1\Big].\label{3ver1}
\end{align}
Using ideas of Lemma 12.3(a) it follows that both $\frac{c_{g_{t}^{j}-g_{t-\varepsilon(t)}^{j}}}{c_{\varepsilon(t)}}$ and $\frac{c_{g_{t-\varepsilon(t)}^{j}}}{c_{t-\varepsilon(t)}}$ converges almost surely to $(E|\mathfrak{I}^{j}_{1}|)^{-1/\alpha},$ as $t\to\infty.$ Since $\{Y_{i}^{**}:i\ge 1\}$ are a sequence of regularly varying random variables with mean $0,$ and hence a version of stable central limit theorem can be obtained in line of $c^{-1}_{n}\sum_{i=1}^{n}Y_{i}^{**}\stackrel{d}{\to}\sigma_{(j,\alpha)}\mathcal{S}_{\alpha}(1,\beta_{j},0)$ for $n\to\infty.$ Hence by Theorem 3.2 of \cite{gut2009stopped}, using $g_{t-\varepsilon(t)}^{j}$ and $g_{t}^{j}-g_{t-\varepsilon(t)}^{j}$ as the stopped random times it follows that as $t\to\infty$

\beqn
\frac{\sum_{i=1}^{g_{t-\varepsilon(t)}^{j}}Y^{**}_{i}}{c_{g_{t-\varepsilon(t)}^{j}}}&\stackrel{d}{\to}&\sigma_{(j,\alpha)}\mathcal{S}_{\alpha}(1,\beta_{j},0),\non\\
\frac{\sum_{i=g_{t-\varepsilon(t)}^{j}+1}^{g_{t}^{j}}Y_{i}^{**}}{c_{g_{t}^{j}-g_{t-\varepsilon(t)}^{j}}}&\stackrel{d}{=}&\frac{\sum_{i=1}^{g_{t}^{j}-g_{t-\varepsilon(t)}^{j}}Y_{i}^{**}}{c_{g_{t}^{j}-g_{t-\varepsilon(t)}^{j}}}\stackrel{d}{\to}\sigma_{(j,\alpha)}\mathcal{S}_{\alpha}(1,\beta_{j},0).\non
\eeqn

Hence by Slutsky's theorem our assertion will follow in \eqref{3ver1} if we prove that both $\frac{c_{\varepsilon(t)}}{c_{t}},\Big[\frac{c_{t-\varepsilon(t)}}{c_{t}}-1\Big]$ go to $0$ as $t\to\infty.$ 

Observe that $\frac{c_{\varepsilon(t)}}{c_{t}}=\big(\frac{\varepsilon(t)}{t}\big)^{1/\alpha} \frac{L_{0}(\varepsilon(t))}{L_{0}(t)},$ and using Lemma12.3(b) (Potter's bound) it follows that for any $0<\epsilon<\frac{1}{\alpha},$ there exists $t_{0,\epsilon}$ such that for all $t\ge t_{0,\epsilon},$
$$\frac{1}{1+\epsilon}\Big(\frac{\varepsilon(t)}{t}\Big)^{\frac{1}{\alpha} +\epsilon} \le \frac{c_{\varepsilon(t)}}{c_{t}}\le \frac{1}{1-\epsilon}\Big(\frac{\varepsilon(t)}{t}\Big)^{\frac{1}{\alpha} -\epsilon},$$ 
and similarly using $\frac{c_{t-\varepsilon(t)}}{c_{t}}=\big(1-\frac{\varepsilon(t)}{t}\big)^{\frac{1}{\alpha}}\frac{L_{0}(t-\varepsilon(t))}{L_{0}(t)}$ one can get the following
$$(1-\epsilon)\big(1-\frac{\varepsilon(t)}{t}\big)^{\frac{1}{\alpha}+\epsilon}-1\le \frac{c_{t-\varepsilon(t)}}{c_{t}}-1 \le (1+\epsilon)\big(1-\frac{\varepsilon(t)}{t}\big)^{\frac{1}{\alpha}-\epsilon} -1,$$
for any $0<\epsilon<\frac{1}{\alpha},$ and $t\ge t_{0,\epsilon}.$
Now taking  $t\to\infty, \frac{\varepsilon(t)}{t}\to 0,$ and taking $\epsilon$ arbitrarily small 
\beqn
\Big(\frac{c_{\varepsilon(t)}}{c_{t}},\Big[\frac{c_{t-\varepsilon(t)}}{c_{t}}-1\Big]\Big)\to(0,0)\label{convc_t}
\eeqn
 is proved as $t\to\infty,$ showing the assertion.
\end{proof}

\subsection{Proof of \textbf{Lemma 10.3 (a) \& (b)}}
\begin{proof}
The proof follows similar ideas (and notations) used in proving Lemma \ref{lemverify2b} with some differences as this is a case when variance of the increments (that is $\text{Var}(\log L_{i}^{j})$) does not exist. 

\vspace{0.5 cm}

Part (a) will follow if we show that $\frac{\max_{1\le k\le n}\log|Q_{k}^{j}|}{c_{n}}\stackrel{a.s}{\to}0,$ as $n\to\infty.$ Condition (4.14) suggests that $E\big[(\log|Q_{1}^{j}|)^{\alpha+\eps}\big]=E\big[(\log|Q_{n}^{j}|)^{\alpha+\eps}\big]<\infty,$ it follows that 
\begin{align}
&\sum_{n=1}^{\infty}P[(\log|Q_{n}^{j}|)^{\alpha+\eps} > n\eps_{1}^{\alpha+\eps}]<\infty,\s \forall \eps_{1}>0,\non\\
&\Longleftrightarrow  P\big[\log|Q_{n}^{j}|> \eps_{1} n^{\frac{1}{\alpha+\eps}} \,\,\,\text{infinitely often}\big]=0,\s \forall \eps_{1}>0.
\end{align}

Hence for the set $\big\{\frac{\log|Q_{k}^{j}|}{k^{\frac{1}{\alpha+\eps}}}, 1\le k \le\infty\big\}$ only finitely many exceed $\eps_{1}>0,$ for any arbitrarily small $\eps_{1}>0.$ Hence we have $\frac{\max_{1\le k\le n}\log|Q_{k}^{j}|}{n^{\frac{1}{\alpha+\eps}}}\stackrel{a.s}{\to}0$ as $n\to\infty$. Since $\eps>0,$ and $$\frac{\max_{1\le k\le n}\log|Q_{k}^{j}|}{c_{n}}=\frac{\max_{1\le k\le n}\log|Q_{k}^{j}|}{n^{\frac{1}{\alpha+\eps}}} \frac{1}{n^{\frac{1}{\alpha}-\frac{1}{\alpha+\eps}}L_{0}(n)},$$
using $n^{\frac{1}{\alpha}-\frac{1}{\alpha+\eps}}L_{0}(n)=n^{\Big[(\frac{1}{\alpha}-\frac{1}{\alpha+\epsilon})+\frac{\log L_{0}(n)}{\log n}\Big]},$ and Lemma 12.3(c) suggests that $\frac{\log L_{0}(n)}{\log n}\stackrel{P}{\to} 0.$ Hence $\frac{\max_{1\le k\le n}\log|Q_{k}^{j}|}{c_{n}}\stackrel{a.s}{\to}0,$ as $n\to\infty.$ Using argument similar to \eqref{BClem_a} the assertion of part (a) follows.

\subsubsection{Proof of part (b)}

Since $\bigg(\frac{S_{g_{t}^{j}}^{\otimes j}}{c_{t}}, \frac{ M_{g_{t}^{j}}^{\otimes j}}{c_{t}} \bigg)=\bigg(\frac{S_{g_{t}^{j}}^{\otimes j}}{c_{g_{t}^{j}}}, \frac{ M_{g_{t}^{j}}^{\otimes j}}{c_{g_{t}^{j}}} \bigg)\frac{c_{g_{t}^{j}}}{c_{t}},$ and under Assumption 1 and Lemma 12.3(a), $c_{g_{t}^{j}}/c_{t}\stackrel{a.s}{\to} \Big(\frac{1}{E|\mathfrak{I}^{j}_{1}|}\Big)^{\frac{1}{\alpha}},$ part (b) will follow if we prove the following two steps,
\begin{itemize}
\item \textbf{Step 1 :} $\Big(\frac{S_{n}^{\otimes j}}{c_{n}},\frac{M_{n}^{\otimes j}}{c_{n}}\Big)\stackrel{d}{\to}\sigma_{(j,\alpha)}\big(\mathcal{\bf{S}}_{\alpha,\beta_{j}}(1),\sup_{0\le u \le 1}\mathcal{\bf{S}}_{\alpha,\beta_{j}}(u)\big)$ as $n\to\infty;$
\item \textbf{Step 2 :} $\Big(\frac{S_{n}^{\otimes j}}{c_{n}},\frac{M_{n}^{\otimes j}}{c_{n}}\Big)$ satisfies the Anscombe's contiguity condition for $R_{n}$ in (2.11);
\end{itemize} 
then by replacing $n$ by $g_{t}^{j}$ (as $g_{t}^{j}\stackrel{a.s}{\to}\infty$) the result will follow.
\begin{enumerate}[(A)]
\item (\textbf{Proof of Step 1}) Let $\mathcal{\bf{S}}_{\alpha,\beta}$ denote the $\alpha$-stable Levy process $\mathcal{\bf{S}}_{\alpha,\beta}:=\big(\mathcal{S}_{\alpha,\beta}(t):t\ge 0\big)$ in $[0,\infty)$. Recall that $S_{n}^{\otimes j}:=\sum_{i=1}^{n}\log L_{i}^{j},\s M_{n}^{\otimes j}:=\max_{1\le k \le n}\Big\{\sum_{i=1}^{k}\log L_{i}^{j}\Big\}$ 
where under condition of Theorem 3(b), $E\log L_{i}^{j}=0$ and  $\{\log L_{i}^{j}: i\ge 1\}$ is a sequence of regularly varying random variables with index $\alpha>0$ along with (10.1) and (10.2). Let the random curve $y_{(n)}:=(y_{(n)}(t):t\ge 0)$ be defined in the following manner
\beqn
y_{(n)}(t):=c_{n}^{-1}S_{[nt]}^{\otimes j},\s  t\ge 0,
\eeqn
where it represents a stepwise functions having jumps at integers in $\{nt: t\ge 0\},$ which is also the continuous time analogue of the discrete partial sum process $\{S_{n}^{\otimes j}:n\in\mathbb{N}\}.$  We express a FCLT motivated by Stable CLT in  Theorem 4.5.3 in \cite{whitt2002stochastic}. Under conditions of Theorem \ref{T3}(b) one has
\beqn
\Big(c_{n}^{-1}S_{[nt]}^{\otimes j}:t\ge 0\Big)\stackrel{w}{\Longrightarrow}\sigma_{(j,\alpha)}\mathcal{\bf{S}}_{\alpha,\beta_{j}}\s \text{in}\s (D,J_{1})
\eeqn 
as $n\to\infty,$ where $(D,J_{1})$ denotes the Skorohod space under $J_{1}$ topology. Note that for any element $y\in D,$ define the functional $f:D\to(D\times D)$ such that $$f(y):=\Big(\big(y(t),\sup_{0\le s\le t}y(s)\big): t\ge 0\Big),$$ and it can be shown that, $f$ is a continuous functional that is continuous almost surely at every continuity point of the Levy process $\mathcal{\bf{S}}_{\alpha,\beta_{j}}$. When $E\log L_{1}^{j}=0,$ using continuous mapping theorem one has as $n\to\infty,$ $f(y_{(n)})\stackrel{w}{\to} f(\mathcal{\bf{S}}_{\alpha,\beta_{j}})$ in $(D\times D,J_{1})$, and as a consequence  one has
\beqn
f(y_{(n)})(1)\stackrel{d}{\to} f(\mathcal{\bf{S}}_{\alpha,\beta_{j}})(1)=\sigma_{(j,\alpha)}\big(\mathcal{\bf{S}}_{\alpha,\beta_{j}}(1),\sup_{0\le u \le 1}\mathcal{\bf{S}}_{\alpha,\beta_{j}}(u)\big)\s .\label{dcontmapp}
\eeqn
Observe that $f(y_{(n)})(1)=c_{n}^{-1}\big(S^{\otimes j}_{n}, M^{\otimes j}_{n}\big).$ Hence \eqref{dcontmapp} implies that
$$c_{n}^{-1}\Big(S^{\otimes j}_{n}, M^{\otimes j}_{n}\Big)\stackrel{d}{\to}\sigma_{(j,\alpha)}\big(\mathcal{\bf{S}}_{\alpha,\beta_{j}}(1),\sup_{0\le u \le 1}\mathcal{\bf{S}}_{\alpha,\beta_{j}}(u)\big),\s \text{as}\s n\to\infty,$$
which proves the assertion.

\item (\textbf{Proof of Step 2}) We proceed similar to Step $2$ of \eqref{lemverify2b}. In order to show  -- ``Given $\gamma>0,\eta>0$, there exist $\delta>0,n_{0}>0$ such that 
\beqn
P\Big[\max_{\{k: |k-n|<n\delta\}}\Big|\Big(\frac{S^{\otimes j}_{k}}{c_{k}},\frac{M^{\otimes j}_{k}}{c_{k}} \Big)-\Big(\frac{S^{\otimes j}_{n}}{c_{n}},\frac{M^{\otimes j}_{n}}{c_{n}}\Big)\Big|> \gamma \Big]<\eta\quad \text{for } n\ge n_{0}."\label{3Bcond0.001}
\eeqn 
it is enough if we show that given $\gamma>0,\eta>0$, there exist $\delta>0,n_{1},n_{2}>0$ such that 
\beqn
P\Big[\max_{\{k: |k-n|<n\delta\}}\Big|\frac{S^{\otimes j}_{k}}{c_{k}}-\frac{S^{\otimes j}_{n}}{c_{n}}\Big|> \frac{\gamma}{2} \Big]<\frac{\eta}{2}\quad \text{for } n\ge n_{1}\label{3Bcond0.001e1}\\
P\Big[\max_{\{k: |k-n|<n\delta\}}\Big|\frac{M^{\otimes j}_{k}}{c_{k}} - \frac{M^{\otimes j}_{n}}{c_{n}}\Big|> \frac{\gamma}{2} \Big]<\frac{\eta}{2}\quad \text{for } n\ge n_{2}\label{3Bcond0.001e2}
\eeqn 
by setting $n_{0}=n_{1}\vee n_{2}.$

To prove \eqref{3Bcond0.001e2} it is enough to show that there exists $\eta,\gamma>0,$ and large $n_{2}$  such that
\beqn
a_{n}:=P\Big[\max_{k: n\le k\le n(1+\delta)}\Big|\frac{M^{\otimes j}_{k}}{c_{k}} - \frac{M^{\otimes j}_{n}}{c_{n}}\Big|>\frac{\gamma}{2}\Big]<\frac{\eta}{4}\quad \text{for } n\ge n_{2},\label{3Bcond1a}
\eeqn
then using similar argument the other quantity $P\big[\max_{\{k: n(1-\delta)\le k\le n\}}\big|\cdot\big|>\frac{\gamma}{2}\big]$ can be shown to be less than $\frac{\eta}{4}$ proving  \eqref{3Bcond0.001e2}. It is important to note that $c_{n}$ is increasing in $n\ge 0.$ Observe that
\begin{align}
a_{n}&= P\Big[\max_{k: n\le k\le n(1+\delta)}\Big|\frac{M^{\otimes j}_{k}}{c_{k}} - \frac{M^{\otimes j}_{n}}{c_{n}}\Big|> \frac{\gamma}{2} \Big]\nonumber\\
&\le P\Big[\max_{k: n\le k\le n(1+\delta)}\bigg(\Big|\frac{M^{\otimes j}_{k}}{c_{k}} - \frac{M^{\otimes j}_{k}}{c_{n}}\Big|+\Big|\frac{M^{\otimes j}_{k}}{c_{n}} - \frac{M^{\otimes j}_{n}}{c_{n}}\Big|\bigg)> \frac{\gamma}{2} \Big]\nonumber\\
&\le P\Big[\max_{k: n\le k\le n(1+\delta)} \Big|\frac{M^{\otimes j}_{k}}{c_{k}} - \frac{M^{\otimes j}_{k}}{c_{n}}\Big|>\frac{\gamma}{4}\Big]+ P\Big[\frac{M^{\otimes j}_{n(1+\delta)} - M^{\otimes j}_{n}}{c_{n}}>\frac{\gamma}{4}\Big]\nonumber\\
&\le P\Big[\frac{|M^{\otimes j}_{n(1+\delta)}|}{c_{n(1+\delta)}}\Big(\frac{c_{n(1+\delta)}}{c_{n}}-1\Big)>\frac{\gamma}{4}\Big]+ P\Big[\frac{M^{\otimes j}_{n(1+\delta)} - M^{\otimes j}_{n}}{c_{n}}>\frac{\gamma}{4}\Big],\label{3Beak}
\end{align}
and denote the two probabilities in \eqref{3Beak} respectively by $a_{n}^{(1)}$ and $a_{n}^{(2)}$. Using the Potter's bound, for any $\epsilon_{1}\in (0,1-\frac{1}{\alpha}),$ there exists $n_{\epsilon_{1}}$ such that for all $n\ge n_{\epsilon_{1}},$ 
$(1-\epsilon_{1})(1+\delta)^{-\epsilon_{1}}\le\frac{L_{0}(n(1+\delta))}{L_{0}(n)}\le (1+\epsilon_{1})(1+\delta)^{\epsilon_{1}},$  and as a consequence
\beqn
\frac{c_{n(1+\delta)}}{c_{n}} =(1+\delta)^{\frac{1}{\alpha}}\frac{L_{0}(n(1+\delta))}{L_{0}(n)}\le (1+\epsilon_{1})(1+\delta)^{\frac{1}{\alpha}+\epsilon_{1}}.\label{c_nd1}
\eeqn
Hence the first term $a_{n}^{(1)}\le P\Big[\frac{|M^{\otimes j}_{n(1+\delta)}|}{c_{n(1+\delta)}}\Big((1+\epsilon_{1})(1+\delta)^{\frac{1}{\alpha}+\epsilon_{1}}-1\Big)>\frac{\gamma}{4}\Big].$ Since $\frac{1}{2}<\frac{1}{\alpha}<1,$ and the chosen $\epsilon_{1}\in (0,1-\frac{1}{\alpha}),$ hence $\epsilon_{1}+\frac{1}{\alpha}<1.$ As a consequence of that for any $\delta>0$, we have $(1+\delta)^{\frac{1}{\alpha}+\epsilon_{1}}\le 1+\frac{\delta}{\alpha}+\delta\epsilon_{1}$ and 
\beqn
(1+\epsilon_{1})(1+\delta)^{\frac{1}{\alpha}+\epsilon_{1}}\le (1+\epsilon_{1})\Big(1+\frac{\delta}{\alpha}+\delta\epsilon_{1}\Big)=: 1+g(\epsilon_{1},\delta),\label{c_nd2}
\eeqn
where $g(\epsilon_{1},\delta):=\frac{\delta}{\alpha}+(1+\delta)\epsilon_{1}+\epsilon_{1}\big(\frac{\delta}{\alpha}+\delta\epsilon_{1}\big).$ Observe that $g(\epsilon_{1},\delta)$ can be made arbitrarily small if we take both $\epsilon_{1},\delta$ small towards $0.$ It follows that for the first term $$a_{n}^{(1)}\le P\Big[\frac{|M^{\otimes j}_{n(1+\delta)}|}{c_{n(1+\delta)}}>\frac{\gamma}{4g(\epsilon_{1},\delta)}\Big].$$

Now given $\gamma>0,\eta>0,$ we can choose $\epsilon_{1},\delta$ arbitrarily small, such that $\frac{\gamma}{4g(\epsilon_{1},\delta)}$ will be large. Since $\frac{M^{\otimes j}_{n}}{c_{n}}\stackrel{d}{\to}\sigma_{(j,\alpha)}\sup_{0\le u\le 1}\mathcal{\bf{S}}_{\alpha,\beta_{j}}(u)$ as $n\to\infty,$ there exists $n(\epsilon_{1},\delta)>0$ such that for all $n\ge n(\epsilon_{1},\delta)\vee n_{\epsilon_{1}}$ 
\beqn
a_{n}^{(1)}\le P\Big[\frac{|M^{\otimes j}_{n(1+\delta)}|}{c_{n(1+\delta)}}>\frac{\gamma}{4g(\epsilon_{1},\delta)}\Big]\le \frac{\eta}{8}.\label{a_n1}
\eeqn

For the second term $a_{n}^{(2)},$ with the notation $A_{1}:=\{(x,y): 0\le y<\infty,-\infty<x<y\}$, it follows that
\begin{align}
a_{n}^{(2)}&\quad=P\Big[M^{\otimes j}_{n(1+\delta)}-M^{\otimes j}_{n}> \frac{\gamma}{4} c_{n}\Big] \non \\
&\quad=\int_{A_{1}}P\big[M^{\otimes j}_{n(1+\delta)}-M^{\otimes j}_{n}>\frac{\gamma}{4}c_{n}\mid (S^{\otimes j}_{n},M^{\otimes j}_{n})=(s,t)\big]f_{(S^{\otimes j}_{n},M^{\otimes j}_{n})}(s,t)dsdt\non\\
&\quad\stackrel{(a)}{=}\int_{A_{1}}P\Big[\frac{M^{\otimes j}_{n\delta}}{c_{n\delta}}>\frac{t-s}{c_{n\delta}}+\frac{\gamma c_{n}}{4c_{n\delta}}\Big]f_{(S^{\otimes j}_{n},M^{\otimes j}_{n})}(s,t)dsdt \non\\
&\quad\stackrel{(b)}{\le}P\Big[\frac{M^{\otimes j}_{n\delta}}{c_{n\delta}}> \frac{\gamma c_{n}}{4c_{n\delta}}\Big]\label{cond0.004}
\end{align}
where $(a)$ follows from the independent increment property of the mean $0,$ random walk $(S_{n}^{\otimes j})_{n\ge 1},$ and the inequality $(b)$ follows by observing that for any $(s,t)\in A_{1}$, one has $t-s>0.$ 

Using Potter's bound again for any $\epsilon_{2}>0,$ there exists $n_{\epsilon_{2}}>0,$ such that for all $n\ge n_{\epsilon_{2}},$ one has
$$\frac{c_{n}}{c_{n\delta}}=\Big(\frac{1}{\delta}\Big)^{\frac{1}{\alpha}}\frac{L_{0}(n)}{L_{0}(n\delta)}\ge (1-\epsilon_{2})\Big(\frac{1}{\delta}\Big)^{\frac{1}{\alpha} -\epsilon_{2}},$$
which implies that $a_{n}^{(2)}\le P\big[\frac{M^{\otimes j}_{n\delta}}{c_{n\delta}}> (1-\epsilon_{2})\frac{\gamma}{4}\big(\frac{1}{\delta}\big)^{\frac{1}{\alpha} -\epsilon_{2}}\big]$ for all $n\ge n_{\epsilon_{2}}.$ 

Select $\epsilon_{2}\in\big(0,\frac{1}{\alpha}\big)$ and fix any $\eta,\gamma>0.$ There exists $\delta^{(2)}_{\eta,\gamma}>0,$ small enough so that for $\delta<\delta^{(2)}_{\eta,\gamma},$ the quantity $\frac{\gamma}{4}\big(\frac{1}{\delta}\big)^{\frac{1}{\alpha}-\epsilon_{2}}$ is sufficiently large and choosing $n^{(2)}_{\eta,\gamma,\epsilon_{2}}$ large, so that for all $n\ge n^{(2)}_{\eta,\gamma,\epsilon_{2}},$  we will have 
\beqn
a_{n}^{(2)}\le P\Big[\frac{M^{\otimes j}_{n\delta}}{c_{n\delta}}> (1-\epsilon_{2})\frac{\gamma}{4}\Big(\frac{1}{\delta}\Big)^{\frac{1}{\alpha} -\epsilon_{2}}\Big]<\frac{\eta}{8}.
\label{a_n2}
\eeqn
Hence putting all bounds together, combining \eqref{a_n1},\eqref{a_n2} it follows that $a_{n}\le \frac{\eta}{4},$ for all $n\ge n_{2}:= \big(n(\epsilon_{1},\delta)\vee n_{\epsilon_{1}}\big)\vee\big(n_{\epsilon_{2}}\vee n^{(2)}_{\eta,\gamma,\epsilon_{2}}\big)$ proving \eqref{3Bcond1a}.

In order to show \eqref{3Bcond0.001e1} it is enough to show that there exists $\eta,\gamma>0$ and some large $n_{1}$ such that
\beqn
b_{n}:=P\Big[\max_{k: n\le k\le n(1+\delta)}\Big|\frac{S^{\otimes j}_{k}}{c_{k}} - \frac{S^{\otimes j}_{n}}{c_{n}}\Big|>\frac{\gamma}{2}\Big]<\frac{\eta}{4}\quad \text{for } n\ge n_{1}.\label{3Bcond2a}
\eeqn
Using the following inequalities,
\beqn
b_{n}&\le&P\Big[\max_{k: n\le k\le n(1+\delta)}\Big(\Big|\frac{S^{\otimes j}_{n}}{c_{k}} - \frac{S^{\otimes j}_{n}}{c_{n}}\Big|+\Big|\frac{S^{\otimes j}_{k}}{c_{k}} - \frac{S^{\otimes j}_{n}}{c_{k}}\Big|\Big)>\frac{\gamma}{2}\Big]\non\\
&\le&P\Big[\max_{k: n\le k\le n(1+\delta)}\frac{|S^{\otimes j}_{n}|}{c_{n}}\Big|\frac{c_{n}}{c_{k}}-1\Big|>\frac{\gamma}{4}\Big]+P\Big[\max_{k: n\le k\le n(1+\delta)}\frac{|S^{\otimes j}_{k} - S^{\otimes j}_{n}|}{c_{n}}>\frac{\gamma}{4}\Big]\non\\
&\le& P\Big[\frac{|S^{\otimes j}_{n}|}{c_{n}}\Big|\frac{c_{n}}{c_{n(1+\delta)}}-1\Big|>\frac{\gamma}{4}\Big]+P\Big[\frac{\max_{\{k: 0\le k\le n\delta\}}|S^{\otimes j}_{k}|}{c_{n}}>\frac{\gamma}{4}\Big]\non
\eeqn
and denote the above two terms in the RHS by $b_{n}^{(1)},b_{n}^{(2)}$ respectively.  Using \eqref{c_nd1} and \eqref{c_nd2} the first term $b_{n}^{(1)}$ can be upper bounded by $P\big[\frac{|S^{\otimes j}_{n}|}{c_{n}}>\frac{\gamma}{4g(\epsilon_{1},\delta)}\big]$ for any $\epsilon_{1}\in (0,1-\frac{1}{\alpha}).$ Since $\frac{S_{n}^{\otimes j}}{c_{n}}\stackrel{d}{\to}\sigma_{(j,\alpha)}\mathcal{\bf{S}}_{\alpha,\beta_{j}}(1),$ for a fixed $\gamma,\eta>0$ we can always find a small $\epsilon^{*}_{1}$ and $\delta^{*},$ such that  for $(\epsilon_{1},\delta)\in (0,\epsilon^{*}_{1})\times(0,\delta^{*})$, the quantity $\frac{\gamma}{4g(\epsilon_{1},\delta)}$ is large and  there exists $n_{2}(\epsilon_{1},\delta)>0$ such that for all $n\ge n_{2}(\epsilon_{1},\delta)\vee n_{\epsilon_{1}}$ 
\beqn
b_{n}^{(1)}\le P\Big[\frac{|S^{\otimes j}_{n}|}{c_{n}}>\frac{\gamma}{4g(\epsilon_{1},\delta)}\Big]\le \frac{\eta}{8}.\label{b_n1}
\eeqn

For bounding the second term $b_{n}^{(2)},$ we cannot use the same \textbf{Kolmogorov-Maximal-inequality} based technique since the random walk $(S_{n}^{\otimes j})_{n\ge 1}$ has regularly varying increment with $1<\alpha<2$, for non-existence of the  variance of its increments. Instead we use \textbf{Montgomery-Smith inequality} (see Theorem 3.2 (Page 263 of \cite{bertail2019Bernstein}) also) that we state here. 

\bt \textbf{(Montgomery-Smith's inequality)} If $\{\widetilde{X}_{i}:i\in \mathbb{N}\}$ are independent and identically distributed random variables, then for $1\le k\le n<\infty$ and all $t>0$ we have
$$P\Big(\max_{1\le k\le n}\|\sum_{i=1}^{k}\widetilde{X}_{i}\|>t\Big)\le 9 P\Big(\|\sum_{i=1}^{n}\widetilde{X}_{i}\|>t/30\Big).$$
\et
Since $\{\log L_{i}^{j}: i\ge 1\}$ are regularly varying with $1<\alpha<2,$ using Jensen's inequality there exists $\alpha'\in(1,\alpha)$ such that $\sup_{j\in S}E |\log L_{i}^{j}|^{\alpha'}<\infty$ and for any $\delta$
$$\Big(\frac{|S_{n\delta}^{\otimes j}|}{n\delta}\Big)^{\alpha'}\le \frac{\sum_{i=1}^{n\delta}|\log L_{i}^{j}|^{\alpha'}}{n\delta}\quad\Longrightarrow\quad E\Big[|S_{n\delta}^{\otimes j}|^{\alpha'}\Big]\le (n\delta)^{\alpha'}E |\log L_{i}^{j}|^{\alpha'}.$$
It follows that
\beqn
P\bigg[\frac{\max_{0\le k \le n\delta}|S_{k}^{\otimes j}|}{c_{n}}> \frac{\gamma}{4} \bigg]&\stackrel{(a)}{\le}&  9 P\bigg[|S_{n\delta}^{\otimes j}| > \frac{\gamma c_{n}}{120} \bigg]\nonumber\\
&\stackrel{(b)}{\le}& 9\times 120^{\alpha'}\frac{E\Big[|S_{n\delta}^{\otimes j}|^{\alpha'}\Big]}{\gamma^{\alpha'} c^{\alpha'}_{n}}\nonumber\\
&\stackrel{(c)}{\le}& 9\times 120^{\alpha'} \Big(\frac{n\delta}{\gamma c_{n}}\Big)^{\alpha'}E |\log L_{i}^{j}|^{\alpha'}\nonumber\\
&=& 9\times 120^{\alpha'} \Big(\frac{n^{1-\frac{1}{\alpha}}\delta}{\gamma L_{0}(n)}\Big)^{\alpha'}E |\log L_{i}^{j}|^{\alpha'},
\eeqn
where $(a), (b), (c)$ follow respectively by using Montgomery-Smith's inequality, Chebyshev and Jensen's inequality. Now given $\gamma,\eta$ we can choose $\delta_{0}$ such that \newline $\delta_{0}=\frac{\gamma}{120}\Big[\frac{\eta}{36\sup_{j\in S}E |\log L_{i}^{j}|^{\alpha'}}\Big]^{1/\alpha'}\frac{L_{0}(n)}{n^{1-\frac{1}{\alpha'}}}$, and then for any $\delta<\min\{\delta_{0},\delta^{*}\},$
$$P\bigg[\frac{\max_{0\le k \le n\delta}|S_{k}^{\otimes j}|}{c_{n}}> \frac{\gamma}{4} \bigg]\le 9\times 120^{\alpha'} \Big(\frac{n^{1-\frac{1}{\alpha}}\delta}{\gamma L_{0}(n)}\Big)^{\alpha'}E |\log L_{i}^{j}|^{\alpha'}\le  \frac{\eta}{8}.$$

Putting all bounds together, combining the lower limit for $\delta,$ it follows that $b_{n}\le \frac{\eta}{4},$ for all $n\ge n_{1}:=n_{2}(\epsilon_{1},\delta)\vee n_{\epsilon_{1}}$ proving \eqref{3Bcond2a}.
\end{enumerate}
\end{proof}

\subsection{Proof of \textbf{Lemma 10.3(c)}}

\begin{proof}

This follows if  both  $S^{\otimes j}_{g^j_{t}}/c_{t}$ and $M^{\otimes j}_{g^j_{t}}/c_{t}$ verify (6.1) separately as $H_{t}$ (since $M^{\otimes j}_{g^j_{t}}/c_{t}$ and $\log Z^{\otimes j}_{g^j_{t}}/c_{t}$ are equivalent in probability using part (a)).  For $\delta>0,$ use the same notation $A^{}_{(\delta,j)}:=\{n\in\mathbb{N}:| n/t-\mu_{j}|\le \delta \}$ as in Lemma \ref{lemverify2b}(c) and observe that $\lim_{t\to\infty}P[g^{j}_{t}\in A^{}_{(\delta,j)}]=1.$ Use the equality
\beqn
\frac{M^{\otimes j}_{g^j_{t}}}{c_{t}}-\frac{M^{\otimes j}_{g^j_{t-\varepsilon(t)}}}{c_{t-\varepsilon(t)}}&=&\frac{M^{\otimes j}_{g^j_{t}}-M^{\otimes j}_{[\mu_{j}t]}}{c_{t}} \non\\&&\s +\frac{M^{\otimes j}_{[\mu_{j}t]}}{c_{t}}  - \frac{M^{\otimes j}_{[\mu_{j}(t-\varepsilon(t))]}}{c_{t-\varepsilon(t)}}+ \frac{M^{\otimes j}_{[\mu_{j}(t-\varepsilon(t))]}-M^{\otimes j}_{g^j_{t-\varepsilon(t)}}}{c_{t-\varepsilon(t)}}.\label{3Bcond0.005}  
\eeqn
 For the first term in the right-hand side of \eqref{3Bcond0.005}, 
\begin{align*}
&P\Big[\frac{|M^{\otimes j}_{g^j_{t}}-M^{\otimes j}_{[\mu_{j}t]}|}{c_{t}}>\gamma\Big] \\
&\quad\le P\Big[\frac{|M^{\otimes j}_{g^j_{t}}-M^{\otimes j}_{[\mu_{j}t]}|}{c_{t}} >\gamma, g^j_{t}\in A^{}_{(\delta,j)}\Big]+P\Big[\frac{|M^{\otimes j}_{g^j_{t}}-M^{\otimes j}_{[\mu_{j}t]}|}{c_{t}} >\gamma, g^j_{t}\in A^{c}_{(\delta,j)}\Big]\\
&\quad\le P\Big[\frac{M^{\otimes j}_{[(\mu_{j}+\delta)t]}-M^{\otimes j}_{[\mu_{j}t]}}{c_{t}} >\gamma\Big]
+P\Big[\frac{M^{\otimes j}_{[\mu_{j}t]}-M^{\otimes j}_{[(\mu_{j}-\delta)t]}}{c_{t}} >\gamma\Big]
+P\Big[g^j_{t}\notin A^{c}_{(\delta,j)}\Big].
\end{align*}
The arguments used to prove part (b) can be applied to prove that the first two terms on the right-hand side above goes to $0,$ by making $\delta$ arbitrarily small, and taking $t\to\infty$. 
The third term goes to $0$ as $t\to\infty$ since $g^j_t/t\stackrel{\text{a.s.}}{\to}\mu_j$ as $t\to\infty$.
Consequently, the first term on the right-hand side in \eqref{3Bcond0.005} goes to $0$ in probability as $t\to\infty$. 
Similarly, the third term on the right-hand side in \eqref{3Bcond0.005} goes to $0$ in probability as $t\to\infty$ since $t-\varepsilon(t)\to\infty$ as $t\to\infty$.

Regarding verification for the $S^{\otimes j}_{g^j_{t}}/c_{t},$ observe that
\begin{align}
\frac{S^{\otimes j}_{g^j_{t}}}{c_{t}}-\frac{S^{\otimes j}_{g^j_{t-\varepsilon(t)}}}{c_{t-\varepsilon(t)}}=\frac{S^{\otimes j}_{g^j_{t-\varepsilon(t)}}}{c_{t-\varepsilon(t)}}\Big[\frac{c_{t-\varepsilon(t)}}{c_{t}} -1\Big]+ \frac{\sum_{i=g^{j}_{t-\varepsilon(t)}+1}^{g^{j}_{t}}\log L_{i}^{j}}{c_{t}}.\label{3Bcond0.005p2}
\end{align}

The upper bound can be expressed as
\begin{align}
\Big|\frac{S^{\otimes j}_{g^j_{t}}}{c_{t}}-\frac{S^{\otimes j}_{g^j_{t-\varepsilon(t)}}}{c_{t-\varepsilon(t)}}\Big|\le \Big|\frac{S^{\otimes j}_{g^j_{t-\varepsilon(t)}}}{c_{t-\varepsilon(t)}}\Big| \Big|\frac{c_{t-\varepsilon(t)}}{c_{t}} -1\Big|+\frac{c_{\varepsilon(t)}}{c_{t}}\frac{\sum_{i=g^{j}_{t-\varepsilon(t)}+1}^{g^{j}_{t}}\log L_{i}^{j}}{c_{\varepsilon(t)}},\non
\end{align}
for $t\ge t_{0}$. Since $\frac{\varepsilon(t)}{t}\to 0$ as $t\to\infty$ and both quantities $ \frac{|S^{\otimes j}_{g^j_{t-\varepsilon(t)}}|}{c_{t-\varepsilon(t)}}, \,\, \frac{|\sum_{i=g^{j}_{t-\varepsilon(t)}+1}^{g^{j}_{t}}\log L_{i}^{j}|}{c_{\varepsilon(t)}}$ are $O_{P}(1)$ which follows from part (b) as $(t-\varepsilon(t)), \varepsilon(t)$ are large. As a consequence of \eqref{convc_t} $\Big(\frac{c_{\varepsilon(t)}}{c_{t}},\Big[\frac{c_{t-\varepsilon(t)}}{c_{t}}-1\Big]\Big)\stackrel{}{\to}(0,0),$ and by using  Slutsky's theorem the LHS of \eqref{3Bcond0.005p2} $\stackrel{P}{\to} 0$ as $t\to\infty,$ and this concludes the proof of part (c).

\end{proof}

\subsection{Proof of results in \textbf{Remark 4.2}}
\begin{proof}
Under Assumption 1(a) the $S$-valued Markov chain $J$ is irreducible, aperiodic, and positive recurrent. Hence using that for any $f:S\times S\to \R,$ for any subset $A\subset S\times S,$ using Proposition 2.12.2 of \cite{resnick1992adventures} we have 
\beqn
\s\,\frac{E_{j}\big[\sum_{(i_{1},i_{2})\in A}\sum_{n=0}^{|\widetilde{A}_{1}^{j}|-1}1_{\{J_{n}=i_{1},J_{n+1}=i_{2}\}} f(J_{n},J_{n+1})\big] }{E_{j}|\widetilde{A}_{1}^{j}|}=\sum_{(i_{1},i_{2})\in A}\mu_{i_{1}}P_{i_{1}i_{2}}f(i_{1}.i_{2}),\label{regenE}\eeqn
as under stationarity $P_{\mu}\big[J_{n}=i_{1},J_{n+1}=i_{2}\big]=\mu_{i_{1}}P_{i_{1}i_{2}}.$ The right tail-index expression can be simplified under Assumptions 1 and 4. It follows from Lemma 10.1 that
\beqn
\widetilde{\alpha}_{j}^{(+)}&:=&E\Big[\sum_{(i_{l},i_{l+1})\in \widetilde{A}_{1}^{j}\cap \mathbb{S}^{+}_{\alpha}}c_{i_{l}i_{l+1}}\big[a(i_{l})-E_{\pi}a(\cdot)\big]^{\alpha}\Big]\non\\
&=& E|\widetilde{A}_{1}^{j}|\frac{E\Big[\sum_{(i_{l},i_{l+1})\in \widetilde{A}_{1}^{j}\cap \mathbb{S}^{+}_{\alpha}}c_{i_{l}i_{l+1}}\widehat{a}^{\alpha}(i_{l})\Big]}{E|\widetilde{A}_{1}^{j}|}\non\\
&\stackrel{(a)}{=}& E|A_{1}^{j}|\frac{E\sum_{(i_{l},i_{l+1})\in  \mathbb{S}^{+}_{\alpha}}\Big[\sum_{n=0}^{|\widetilde{A}_{1}^{j}|-1}1_{\{J_{n}=i_{1},J_{n+1}=i_{2}\}}c_{J_{n}J_{n+1}}\widehat{a}^{\alpha}(J_{n})\Big]}{E|\widetilde{A}_{1}^{j}|}\non\\
&=& E|A_{1}^{j}| \sum_{(i_{l},i_{l+1})\in  \mathbb{S}^{+}_{\alpha}} \mu_{i_{1}}P_{i_{1}i_{2}} c_{i_{1}i_{2}}\widehat{a}^{\alpha}(i_{1})\non
\eeqn
where (a) follows by observing $|A_{1}^{j}|=|\widetilde{A}_{1}^{j}|$ and the last equality follows by using \eqref{regenE}. The left tail-index expression and $\sigma_{(j,\alpha)}$ can be derived similarly.

Second part of the remark holds as
\beqn
E\Big[\sum_{(i_{l},i_{l+1})\in \widetilde{A}_{1}^{j}\cap \mathbb{S}_{\alpha}}\widetilde{c}_{\epsilon}(|a(i_{l})||\widetilde{A}_{1}^{j}\cap \mathbb{S}_{\alpha}|)^{\alpha+\epsilon}\Big]&\le& \widetilde{c}_{\epsilon}\sup_{x\in S}|a(x)|E[|\widetilde{A}_{1}^{j}\cap \mathbb{S}_{\alpha}|^{1+\alpha+\epsilon}]\non\\
&\le& \widetilde{c}_{\epsilon}\sup_{x\in S}|a(x)| E[|\widetilde{A}_{1}^{j}|^{1+\alpha+\epsilon}]\non\\
&=&\widetilde{c}_{\epsilon}\sup_{x\in S}|a(x)| E[|A_{1}^{j}|^{1+\alpha+\epsilon}]<\infty.\non
\eeqn
\end{proof}

\subsection{Proof of \textbf{Proposition 4.4}}
\begin{proof} Note that $\sigma_{j}^{2}=\Var\Big(\int_{\tau_{0}^{j}}^{\tau_{1}^{j}}\big(a(Y_{s}) - E_{\pi}a(\cdot)\big)ds\Big),$ and
$$\sigma_{j}^{2}=0 \Longleftrightarrow P\Big[\int_{\tau_{0}^{j}}^{\tau_{1}^{j}}\big(a(Y_{s}) - E_{\pi}a(\cdot)\big)ds=c\Big]=1$$
for some constant $c$. Taking expectation of $\int_{\tau_{0}^{j}}^{\tau_{1}^{j}}\big(a(Y_{s}) - E_{\pi}a(\cdot)\big)ds$ observe that $c=0,$ which implies that 
\beqn
P\bigg[\frac{\int_{\tau_{0}^{j}}^{\tau_{1}^{j}}a(Y_{s})ds}{\tau_{1}^{j} - \tau_{0}^{j}}=E_{\pi}a(\cdot)\bigg]=1,\label{aconstante1}
\eeqn
 where recall that $E_{\pi}a(\cdot)=\frac{E \big[\int_{\tau_{0}^{j}}^{\tau_{1}^{j}}a(Y_{s})ds\big]}{E[\tau_{1}^{j} - \tau_{0}^{j}]}.$ Denote the set $\Big\{\omega\in \Omega:\frac{\int_{\tau_{0}^{j}}^{\tau_{1}^{j}}a(Y_{s})ds}{\tau_{1}^{j} - \tau_{0}^{j}}=E_{\pi}a(\cdot)\Big\}$ by $A.$

For each $i\in S,$ and for each $\omega\in A$ with $P[A]=1,$ define $$\Tilde{T}_{i}(\omega)= \text{total time spent at state} \,\,\,\, i\,\,\, \text{in}\,\,\, [\tau_{0}^{j},\tau_{1}^{j}).$$
Note that $\int_{\tau_{0}^{j}}^{\tau_{1}^{j}}a(Y_{s})ds= \sum_{i\in S} a(i)\Tilde{T}_{i}(\omega)$ by definition and for each $i\in S$ define $\widehat{a}(i):=a(i)-E_{\pi}a(\cdot)$, and for each $\omega\in A,$ $\Tilde{T}_{i}(\omega)>0$ with probability $1$ (since under Assumption 1 sojourn time laws do not have atom at $0$). \eqref{aconstante1} can be rewritten as 
\beqn
P\Big[ \frac{\sum_{i\in S} a(i)\Tilde{T}_{i}}{\tau_{1}^{j} -\tau_{0}^{j}}=E_{\pi}a(\cdot)\Big]=1\s \Longleftrightarrow\s \forall \,\,\omega\in A,\,\,\, \sum_{i\in S}\widetilde{a}_{i}\Tilde{T}_{i}(\omega)=0.
\label{aconstante2}
\eeqn 
Since there are uncountably infinitely many $\omega\in A,$ and $S$ can be only at most countably finite. Then one can always choose a finite collection $B=\{\omega_{j}:j\in S\},$ such that the matrix $\Big\{\Tilde{T}_{i}(\omega_{j}): i\in S, \omega_j\in B\Big\}$ will be almost surely non-singular. \eqref{aconstante2} implies that
$$\text{for all}\s \omega_{j}\in B,\s \sum_{i\in S}\widehat{a}(i)\Tilde{T}_{i}(\omega_{j})=0.$$
Hence the only solution for $\{\widehat{a}(i):i\in S\}$ holds iff $\widehat{a}(i)=0 \,\,\forall i\in S,$ proving the assertion.
\end{proof}

\begin{remark}\label{countableremark}
We conjecture that Proposition 4.4 holds even if $S$ is countably infinite. This can be shown if we can prove that one can always choose a countably infinite collection $B=\{\omega_{j}:j\in S\},$ such that the matrix $\Big\{\Tilde{T}_{i}(\omega_{j}): i\in S, \omega_j\in B\Big\}$ will be almost surely non-singular. 
\end{remark}

\subsection{Proof of \textbf{Theorem 4}}
\begin{proof}
 For part (a) with $a<0,$ we use following notations
\beqn
(L^{j}_{k},Q^{j}_{k})&\stackrel{}{:=}&\Big(e^{-a(\tau_{k}^{j} - \tau_{k-1}^{j})},\int_{\tau_{k-1}^{j}}^{\tau_{k}^{j}}b^{}(Y_{s})e^{-a(\tau_{k}^{j} - s)}ds\Big), \quad k\geq 1,\non\\ 
(L^{j}_{0},Q^{j}_{0})&\stackrel{}{:=}&\Big(e^{-a\tau_{0}^{j}},\int_{0}^{\tau_{0}^{j}}b^{}(Y_{s})e^{-a(\tau_{0}^{j}-s)}ds\Big),\non
\eeqn
Recalling (8.4), denote the following
$S^{**}_{n}:=P_{n}\big(\frac{1}{L_{1}^{j}},\frac{Q_{1}^{j}}{L_{1}^{j}}\big)= \sum_{k=1}^{n}\Big(\prod_{l=1}^{k-1}\frac{1}{L_{l}^{j}}\Big)\frac{Q_{k}^{j}}{L_{k}^{j}},\,\, n\ge 1.$

We begin with for any function $f(\cdot,\cdot):\R_{\ge 0}\times \R\to\R$
\beqn
\s\s\s P\big[f(t,I_{t}^{(a,b)})\in A | Y_{0}=i\big]=\sum_{j\in S}P[Y_{t}=j | Y_{0}=i] P\big[f(t,I_{t}^{(a,b)})\in A | Y_{0}=i, Y_{t}=j\big].\label{eqa1}
\eeqn

Observe that on the event $\{Y_{0}=i,\,\, Y_{t}=j\}$ for $f(t_{1},x)=e^{at_{1}}x$ one has
\begin{align}
&e^{at}I_{t}^{(a,b)}\stackrel{d}{=}\frac{\int_{0}^{t}b^{}(Y_{s})e^{a(s-t)}ds}{e^{-at}}\non\\
&\s=\frac{G^{a,b^{}}_{j}(t-\tau_{g_{t}^{j}}^{j}) +e^{-a(t-\tau_{g_{t}^{j}}^{j})}\Big[\Big(\prod_{k=1}^{g_{t}^{j}}L_{k}^{j}\Big)Q_{0}^{j}+\sum_{k=1}^{g_{t}^{j}}\Big(\prod_{l=k+1}^{g_{t}^{j}}L_{l}^{j}\Big)Q_{k}^{j}       \Big] }{e^{-a(t-\tau_{g_{t}^{j}}^{j})}\Big(\prod_{k=1}^{g_{t}^{j}}L_{k}^{j}\Big) e^{-a\tau^{j}_{0}}} \non\\
&\s=\frac{G^{a,b^{}}_{j}(t-\tau_{g_{t}^{j}}^{j})}{e^{-at}} +\frac{Q_{0}^{j}+S^{**}_{g_{t}^{j}}}{L_{0}^{j}}\non\\
&\s\stackrel{P}{\to}\frac{Q_{0}^{j}+S^{**}_{\infty}}{L_{0}^{j}}\label{R4e1}
\end{align}
as $t\to\infty.$
Last step follows by using $a<0$ and proceeding similarly to the derivation done in the display \eqref{SREdiv}. Notice that using similar steps of Lemma 6.2 with relevant assumptions $S^{**}_{\infty}$ satisfies the unique solution of the SRE 
$$S^{**}_{\infty}\stackrel{d}{=} \frac{1}{L_{1}^{j}}S^{**}_{\infty}+ \frac{Q_{1}^{j}}{L_{1}^{j}},\s S^{**}_{\infty}\perp \Big(\frac{1}{L_{1}^{j}},\frac{Q_{1}^{j}}{L_{1}^{j}}\Big).$$ 
Using \eqref{R4e1}, proceeding similar to the way we proved Theorem 2(a) from \eqref{beg}, the assertion of part (a) follows by taking limit $t\to\infty$ in \eqref{eqa1}, denoting $S^{**}_{\infty}$ by $\widetilde{V}_{j}.$

Under the conditions of part (b) (that is when $a=0$) observe that $$I^{(0,b)}_{t}=\int_{0}^{t}b(Y_{s})ds $$ and then applying the law of large number for renewal process (under condition $E_{\pi}|b^{}(\cdot)|<\infty$) yields 
$$\frac{I^{(0,b)}_{t}}{t}=\frac{\int_{0}^{t}b(Y_{s})ds}{t}\,\,\stackrel{a.s}{\to} \,\, E_{\pi}b^{}(\cdot),$$
as $t\to\infty$ as required.

\vspace{0.3 cm}

For the rest parts, we sketch the proofs omitting the details, as they are very similar to steps of Theorem 2(a) and Theorem 3(a) and follow finally by using \eqref{eqa1} by taking $f(t,x)=x$. Observe that on the set $\{Y_{0}=i, Y_{t}=j\}$ one has $$\int_{0}^{t}b(Y_{s})ds = \sum_{i=1}^{g_{t}^{j}}\int_{\tau_{i-1}^{j}}^{\tau_{i}^{j}}b(Y_{s})ds+\int_{0}^{\tau_{0}^{j}}b(Y_{s})ds+b(j)(t-\tau_{g_{t}^{j}}),$$
where $\sum_{i=1}^{g_{t}^{j}}\int_{\tau_{i-1}^{j}}^{\tau_{i}^{j}}b(Y_{s})ds$ is the key term determining the asymptotics result while $\int_{0}^{\tau_{0}^{j}}b(Y_{s})ds+b(j)(t-\tau_{g_{t}^{j}})$ is just a $O_{p}(1)$ which goes to $0$ asymptotically after division with any exponent of $t.$ Since $\Big(\sum_{i=1}^{g_{t}^{j}}\int_{\tau_{i-1}^{j}}^{\tau_{i}^{j}}b(Y_{s})ds: t\ge 0\Big)$ can be seen as a renewal reward process with $i$-th reward being $\int_{\tau_{i-1}^{j}}^{\tau_{i}^{j}}b(Y_{s})ds,$ using results derived in \eqref{clt} of Theorem 2(a) and Theorem 3(a), one can the subsequently derive results based on conditions if $\sigma^{2}_{j,(b)}$ is finite, or $=\infty$ or $=0.$

We prove the result for (ii). Observe that if $\widehat{b}(\cdot):=b(\cdot)-E_{\pi}b(\cdot)$, then on $\{Y_{0}=i,Y_{t}=j\}$ $$I_{t}^{(0,b)}-tE_{\pi}b(\cdot)=\sum_{i=1}^{g_{t}^{j}}\int_{\tau_{i-1}^{j}}^{\tau_{i}^{j}}\widehat{b}(Y_{s})ds+\int_{0}^{\tau_{0}^{j}}\widehat{b}(Y_{s})ds+\widehat{b}(j)(t-\tau_{g_{t}^{j}}).$$
Under Assumptions 4(a),(b) with $a(\cdot)$ replaced by $b(\cdot),$ Lemma 10.1 suggests that $\int_{\tau_{0}^{j}}^{\tau_{1}^{j}}\widehat{b}(Y_{s})ds=\int_{\tau_{0}^{j}}^{\tau_{1}^{j}}(b(Y_{s})-E_{\pi}b(\cdot))ds$ is regularly varying with index $\alpha_{b},$ along with the slowly varying function $L_{1}(\cdot),$ with following right and left tail asymptotic limits
\beqn
\widetilde{\alpha}_{j,(b)}^{(+)}&=& E\Big[\sum_{(i_{l},i_{l+1})\in \widetilde{A}_{1}^{j}\cap \mathbb{S}^{+}_{\alpha_{b}}}c_{i_{l}i_{l+1}}\big[b(i_{l})-E_{\pi}b(\cdot)\big]^{\alpha_{b}}\Big],\non\\
\widetilde{\alpha}_{j,(b)}^{(-)}&=&E\Big[\sum_{(i_{l},i_{l+1})\in \widetilde{A}_{1}^{j}\cap  \mathbb{S}^{-}_{\alpha_{b}}}c_{i_{l}i_{l+1}}|b(i_{l})-E_{\pi}b(\cdot)|^{\alpha_{b}}\Big],\non\\
\widetilde{\beta}_{j,(b)}&=&\frac{\widetilde{\alpha}_{j,(b)}^{(-)} - \widetilde{\alpha}_{j,(b)}^{(+)}}{\widetilde{\alpha}_{j,(b)}^{(+)}+\widetilde{\alpha}_{j,(b)}^{(-)}}.\non
\eeqn
Only for this part denote $t^{1/\alpha_{b}}L_{1}(t), n^{1/\alpha_{b}}L_{1}(n)$ by $\tilde{c}_{t},\tilde{c}_{n}$ respectively.
For each $j\in S$ using the quantity $\widetilde{\beta}_{j,(b)}$ above, applying the stable central limit Theorem 4.5.1 of \cite{whitt2002stochastic}, as sample size $n\to\infty$ one has
\beqn
\tilde{c}^{-1}_{n}\Big(\sum_{i=1}^{n}\int_{\tau_{i-1}^{j}}^{\tau_{i}^{j}}\widehat{b}(Y_{s})ds\Big)\stackrel{d}{\to}\sigma_{(j,\alpha_{b},(b))}\mathcal{S}_{\alpha_{b}}(1,-\beta_{j,(b)},0).\label{stable3aa}
\eeqn 
Using \eqref{stable3aa} and Theorem 3.2 of \cite{gut2009stopped} for the stopped random time $g_{t}^{j}$ as $t\to\infty,$ 
$$\frac{\sum_{i=1}^{g_{t}^{j}}\int_{\tau_{i-1}^{j}}^{\tau_{i}^{j}}\widehat{b}(Y_{s})ds}{(g_{t}^{j})^{\frac{1}{\alpha_{b}}}L_{1}(g_{t}^{j})}\stackrel{d}{\to}\sigma_{(j,\alpha_{b},(b))}\mathcal{S}_{\alpha_{b}}(1,-\beta_{j,(b)},0).$$
Furthermore observe $\tau_{0}^{j}+(t-\tau_{g_{t}^{j}}^{j})$ is $O_{p}(1)$ and  $\lim_{t\to\infty}\frac{\tilde{c}_{g_{t}^{j}}}{\tilde{c}_{t}}=\Big(\frac{1}{E|\mathfrak{I}^{j}_{1}|}\Big)^{\frac{1}{\alpha_{b}}}$ (see Lemma 12.3(a)). Hence as $t\to\infty$ we have  
\beqn
\mathcal{L}\Big(\tilde{c}_{t}^{-1}\big[I_{t}^{(0,b)}-tE_{\pi}b(\cdot)\big] | Y_{0}=i, Y_{t}=j\Big)\stackrel{w}{\to} \mathcal{L}\bigg(\big(E|\mathfrak{I}^{j}_{1}|\big)^{-\frac{1}{\alpha_{b}}}\sigma_{(j,\alpha_{b},(b))}\mathcal{S}_{\alpha_{b}}(1,-\beta_{j,(b)},0)\bigg).\nonumber
\eeqn
Putting this estimate in \eqref{eqa1} using $f(t,x)=x$, and taking $t\to\infty,$ the second part of Theorem 4(b) will follow. Third part is trivial and the proof is omitted.
\end{proof}

\section{Proof of Supplementary parts of Corollary 4.5}

\subsubsection{Proof of \textbf{Lemma 11.1(a)}}
\begin{proof}
Proceeding similar to the proof of Theorem 2(a) (and using same notation) observe that,
\beqn
|I_{t}^{(ma,b)}|^{\frac{1}{m}}&=& \big|\int_{0}^{t}b(Y_{s})e^{-\int_{s}^{t}ma(Y_{r})dr}ds\big|^{\frac{1}{m}}\non\\
&=&\Phi_{t}^{(a)}|\widetilde{I}_{t}^{(ma,b)}|^{\frac{1}{m}}.\non
\eeqn
Suppose $\{(L_{i,m}^{j},Q_{i,m}^{j}): i\ge 1\},$ $(L_{0,m}^{j},Q_{0,m}^{j})$  are defined similarly as $\{(L_{i}^{j},Q_{i}^{j}): i\ge 1\}$ and $(L_{0}^{j},Q_{0}^{j})$ by replacing $a(\cdot)$ by $ma(\cdot)$ respectively. $\{S_{n,m}^{**}:n\ge 1\}$ are defined as $$S^{**}_{n,m}:=P_{n}\Big(\frac{1}{L_{1,m}^{j}},\frac{Q_{1,m}^{j}}{L_{1,m}^{j}}\Big)=\sum_{i=1}^{n}\frac{Q_{i,m}^{j}}{L_{i,m}^{j}}\prod_{k=1}^{i-1}\Big(\frac{1}{L_{k,m}^{j}}\Big).$$

Observe that on $\{Y_{0}=i,Y_{t}=j\},$
\beqn
\widetilde{I}^{(ma,b)}_{t}=\bigg[\frac{Q_{0,m}^{j}+S^{**}_{g_{t}^{j},m}}{L_{0,m}^{j}}+\frac{G_{j}^{(ma,b)}(t-\tau_{g_{t}^{j}})}{\Phi_{t}^{(ma)}}\bigg].\non
\eeqn
Using same notations as in the proof of Theorem 2(a), let $\widetilde{G}^{(3,m)}_{t}$ be defined as 
\beqn
&& \widetilde{G}^{(3,m)}_{t}:=\bigg[\frac{Q_{0,m}^{j}+S^{**}_{g_{t}^{j},m}}{L_{0,m}^{j}}+\frac{G_{j}^{(ma,b)}(t-\tau_{g_{t}^{j}})}{\Phi_{t}^{(ma)}}\bigg]^{\frac{1}{m\sqrt{t}}}.\non
\eeqn

Observe that 
\beqn
\mathcal{L}\Big(|\widetilde{I}^{(ma,b)}_{t}|^{\frac{1}{m\sqrt{t}}}\mid Y_{0}=i,Y_{t}=j\Big)=\mathcal{L}\Big(\widetilde{G}^{(3,m)}_{t}\mid Y_{0}=i,Y_{t}=j\Big)
\eeqn
and furthermore
\beqn
&& \mathcal{L}\bigg(\bigg(\frac{[\Phi^{(a)}_{t}]^{\frac{1}{\sqrt{t}}}}{e^{-\sqrt{t}E_{\pi}a(\cdot)}}, \frac{|I^{(ma,b)}_{t}|^{\frac{1}{m\sqrt{t}}}}{e^{-\sqrt{t}E_{\pi}a(\cdot)}}\bigg)\mid Y_{0}=i,Y_{t}=j\bigg)\non\\ &&\,\,\,\,\,\,\,\,\,\,=\mathcal{L}\Big(\big(G^{(1)}_{t}G^{(2)}_{t},G^{(1)}_{t}G^{(2)}_{t}\widetilde{G}^{(3,m)}_{t}\big)\mid Y_{0}=i,Y_{t}=j\Big).\non
\eeqn
Since $S^{**}_{g_{t}^{j},m}\stackrel{d}{\to}S_{m}^{**}$    where $S_{m}^{**}$ satisfies
$$S_{m}^{**}\stackrel{d}{=}\frac{1}{L_{1,m}^{j}}S_{m}^{**}+\frac{Q_{1,m}^{j}}{L_{1,m}^{j}}, \,\,\text{with}\,\,E\log\Big[\frac{1}{L_{1,m}^{j}}\Big]<0,\,\, E \log^{+}\Big|\frac{Q_{1,m}^{j}}{L_{1,m}^{j}}\Big|<\infty,$$
under Assumption 3(c) with $(a,b)$ replaced by $(ma,b)$.

Hence $S^{**}_{g_{t}^{j},m}$ is $O_{p}(1),$ along with $(L^{j}_{0,m},Q^{j}_{0,m},t-\tau_{g_{t}^{j}})$. Since $\Phi_{t}^{(ma)}\to\infty,$ as $t\to\infty$
$$\mathcal{L}\Big(\widetilde{G}^{(3,m)}_{t}\mid Y_{0}=i,Y_{t}=j\Big)=\exp\bigg[\frac{\log\Big|\frac{Q_{0,m}^{j}+S^{**}_{g_{t}^{j},m}}{L_{0,m}^{j}}+\frac{G_{j}^{(ma,b)}(t-\tau_{g_{t}^{j}})}{\Phi_{t}^{(ma)}}\Big|}{m\sqrt{t}}\bigg]\stackrel{w}{\to}\delta_{\{1\}}.$$
Using similar analysis on the asymptotic results of $G_{t}^{(1)},G_{t}^{(2)},$ proceeding as the rest of the proof of Theorem 2(a), the assertion of  Lemma 11.1(a) will follow.
\end{proof}

\subsubsection{Proof of \textbf{Lemma 11.1(b)}}

Let $\{(L_{i,m}^{j},Q_{i,m}^{j}): i\ge 0\}$ be defined as in Lemma 11.1(a) and for any $n\ge 1,$ denote $Z_{n,m}^{\otimes j}:=\max_{1\le k\le n}\Big\{\Big(\prod_{i=1}^{k-1}L_{i,m}^{j}\Big)|Q_{k,m}^{j}|\Big\},$ and 
$$P^{(j)}_{n,m}:=\sum_{k=1}^{n}\bigg(\prod_{i=1}^{k-1}L_{i,m}^{j}\bigg)Q_{k,m}^{j},\s\widetilde{P}^{(j)}_{n,m}:=\sum_{k=1}^{n}\bigg(\prod_{i=k+1}^{n}L_{i,m}^{j}\bigg)Q_{k,m}^{j}.$$

\begin{proof}
Using a similar approach and notations of the proof of Theorem 2(b), we observe that 
\begin{align}
&\bigg(\log\Phi_{t}^{(a)},\frac{\log |I_{t}^{(ma,b)}|}{m}\bigg)\stackrel{\mathcal{L}(\cdot|Y_{0}=i, Y_{t}=j)}{=}
\bigg(\log L^{j}_{0}+\log\bigg(\prod_{k=1}^{g^{j}_{t}}L^{j}_{k}\bigg)+\log(e^{ -a(j)(t-\tau_{g_{t}^{j}}^{j})}),\non\\ &\s\s\s\s\s\s\frac{1}{m}\log\bigg|e^{-ma(j)(t-\tau_{g_{t}^{j}}^{j})}\bigg[\bigg(\prod_{k=1}^{g_{t}^{j}}L^{j}_{k,m}\bigg)^{} Q^{j}_{0,m}+\widetilde{P}^{(j)}_{g_{t}^{j},m}\bigg]+G_{j}^{(ma,b^{})}\big(t-\tau_{g_{t}^{j}}^{j}\big)\bigg|\bigg)\non\\
&\s\s\s\stackrel{d}{=}\bigg(S_{g_{t}^{j}}^{\otimes j}+[\log L^{j}_{0}-a(j)(t-\tau_{g_{t}^{j}}^{j})]\,\, , \,\,\frac{\log Z_{g_{t}^{j},m}^{\otimes j}}{m}+ \frac{1}{m}\log\bigg|B_{t,m}^{(1)}+\frac{B_{t,m}^{(2)}}{Z_{g_{t}^{j},m}^{\otimes j}}P_{g_{t}^{j},m}^{(j)}\bigg|\bigg)\label{T2be1}
\end{align}

where 
\beqn
B_{t,m}^{(1)}&=&\frac{e^{-ma(j)(t-\tau_{g_{t}^{j}}^{j})}\bigg(\prod_{k=1}^{g_{t}^{j}}L^{j}_{k,m}\bigg)^{} Q^{j}_{0,m}+G_{j}^{(ma,b^{})}\big(t-\tau_{g_{t}^{j}}^{j}\big) }{Z_{g_{t}^{j},m}^{\otimes j}},\non\\
B_{t,m}^{(2)}&=&e^{-ma(j)(t-\tau_{g_{t}^{j}}^{j})},\non
\eeqn
and denote $\bigg(B_{t,m}^{(1)}+B_{t,m}^{(2)}\frac{P_{g_{t}^{j},m}^{(j)}}{Z_{g_{t}^{j},m}^{\otimes j}}\bigg)$ by $B^{(3)}_{t,m}.$ We now consider the scaled limit of $\Big(\frac{\log\Phi_{t}^{(a)}}{\sqrt{t}},\frac{\log |I_{t}^{(ma,b)}|}{m\sqrt{t}}\Big)$ as $t\to\infty.$ Observe that
$$B_{t,m}^{(1)}=\frac{e^{-ma(j)(t-\tau_{g_{t}^{j}}^{j})}\bigg(\prod_{k=1}^{g_{t}^{j}}L^{j}_{k,m}\bigg)^{} Q^{j}_{0,m}+G_{j}^{(ma,b^{})}\big(t-\tau_{g_{t}^{j}}^{j}\big)}{\max_{1\le k\le g_{t}^{j}}\Big(\prod_{i=1}^{k-1}L_{i,m}^{j}\Big)|Q_{k,m}^{j}|},$$
and $B_{t,m}^{(2)}$ are both $O_{p}(1)$ by construction, and for each $1\le k\le n,$ 
$$- \max_{1\le k\le g_{t}^{j}}\Big(\prod_{i=1}^{k-1}L_{i,m}^{j}\Big)|Q_{k,m}^{j}|\le Q_{k,m}^{j}\Big(\prod_{i=1}^{k-1}L_{i,m}^{j}\Big)\le \max_{1\le k\le g_{t}^{j}}\Big(\prod_{i=1}^{k-1}L_{i,m}^{j}\Big)|Q_{k,m}^{j}|.$$ 

 Hence for all $1\le k\le n,$ $Q_{k,m}^{j}\Big(\prod_{i=1}^{k-1}L_{i,m}^{j}\Big)\in\Big(-Z_{n,m}^{\otimes j},Z_{n,m}^{\otimes j}\Big).$
This implies that for all $n\ge 1,$ $$-n Z_{n,m}^{\otimes j}\le P_{n,m}^{(j)} \le n Z_{n,m}^{\otimes j},$$  which further implies that $\Big|\frac{P_{g_{t}^{j},m}^{(j)}}{Z_{g_{t}^{j},m}^{\otimes j}}\Big|\le g_{t}^{j}$; hence $|B^{(3)}_{t,m}|=O_{p}(t)$ and strictly positive. As $t\to\infty,$
$$\frac{\log|B_{t,m}^{(3)}|}{m\sqrt{t}}=\frac{\log \Big|B_{t,m}^{(1)}+B_{t,m}^{(2)}\frac{P_{g_{t}^{j},m}^{(j)}}{Z_{g_{t}^{j},m}^{\otimes j}}\Big|}{m\sqrt{t}}\stackrel{P}{\to} 0.$$
Now we proceed identically as done in Theorem 2(b) and observe that 
\beqn
&&P\bigg[\Big(\frac{\log \Phi_{t}^{(a)}}{\sqrt{t}},\frac{\log |I_{t}^{(ma,b)}|}{m\sqrt{t}}\Big)\in \cdot\s\mid Y_{0}=i\bigg]\non\\ \s\s\s\s &&=\sum_{j\in S} P[Y_{t}=j|Y_{0}=i]P\Big[\Big(\frac{\log \Phi_{t}^{(a)}}{\sqrt{t}},\frac{\log |I_{t}^{(ma,b)}|}{m\sqrt{t}}\Big)\in \cdot\s \mid Y_{0}=i, Y_{t}=j\Big]\non\\
&&=\sum_{j\in S}\pi_{j}\lim_{t\to\infty} P\bigg[\bigg(\frac{S_{g_{t}^{j}}^{\otimes j}}{\sqrt{t}},\frac{\log Z_{g_{t}^{j},m}^{\otimes j}}{m\sqrt{t}}\bigg)\in \cdot\s\mid Y_{0}=i\bigg],\non
\eeqn
by applying Lemma 6.1, with 
$$ L_{t}^{(1)}:=\bigg(\frac{[\log L^{j}_{0}-a(j)(t-\tau_{g_{t}^{j}}^{j})]}{\sqrt{t}} , \frac{\log|B_{t,m}^{(3)}|}{m\sqrt{t}}\bigg)',H_{t}:=\bigg(\frac{S_{g_{t}^{j}}^{\otimes j}}{\sqrt{t}},\frac{\log Z_{g_{t}^{j},m}^{\otimes j}}{m\sqrt{t}}\bigg)',$$
and $L_{t}^{(2)}$ defined as in Theorem 2(b) (with the verification lemma that $H_{t}$ satisfies (6.1), using ideas  similar to Lemma \ref{lemverify2b}(c)). 

For any two sequences $\{u_{k}\}_{k\ge 1},\{v_{k}\}_{k\ge 1},$ one has $$\max\{u_{k}\} -  \max\{v_{k}\}\le \max\{u_{k}+v_{k}\}\le \max\{u_{k}\} +  \max\{v_{k}\}.$$ As a result we can bound 
\beqn
\log Z_{g_{t}^{j},m}^{\otimes j}&=& \max_{1\le k\le g_{t}^{j}}\Big\{\log \Big(\prod_{i=1}^{K-1}L_{i,m}^{j}\Big)|Q_{k,m}^{j}|\Big\}\non\\&=&\max_{1\le k\le g_{t}^{j}}\{\sum_{i=1}^{k-1}\log L_{i,m}^{j}+\log|Q_{k,m}^{j}|\}\non\\&=&\max_{1\le k\le g_{t}^{j}}\{m\sum_{i=1}^{k-1}\log L_{i}^{j}+\log|Q_{k,m}^{j}|\},\non
\eeqn
 in the following manner

\beqn
\,\,\,\,\frac{M_{g_{t}^{j}}^{\otimes j}}{\sqrt{t}} -\frac{\max_{1\le k\le g_{t}^{j}}\log|Q_{k,m}^{j}|}{m\sqrt{t}}\le \frac{\log Z_{g_{t}^{j},m}^{\otimes j}}{m\sqrt{t}}\le \frac{M_{g_{t}^{j}}^{\otimes j}}{\sqrt{t}}+\frac{\max_{1\le k\le g_{t}^{j}}\log|Q_{k,m}^{j}|}{m\sqrt{t}}.\label{11.11}
\eeqn

Since (4.3) holds with $(a,b)$ replaced by $(ma,b),$ it follows that $\frac{\max_{1\le k\le g_{t}^{j}}\log|Q_{k,m}^{j}|}{m\sqrt{t}}\stackrel{a.s}{\to}0$
 (following arguments similar to Lemma \ref{lemverify2b}(a)). We get the final result by applying the weak limit of $\Big(\frac{S_{g_{t}^{j}}^{\otimes j}}{\sqrt{t}},\frac{M_{g_{t}^{j}}^{\otimes j}}{\sqrt{t}}\Big)$ by using Lemma \ref{lemverify2b}(b).
\end{proof}

\subsection{Proof of \textbf{Corollary 5.2}}
\begin{proof} Proceeding similar to the proof of Theorem 5.9, integrating in the range $[0,t]$ yields,
\beqn
X_{t}&=&X_{0}e^{-\int_{0}^{t}a(Y_{s})ds}+\int_{0}^{t}b(Y_{s})e^{-\int_{s}^{t}a(Y_{r})dr}d\mathcal{\bf{S}}_{\alpha^{*},0}(s)\non\\
&\stackrel{d}{=}& X_{0}\Phi^{(a)}_{t}+|I^{(\alpha^{*} a,b^{^{\alpha^{*}}})}_{t}|^{1/\alpha^{*}} \mathcal{S}_{\alpha^{*}}(1,0,0)\label{CC1e}
\eeqn

Using the continuous mapping theorem on Theorem 1, part (A) will follow.

For the rest of part (B), (C), (D) we observe that marginal evolution of the process at \eqref{CC1e} resembles with $Z_{t}^{(2)}$ in Corollary 4.5 of \cite{Majumder2024semi} with $a_{2}=X_{0},b_{2}= \mathcal{S}_{\alpha^{*}}(1,0,0), m=\alpha^{*};$ hence all the results will follow by replacing $(a,b^{})$ by $(a,b^{^{\alpha^{*}}})$ in Corollary 4.5 for $|Z_{t}^{(2)}|$. 
\end{proof}

\section{Proofs of the lemmas in the Appendix with some additional results}

Proofs of Lemma 12.1, Proposition 2.2, Lemma 12.2, Lemma 12.3(a),(b),(c), and one supplementary result are given here.

\subsection{Proof of \textbf{Lemma 12.1}}
\begin{proof}
 We use the notations similar to section 8 of \cite{cinlar1969markov}. Define the matrix of functions $A(t):=(A_{ij}(t): i,j\in S)$ such that $A_{ij}(t):=Q_{ij}(t)=P\big[J_{1}=j, T_{1}\le t\mid J_{0}=j\big]=P_{ij}F_{ij}(t).$ $A$ represents the semi-Markov matrix. For any two matrix valued functions $B(t):=(B_{ij}(t): i,j\in S), C(t):=(C_{ij}(t): i,j\in S),$ by the convolution $B\star C$ we denote the matrix valued functions where for each $i,j\in S,\s (B\star C)_{ij}(t):=\int_{0}^{t}\sum_{k\in S}B_{ik}(du)C_{kj}(t-u)=\int_{0}^{t}\sum_{k\in S}B_{ik}(t-u)C_{kj}(du).$ This manner recuresively for $n\ge 1,$ we have $A^{(n)}=A\star A^{(n-1)}$ and the interpretation of $A_{ij}^{(n)}(t)= P[Y_{n}=j, T_{n} \le t\mid Y_{0}=i].$ Define the renewal function at state $j$ starting from $i$ as $R_{ij}(t):=E \big[(g_{t}^{j}+1)1_{\{t\ge \tau_{0}^{j}\}}\mid Y_{0}=i\big]=\sum_{n\ge 0} A_{ij}^{(n)}(t)$ as the expected number of hitting at state $j$ before time $t.$  Using similar notations of \cite{cinlar1969markov} by $V^{-}(t)$ we denote $t-S_{N_{t}}.$ Note that on $\{Y_{t}=j\},$ $\tau_{g_{t}^{j}}=S_{N_{t}},$ hence we can safely write $P_{_{i}}[t-\tau_{g_{t}^{j}}^{j}>x, Y_{t}=j]=P_{_{i}}[t-S_{N_{t}}>x, Y_{t}=j]=P_{_{i}}[V^{-}(t)>x, Y_{t}=j].$ 

Observe that
\beqn
&P_{_{i}}[V^{-}(t)>x, Y_{t}=j]= P_{_{i}}[V^{-}(t)>x, Y_{t}=j, T_{1}>t]+P_{_{i}}[V^{-}(t)>x, Y_{t}=j, T_{1}<t]\non\\
&\s\s=\delta_{\{i=j\}}1_{\{t>x\}}P[T_{1}>t\mid Y_{0}=j]+\sum_{k\in S}\int_{0}^{t}P_{ik}F_{ik}(du)P_{k}\big[V^{-}(t-u)>x, Y_{t-u}=j\big]\non\\
&\,\,=\delta_{\{i=j\}}1_{\{t>x\}}\sum_{k\in S}P_{jk}\bar{F}_{jk}(t)+\sum_{k\in S}\int_{0}^{t}A_{ik}(du)P_{_{k}}\big[V^{-}(t-u)>x, Y_{t-u}=j\big].\non\label{reneq1}
\eeqn
Now for fixed $i,x,$ writing $U_{j}(t):=P_{_{i}}[V^{-}(t)>x, Y_{t}=j],$ and $$G_{j}(t):=\delta_{\{i=j\}}1_{\{t>x\}}\sum_{k\in S}P_{jk}\bar{F}_{jk}(t),$$ from aforementioned relation we have the renewal equation 
\beqn
U=G+A\star U.\label{Renewaleq2}
\eeqn Observe that $G_{j}(t)=\delta_{\{i=j\}}1_{\{t>x\}}\sum_{k\in S}P_{jk}\bar{F}_{jk}(t) = \delta_{\{i=j\}}1_{\{t>x\}}(1-F_{j}(t))\le (1-F_{j}(t)) =1-A_{j}(t)1,$ which implies that $G\le 1- A.1$ co-ordinate wise, and as a consequence of Theorem 3.17 of \cite{cinlar1969markov} there exists a solution of the equation \eqref{Renewaleq2}, and from Theorem 3.11 and display 4.9 of \cite{cinlar1969markov} the solution is unique if all states are conservative (which is granted under Assumption 1). The minimal solution satisfies
\beqn
P_{_{i}}[V^{-}(t)>x, Y_{t}=j]&=& \int_{0}^{t}R_{ij}(du)1_{\{t-u>x\}}(1-F_{j}(t-u))\non\\
&=& \int_{0}^{t}R_{ij}(du)1_{\{t-u>x\}}\sum_{k\in S}P_{jk}\bar{F}_{jk}(t-u).\non
\eeqn

Now using analog of Key-Renewal Theorem 6.3 of \cite{cinlar1969markov} gives
\beqn
\lim_{t\to\infty}P_{_{i}}[V^{-}(t)>x, Y_{t}=j]&=&\lim_{t\to\infty} \int_{0}^{t}R_{ij}(du)1_{\{t-u>x\}}\big(1-F_{j}(t-u)\big)\non\\
&=& \frac{\mu_{j}\int_{x}^{\infty}(1-F_{j}(y))dy}{\sum_{k\in S}\mu_{k}m_{k}},\label{reneweq3}
\eeqn

given the fact that state $j$ is recurrent (follows from Assumption 1) and the function $t\to 1_{\{t>x\}}(1-F_{j}(t))$ is directly Riemann integrable (in short d.r.i).  Definition and the necessary-sufficient conditions of `d.r.i functions' can be found in Section 4 (Page 153 of chapter V (Renewal theory) of \cite{asmussen2008applied}) and Proposition 4.1 of \cite{asmussen2008applied} respectively. Since the distribution function $F_{j}(t)=\sum_{k\in S}P_{jk}F_{jk}(t)$ has at most countable number of jumps so it is continuous almost everywhere w.r.t Lebesgue measure and the function $1_{\{t>x\}}(1-F_{j}(t))$ is bounded as well. So using Proposition 4.1(i) of \cite{asmussen2008applied} the function $1_{\{t>x\}}(1-F_{j}(t))$ is d.r.i hence the above assertion \eqref{reneweq3} holds.
\end{proof}

\subsection{Proof of \textbf{Proposition 2.2}}
\begin{proof}
Note that the distribution of $\tau_{g^{j}_{t}+2}^{j}-\tau_{g^{j}_{t}+1}^{j}$ is identical as $\tau^{j}_{2}-\tau_{1}^{j},$  since 
\beqn
P[\tau_{g^{j}_{t}+2}^{j}-\tau_{g^{j}_{t}+1}^{j}>x]&=&\sum_{n=0}^{\infty}P[\tau_{g^{j}_{t}+2}^{j}-\tau_{g^{j}_{t}+1}^{j}>x\mid g^{n}_{t}=n]P[g^{n}_{t}=n]\non\\
&=&\sum_{n=0}^{\infty}P[\tau_{n+2}^{j}-\tau_{n+1}^{j}>x\mid g^{j}_{t}=n]P[g^{j}_{t}=n]\non\\
&\stackrel{(a)}{=}&\sum_{n=0}^{\infty}P[\tau_{1}^{j}-\tau_{0}^{j}>x]P[g^{j}_{t}=n]=P[\tau_{1}^{j}-\tau_{0}^{j}>x]\label{eres2}
\eeqn
where (a) holds from the fact that the event $\{g_{t}^{j}=n\}=\{\tau_{n}^{j}\le t <\tau_{n+1}^{j}\}$ is independent with the event $\{\tau_{n+2}^{j}-\tau_{n+1}^{j}>x\}$  due to the regeneration property, and the latter is identical in probability with $\{\tau_{1}^{j}-\tau_{0}^{j}>x\},$ proving the assertion of part (i). 

Similar to the steps done in \eqref{eres2} one can show that any functional of $Y$ in $\{s: s\ge \tau^{j}_{g_{t}^{j}+1}\}$ is independent of $g_{t}^{j}$ and identically distributed as that functional on $\{s:s\ge \tau_{0}^{j}\}$ for any $t\ge \tau_{0}^{j}$ for part (ii).

For part (iii), observe that $\{g^{j}_{t}=n, Y_{t}=j'\}=\{\tau^{j}_{n}\le t <\tau^{j}_{n+1},Y_{t}=j'\}.$ For any two sets $A,B$
\beqn
&&P[Y^{(1)}_{t}\in A ,Y^{(2)}_{t} \in B\mid K_{t},Y_{t}=j']\non\\&\s\s=&\sum_{n=0}^{\infty}P\big[Y^{(1)}_{t}\in A ,Y^{(2)}_{t} \in B\mid K_{t}, Y_{t}=j',g^{j}_{t}=n\big]P[g^{j}_{t}=n\mid K_{t},Y_{t}=j']\non\\
&\s\s\stackrel{(a)}{=}& \sum_{n=0}^{\infty}P\big[Y^{(1)}_{t}\in A\mid  K_{t},g^{j}_{t}=n, Y_{t}=j'\big] P\big[Y^{(2)}_{t} \in B\mid  K_{t}, g^{j}_{t}=n,  Y_{t}=j'\big]\non\\&&\s\times P[g^{j}_{t}=n\mid  K_{t}, Y_{t}=j']\non\\
&\s\s\stackrel{(b)}{=}& \sum_{n=0}^{\infty}P\big[Y^{(1)}_{t}\in A\mid  K_{t}, g^{j}_{t}=n, Y_{t}=j'\big] P\big[Y^{(2)}_{t} \in B\mid  K_{t}\big]P[g^{j}_{t}=n\mid  K_{t}, Y_{t}=j']\non\\
&\s\s=& P[Y^{(1)}_{t}\in A\mid  K_{t}, Y_{t}=j'\big]P\big[Y^{(2)}_{t} \in B \mid K_{t}].\non
\eeqn
Equality in $(a)$ follows from the fact that conditioned on $\{K_{t},g^{j}_{t}=n,Y_{t}=j'\}$ the processes $Y^{(1)}_{t},Y^{(2)}_t$ are respectively $\sigma\{\mathcal{H}_{i}:i\le n\}$ and $\sigma\{\mathcal{H}_{i}: i\ge n+2\}$ measurable which are independent sigma fields. Equality in $(b)$ follows as conditioned on $\{K_{t}\}$ and $\{g_{t}^{j}=n,Y_{t}=j'\}$ $Y_{t}^{(2)}$ is $\sigma\{\mathcal{H}_{i}:i\ge n+2\}$ measurable and independent with $\{g_{t}^{j}=n,Y_{t}=j'\}=\{\tau_{n}^{j}\le t<\tau_{n+1}^{j},Y_{t}=j'\}$ as the latter is $\sigma\{\mathcal{H}_{i}: i\le n+1\}$ measurable.

\end{proof}

\subsection{Proof of \textbf{Lemma 12.2}}

\begin{proof}
We prove that $\log^{}|L^{j}_{0}|=O_{P}(1)$ and $|Q^{j}_{0}|=O_{P}(1)$. Let, for arbitrary $i,j\in S$,  $A^{(k)}_{ij}$ denote the event that $Y$ visits the state $j$ in $k$-th excursion from state $i$ to itself (i.e in time interval $[\tau_{k-1}^{i},\tau_{k}^{i})$). Clearly $0<P\big(A^{(k)}_{ij}\big)<1$ for any $k\in\mathbb{N}$ since $Y$ is irreducible in $S$. Observe that on $\{Y_{0}=i\}$ for any arbitrary $i\in S$, 
$$
\log^{}|L^{j}_{0}|=-\int_{0}^{\tau_{0}^{j}}a(Y_{s})ds\le \sum_{l=1}^{k^{*}-1}\log L_{k}^{i}+ \int_{\tau_{k^{*}-1}^{i}}^{\tau_{k^{*}}^{i}}|a(Y_{s})|ds,
\quad k^{*}=\inf\Big\{k\ge 1: 1_{A^{(k)}_{ij}}=1\Big\}.
$$ 
Observe that above upper bound is a geometric sum of random variables having finite expectation which proves the first assertion that $\log^{}|L^{j}_{0}|=O_{P}(1)$.
Similarly, on $\{Y_{0}=i\}$,  
\begin{align*}
Q^{j}_{0}&=\int_{0}^{\tau_{0}^{j}}b^{}(Y_{s})e^{-\int_{s}^{\tau_{0}^{j}}a(Y_{r})dr}ds\\
&=e^{(\tau_{0}^{j}- \tau_{k^{*}}^{i})}\sum_{l=1}^{k^{*}-1}\Big(\prod_{n=l+1}^{k^{*}}L_{n}^{i}\Big)^{} Q_{l}^{i}+\int_{\tau_{k*-1}^{i}}^{\tau_{0}^{j}}b^{}(Y_{s})e^{-\int_{s}^{\tau_{0}^{j}}a(Y_{r})dr}ds.
\end{align*}
Note that $[\tau_{k*-1}^{i},\tau_{0}^{j})\subset [\tau_{k*-1}^{i},\tau_{k*}^{i})$ and that 
$$
\sum_{l=1}^{\infty}\Big(\prod_{n=l+1}^{\infty}L_{n}^{i}\Big)^{}Q_{l}^{i}
$$
is absolutely convergent due to Assumption 2. Hence, $|Q^{j}_{0}|=O_{P}(1)$. 
\end{proof}

\subsection{Statement of \textbf{Lemma 12.3}}
\begin{enumerate}[(a)]
\item Suppose Assumption 1 holds and $c_{\cdot}$ is a function defined on $\mathbb{N}$ as (2.15) and its definition is extended to the whole $\R_{\ge 0}$ for any arbitrary slowly varying function $L(\cdot)$. Let $g^{j}_{t}$ be defined as the number of total renewals to the state $j,$ before time $t.$ Then for any fixed $s>0,$ $$\lim_{t\to\infty}c^{-1}_{g_{st}^{j}}/c^{-1}_{t}\stackrel{a.s}{\to} \Big(\frac{s}{E|\mathfrak{I}^{j}_{1}|}\Big)^{-\frac{1}{\alpha}}.$$
\item For any function $L(\cdot)$ slowly varying at $+\infty,$ and any two functions $a_{1}(\cdot),b_{1}(\cdot):\R_{\ge 0}\to \R_{\ge 0}$ such that both $a_{1}(t),b_{1}(t)\to \infty$ as $t\to\infty$ and $a_{1}\le b_{1};$ for any $\eps>0$ there exists  $t_{0,\eps}\ge 0,$ such that for all $t\ge t_{0,\eps},$ 
\beqn
(1-\eps)\Big(\frac{b_{1}(t)}{a_{1}(t)}\Big)^{-\eps}\le\frac{L(b_{1}(t))}{L(a_{1}(t))}\le (1+\eps)\Big(\frac{b_{1}(t)}{a_{1}(t)}\Big)^{\eps}\label{regVp3}
\eeqn
\item For any function $L(\cdot)$ slowly varying at $t\to\infty,$  $\lim_{t\to\infty}\frac{\log L(t)}{\log t}\to 0$. 
\end{enumerate}
\subsubsection{Proof of \textbf{Lemma 12.3(a)}}
\begin{proof}
Observe that,
$$c^{-1}_{g_{st}^{j}}/c^{-1}_{t}= \frac{(g_{st}^{j})^{-\frac{1}{\alpha}}L_{0}^{-1}(g_{st})}{t^{-\frac{1}{\alpha}}L^{-1}_{0}(t)},$$
where $L_{0}(\cdot)$ depends on $L(\cdot)$ through (2.15) as it is defined as $c_{n}\sim n^{-1/\alpha}L_{0}(n).$

The result follows from the fact $\lim_{t\to\infty}\frac{g_{st}}{\frac{s}{E|\mathfrak{I}^{j}_{1}|}t}\stackrel{a.s}{\to}1,$ and $\lim_{t\to\infty}\frac{L_{0}(t)}{L_{0}(g^{j}_{st})} \stackrel{a.s}{\to} 1$ as an application of Potter's bound. To see the second result observe that given a small $\epsilon_{1}\in \Big(0,\frac{s}{E|\mathfrak{I}_{1}^{j}|}\Big),$ for each $\omega\in A$ with $P[A]=1$  there exists $t_{1}(\omega,\epsilon_{1})$ such that $\frac{g_{st}}{ \frac{s}{E|\mathfrak{I}^{j}_{1}|} t}\ge 1-\frac{\epsilon_{1}}{\frac{s}{E|\mathfrak{I}^{j}_{1}|}},$ for all $t\ge t_{1}(\omega,\epsilon_{1}).$ Since $$\frac{L_{0}(t)}{L_{0}(g^{}_{st})}=\frac{L_{0}(t)}{L_{0}\big(t\big(\frac{s}{E|\mathfrak{I}^{j}_{1}|}-\epsilon_{1}\big)\big)}\frac{L_{0}\big(t\big(\frac{s}{E|\mathfrak{I}^{j}_{1}|}-\epsilon_{1}\big)\big)}{L_{0}(g^{}_{st})},$$
the first term goes to $1$ as $t\to\infty$ by definition. The second term can be bounded by using Potter's bound for any slowly varying function $L_{0}(\cdot)$ which suggests that for all $\epsilon>0,$ there exists $t_{0}>0,$ such that 
$$(1-\epsilon)\bigg(\frac{\frac{g_{st}^{j}}{ t} }{\frac{s}{E|\mathfrak{I}^{j}_{1}|}-\epsilon_{1}}\bigg)^{-\epsilon}\le\frac{L_{0}(g^{j}_{st})}{L_{0}\big(t\big(\frac{s}{E|\mathfrak{I}^{j}_{1}|}-\epsilon_{1}\big)\big)}\le (1+\epsilon)\bigg(\frac{\frac{g_{st}^{j}}{\frac{s}{E|\mathfrak{I}^{j}_{1}|}t} }{1-\frac{\epsilon_{1}}{\frac{s}{E|\mathfrak{I}^{j}_{1}|}}}\bigg)^{\epsilon}$$
for all $t\ge t_{0}\vee t_{1}(\omega,\epsilon_{1}).$ As both $\epsilon_{1},\epsilon$ can be chosen arbitrarily small, both sides have an almost sure limit to $1,$ for large $t$ proving the part (b) of the Lemma.
\end{proof}
\subsubsection{Proof of \textbf{Lemma 12.3(b)}}
\begin{proof}
Using Karamata's representation theorem for a slowly varying function $L(\cdot)$ at $\infty,$ there exist $x_{0}\ge 0$, functions $c_{1}(\cdot),\eta(\cdot)$ such that there exists a constant $c_{0},$ for which $\lim_{x\to\infty}c_{1}(x)=c_{0},\lim_{x\to\infty}\eta(x)=0,$ and $L(\cdot)$ can be represented as $L(x)=c_{1}(x)e^{\int_{x_{0}}^{x}\frac{\eta(s)}{s}ds}.$ 

Using this representation for any $t\ge 0,$
$$\frac{L(b_{1}(t))}{L(a_{1}(t))}=\frac{c_{1}(b_{1}(t))}{c_{1}(a_{1}(t))} \frac{e^{\int_{x_{0}}^{b_{1}(t)}\frac{\eta(s)}{s}ds}}{e^{\int_{x_{0}}^{a_{1}(t)}\frac{\eta(s)}{s}ds}}=\frac{c_{1}(b_{1}(t))}{c_{1}(a_{1}(t))}e^{\int_{1}^{\frac{b_{1}(t)}{a_{1}(t)}}\frac{\eta(a_{1}(t) s)}{s}ds}.$$
Since both $(a_{1}(t),b_{1}(t))\to(\infty,\infty)$ as $t\to\infty,$ for any $\epsilon>0,$ there exists $t_{0,\epsilon}\ge 0$ such that 
$$\Big|\frac{c_{1}(b_{1}(t))}{c_{1}(a_{1}(t))}-1\Big|<\epsilon\s\& \s |\eta(a_{1}(t)s)|\le\epsilon\s \forall \,\, t\ge t_{0,\epsilon}.$$
If $|\eta(a_{1}(t)s)|\le\epsilon,$ it implies that 
$$-\epsilon\log\Big(\frac{b_{1}(t)}{a_{1}(t)}\Big)\le \int_{1}^{\frac{b_{1}(t)}{a_{1}(t)}}\frac{\eta(a_{1}(t) s)}{s}ds \le  \epsilon\log\Big(\frac{b_{1}(t)}{a_{1}(t)}\Big),$$
which together with $\Big|\frac{c_{1}(b_{1}(t))}{c_{1}(a_{1}(t))}-1\Big|<\epsilon,$ prove the assertion in \eqref{regVp3}.
\end{proof}
\subsubsection{Proof of \textbf{Lemma 12.3(c)}}
\begin{proof}
For any $L(t)$ slowly varying at $t\to\infty,$ we show that $\lim_{t\to\infty}\frac{\log L(t)}{\log t}\to 0.$ Suppose given any small $\delta>0,$ there exists $t_{\delta}$ such that for all $t\ge t_{\delta},$ $|\eta(t)|\le\delta$ and $|c_{1}(t)-c_{0}|\le \delta.$

Using Karamata's presentation $$\frac{\log L(t)}{\log t}= \frac{\log c_{1}(t)}{\log(t)}+\frac{1}{\log(t)}\Big[\int_{x_{0}}^{t_{\delta}}\frac{\eta(s)}{s}ds+\int_{t_{\delta}}^{t}\frac{\eta(s)}{s}ds\Big].$$

It follows that for large $t,$ the first and second term in the RHS go to $0,$ and the third term
$$\frac{1}{\log(t)}\int_{t_{\delta}}^{t}\frac{\eta(s)}{s}ds\le \delta \frac{\log(t) - \log(t_{\delta})}{\log(t)}\le \delta,$$
for any arbitrary $\delta>0.$ Hence the assertion follows as $t\to\infty.$
\end{proof}

\subsection{Proof of Lemma \ref{regV21}}
\begin{proof}
To prove first assertion observe that for any $\delta>0,$ $$\{\widetilde{X}_{1}>(1+\delta)x, \widetilde{X}_{2}<\delta x\}\subset\{\widetilde{X}_{1}-\widetilde{X}_{2}>x\}\subset \{\widetilde{X}_{1}>x\}.$$ These inequalities together imply that 
\beqn
1&\ge& \frac{P\big[\widetilde{X}_{1}-\widetilde{X}_{2}>x\big]}{P[\widetilde{X}_{1}>x]}\non\\
&\ge& \frac{P\big[\widetilde{X}_{1}>(1+\delta)x, \widetilde{X}_{2}<\delta x\big]}{P[\widetilde{X}_{1}>x]}\non\\
&=& \frac{P\big[\widetilde{X}_{1}>(1+\delta)x\big]}{P[\widetilde{X}_{1}>x]}\Big[1-P[\widetilde{X}_{2}\ge \delta x]\Big]=(1+\delta)^{-\alpha}[1-\delta^{-\alpha}O(x^{-\alpha})]+o(1),\label{r1}
\eeqn
where equality in \eqref{r1} follows from the independence of $\{\widetilde{X}_{i}:i=1,\ldots,n\}$. Taking limit as $x\to\infty$ and having $\delta>0$ arbitrarily small proves the first assertion $$P[\widetilde{X}_{1}-\widetilde{X}_{2}>x]\sim P[\widetilde{X}_{1}>x].$$ The assertion $P[\widetilde{X}_{1}-\widetilde{X}_{2}<-x]\sim P[\widetilde{X}_{2}>x]$ follows similarly by changing signs. Second and third assertion follow as a corollary of Lemma 1.3.4 and Remark 1.3.5 of \cite{mikosch1999regular} respectively.
\end{proof}

\subsection{Proof of \textbf{Lemma 12.4}}
\begin{proof}
Define two functions $G_{1},G_{2}:\R_{\ge 0}^{2}\to\R,$ such that $$G_{1}(x,y)=\max\{\log x,\log y\},G_{2}(x,y)=\log \max\{x,y\}.$$ It is easy to see that 
\begin{align}
&P[G_{1}(X,Y)=G_{2}(X,Y)]=P[G_{1}(X,Y)=G_{2}(X,Y), X>Y]\non\\&\s +P[G_{1}(X,Y)=G_{2}(X,Y), X=Y]+P[G_{1}(X,Y)=G_{2}(X,Y), X<Y]\non
\end{align}
 and each of these probabilities in the RHS are 
\beqn
P[G_{1}(X,Y)=G_{2}(X,Y), X>Y]&=&P[X>Y],\non\\
 P[G_{1}(X,Y)=G_{2}(X,Y), X<Y]&=&P[X<Y],\non\\ P[G_{1}(X,Y)=G_{2}(X,Y), X=Y]&=&P[X=Y]\non
\eeqn
 and after summing one can see that $P[G_{1}(X,Y)=G_{2}(X,Y)]=1.$
\end{proof}


\end{document}